\newtheorem{thm}{Theorem}
\newtheorem{cor}[thm]{Corollary}
\newtheorem{prop}[thm]{Proposition}
\newtheorem{lem}[thm]{Lemma}
\newtheorem{clm}[thm]{Claim}
\theoremstyle{definition}
\newtheorem{dfn}[thm]{Definition}
\newtheorem{rem}[thm]{Remark}
\newtheorem{ex}[thm]{Example}
\newtheorem{prob}[thm]{Problem}
\newtheorem{nota}[thm]{Notation}
\numberwithin{thm}{section}
\numberwithin{equation}{section}
\DeclareMathOperator{\im}{Im}
\DeclareMathOperator{\length}{length}
\newcommand{\step}[1]{\medskip\noindent\textit{#1.}}
\title[Topological regularity of Busemann spaces]{Topological regularity of Busemann spaces of nonpositive curvature}
\author[T. Fujioka]{Tadashi Fujioka}
\address[T. Fujioka]{Department of Applied Mathematics, Fukuoka University, Fukuoka 814-0180, Japan}
\email{tfujioka210@gmail.com}
\author[S. Gu]{Shijie Gu}
\address[S. Gu]{Department of Mathematics, Northeastern University, Shenyang, Liaoning, China, 110004}
\email{shijiegutop@gmail.com}
\date{\today}
\subjclass[2020]{53C23, 53C70, 57M50, 57N16, 57P05}
\keywords{nonpositive curvature, topological manifolds, homology manifolds, geodesic completeness, strainers, G-spaces, convex geodesic bicombings}
\begin{document}

\begin{abstract}
We extend the topological results of Lytchak--Nagano and Lytchak--Nagano--Stadler for CAT($0$) spaces to the setting of Busemann spaces of nonpositive curvature, i.e., BNPC spaces.
We give a characterization of locally BNPC topological manifolds in terms of their links and show that the singular set of a locally BNPC homology manifold is discrete.
We also prove that any (globally) BNPC topological $4$-manifold is homeomorphic to Euclidean space.
Applications include a topological stability theorem for locally BNPC G-spaces.
Our arguments also apply to spaces admitting convex geodesic bicombings.
\end{abstract}

\maketitle
\tableofcontents

\section{Introduction}\label{sec:intro}

There are two important notions of globally nonpositive curvature defined for metric spaces. One arises from Alexandrov's pioneering comparison geometry and is nowadays called \textit{CAT(0)}, while the other goes back to Busemann and will be referred to here as \textit{BNPC} (also known in the literature as a Busemann space or a convex space).
A complete geodesic space $X$ is BNPC if, for any two constant-speed geodesics $\gamma,\eta:[0,1]\to X$, the restriction of the distance function
\[t\mapsto d(\gamma(t),\eta(t))\]
is convex on $[0,1]$.
Although CAT($0$) spaces are BNPC spaces, the class of BNPC spaces is much larger than the class of CAT($0$) spaces.
For example, an $L^p$ space is CAT($0$) only for $p=2$, yet it is BNPC for any $1<p<\infty$.
More generally, CAT($0$) spaces generalize complete, simply connected Riemannian manifolds of nonpositive sectional curvature, whereas BNPC spaces can be viewed as a generalization of such Finsler manifolds.
See Section \ref{sec:pre} for more background information.

The purpose of this paper is to extend recent topological results of Lytchak--Nagano \cite{LN:geo}, \cite{LN:top} and Lytchak--Nagano--Stadler \cite{LNS} (building on earlier work of Thurston \cite{Th:cat}) for CAT($0$) spaces to the setting of BNPC spaces.
This gives a positive answer to the problem raised in \cite[Section 1.6]{LNS}.
Although many of the original results were stated for more general CAT($\kappa$) spaces, we restrict
ourselves here to the most basic case of nonpositive curvature.
As mentioned above, the difference between Alexandrov and Busemann curvature bounds closely reflects the difference between Riemannian and Finsler geometries.
Therefore, the key to the extension is to capture the Finsler nature of BNPC spaces and deduce from it the same topological properties as in the CAT($0$) case.

More specifically, the main technical contribution of this paper is the development of a strainer theory for geodesically complete BNPC spaces (more precisely, for \textit{GNPC spaces} defined later).
Strainers are a classical tool in Alexandrov geometry, originally introduced in the study of
Alexandrov spaces with curvature bounded below \cite{BGP}, and are also the key ingredient in the aforementioned studies of CAT($0$) spaces \cite{LN:geo}, \cite{LN:top}, \cite{LNS}.
However, in contrast to the CAT($0$) setting, BNPC spaces do not have a canonical angle structure for introducing strainers, reflecting their Finsler-like nature.
We overcome this difficulty by separating the two roles played by angles in the CAT($0$) theory: one controlling the branching of geodesics and the other controlling
orthogonality.
These technical details will be discussed in Section \ref{sec:tool}.

\subsection{Main results}\label{sec:main}

The main results concern topological manifolds and homology manifolds with BNPC metrics.
A metric space is \textit{locally BNPC} if every point has a neighborhood that is BNPC with respect to the ambient metric, which we call a \textit{BNPC neighborhood}.

\begin{thm}\label{thm:reg}
Let $X$ be a locally compact, locally BNPC space.
Then $X$ is a topological $n$-manifold if and only if for any $p\in X$, every punctured ball around $p$ in its BNPC neighborhood is homotopy equivalent to the $(n-1)$-sphere.
\end{thm}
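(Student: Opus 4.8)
The plan is to reduce the statement to a manifold-recognition criterion by exploiting the convexity of the metric. The forward direction is essentially local homology: if $X$ is an $n$-manifold, then for $p \in X$ a small punctured ball $B(p,r)\setminus\{p\}$ deformation retracts onto a small sphere $S^{n-1}$, since in the manifold chart $B(p,r)$ can be taken to be a Euclidean ball. The only subtlety is that we must verify this retraction can be performed \emph{inside the BNPC neighborhood} and inside a metric (not just topological) ball; but shrinking $r$ makes the metric ball sit inside both the chart and the BNPC neighborhood, and the geodesic contraction toward $p$ (available because geodesics from $p$ are unique in a BNPC space and vary continuously) pushes the punctured ball onto any concentric smaller punctured ball, hence onto a sphere. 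So the forward direction is routine once the basic BNPC geometry (unique geodesics, the geodesic between $p$ and a nearby point depending continuously on the endpoint) is in hand.

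The substance is the reverse direction. Assume every punctured ball around every point is homotopy equivalent to $S^{n-1}$. First I would set up the local cone structure: using the convexity of the metric, for small $r$ the ball $B(p,r)$ in a BNPC neighborhood is \emph{geodesically contractible} to $p$ via the map $(x,t)\mapsto \gamma_{px}(t)$, and in fact one gets that $B(p,r)$ is homeomorphic to the open cone over the space of directions or over the metric sphere $S(p,r)$ — the point being that the normalized geodesic from $p$ gives a "logarithm"-type coordinate. Combined with the hypothesis that the punctured ball is homotopy $S^{n-1}$, this shows $B(p,r)$ is a contractible, locally contractible, finite-dimensional space that is homotopy equivalent to the open cone on a homotopy $(n-1)$-sphere, hence has the local homology of $\mathbb{R}^n$ at $p$: $H_k(X, X\setminus\{p\}) \cong \tilde H_{k-1}(\text{punctured ball}) \cong H_k(\mathbb{R}^n,\mathbb{R}^n\setminus\{0\})$. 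Doing this at every point shows $X$ is a \emph{homology $n$-manifold}.

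To upgrade a homology manifold to a topological manifold I would invoke the disjoint disks property (DDP) plus Edwards's cell-like approximation / Quinn's resolution theorem, which is the standard route used by Lytchak–Nagano in the CAT(0) case. The homology-manifold conclusion already gives a resolution obstruction that vanishes (Quinn's invariant is local and the space is locally contractible with the right local homology), so a resolution exists; then DDP for the BNPC space is where the Finsler/convexity input is genuinely needed. I would verify DDP by a general-position argument using the convexity of the metric: two singular $2$-disks can be perturbed off each other because geodesics between nearby points are unique and depend continuously on endpoints, so one can push one disk along short geodesics transverse to the other, exactly as in \cite{LN:top} but replacing CAT(0) comparison by Busemann convexity. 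Low dimensions ($n\le 2$) are handled separately by classical arguments, and $n=3,4$ may require the extra care that the cited papers use (e.g.\ Freedman's theorem in dimension $4$ is not needed here since we only claim the manifold property, not the smooth/PL structure).

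The main obstacle I expect is establishing the disjoint disks property from Busemann convexity alone: the CAT(0) proofs lean on the quadratic comparison inequality and on Reshetnyak-type majorization to control how geodesics spread, tools that have no exact Busemann analogue. The fix will be to use only the \emph{first-order} consequences of convexity — continuity and uniqueness of geodesics, convexity of the distance to a point, and the resulting (possibly non-symmetric, Finsler-like) infinitesimal structure — and to show these suffice for the general-position perturbations. This is presumably where the paper's "technical contribution" advertised in Section \ref{sec:tool} enters, and I would expect the argument to route through a careful analysis of the space of directions and a gradient-curve or geodesic-deformation construction that is robust enough to not require an Alexandrov or CAT comparison.
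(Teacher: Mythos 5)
Your forward direction and the first half of your reverse direction are essentially the paper's: the ``only if'' part follows by sandwiching cells inside metric balls and using the geodesic contraction, and the hypothesis plus contractibility of small balls gives that $X$ is an ANR homology $n$-manifold via excision and Mayer--Vietoris. One caveat: your claim that $B(p,r)$ is homeomorphic to the cone over the metric sphere (a ``logarithm'' coordinate) is false in general, since geodesics emanating from $p$ can branch, so the map $(\xi,t)\mapsto\gamma_{p\xi}(t)$ need not be injective; but you do not need it, because contractibility of the ball already yields the local homology computation.

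The genuine gap is the upgrade from homology manifold to topological manifold. First, your justification of resolvability is incorrect: being a locally contractible, finite-dimensional ANR with the local homology of $\mathbb{R}^n$ does \emph{not} force Quinn's obstruction to vanish (the exotic homology manifolds of Bryant--Ferry--Mio--Weinberger have exactly these properties). What makes the local index equal to $1$ here is the existence of genuine manifold points, and in the paper that is Theorem \ref{thm:mfd}, whose proof already requires the strainer machinery (existence of strainers and Theorem \ref{thm:bilip}). Second, and more seriously, your plan to verify the disjoint disks property for $X$ directly by ``pushing one disk along short geodesics transverse to the other'' has no content in this setting: there is no transversality available, and uniqueness/continuity of geodesics together with convexity of distance functions do not by themselves produce general position; neither \cite{LN:top} nor this paper proves DDP for $X$ by such an argument. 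The actual route is to prove that the set of non-manifold points is discrete (Theorem \ref{thm:sing}, the case $k=n$ of Theorem \ref{thm:rev}), which rests on the existence and fibration properties of strainer maps (the two angles controlling almost nonbranching and almost orthogonality, openness and locally uniformly contractible fibers, Ungar's and Raymond's theorems, and an induction on the fiber dimension, with DDP entering only for the fibers through the fibration structure); once the singular set is discrete and the links are simply connected ($S^{n-1}$ with $n\ge4$; $n\le3$ is Corollary \ref{cor:three}), the manifold recognition theorem of \cite{LN:top} applies. Your proposal explicitly defers this entire step (``this is presumably where the paper's technical contribution enters''), so as written it is not a proof: the strainer construction and its fibration theorems are precisely the missing content, and they cannot be replaced by the first-order convexity facts you list.
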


\begin{thm}\label{thm:sing}
Let $X$ be a locally BNPC homology manifold.
Then the set of non-manifold points in $X$ is discrete.
\end{thm}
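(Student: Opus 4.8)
The plan is to derive the statement from the structure theory of locally BNPC spaces developed in the earlier sections, together with Theorem~\ref{thm:reg}; once that machinery is in place the argument is short, and its heart is the observation that a point lying in the interior of a geodesic has a simply connected space of directions.

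First I would reduce to a local statement. The set $X_{\mathrm{sing}}$ of non-manifold points of $X$ is closed, since the set of manifold points is open; hence it suffices to show that no point of $X$ is an accumulation point of $X_{\mathrm{sing}}$, that is, that every $p\in X$ has a neighborhood $U$ with $U\cap X_{\mathrm{sing}}\subseteq\{p\}$. This is clear when $p$ is a manifold point, so assume $p\in X_{\mathrm{sing}}$. A homology $n$-manifold with $n\le 2$ is a topological manifold, so in that case $X_{\mathrm{sing}}=\emptyset$ and there is nothing to prove; thus assume $n\ge 3$. I would then invoke the following facts about a locally compact, locally BNPC homology $n$-manifold, all extending the corresponding CAT($0$) results of Lytchak--Nagano and established in the preceding sections: it is locally uniquely geodesic, non-branching, and geodesically complete; for every point $q$ and every small $\rho>0$, the punctured ball $B(q,\rho)\setminus\{q\}$ deformation retracts onto the metric sphere $S(q,\rho)$, which is homeomorphic to the space of directions $\Sigma_q$; this punctured ball, and hence $\Sigma_q$, is a homology $(n-1)$-sphere (because $X$ is a homology $n$-manifold and $B(q,\rho)$ is contractible) as well as a finite-dimensional compact ANR; and if $q$ is an interior point of a geodesic, then $\Sigma_q$ is, topologically, a spherical suspension $\Sigma(E_q)$.

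With these in hand, the core step is to show that every point of $U\setminus\{p\}$ is a manifold point, where $U$ is a sufficiently small ball around $p$. Fix $q\in U\setminus\{p\}$, and choose $U$ small enough that the geodesic $[p,q]$ exists; by geodesic completeness, extend $[p,q]$ slightly past $q$, so that $q$ becomes an interior point of a geodesic. Then $\Sigma_q\cong\Sigma(E_q)$ is a spherical suspension, hence the union of two cones on $E_q$ glued along $E_q$; since $E_q$ is nonempty and path-connected (being a geodesic space), van Kampen's theorem gives $\pi_1(\Sigma_q)=0$. As $\Sigma_q$ is also a homology $(n-1)$-sphere and a finite-dimensional compact ANR with $n-1\ge 2$, the Hurewicz and Whitehead theorems show that $\Sigma_q$ is homotopy equivalent to $S^{n-1}$; consequently, for $\rho$ small, $B(q,\rho)\setminus\{q\}$ is homotopy equivalent to $S^{n-1}$. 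Thus every point of the locally compact, locally BNPC space $U\setminus\{p\}$ has a punctured ball homotopy equivalent to $S^{n-1}$, so by Theorem~\ref{thm:reg} the space $U\setminus\{p\}$ is a topological $n$-manifold. Hence every point of $U\setminus\{p\}$ is a manifold point of $X$; that is, $U\cap X_{\mathrm{sing}}\subseteq\{p\}$, as required.

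I expect the real difficulty to lie not in this last combination but in its structural prerequisites, in the BNPC rather than the CAT($0$) setting. Three of these are essential: (i) that a locally BNPC homology manifold is geodesically complete, which is what permits viewing every nearby point as an interior point of a geodesic; (ii) the homotopy equivalence between small punctured balls and spaces of directions; and, above all, (iii) that the space of directions at an interior point of a geodesic in a BNPC space is topologically a spherical suspension. In the CAT($0$) case, (iii) follows from the additivity of angles at an interior point of a geodesic; in the BNPC case there is no Riemannian angle, so one must instead recover the suspension splitting from the pair of opposite directions of the geodesic, using the Finsler-type infinitesimal structure. Establishing this, via the strainer techniques of Section~\ref{sec:tool}, is the crux of the proof.
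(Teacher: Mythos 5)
Your reduction to ``every point of a small punctured neighborhood of $p$ is a manifold point'' is fine as a reformulation of discreteness, but the step you yourself identify as the crux --- (iii), that the space of directions at an interior point of a geodesic is topologically a spherical suspension --- is not just unproven, it is false, already in the CAT($0$)/GCBA case. Take the Euclidean cone over a geodesically complete CAT($1$) metric on the Poincar\'e homology sphere $P$ (such metrics exist; cf.\ Remark \ref{rem:dunce}). This is a GCBA, hence GNPC, homology $4$-manifold; by geodesic completeness its vertex $o$ lies in the interior of a geodesic, yet the space of directions at $o$ is $P$, which is not simply connected and is not a suspension of anything. The suspension rigidity you are invoking (``additivity of angles at an interior point of a geodesic'') is the maximal-diameter phenomenon for \emph{lower} curvature bounds; under upper (CAT/BNPC-type) bounds, the existence of a pair of antipodal directions forces no splitting of the link. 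Worse, if your argument worked it would prove too much: by geodesic completeness (Proposition \ref{prop:geo}) \emph{every} point, including $p$ itself, is an interior point of a geodesic, so you would conclude that every locally BNPC homology manifold of dimension $\ge 3$ is a topological manifold --- contradicting the known singular examples in dimension $\ge 4$ (the cone above, or Davis--Januszkiewicz), which the paper explicitly mentions in Section \ref{sec:com}. A secondary issue: in the GNPC setting the paper never constructs a space of directions or tangent cone (this is left open in Problem \ref{prob:tan}); one only has homotopy equivalences between small punctured balls and metric spheres (Theorem \ref{thm:link}), not homeomorphisms with a link.

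The paper's actual route is entirely different: Theorem \ref{thm:sing} is the $k=n$ case of Theorem \ref{thm:rev}, which says that inside a tiny ball the set $E$ of points where the (trivial) strainer map cannot be complemented --- equivalently, points admitting no $(1,\delta)$-strainer --- is finite, while on the complement one builds, by induction on dimension, strainer maps whose fibration properties (Theorems \ref{thm:str}, \ref{thm:fib}) combined with the recognition theorems of geometric topology force manifoldness. So the singular set is controlled by a finiteness statement for non-strained points, not by any local suspension structure; any correct proof along your lines would have to be rebuilt on that machinery rather than on (iii).
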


\begin{thm}\label{thm:four}
Let $X$ be a (globally) BNPC topological manifold of dimension $4$.
Then $X$ is homeomorphic to Euclidean space.
\end{thm}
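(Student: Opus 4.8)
The plan is to realize $X$ as a contractible open topological $4$-manifold that is simply connected at infinity, and then to invoke Freedman's recognition theorem: a contractible open topological $4$-manifold that is simply connected at infinity is homeomorphic to $\mathbb{R}^4$. Simple connectivity at infinity, together with contractibility, already forces the end to have the proper homotopy type of $S^3\times[0,\infty)$, and Freedman's disk-embedding theory then produces the homeomorphism; this is essentially the only place where four-dimensional topology proper enters.

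First I would record the elementary structure. Since $X$ is BNPC, geodesics between pairs of points are unique and vary continuously with their endpoints, so the geodesic contraction toward a fixed basepoint $o$ shows that $X$ is contractible; in particular $X$ is noncompact, hence an open $4$-manifold, and being contractible of dimension $\geq 2$ it has exactly one end. Moreover $X$ is proper (locally compact, complete, geodesic), so the closed metric balls $\bar B_r(o)$ form a cofinal exhaustion by compact sets, and the geodesic contraction toward $o$ restricts, for every $r>0$, to a deformation retraction of $X\setminus B_r(o)$ onto the distance sphere $S_r(o)$; thus the pro-fundamental group at infinity is computed by the groups $\pi_1(S_r(o))$ and the radial maps between them. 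Feeding the local cone structure underlying Theorem~\ref{thm:reg} (adapted to the BNPC setting along the lines of \cite{LN:geo}, \cite{LN:top}) together with the fact that $X$ is a homology manifold into this picture, one gets that $X$ is geodesically complete and that each space of directions $\Sigma_pX$, equivalently each small distance sphere, is a $3$-dimensional homology manifold homotopy equivalent to $S^3$; since a $3$-dimensional homology manifold is a topological $3$-manifold and a topological $3$-manifold homotopy equivalent to $S^3$ is homeomorphic to $S^3$ by the Poincar\'e conjecture, the small distance spheres are genuine $3$-spheres, so $\pi_1(S_r(o))=1$ for small $r$.

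The heart of the matter, and the step I expect to be the main obstacle, is to pass from this local statement to simple connectivity at infinity, i.e.\ to control $\pi_1(S_r(o))$ for all large $r$. Here dimension $4$ is indispensable: in dimension $\geq 5$ the analogous assertion is false, since Davis-type exotic contractible open manifolds can be given CAT($0$) --- hence BNPC --- metrics and are not simply connected at infinity, so no purely formal argument can work. The route I would take is to analyze the ideal boundary $\partial_\infty X$ with its cone topology: using geodesic completeness and the cone structure, $\overline X:=X\cup\partial_\infty X$ is a compact contractible space in which $\partial_\infty X$ is a $3$-dimensional homology manifold with the \v Cech cohomology of $S^3$, hence (again because $3$-dimensional homology manifolds are topological $3$-manifolds) a closed $3$-manifold with the homology of $S^3$. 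The remaining, and crucial, point is that $\partial_\infty X$ must in fact be \emph{simply connected}; I would extract this from the geodesic and Finsler structure at infinity, propagating the simple connectivity of the spaces of directions $\Sigma_pX$ outward through the radial retractions $S_{r'}(o)\to S_r(o)$ and a limiting argument as $r'\to\infty$, which is precisely the mechanism that breaks down in higher dimensions. Once $\partial_\infty X$ is known to be a simply connected homology $3$-sphere, the Poincar\'e conjecture gives $\partial_\infty X\cong S^3$, a collar-type neighborhood of $\partial_\infty X$ in $\overline X$ exhibits $X$ as simply connected at infinity, and Freedman's theorem yields $X\cong\mathbb{R}^4$.
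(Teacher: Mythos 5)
Your outline is fine at the two ends but has a genuine gap exactly where you yourself locate ``the heart of the matter.'' The elementary part (contractibility, one end, properness, small metric spheres being $3$-manifolds homotopy equivalent to $S^3$ and hence $3$-spheres, which is Corollary \ref{cor:4link} here) and the topological endgame (a contractible open topological $4$-manifold that is simply connected at infinity is $\mathbb{R}^4$) are both correct. But the passage from the local statement to simple connectivity at infinity is only announced, not proved: ``propagating the simple connectivity of $\Sigma_pX$ outward through the radial retractions $S_{r'}(o)\to S_r(o)$ and a limiting argument as $r'\to\infty$'' is not an argument. The radial projection from a larger metric sphere to a smaller one is in general neither a fibration nor $\pi_1$-injective/surjective in the needed direction, and nothing in the local theory controls the topology of $\partial B(o,R)$ for large $R$: a point $x\in\partial B(o,R)$ only sees a tiny ball around itself, which says nothing about how the sphere of huge radius $R$ sits through $x$. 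Likewise your claim that $\partial_\infty X$ is a $3$-dimensional homology manifold with the \v{C}ech cohomology of $S^3$ is itself a global statement of the same difficulty; in this paper $\partial_\infty X\cong S^3$ is a \emph{consequence} of the main theorem (Corollary \ref{cor:bdry}), not an available input.

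The paper fills precisely this gap with a dedicated tool: extended strainers and half strainer maps based at $p_0$ at \emph{arbitrary} distance $R$ (Section \ref{sec:exstr}, made possible by the fact that the almost orthogonality in Proposition \ref{prop:orth} holds at any scale, Remark \ref{rem:orthglo}), together with the structure results for half-spaces and hemispheres (Propositions \ref{prop:half} and \ref{prop:hemi}). These yield Theorem \ref{thm:4sph}: \emph{every} metric sphere in a BNPC homology $4$-manifold, no matter how large, is a topological $3$-manifold; the dimension restriction enters through the $2$-manifold recognition and the Jordan--Schoenflies theorem in that proof, which is the concrete mechanism that ``breaks down'' in higher dimensions. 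Once all spheres around a point are closed $3$-manifolds, Thurston's Theorem \ref{thm:thur} (tame point $\Rightarrow$ Euclidean) concludes. To make your route work you would need a substitute for this large-scale analysis — some argument controlling $\pi_1$ of arbitrarily large spheres or of the ideal boundary — and no such substitute appears in your proposal.
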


Theorems \ref{thm:reg} and \ref{thm:sing} generalize, respectively, \cite[Theorems 1.1, 1.2]{LN:top} of Lytchak--Nagano in the CAT($0$) case, and Theorem \ref{thm:four} generalizes \cite[Theorem 1.1]{LNS} of Lytchak--Nagano--Stadler.
In Theorem \ref{thm:reg}, the homotopy type of a punctured ball in a BNPC neighborhood is independent of its radius (and of its openness or closedness), thanks to the unique geodesic contraction centered at $p$.
Although Theorem \ref{thm:reg} may look different from \cite[Theorem 1.1]{LN:top}, they are essentially the same, because in the CAT($0$) case any small punctured ball around $p$ is homotopy equivalent to the space of directions at $p$ (\cite[Theorem A]{Kram}; see also Remark \ref{rem:reg}).
Note that Theorems \ref{thm:reg} and \ref{thm:sing} are local results, whereas Theorem \ref{thm:four} is a global one.
For more generalizations, such as general curvature bounds, see Section \ref{sec:gen}.

\subsection{Comments}\label{sec:com}

Here are some more comments on the main theorems, which largely parallel the CAT($0$) case.

First, we remark that Theorems \ref{thm:reg} and \ref{thm:sing} for Alexandrov spaces with curvature bounded below were proved by Wu \cite{Wu:alex} (cf.\ \cite{Wu:ed}).

In the situation of Theorem \ref{thm:reg}, the punctured ball around $p$ is homotopy equivalent to any sufficiently small metric sphere around $p$; see Theorem \ref{thm:link}.
However, for dimensions $\ge 5$, such a metric sphere need not be homeomorphic to a sphere, as a consequence of the double suspension theorem of Cannon \cite{Ca} and Edwards \cite{Ed} (cf.\ \cite{Be:alex}).
On the other hand, for dimension $\le 4$, every sufficiently small metric sphere around $p$ is a topological sphere, exactly as in the CAT($0$) case (Corollary \ref{cor:4link}).

Theorem \ref{thm:sing} for Alexandrov curvature bounds was a question raised by Quinn \cite[Problem 7.2]{Qu}.
Note that for dimension $\ge4$, a locally BNPC homology manifold is not necessarily a topological manifold.
On the other hand, for dimension $\le3$, any locally BNPC homology manifold is a topological manifold.
The dimension $1$ and $2$ cases are general facts holding without the BNPC condition (Theorem \ref{thm:moo}).
The dimension $3$ case was essentially observed by Thurston \cite[Theorem 3.3]{Th:cat} (see Theorem \ref{thm:three}).
Our argument provides an alternative proof of this fact in Corollary \ref{cor:three}.

Theorem \ref{thm:four} provides a definitive answer to a question of Gromov \cite[Section 2.1]{Gr}, asking whether there exists a BNPC topological manifold different from Euclidean space. The same statement as Theorem \ref{thm:four} is known to hold in dimensions $\le 3$ and to fail in dimensions $\ge 5$.
The dimension $1$ and $2$ cases follow from the contractibility of BNPC spaces.
The dimension $3$ case is a combination of the results of Brown \cite{Br} and Rolfsen \cite{Ro} (Theorem \ref{thm:three}).
A counterexample in dimension $\ge 5$ was constructed by Davis--Januszkiewicz \cite{DJ} (cf.\ \cite{ADG}) in the CAT($0$) setting.
Our theorem thus provides an affirmative solution to the only remaining case, extending the result of Lytchak--Nagano--Stadler \cite{LNS} in the CAT($0$) case.

For additional results related to the main theorems, see Sections \ref{sec:prfloc} and \ref{sec:prfglo}.

\subsection{General results}\label{sec:res}

In our main theorems, the (locally) BNPC spaces under consideration are \textit{locally geodesically complete}, i.e., any geodesic is locally extendable (Proposition \ref{prop:geo}).
The original results of Lytchak--Nagano and Lytchak--Nagano--Stadler were applications of the theory of \textit{GCBA spaces}, developed in \cite{LN:geo}, \cite{LN:top}, \cite{LNS}.
A GCBA space is a separable, locally compact, locally geodesically complete, locally CAT($\kappa$) space.
For recent results on GCBA spaces, see \cite{Na:vol}, \cite{Na:asy}, \cite{Na:wall}, \cite{NSY:1}, \cite{NSY:2}, \cite{CS}, \cite{Fu:nc}, \cite{Fu:ex}.

Similarly, in this paper we develop the theory of \textit{GNPC spaces}.
A GNPC space is a separable, locally compact, locally geodesically complete, locally BNPC space (the separability is not necessary for our purposes).
Every GCBA space of nonpositive curvature is, in fact, a GNPC space.
Throughout this paper, all spaces under consideration are GNPC spaces.
We prove the following two theorems in this general setting; compare with \cite[Theorems 1.2, 1.12]{LN:geo} (cf.\ \cite{Kl}, \cite{Kram}).

\begin{thm}\label{thm:mfd}
Let $X$ be a GNPC space.
Then $X$ contains an open dense subset of manifold points (of possibly different dimensions).
\end{thm}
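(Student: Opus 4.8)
The plan is to deduce the statement from two facts, organized by an induction on dimension following the GCBA strategy \cite{LN:geo} but with comparison angles replaced throughout by the convexity of distance functions along the unique geodesics of a BNPC space: (i) the points admitting a full-length \emph{strainer} are dense, and (ii) such points have Euclidean neighborhoods. First note that the set $M\subseteq X$ of manifold points is automatically open, since any point of $M$ has a Euclidean neighborhood, every point of which is again a manifold point. So it suffices to prove density, and since the statement is local we may pass to a compact, geodesically complete BNPC neighborhood $U$ of an arbitrary point (recall local geodesic completeness, Proposition \ref{prop:geo}) and show that $M\cap U$ is dense in $U$. By the structure theory of GNPC spaces, $U$ is finite-dimensional; the space of directions $\Sigma_p$ at each $p\in U$ is again a compact, geodesically complete space of the same type, of one lower dimension, with curvature $\le 1$ in Busemann's sense; and a small punctured ball around $p$ is homotopy equivalent to $\Sigma_p$ (and to a small metric sphere around $p$) via the geodesic contraction centered at $p$. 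We argue by induction on $\dim U$, the case $\dim U=0$ being trivial.

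For the inductive step, fix $q_0\in U$, set $n:=\dim_{q_0}X$, and construct an $(n,\delta)$-strainer $(a_i,b_i)_{i=1}^n$ at a point $q$ arbitrarily close to $q_0$, for a prescribed small $\delta>0$. Here local geodesic completeness is essential: starting from the empty strainer, whenever the current strainer has length $k<\dim_q X$ the set of directions in $\Sigma_q$ that are $\delta$-orthogonal to it is nonempty by a dimension count (using the inductive structure of $\Sigma_q$), so one picks such a direction, extends the corresponding geodesic through $q$ in both directions to produce two new strainer points, and, after a small adjustment of $q$, enlarges the strainer to length $k+1$; the process terminates at length $n$. The $\delta$-straightness and $\delta$-orthogonality are verified from the convexity of $t\mapsto d(a_i,\gamma(t))$ along geodesics $\gamma$, which takes over the role of comparison angles in the CAT($0$) setting.

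It remains to show that a point $q$ carrying a full $(n,\delta)$-strainer with $\delta$ small is a manifold point of dimension $n$. The strainer map $F=(d(a_1,\cdot),\dots,d(a_n,\cdot))$ restricts to a homeomorphism from a small neighborhood of $q$ onto an open subset of $\mathbb{R}^n$: continuity is clear; local surjectivity onto a box follows because each coordinate can be increased or decreased independently along a suitable geodesic through any nearby point (local geodesic completeness and convexity); injectivity near $q$ follows from the strainer inequalities; and properness then upgrades the resulting local bijection to a homeomorphism. Equivalently, one checks that a full strainer forces $\Sigma_q$ to be a topological $(n-1)$-sphere, whence a punctured ball around $q$ is homotopy equivalent to $S^{n-1}$ and Theorem \ref{thm:reg} shows that $X$ is an $n$-manifold near $q$. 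Combined with the density of strained points, this completes the induction and hence the proof.

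I expect the main obstacle to be this last step. Unlike in \cite{LN:geo}, no CAT($0$) comparison is available, so the strainer map yields only a topological chart rather than a bi-Lipschitz one; the bi-Lipschitz estimates must be replaced by an openness-and-monotonicity argument along geodesics together with an invariance-of-domain (or degree) argument to conclude that $F$ is a homeomorphism onto an open set. A secondary difficulty, reflecting the Finsler nature of BNPC spaces, is that the relevant ``angle'' is not symmetric, so the forward and backward strainer directions must be treated symmetrically when verifying the orthogonality conditions in the density step.
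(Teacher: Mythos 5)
Your proposal has genuine gaps, and they sit exactly at the points where the GCBA machinery you are importing is unavailable for GNPC spaces. First, the density step rests on a structure theory that does not exist here: you invoke a space of directions $\Sigma_p$ that is ``compact, geodesically complete, of the same type, of one lower dimension, with curvature $\le 1$,'' with a small punctured ball homotopy equivalent to $\Sigma_p$, and you find new strainer directions by a ``dimension count'' in $\Sigma_q$. In a GNPC space there is no parametrization-independent angle (Remark \ref{rem:bran}), hence no well-defined space of directions or tangent cone with these properties; describing the tangent cone is raised as an open problem (Problem \ref{prob:tan}), and even the equality of Hausdorff and topological dimension is open (Problem \ref{prob:dim}), so tying the strainer length to $\dim_{q_0}X$ is not justified. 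The paper avoids this entirely: strainers are produced concretely from the Busemann monotonicity and convexity (Propositions \ref{prop:1str} and \ref{prop:kstr}), and the density of manifold points uses only the \emph{local $\delta$-strainer number}, i.e.\ the maximal $k$ such that $(k,\delta)$-strained points are dense near $p$ (finite by Lemma \ref{lem:num}); maximality, not dimension, is what drives the argument.

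Second, the chart step is not carried by what you wrote. ``Injectivity near $q$ follows from the strainer inequalities'' is precisely what fails to be elementary here: in the paper injectivity is Theorem \ref{thm:bilip}, and its proof needs (a) the $\varepsilon_k$-openness of the strainer map (Proposition \ref{prop:open}), which is itself nontrivial because the almost orthogonality is asymmetric — moving toward $p_i$ may change $d(p_k,\cdot)$ by a bounded, non-small amount — and is handled by a weighted norm and the iteration scheme of Lemma \ref{lem:open}, not by moving each coordinate ``independently''; (b) the contractibility (in particular path-connectedness) of fibers (Theorem \ref{thm:fib}, built on the retraction of Proposition \ref{prop:ret}); and (c) non-complementability coming from the maximality of the strainer number, which via Proposition \ref{prop:kstr} rules out distinct nearby points in a fiber. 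Your fallback of invariance of domain or a degree argument is circular: those tools presuppose the local manifold (or controlled local homology) structure you are trying to establish. Also, the asymmetry you flag is not a forward/backward issue to be ``symmetrized''; it is an asymmetry between different strainer indices, and the cure in the paper is the ordered, weighted-norm consecutive-approximation argument. In short, the skeleton (dense strained points plus strained points are manifold points) matches the paper, but the two load-bearing steps are replaced by appeals to structure (spaces of directions, dimension counts, invariance of domain) that are not available in the GNPC setting.
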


\begin{thm}\label{thm:link}
Let $X$ be a GNPC space and $p\in X$.
Then any sufficiently small metric sphere around $p$ has the same homotopy type, independent of its radius, and is also homotopy equivalent to any sufficiently small punctured ball around $p$.
\end{thm}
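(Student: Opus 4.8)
Throughout, fix (using local compactness) a BNPC neighborhood $U$ of $p$ and a radius $R>0$ with $\bar B_R(p)$ compact and $\bar B_R(p)\subseteq U$; write $d_p:=d(p,\cdot)$ and $P_\rho:=\bar B_\rho(p)\setminus\{p\}$. Since $U$ is BNPC it is uniquely geodesic, and convexity of $t\mapsto d(\gamma_{pq}(t),\gamma_{pq'}(t))$ shows that $q\mapsto\gamma_{pq}$ is continuous; hence the radial geodesic contraction, sending $q$ to the point of $\gamma_{pq}$ at distance $(1-s)d_p(q)$ from $p$, is a continuous homotopy from the identity to the constant map $p$ on $\bar B_R(p)$. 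Stopping this contraction at radius $\rho$ — keeping $q$ fixed when $d_p(q)\le\rho$ and otherwise moving it along $\gamma_{pq}$ toward $p$ until $d_p$ reaches $(1-s)d_p(q)+s\rho$ — gives a strong deformation retraction of $P_R$ onto $P_\rho$ for every $0<\rho\le R$, and the same works for open balls. This already proves that the homotopy type of a small punctured ball around $p$ is independent of its radius and of its openness or closedness, and it reduces Theorem \ref{thm:link} to the single claim that $P_R$ is homotopy equivalent to a metric sphere $S_\rho(p)=d_p^{-1}(\rho)$ for small $\rho>0$.

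For this claim I would show that $d_p$ is a \emph{regular} function on the punctured ball $P_R$ and then invoke a fibration theorem for regular functions — the GNPC counterpart of the GCBA results underlying \cite[Theorem 1.12]{LN:geo}, to be set up here through the strainer machinery. Regularity at a point $q\in P_R$ amounts to the existence of a geodesic issuing from $q$ along which $d_p$ increases at unit speed, with control uniform on compact subsets: by Proposition \ref{prop:geo} the radial geodesic $\gamma_{pq}$ extends slightly past $q$, and since a locally geodesic curve contained in a BNPC neighborhood is a geodesic, any such extension $\bar\gamma$ is again a geodesic emanating from $p$, so that $d_p(\bar\gamma(t))=t$ along it; the uniformity is supplied by a strainer at $q$. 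As $d_p\colon P_R\to(0,R]$ is moreover proper with connected, contractible image, the fibration theorem then yields that all fibers $S_\rho(p)$ are mutually homotopy equivalent and that $P_R$ is homotopy equivalent to any one of them. Together with the reduction above, this proves the theorem.

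The main difficulty lies in the passage from this pointwise ascent property of $d_p$ to the global fibration statement. In contrast to the Alexandrov or CAT setting, there is no gradient flow of a distance function pushing points radially outward: extensions of geodesics branch, the direction of steepest ascent of $d_p$ need not be unique, and one cannot simply retract a punctured ball onto its bounding sphere. The replacement is the Finsler-type local model provided by strainers — near each point of $P_R$ strainers give a Lipschitz-controlled local model in which $d_p$ plays the role of one coordinate function — and the real work is to patch these local models, using that GNPC spaces are finite-dimensional, locally compact and locally contractible, into the required fibration. It is precisely here that the Finsler nature of BNPC spaces, rather than any CAT-type comparison, enters.
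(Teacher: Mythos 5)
Your first step is fine and matches the paper in spirit: the geodesic contraction, stopped at radius $\rho$, gives deformation retractions between punctured balls of different radii, so the homotopy type of a small punctured ball is independent of its radius and the theorem reduces to showing that a small punctured ball is homotopy equivalent to a small metric sphere. The problem is that this remaining claim is where the entire content of the theorem lies, and your proposal defers it rather than proving it: you appeal to ``a fibration theorem for regular functions --- the GNPC counterpart of the GCBA results \dots to be set up here through the strainer machinery,'' and you yourself note that ``the real work is to patch these local models \dots into the required fibration.'' That work is not carried out, and you do not invoke the results of the paper that do carry it out. Moreover, the pointwise ascent property you verify (an extension of $\gamma_{pq}$ slightly past $q$, along which $d_p$ grows at unit speed) is not the relevant notion of regularity: because geodesics branch, what is needed is the \emph{uniform} version of Definition \ref{dfn:1str} --- a single point $\bar p$ with $\angle p y \bar p>\pi-\delta$ for all $y$ in a fixed neighborhood of $q$, with a quantitative straining radius --- and establishing that $p$ itself is such a $(1,\delta)$-strainer at every point of a small punctured ball is exactly the almost-nonbranching analysis (Lemma \ref{lem:bran}, Proposition \ref{prop:bran}, Proposition \ref{prop:1str}, Remark \ref{rem:1strrad}), the genuinely Finsler-sensitive step that your sketch leaves as ``supplied by a strainer at $q$.''

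Even granting the strainer, passing from it to the fibration statement is not a routine patching argument but requires the $\varepsilon$-openness of the strainer map (Proposition \ref{prop:open}), the iterated-retraction construction giving locally uniformly contractible fibers (Proposition \ref{prop:ret}, Corollary \ref{cor:ret}), Ungar's theorem (Theorem \ref{thm:ung1}) through Corollary \ref{cor:glofib}, and the locality of the homotopy lifting property; the paper's proof of Theorem \ref{thm:link} is precisely the $k=0$ case of Corollary \ref{cor:fib}, after which fibers of the Hurewicz fibration $d(p,\cdot)$ are homotopy equivalent and lifting the contraction of $(0,r)$ identifies the punctured ball with a sphere. So your intended route is the same as the paper's, but as written it has a genuine gap at the central step. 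A minor additional overreach: you assert regularity of $d_p$ on all of $P_R$, whereas $p$ is known to be a $(1,\delta)$-strainer only at points within $r_0(p,\delta)$ of $p$; this is harmless because your retraction step already reduces everything to an arbitrarily small punctured ball, but the statement should be restricted accordingly.
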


Lytchak--Nagano \cite{LN:geo} further revealed the geometric structure of GCBA spaces, such as the tangent cone, measure-theoretic stratification, and almost everywhere Riemannian structure.
However, we focus here on the topological aspects of GNPC spaces, and only delve into as much geometric structure as needed for the proofs of the main theorems.
We leave more detailed geometric aspects for future work.
See Section \ref{sec:prob} for open problems (see also Remark \ref{rem:prob} for the latest information after the first version of this paper appeared on arXiv).

\subsection{Applications}\label{sec:app}

Busemann \cite{Bu:geo} also introduced the notion of \textit{G-space}, which can be viewed as a qualitative generalization of Finsler manifolds.
The BNPC condition was first introduced in the context of G-spaces (\cite{Bu:npc}, \cite{Bu:geo}).
In simple terms, a G-space is a complete, locally compact geodesic space with local unique extendability of geodesics.
See Section \ref{sec:prfapp} for background.

In this paper we only consider G-spaces with (locally) BNPC metrics.
Note that a locally BNPC G-space is nothing but a complete GNPC space without branching geodesics.
The nonbranching assumption for geodesics is very strong and prevents the existence of singular points of GNPC spaces.
Indeed, Andreev \cite{An:prf} proved that any locally (resp.\ globally) BNPC G-space is a topological manifold (resp.\ homeomorphic to Euclidean space).
Compare with the general situation described in Section \ref{sec:com} (see also \cite{Fuk}).
As a corollary of Theorem \ref{thm:mfd}, we obtain an alternative proof of Andreev's result.

\begin{thm}\label{thm:g}
Any locally BNPC G-space is a topological manifold.
Any (globally) BNPC G-space is homeomorphic to Euclidean space.
\end{thm}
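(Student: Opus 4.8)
The plan is to derive both assertions from Theorem~\ref{thm:mfd}, using the rigidity forced by the nonbranching of geodesics. As recalled in the text, a locally BNPC G-space is exactly a complete GNPC space with nonbranching geodesics; it is in particular locally compact and locally BNPC, so Theorems~\ref{thm:reg}, \ref{thm:mfd} and~\ref{thm:link} all apply to it, and if $X$ is globally BNPC then the geodesic contraction toward any point shows $X$ is contractible and (being complete, locally compact and geodesic) proper. It thus suffices to prove the local assertion --- that every $p\in X$ is a manifold point, of one fixed dimension $n$ --- and then to read off the global assertion from the resulting conical structure.

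For the local assertion I would fix $p\in X$ and a BNPC neighborhood $U\ni p$. By Theorem~\ref{thm:mfd} there is a manifold point $q\in U$, say of dimension $n$, which we may take so close to $p$ that some small metric sphere $S_r(p)$ lies in $U$ with $d(p,q)<r$. The crux is a \emph{sliding homeomorphism} $S_r(p)\cong S_\rho(q)$ for small $\rho$: geodesics issuing from $q$ are unique (by BNPC) and nonbranching, and along each of them $d(p,\cdot)$ is convex while $B_r(p)$ is convex, so each point of $S_r(p)$ lies on exactly one geodesic from $q$; sending it to the point of that geodesic at distance $\rho$ from $q$ gives a continuous bijection onto $S_\rho(q)$, hence (by compactness) a homeomorphism. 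By Theorem~\ref{thm:link}, a punctured ball around $p$ is homotopy equivalent to $S_r(p)\cong S_\rho(q)$, which is homotopy equivalent to a punctured ball around the manifold point $q$, i.e.\ to $S^{n-1}$; by Theorem~\ref{thm:reg}, $X$ is a topological $n$-manifold near $p$. Since this dimension agrees with that of any nearby manifold point, it is locally constant, hence constant on the connected G-space $X$, which is therefore a topological $n$-manifold.

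If moreover $X$ is globally BNPC, then $X$ is a contractible proper topological $n$-manifold, and I would identify it with an open cone: by geodesic completeness every geodesic from a fixed $p$ extends to a ray, and uniqueness together with nonbranching makes the radial map a homeomorphism from the open cone over a small sphere $S_r(p)$ onto $X$. By the local part $S_r(p)$ is homeomorphic to a small metric sphere at a manifold point --- hence, as part of the same G-space structure, a compact $(n-1)$-manifold --- and by Theorem~\ref{thm:link} it is homotopy equivalent to $S^{n-1}$, so by the (topological) generalized Poincaré conjecture, known in all dimensions, it \emph{is} homeomorphic to $S^{n-1}$; thus $X$ is homeomorphic to the open cone over $S^{n-1}$, that is, to Euclidean space. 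In low dimensions one may instead cite the classical facts recalled in Section~\ref{sec:com} together with Theorem~\ref{thm:four} for $n=4$. I expect the main obstacle to be the passage from homotopy equivalences to genuine homeomorphisms --- verifying that the sliding and radial maps built from geodesics are homeomorphisms, and that the pertinent metric spheres are topological manifolds --- which is exactly where the nonbranching hypothesis is essential and where the Davis--Januszkiewicz phenomenon is excluded.
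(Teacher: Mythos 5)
Your first statement is proved correctly, but by a different route than the paper. The paper combines Theorem~\ref{thm:mfd} with the known topological homogeneity of G-spaces (\cite{Th:g}, \cite{BHR}): one manifold point forces every point to be a manifold point. You instead build a ``sliding'' homeomorphism $\partial B(p,r)\cong\partial B(q,\rho)$ onto a small sphere at a nearby manifold point $q$, using uniqueness of geodesics, Busemann convexity of $d(p,\cdot)$ (so a geodesic issuing from $q\in B(p,r)$ meets $\partial B(p,r)$ at most once) and nonbranching for injectivity/surjectivity of the radial correspondence, and then feed the resulting identification of punctured-ball homotopy types into Theorems~\ref{thm:link} and~\ref{thm:reg}. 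This works, though the local constancy of $n$ should be argued a bit more carefully (the homotopy type of small punctured balls at $p$ is a well-defined invariant, so all manifold points near $p$ share one dimension; this makes $n$ locally constant, hence constant by connectedness, and only then is Theorem~\ref{thm:reg} applied to all of $X$). Note that your route is heavier than the paper's: it invokes the deep recognition theorem~\ref{thm:reg}, whereas the paper only cites homogeneity; what you gain is independence from the homogeneity theorem for G-spaces.

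In the second statement there is a genuine gap. Writing $X$ as the open cone over $\partial B(p,r)$ via the nonbranching rays is fine (this is how Corollary~\ref{cor:gbdry} works), and $\partial B(p,r)\simeq S^{n-1}$ follows from your first part; but to invoke the generalized Poincar\'e conjecture you must know that $\partial B(p,r)$ is a closed topological $(n-1)$-manifold, and your justification ``as part of the same G-space structure'' is not a proof. A metric sphere carries no G-space structure of its own; in locally BNPC (even CAT($0$)) manifolds of dimension $\ge 5$ small metric spheres need not be manifolds (the double suspension phenomenon), and $X\setminus\{p\}\cong\partial B(p,r)\times\mathbb{R}$ being a manifold does not imply that $\partial B(p,r)$ is one. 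So the nonbranching hypothesis must enter here in an essential, nontrivial way: in the paper this is precisely Theorem~\ref{thm:glink}, proved via the Berestovskii--Halverson--Repov\v s homogeneity theorem for metric spheres together with Theorem~\ref{thm:rev} and the Poincar\'e conjecture. The paper's own proof of the second half sidesteps all of this: once $X$ is a manifold, take a Euclidean neighborhood $V_1$ of $p$ and push it outward by the radial expansion homeomorphisms along the unique nonbranching rays, obtaining a nested exhaustion of $X$ by open sets homeomorphic to $\mathbb{R}^n$; Brown's monotone union theorem \cite{Br} then gives $X\cong\mathbb{R}^n$ with no appeal to Poincar\'e or to the manifold property of spheres. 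Either fill your gap by proving (or citing the machinery behind) Theorem~\ref{thm:glink}, or replace the cone-plus-Poincar\'e step by this Brown-type exhaustion.
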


Furthermore, any small metric sphere in a locally BNPC G-space is a topological sphere (Theorem \ref{thm:glink}).
Hence the pathological phenomena arising from the double suspension theorem mentioned in Section \ref{sec:com} do not occur.

Andreev \cite{An:norm} later proved a more rigid result: the tangent cone of a locally BNPC G-space is a strictly convex normed space.
In other words, any GNPC space without branching geodesics admits a Finsler-like structure.
This can be viewed as a model case of the geometric structure of the ``regular part'' of a GNPC space.
Compare with the Riemannian structure of the regular part of a GCBA space: \cite[Theorem 1.3]{LN:geo}, \cite{Be:up}.
See Section \ref{sec:prob} for more details.

As an application of Theorem \ref{thm:reg}, we obtain the following topological stability theorem for locally BNPC G-spaces.
This partially generalizes \cite[Theorem 1.3]{LN:top}.
The \textit{injectivity radius} of a G-space $X$ is the supremum of $r>0$ such that for any $p\in X$ every shortest path emanating from $p$ of length $<r$ extends uniquely to length $r$ (i.e., without branching).

\begin{thm}\label{thm:stab}
Let $X_j$ be a sequence of compact, locally BNPC G-spaces of dimension $n$ (as manifolds) and injectivity radius $\ge r>0$, where $n$ and $r$ are independent of $j$.
Suppose $X_j$ converges to a compact metric space $X$ in the Gromov--Hausdorff topology.
Then $X$ is a topological manifold and $X_j$ is homeomorphic to $X$ for any sufficiently large $j$.
\end{thm}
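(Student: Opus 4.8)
The plan is to show that the Gromov--Hausdorff limit $X$ is a locally compact, locally BNPC topological $n$-manifold, and then to produce homeomorphisms $X_j\to X$ for $j$ large by gluing together local strainer charts, using the local contractibility of homeomorphism groups of manifolds to carry out the gluing. \emph{Step 1: the limit is an $n$-manifold.} Since $X$ is compact it is complete and locally compact, and it is geodesic as a Gromov--Hausdorff limit of proper geodesic spaces; the BNPC inequality --- convexity of $t\mapsto d(\gamma(t),\eta(t))$ for a pair of geodesics --- is preserved under Gromov--Hausdorff convergence of geodesics, so $X$ is locally BNPC. (The uniform injectivity radius bound also passes to the limit, so $X$ is in fact a locally BNPC G-space, though we only use that it is locally BNPC.) By Theorem~\ref{thm:g} each $X_j$ is a topological $n$-manifold; by Theorem~\ref{thm:link} a small punctured ball in $X_j$ is homotopy equivalent to a small metric sphere, which by Theorem~\ref{thm:glink} is a topological $(n-1)$-sphere. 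The uniform injectivity radius bound rules out collapsing, and one shows that under this bound the homotopy type of small punctured balls is stable under Gromov--Hausdorff convergence; hence punctured balls in $X$ are also homotopy equivalent to $S^{n-1}$, and Theorem~\ref{thm:reg} makes $X$ a topological $n$-manifold.

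\emph{Step 2: local stability.} Fix $p\in X$. Using the strainer calculus --- available here since $X$ is an $n$-manifold that is locally BNPC --- choose an $(n,\delta)$-strainer $\{(a_i,b_i)\}_{i=1}^n$ at $p$ with $\delta$ small enough that the map $f_p=(d(a_1,\cdot),\dots,d(a_n,\cdot))$ restricts to a homeomorphism of a ball $B(p,\rho)$ onto an open subset of $\mathbb{R}^n$. Let $\phi_j\colon X\to X_j$ be $\epsilon_j$-Gromov--Hausdorff approximations with $\epsilon_j\to 0$. For $j$ large, $a_i^j:=\phi_j(a_i)$ and $b_i^j:=\phi_j(b_i)$ form an $(n,\delta)$-strainer (with slightly worse parameter) at $p^j:=\phi_j(p)$, and $f_p^j=(d(a_1^j,\cdot),\dots,d(a_n^j,\cdot))$ is a homeomorphism of $B(p^j,\rho)$ onto an open subset of $\mathbb{R}^n$ converging uniformly to $f_p$ along $\phi_j$. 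Consequently $h_p^j:=(f_p^j)^{-1}\circ f_p$ is a homeomorphism between slightly shrunken neighborhoods of $p$ in $X$ and of $p^j$ in $X_j$ that is uniformly close to $\phi_j$, with error tending to $0$ as $j\to\infty$.

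\emph{Step 3: gluing.} Cover $X$ by finitely many balls $B(p_k,\rho_k/3)$, $k=1,\dots,N$, with each $B(p_k,\rho_k)$ as in Step 2, and take the corresponding cover of $X_j$ for $j$ large. Construct $\Phi_j\colon X\to X_j$ by induction on $k$: given $\Phi_j$ on $\bigcup_{i<k}B(p_i,\rho_i/3)$, extend it across $B(p_k,\rho_k/3)$ by interpolating, over a collar in the overlap, between the map already defined and $h_{p_k}^j$. Both maps are uniformly close to $\phi_j$, hence uniformly close to each other with error tending to $0$; reading them in the Euclidean chart $f_{p_k}^j$ and invoking the uniform local contractibility of the homeomorphism group of a manifold --- Chernavskii, Edwards--Kirby --- one deforms one into the other with arbitrarily small displacement, keeping the construction consistent. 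After $N$ steps $\Phi_j$ is a homeomorphism $X\to X_j$, which proves the theorem.

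The \textbf{main obstacle} is twofold: the Gromov--Hausdorff stability of the homotopy type of punctured balls needed in Step 1, and the controlled gluing in Step 3. In Perelman's stability theorem for Alexandrov spaces the strainer maps are uniformly bi-Lipschitz and almost isometric, which makes both points routine; in the BNPC setting the tangent cones are merely normed spaces \cite{An:norm}, so the strainer maps are only homeomorphisms with no uniform metric control, and the missing metric estimates must be replaced by the local contractibility of homeomorphism groups while one checks that all the deformations used stay small. The non-collapsing underlying both points, and hence the whole argument, is exactly what the uniform injectivity radius bound, together with geodesic completeness, supplies.
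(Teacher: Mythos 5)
Your Step 1 rests on a false claim: the BNPC condition does \emph{not} pass to Gromov--Hausdorff limits, and neither does the G-space property. BNPC quantifies the convexity of $t\mapsto d(\gamma(t),\eta(t))$ over \emph{all} pairs of shortest paths and forces uniqueness of geodesics; in the limit, new shortest paths appear that are not limits of the (unique) geodesics of the $X_j$, and convexity can fail for them, while geodesics may branch. This is exactly the phenomenon the paper warns about right after the statement of Theorem~\ref{thm:stab} ($L^p$ spaces converge to $L^1$ or $L^\infty$, which are neither BNPC nor nonbranching), so your parenthetical that the injectivity radius bound makes $X$ a locally BNPC G-space is also wrong. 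What the limit does inherit is a local \emph{convex geodesic bicombing} (Proposition~\ref{prop:conv}, Corollary~\ref{cor:weak}), and this is precisely why the paper develops the ``weakly GNPC'' framework and Theorem~\ref{thm:conv}: all the strainer machinery, Theorem~\ref{thm:reg}, and Theorem~\ref{thm:link} must first be re-established in that setting before they can be applied to $X$. Your remaining Step 1 outline (transfer the sphere $S^{n-1}$ from $X_j$ via Theorem~\ref{thm:glink}, then apply Theorem~\ref{thm:reg}) is the paper's route, but the assertion ``one shows the homotopy type of small punctured balls is stable under GH convergence'' is itself a nontrivial lemma (Lemma~\ref{lem:stabsph}), proved by a compactness argument showing the $1$-strainer/almost-nonbranching conditions persist on a fixed sphere $\partial B(p_j,r)$ plus Petersen's stability theorem, and it must be combined with the injectivity-radius ``exponential map'' to pass from the $p_j$-dependent radius where Theorem~\ref{thm:glink} applies to a fixed radius $r_p$.

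Your Steps 2--3 are a genuinely different (and incomplete) route to the homeomorphism $X_j\cong X$. Two pillars are unproved. First, you assume every point of the limit manifold $X$ admits an $(n,\delta)$-strainer, $\delta<\delta_n$, whose strainer map is a homeomorphism on a ball of definite radius; the paper only proves that strained points are \emph{dense} (Theorem~\ref{thm:mfd}), and the chart statement (Theorem~\ref{thm:bilip}) requires non-complementability -- nothing in the paper (or in the GCBA theory) guarantees $n$-strainers at \emph{every} manifold point, let alone uniform chart radii; moreover the claim that the $\phi_j$-images of a strainer form a strainer in $X_j$ needs an argument, since the angle conditions are not obviously GH-stable (this is the content of Lemma~\ref{lem:stabsph} and \cite[Theorem 13.1]{LN:geo}, cf.\ Remark~\ref{rem:stabstr}). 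Second, the inductive interpolation of charts via Chernavskii/Edwards--Kirby ``with arbitrarily small displacement'' is essentially a hand re-derivation of the $\alpha$-approximation theorem; making the overlapping deformations consistent with controlled error (in particular in dimension $4$) is a substantial controlled-topology argument that your ``main obstacle'' paragraph acknowledges but does not supply. The paper avoids all of this: uniform local contractibility of $X_j$ and $X$ (tiny balls of radius $r/100$ via Lemma~\ref{lem:ch} and Corollary~\ref{cor:weak}) plus Petersen's stability theorem \cite{Pete} yields an $\varepsilon$-equivalence $X_j\simeq X$, and once $X$ is known to be a topological manifold the Chapman--Ferry $\alpha$-approximation theorem \cite{CF} converts it into a homeomorphism -- no explicit charts or gluing are needed. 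I recommend you repair Step 1 by working with bicombings and replace Steps 2--3 by the Petersen/Chapman--Ferry argument.
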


Note that the limit space \(X\) in Theorem~\ref{thm:stab} may fail to be a G-space if geodesics branch; it may even fail to be locally BNPC if geodesics are not unique.
For instance, $L^p$ spaces ($1<p<\infty$) converge to $L^1$ or $L^\infty$ space under the Gromov--Hausdorff topology.
However, as we explain in the next subsection, the limit space locally admits a convex geodesic bicombing.

\subsection{Generalizations}\label{sec:gen}

We now discuss two possible/potential generalizations.

One is a possible generalization to spaces with convex geodesic bicombings.
The notion of \textit{convex geodesic bicombing}, introduced by Descombes--Lang \cite{DL:conv}, is a generalization of the BNPC condition.
Similar approaches can also be found in \cite{BP} and \cite{Kl}.
Roughly speaking, all these approaches weaken the BNPC condition by dropping the uniqueness of geodesics and instead imposing the convexity condition only on some suitable choice of geodesics.
See Section \ref{sec:prfgen} for a precise definition.
For example, all normed spaces admit convex geodesic bicombings by choosing the affine geodesics.
The advantage of this treatment is that this class is closed under limiting operations, which is necessary to prove Theorem \ref{thm:stab}.
We claim

\begin{thm}\label{thm:conv}
All the above statements remain valid if we replace ``BNPC spaces'' by ``spaces with convex geodesic bicombings'' in appropriate manners (see Section \ref{sec:prfgen} for more details).
\end{thm}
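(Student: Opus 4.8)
The plan is to revisit every statement in the paper and verify that its proof invokes only those features of BNPC spaces that persist for spaces with a convex geodesic bicombing. Fix, locally, a convex geodesic bicombing $\sigma$; by Descombes--Lang \cite{DL:conv} we may assume it is \emph{consistent}. The natural analogue of a GNPC space is then a separable, locally compact space that locally admits a consistent convex bicombing $\sigma$ and is locally geodesically complete with respect to $\sigma$ (every $\sigma$-geodesic is locally extendable as a $\sigma$-geodesic). The first task is to set up the basic calculus of such spaces. Since a geodesic from $p$ to $p$ is constant, convexity of $\sigma$ shows that $t\mapsto d(\sigma_{xy}(t),p)$ is convex; hence every metric ball is $\sigma$-convex, the restriction of $\sigma$ to a ball is again a consistent convex bicombing, and the ``BNPC neighborhood'' in Theorems \ref{thm:reg} and \ref{thm:link} is to be replaced by a ``bicombing neighborhood''. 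Moreover $(x,t)\mapsto\sigma_{px}(t)$ is a continuous contraction of any such ball to its center $p$, which replaces the unique geodesic contraction used throughout, in particular for the radius-independence of the homotopy type of punctured balls in Theorems \ref{thm:reg} and \ref{thm:link}.

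With this dictionary in hand, I would reprove the structural results. The space of directions at a point is built from the initial velocities of $\sigma$-geodesics rather than of all geodesics; the homotopy equivalences among small metric spheres, punctured balls, and links (Theorem \ref{thm:link}) then go through using the above contraction together with local geodesic completeness. For Theorems \ref{thm:reg}, \ref{thm:sing} and \ref{thm:mfd} one must re-examine, one lemma at a time, the ``Finsler-type'' strainer and stability arguments developed in the paper for GNPC spaces; each of these rests on convexity of distance functions, contractibility of small balls, and extendability of geodesics, all of which are available for $\sigma$. The remaining manifold-recognition inputs (the disjoint disks property, cell-like approximation, and the four-dimensional argument of Lytchak--Nagano--Stadler \cite{LNS}) are purely topological and are insensitive to whether geodesics are unique, once these geometric ingredients are supplied.

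For the applications, Theorem \ref{thm:g} and the corollaries about metric spheres require little beyond the above, since a locally BNPC G-space already carries a consistent convex bicombing given by its locally unique geodesics. Theorem \ref{thm:stab}, however, genuinely needs the generalization: the Gromov--Hausdorff limit $X$ of the $X_j$ need not have unique geodesics, but a limit of consistent convex bicombings is again one (as in \cite{DL:conv}), and the uniform injectivity-radius bound passes to local geodesic completeness of the limit; the bicombing versions of Theorems \ref{thm:reg} and \ref{thm:mfd} then show that $X$ is a topological manifold, after which a standard stability argument yields $X_j\cong X$ for large $j$.

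The main obstacle is the loss of uniqueness of geodesics, which enters in two ways. First, every place where the original arguments transport data along ``the'' geodesic from $p$ — directions, logarithm-type maps, the identification of a punctured ball with a link — must be rephrased in terms of the fixed bicombing $\sigma$, and one must check that the resulting maps remain continuous and that the various choices are mutually compatible across lemmas. Second, one must confirm that restricting a convex bicombing to a ball or a convex subset, and passing to rescaling (tangent cone) limits, again produces a bicombing of the required flavor — convex, consistent, and, where the argument uses it, reversible; the reversibility and the survival of consistency under these operations are precisely the points that need care, and I would handle them by working throughout with a canonical consistent convex bicombing and invoking its stability properties, as carried out in Section \ref{sec:prfgen}.
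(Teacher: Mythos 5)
Your proposal is correct and follows essentially the same route as the paper's Section \ref{sec:prfgen}: introduce the bicombing analogue of a GNPC space (consistent, reversible, convex, locally extendable $\sigma$), observe that every argument uses only $\sigma$-geodesic contraction, convexity of distance along $\sigma$-geodesics, and extendability — never uniqueness of geodesics — keep the topological recognition input unchanged, and use Arzel\`a--Ascoli stability of convex bicombings under Gromov--Hausdorff limits for Theorem \ref{thm:stab}. The only cosmetic differences (appealing to Descombes--Lang for consistency rather than building it into the definition, and mentioning spaces of directions/tangent cones, which the paper deliberately avoids) do not affect the argument.
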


The other is a potential generalization to general curvature bounds.
The original statements of Lytchak--Nagano and Lytchak--Nagano--Stadler were established for CAT($\kappa$) spaces with arbitrary $\kappa\in\mathbb R$ (even a global result; see \cite[Corollary 1.8]{LNS}).
By using triangle comparison, Busemann curvature bounds can also be defined for any $\kappa\in\mathbb R$ (Remark \ref{rem:curv}).
While it should be possible to extend our arguments to that broader setting, we do not pursue this here and leave it as an open problem (Problem~\ref{prob:curv}).

\subsection{Main tool}\label{sec:tool}

We now explain how to prove the main theorems.
Our proofs are parallel to the original proofs of Lytchak--Nagano \cite{LN:geo}, \cite{LN:top} and Lytchak--Nagano--Stadler \cite{LNS}.
The original proofs can be divided into a topological part and a geometrical part.
In our generalizations, the topological part is the same as the original one, and the problem to overcome lies in the geometrical part (contrary to the title of this paper).

The main technical tool in the geometrical part of the original proofs is the notion of a \textit{strainer} (and its variant called an \textit{extended strainer}).
Roughly speaking, a strainer at a point $p$ is a finite collection of points around $p$ that behaves like an almost orthonormal family of vectors in the tangent space at $p$.

Historically, strainers were first introduced by Burago--Gromov--Perelman \cite{BGP} in the study of Alexandrov spaces with curvature bounded below.
Lytchak--Nagano \cite{LN:geo}, \cite{LN:top} later adapted them to the setting of GCBA spaces.
Extended strainers were introduced by Lytchak--Nagano--Stadler \cite{LNS} to handle global problems on GCBA spaces.
In this paper, we define strainers and extended strainers for GNPC spaces, prove their existence, and establish their topological properties.
This is the main contribution of this paper.

Although the various notions of strainers in different settings are defined abstractly using angles, their underlying constructions are essentially the same and remain meaningful (whether or not they are immediately useful) in general metric spaces.
Since GNPC spaces, like Finsler manifolds, have no fixed notion of angle, we now recall the concrete construction based on metric spheres.

Let $X$ be a metric space.
We denote by $\partial B(p,r)$ the metric sphere centered at $p\in X$ of radius $r>0$. 
Fix the first point $p_1\in X$ and choose the second point $p_2\in\partial B(p_1,r_1)$, where $r_1\ll1$ depends on $p_1$ (we discuss the dependence later).
Next choose the third point $p_3\in\partial B(p_1,r_1)\cap\partial B(p_2,r_2)$, where $r_2\ll r_1$ depends on $p_2$.
Repeating this procedure, one can construct a sequence $p_1,\dots,p_k,p_{k+1}$ of points in $X$ such that
\[p_{i+1}\in\bigcap_{j=1}^i\partial B(p_j,r_j),\quad r_{i+1}\ll\ r_i\]
for any $1\le i\le k$, where $r_i$ depends on $p_i$. (Note that the construction may stop if the intersection is empty.)
The sequence $p_1,\dots,p_k$ is called a \textit{$k$-strainer} at $p_{k+1}$.
The distance map
\[f=(d(p_1,\cdot),\dots,d(p_k,\cdot)):X\to\mathbb R^k\]
is called a \textit{$k$-strainer map} at $p_{k+1}$.
See Figure \ref{fig:tool}.

\begin{figure}[ht]
\centering
\begin{tikzpicture}
\coordinate[label=below:$p_1$](p1)at(0,0);
\coordinate[label=above:$p_2$](p2)at(45:2.5);

\draw[name path=r1](p1)circle[radius=2.5];
\draw[name path=r2](p2)circle[radius=0.75];

\path[name intersections={of=r1 and r2}];
\coordinate[label=below right:$p_3$](p3)at(intersection-2);

\fill(p1)circle(1.5pt);
\fill(p2)circle(1.5pt);
\fill(p3)circle(1.5pt);

\draw[dashed](p1)to(p2);
\draw[dashed](p2)to(p3);

\node[below right]at($(p1)!.5!(p2)$){$r_1$};
\node[above right]at($(p2)!.5!(p3)$){$r_2$};
\end{tikzpicture}
\caption{}\label{fig:tool}
\end{figure}

If $X$ is an Alexandrov or GCBA space, the strainer $p_1,\dots,p_k$ behaves like an almost orthonormal family of vectors at $p_{k+1}$.
As a result, the associated strainer map $f$ has some fibration property around $p_{k+1}$.
We recall this in more detail below.

Suppose $X$ is an Alexandrov space.
Then the geodesics in $X$ are generally not extendable (but never branch).
However, if $r_i$ is sufficiently small depending on $p_i$, the geodesics emanating from $p_i$ become \textit{almost extendable} in the $r_i$-neighborhood of $p_i$, especially at $p_{k+1}$.
Moreover, if $r_i\ll r_j$, these almost extendable geodesics from $p_i$ and $p_j$ to $p_{k+1}$ are \textit{almost orthogonal} at $p_{k+1}$ for any $1\le i\neq j\le k$.
These properties, together with a deep analysis using geometric topology, imply that $f$ is locally a trivial fibration around $p_{k+1}$ (\cite{Per:alex}, \cite{Per:mor}).

Suppose $X$ is a GCBA space.
Then the geodesics in $X$ are always extendable, but may branch.
Nevertheless, if $r_i$ is sufficiently small depending on $p_i$, the geodesics emanating from $p_i$ are \textit{almost nonbranching} in the $r_i$-neighborhood of $p_i$, especially at $p_{k+1}$.
Furthermore, if $r_i\ll r_j$, these almost nonbranching geodesics from $p_i$ and $p_j$ to $p_{k+1}$ are \textit{almost orthogonal} at $p_{k+1}$ for any $1\le i\neq j\le k$.
These properties, combined with a retraction argument using geometric topology, show that $f$ is locally a Hurewicz fibration around $p_{k+1}$ (\cite{LN:geo}, \cite{LN:top}; but not a trivial fibration because of the complexity of singularity).

In both cases, the key properties of a strainer are the almost extendability (in the Alexandrov case) or the almost nonbranching (in the GCBA case), and the almost orthogonality (in both cases) of geodesics.
All these properties are established using the unique notion of angle, whose existence follows from the Riemann-type curvature bounds of Alexandrov/GCBA spaces.

Now let us turn to the case of GNPC spaces.
As in the case of GCBA spaces, what we want to prove are:
\begin{enumerate}
\item the almost nonbranching and almost orthogonality of a strainer, and
\item the fibration property of a strainer map based on them.
\end{enumerate}

Regarding (1), the main difficulty is the lack of a fixed angle notion in GNPC spaces, reflecting their Finsler-like character.
To overcome this, we introduce two distinct notions of angle which are tied to the branching and orthogonality of geodesics, respectively.
These angles coincide in GCBA spaces but can differ in general GNPC spaces.
This will be discussed in Section \ref{sec:key}.

Regarding (2), the challenge is the asymmetry of the almost orthogonality of a GNPC strainer, which is addressed by using a modified
consecutive-approximation argument.
An informal analogy is as follows.
Let $f$ and $g$ be smooth maps from $\mathbb R^n$ to $\mathbb R^k$ ($n\ge k$), whose Jacobian matrices are
\begin{equation*}
df=
\left(
\begin{array}{ccc|c}
1\pm\delta & & \pm\delta & \\
&  \ddots &  & ?\\
\pm\delta & & 1\pm\delta &
\end{array} 
\right)
,\quad
dg=
\left(
\begin{array}{ccc|c}
1\pm\delta & & \pm\delta & \\
&  \ddots &  & ?\\
\pm C & & 1\pm\delta &
\end{array} 
\right),
\end{equation*}
where $\delta$ is a small positive number and $C$ is a uniformly bounded positive number.
An Alexandrov/GCBA strainer map looks like $f$, whereas a GNPC strainer map looks like $g$.
The first $k$ diagonal entries of $df$ being almost $1$ represents the almost extendability or almost nonbranching, while the other entries being almost $0$ correspond to the almost orthogonality.
The lower triangular entries of $dg$ are responsible for the infinitesimal deviation of our GNPC metric from the standard Euclidean metric.
In either case, however, if $\delta$ is small enough compared to $C$, the Jacobian matrices are regular and thus, after appropriate coordinate changes, $f$ and $g$ are locally projection maps.
An analogous argument allows us to prove the fibration property of a GNPC strainer map.
This will be discussed in Section \ref{sec:str}.

In addition to the the absence of a canonical angle structure, another difficulty overcome in Sections \ref{sec:key} and \ref{sec:str} is the possible ``non-differentiability'' of sufficiently small metric spheres in GNPC spaces, which is not explicitly stated but is implicitly handled in our arguments.
This stems from the fact that the GNPC setting differs from the GCBA setting by allowing non-smooth normed spaces, reflecting again its Finsler-like nature.
This perspective will play an important role in future studies of the geometric structure of GNPC spaces; see Remarks \ref{rem:dich} and \ref{rem:prob}.

Furthermore, in Section \ref{sec:exstr}, we introduce an extended strainer for a GNPC space.
Our treatment of an extended strainer is slightly different from the original one of Lytchak--Nagano--Stadler \cite{LNS}: we introduce the associated \textit{half strainer map} and prove its fibration property, in parallel with that of the standard strainer maps.
This half-strainer formulation, and
in particular the corresponding fibration theorem, does not appear in \cite{LNS} in this form, and constitutes another contribution of this paper.

Hence, Sections \ref{sec:key}, \ref{sec:str}, and \ref{sec:exstr} constitute the geometric core of this paper.
Once the existence and fibration properties of strainers and extended strainers are established, the proofs of the main topological
theorems follow the GCBA case and rely mainly on topological arguments.

\step{Organization}
In Section \ref{sec:pre}, we recall the basics of BNPC spaces, introduce GNPC spaces, and present some results from geometric topology.
In Section \ref{sec:fac}, we collect several facts known for CAT($0$) spaces that directly extend to BNPC spaces.
In Section \ref{sec:key}, we prove key lemmas on the almost nonbranching and almost orthogonality of geodesics in GNPC spaces.
In Section \ref{sec:str}, we develop strainers for GNPC spaces and prove Theorems \ref{thm:mfd} and \ref{thm:link}.
In Section \ref{sec:exstr}, we develop extended strainers for GNPC spaces in parallel with the previous section.
In Section \ref{sec:prf}, we prove Theorems \ref{thm:reg}, \ref{thm:sing}, \ref{thm:four}, \ref{thm:g}, \ref{thm:stab}, and \ref{thm:conv}.
In Section \ref{sec:prob}, we discuss open problems.

\step{Acknowledgments}
The authors are especially grateful to Stephan Stadler for giving us the opportunity to collaborate and for his insightful suggestions.
The first author would like to thank Shin-ichi Ohta and Koichi Nagano for helpful discussions, and Alexander Lytchak for his valuable comments, especially on Section \ref{sec:prob}.
The second author would like to thank Wenyuan Yang for his inquiry regarding potential applications to spaces admitting convex geodesic bicombings, and Shmuel Weinberger for helpful discussions on homology manifolds. The first author was supported by JSPS KAKENHI Grant Numbers 22KJ2099 and 25K23336.
The second author was supported by NSFC grant 12201102.

\section{Preliminaries}\label{sec:pre}

In this section, we recall the definition and basic properties of BNPC spaces and introduce GNPC spaces together with their tiny balls.
We also present several results from geometric topology concerning homology manifolds and Hurewicz fibrations.

\subsection{Notation}\label{sec:nota}
We denote the distance between two points $x$ and $y$ by $d(x,y)$ or $|xy|$.
A shortest path (see the next subsection) between $x$ and $y$ is denoted by $xy$.
The open and closed metric balls, and metric sphere centered at $p$ of radius $r$ are denoted by $B(p,r)$, $\bar B(p,r)$, and $\partial B(p,r)$, respectively.
Open and closed punctured balls mean $B(p,r)\setminus\{p\}$ and $\bar B(p,r)\setminus\{p\}$, respectively.

In Sections \ref{sec:key}, \ref{sec:str}, and \ref{sec:exstr}, we will use the symbols $\delta$, $\varkappa(\delta)$, $\varkappa_k(\delta)$, $\delta_k$, and $\varepsilon_k$ with special meanings.
See Notations \ref{nota:del} and \ref{nota:k}.

\subsection{Geodesic spaces}\label{sec:geo}
A \textit{shortest path} is an isometric embedding of a closed interval into a metric space.
A \textit{geodesic space} is a metric space such that any two points can be joined by a shortest path.
Any complete, locally compact geodesic space is \textit{proper}, i.e., every closed ball is compact (\cite[Theorem 2.5.28]{BBI}).
In other words, a distance function from a point is proper in the usual sense.

A \textit{geodesic} is a curve whose restriction to each sufficiently small sub-interval is a shortest path.
A geodesic space $X$ is called \textit{locally geodesically complete} if every geodesic is locally extendable, or more precisely, for any geodesic $\gamma:[a,b]\to X$, there exist $\varepsilon>0$ and a geodesic
\[\bar\gamma:[a-\varepsilon,b+\varepsilon]\to X\]
such that $\bar\gamma_{[a,b]}=\gamma$.
Similarly, $X$ is called \textit{(globally) geodesically complete} if each geodesic is infinitely extendable, i.e., one can take $\varepsilon=\infty$.
In a complete metric space, local geodesic completeness implies global geodesic completeness (\cite[Proposition 9.1.28]{BBI}).

By definition, every geodesic is unit-speed parametrized.
In this paper, we often consider a linear reparametrization of a geodesic, also described as a constant-speed geodesic.

\subsection{BNPC spaces}\label{sec:bnpc}

Now we introduce BNPC spaces, i.e., globally nonpositively curved spaces in the sense of Busemann.
While there are many textbooks about CAT($0$) spaces, globally nonpositively curved spaces in the sense of Alexandrov (e.g., \cite{AKP:cat}, \cite{AKP:found}, \cite{BH}, \cite{BBI}), the references for BNPC spaces are quite limited.
We refer the reader to \cite{Pa} and \cite{Jo}.

\begin{dfn}\label{dfn:bnpc}
A complete geodesic space $X$ is said to be a \textit{BNPC space} if for any pair of linearly reparametrized shortest paths $\gamma,\eta:[0,1]\to X$, the restriction of the distance function
\[t\mapsto d(\gamma(t),\eta(t))\]
is convex on $[0,1]$.
Such spaces are also called  \textit{Busemann spaces} or \textit{convex spaces}.
\end{dfn}

The following remark points out simple yet crucial consequences of the above definition.

\begin{rem}\label{rem:bnpc}
Let $X$ be a BNPC space and $p\in X$.
\begin{enumerate}
\item Let $\gamma,\eta:[0,1]\to X$ be linearly reparametrized shortest paths emanating from $p$.
Then for any $0\le t\le 1$,
\begin{equation}\label{eq:mono}
|\gamma(t)\eta(t)|\le t|\gamma(1)\eta(1)|.
\end{equation}
We call it the \textit{Busemann monotonicity} at $p$.
\item Let $\zeta:[0,1]\to X$ be an arbitrary linearly reparametrized shortest path.
Then for any $0\le t\le 1$,
\begin{equation}\label{eq:conv}
|p\zeta(t)|\le(1-t)|p\zeta(0)|+t|p\zeta(1)|.
\end{equation}
We call it the \textit{Busemann convexity} for $p$.
\end{enumerate}
\end{rem}

Indeed, Condition (\ref{eq:mono}) is equivalent to the definition of a BNPC space, thanks to the triangle inequality (\cite[Proposition 8.1.2]{Pa}).
The above two properties are, in a sense, closely related to the two angles we will introduce in Sections \ref{sec:bran} and \ref{sec:orth}, respectively.

\begin{ex}\label{ex:bnpc}
The following are examples of BNPC spaces.
\begin{itemize}
\item Strictly convex Banach spaces (\cite[Proposition 8.1.6]{Pa}).
\item Complete, simply connected Finsler manifolds with reversible Berwald metrics of nonpositive flag curvature (\cite{KVK}, \cite{KK}, \cite{IL}, \cite[Section 8.4]{Oh}).
\item A ``suitable'' gluing or Gromov--Hausdorff limit of BNPC spaces (\cite{An:const}).
\item CAT($0$) cube complexes equipped with piecewise $L^p$-metrics, where $1<p<\infty$ (\cite[Theorem D]{HHP}).
\end{itemize}
\end{ex}

Note that if $(X,d)$ is BNPC, then its rescaling $(X,\lambda d)$ remains BNPC for any $\lambda>0$.
The following are basic properties of BNPC spaces that immediately follow from the above definition (see \cite[Chapter 8]{Pa}).

\begin{itemize}
\item A geodesic between any two points is unique; in particular, every geodesic is actually a shortest path.
\item Every open or closed ball $B$ is \textit{convex}, i.e., any shortest path between two points in $B$ is contained in $B$.
\item Every convex subset $C$ is contractible to an arbitrary point in $C$ along the unique shortest paths: we call this the \textit{geodesic contraction}.
\end{itemize}

In particular, any BNPC space is contractible and therefore simply connected.
Conversely, the Cartan--Hadamard-type globalization theorem holds: any complete, simply connected, locally BNPC space is (globally) BNPC (\cite{AB}, \cite{Ba}).
We say that a metric space is \textit{locally BNPC} if every point has a neighborhood that is BNPC with respect to the ambient metric.
In particular, the universal cover of any complete, locally BNPC space is BNPC.

\subsection{BNPC vs.\ CAT(0)}\label{sec:cat}

We now compare BNPC spaces with CAT($0$) spaces.
Recall that the Busemann monotonicity \eqref{eq:mono} is an equivalent definition of a BNPC space and we interpret it geometrically as follows.

Let $\triangle pxy$ be a geodesic triangle in a BNPC space, and let $\triangle\tilde p\tilde x\tilde y$ be its \textit{comparison triangle}, that is, a geodesic triangle in the Euclidean plane with the same side lengths.
For $0\le t\le 1$, let $tx$ and $ty$ denote the points on the shortest paths $px$ and $py$ such that $|p,tx|=t|px|$ and $|p,ty|=t|py|$, respectively.
Similarly, let $t\tilde x$ and $t\tilde y$ be the corresponding points on the shortest paths $\tilde p\tilde x$ and $\tilde p\tilde y$, respectively.
Then the Busemann monotonicity \eqref{eq:mono} says
\[|tx,ty|\le|t\tilde x,t\tilde y|.\]

By contrast, the CAT($0$) condition says
\[|tx,sy|\le|t\tilde x,s\tilde y|\]
for any $0\le t\le 1$ and $0\le s\le 1$.
Hence every CAT($0$) space is a BNPC space.
This distinction reflects the Riemannian vs. Finsler nature of CAT($0$) vs. BNPC, as illustrated in the following examples.

\begin{ex}\label{ex:cat}
Compare the following with Example \ref{ex:bnpc}.
\begin{itemize}
\item A BNPC Banach space is CAT($0$) if and only if it is Hilbertian.
\item A BNPC Finsler manifold is CAT($0$) if and only if it is Riemannian.
\end{itemize}
See \cite[Sections 1.2, 8.3]{Oh} for the proofs.
\end{ex}

In general, a BNPC space is CAT($0$) if and only if the angle is well-defined for every hinge (\cite[Exercise 9.81]{AKP:found}).
Foertsch--Lytchak--Schroeder \cite{FLS} characterized CAT($0$) spaces among BNPC spaces in terms of the Ptolemy inequality.
Another characterization was given by the second author \cite{Gu:ab} in terms of average angles.

\begin{rem}\label{rem:curv}
From the geometric interpretation of the BNPC condition, one can define a \textit{Busemann space} with curvature $\le\kappa$ for any $\kappa\in\mathbb R$, by replacing the Euclidean comparison triangles with those in the model plane of constant curvature $\kappa$ (cf.\ \cite{Gu:ab}).
However, we will not pursue that generalization here; see Problem \ref{prob:curv}.
\end{rem}

\begin{rem}\label{rem:cd}
The above difference between BNPC spaces and CAT($0$) spaces in a sense resembles the difference between \textit{CD spaces} and \textit{RCD spaces}, which are synthetic generalizations of manifolds with lower Ricci curvature bounds.
Although the CD condition characterizes lower Ricci curvature bounds for Riemannian manifolds, it also makes sense for Finsler manifolds.
To exclude Finsler-type spaces, one needs the so-called \textit{infinitesimally Hilbertian} condition, which led to the notion of RCD space.
See \cite[Chapter 18]{Oh} and the references therein.
\end{rem}

\subsection{GNPC spaces}\label{sec:gnpc}

Following Lytchak--Nagano \cite{LN:geo}, \cite{LN:top}, we introduce

\begin{dfn}\label{dfn:gnpc}
A \textit{GNPC space} is a separable, locally compact, locally geodesically complete, locally BNPC space.
\end{dfn}

Note that BNPC is a global condition, whereas GNPC is a local condition, except for the separability (which we do not use in this paper).
Lytchak--Nagano's GCBA space with nonpositive curvature, defined by replacing ``BNPC" with ``CAT($0$)" in Definition \ref{dfn:gnpc}, is our GNPC space.
Any open subset of a GNPC space remains GNPC, and any locally BNPC homology manifold (without boundary) is GNPC (Proposition \ref{prop:geo}; see the next subsection for homology manifolds).

Following Lytchak--Nagano \cite{LN:geo}, \cite{LN:top}, we also introduce

\begin{dfn}\label{dfn:tiny}
Let $X$ be a GNPC space.
An open metric ball $B$ of radius $r$ in $X$ is called a \textit{tiny ball} if the concentric closed ball of radius $10r$, denoted by $10\bar B$, is a compact BNPC space.
\end{dfn}

For each $p\in X$, any sufficiently small ball around $p$ is a tiny ball.
We allow  arbitrarily large radius to deal with global problems like Theorem \ref{thm:four}, but after rescaling, the actual radius becomes irrelevant.

Within a tiny ball $B$ (and its enlargement $10\bar B$), all properties of a BNPC space from Section \ref{sec:bnpc} hold.
Moreover, any shortest path in $B$ is extendable to length $9r$ in $10\bar B$, where $r$ is the radius of $B$.
This follows from the same argument that local geodesic completeness implies global one in a complete space (see Section \ref{sec:geo}).

\subsection{Homology manifolds}\label{sec:hom}

We say that $X$ is a \textit{homology $n$-manifold with boundary} if it is a locally compact, separable, metrizable space of finite topological dimension such that the local (integral) homology group at each point is isomorphic to that of the closed $n$-cell $D^n$ at some point, i.e.,
\[H_\ast(X,X\setminus\{x\})\cong H_\ast(D^n,D^n\setminus\{y\})\]
for any $x\in X$ and some $y=y(x)\in D^n$.

The \textit{boundary} of a homology $n$-manifold is the set of points where the $n$-th local homology vanishes.
The following theorem is due to Mitchell \cite{Mit}.

\begin{thm}\label{thm:mit}
The boundary of a homology $n$-manifold, if nonempty, is a homology $(n-1)$-manifold without boundary.
\end{thm}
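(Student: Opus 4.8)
\emph{Proof sketch.} The plan is to follow the sheaf-theoretic route to this result; we only indicate the steps and refer to \cite{Mit} for the details.

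First we unwind the definition. The group $H_*(D^n,D^n\setminus\{y\};\mathbb Z)$ is $\mathbb Z$ concentrated in degree $n$ when $y$ is an interior point of $D^n$, and is zero in every degree when $y\in\partial D^n$, since then $D^n\setminus\{y\}$ is star-shaped and hence contractible. Consequently a point $x\in X$ lies in the boundary $\partial X$ --- the set where the top local homology vanishes --- if and only if $H_k(X,X\setminus\{x\})=0$ for every $k$, whereas $\operatorname{int}X:=X\setminus\partial X$ consists of the points $x$ with $H_n(X,X\setminus\{x\})\cong\mathbb Z$ and vanishing lower local homology. The first thing to establish is that $\operatorname{int}X$ is open, equivalently that a boundary point cannot be a limit of interior points carrying persistent top local homology; here local compactness and finiteness of the topological dimension enter through the continuity of \v{C}ech/Borel--Moore homology, and this is already a genuine point (it is essentially the content of Mitchell's paper). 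Granting it, $\operatorname{int}X$ is an open subspace in which every point has the local homology of $\mathbb R^n$ at the origin, that is, a homology $n$-manifold without boundary.

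The core step is to compute $H_k(\partial X,\partial X\setminus\{x\})$ for $x\in\partial X$. We would pass to the local homology sheaves $\mathcal H_\bullet(X)$ and $\mathcal H_\bullet(X,\partial X)$, whose stalks at a point $y$ are the direct limits $\varinjlim_U H_\bullet(X,X\setminus U)$ and $\varinjlim_U H_\bullet(X,\partial X\cup(X\setminus U))$ over neighbourhoods $U\ni y$. Localizing the long exact sequence of the pair $(X,\partial X)$ at $x$ gives
\[\cdots\to\mathcal H_{k+1}(X)_x\to\mathcal H_{k+1}(X,\partial X)_x\to\mathcal H_k(\partial X)_x\to\mathcal H_k(X)_x\to\cdots,\]
and since $\mathcal H_\bullet(X)_x=H_\bullet(X,X\setminus\{x\})=0$ by the above description of $\partial X$, we obtain $H_k(\partial X,\partial X\setminus\{x\})=\mathcal H_k(\partial X)_x\cong\mathcal H_{k+1}(X,\partial X)_x$. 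It then remains to prove the local Lefschetz-duality statement that $\mathcal H_n(X,\partial X)$ is the locally constant sheaf $\mathbb Z$ on all of $X$ while $\mathcal H_j(X,\partial X)=0$ for $j\ne n$; equivalently, that for local homology with supports the pair $(X,\partial X)$ behaves like a manifold-with-boundary pair. On $\operatorname{int}X$ this is immediate from the previous step, and the point is that it propagates across $\partial X$. Combining these, $H_k(\partial X,\partial X\setminus\{x\})\cong\mathbb Z$ for $k=n-1$ and $0$ otherwise, so $\partial X$ is a homology $(n-1)$-manifold; moreover every point of $\partial X$ carries full top-degree local homology, so $\partial(\partial X)=\varnothing$.

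The hard part will be the duality step --- showing $\mathcal H_j(X,\partial X)\cong\mathbb Z$ for $j=n$ and $0$ otherwise near $\partial X$ --- which says that $X$ is ``homologically collared'' along its boundary. Carrying this out with only the point-set hypotheses in the statement (locally compact, metrizable, finite topological dimension) is exactly what requires the dualizing-complex formalism, as in Bredon's \emph{Sheaf Theory}, and is the technical heart of \cite{Mit}; the openness of $\operatorname{int}X$ is a second, logically prior, difficulty. In the sequel we use this result only as a black box.
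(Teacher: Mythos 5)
The paper does not actually prove this statement: Theorem \ref{thm:mit} is quoted from Mitchell \cite{Mit} and used purely as a black box (its only role is in Proposition \ref{prop:hom}), so there is no in-paper argument to compare yours against. Your sketch is an accurate reconstruction of the standard sheaf-theoretic skeleton behind Mitchell's result: the unwinding of the definition is correct (interior points carry $\mathbb Z$ in degree $n$ and nothing else, boundary points have vanishing local homology in all degrees, since $D^n$ minus a boundary point is star-shaped), and the formal reduction --- localize the long exact sequence of $(X,\partial X)$ at $x\in\partial X$, use the vanishing of $H_\ast(X,X\setminus\{x\})$ to get $H_k(\partial X,\partial X\setminus\{x\})\cong\mathcal H_{k+1}(X,\partial X)_x$, then compute the relative local homology sheaf --- is indeed the right bookkeeping (modulo routine tautness/excision points needed to identify stalks with local homology groups, which are fine for locally compact, finite-dimensional metric spaces).

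However, as a proof attempt this has a genuine gap exactly where you place the label ``technical heart'': the openness of the interior (equivalently, closedness of $\partial X$) and the local duality statement that $\mathcal H_j(X,\partial X)$ is $\mathbb Z$ for $j=n$ and $0$ otherwise near $\partial X$ are not verified, and they constitute essentially all of the mathematical content of the theorem; everything you do prove is formal consequence-chasing around these two inputs. Since you declare that the result will only be used as a black box --- which is precisely how the paper treats it --- this is acceptable for the purpose at hand, but it should be presented as a citation with an expository outline, not as a proof.
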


A \textit{homology manifold} simply refers to a homology manifold without boundary (the same convention applies to topological manifolds).
Any homology $n$-manifold has topological dimension $n$.

The following theorem, due to Moore (\cite[Chapter IX]{Wi}), is the starting point of the inductive proofs of the main theorems.

\begin{thm}\label{thm:moo}
Any homology manifold of dimension $\le2$ is a topological manifold.
\end{thm}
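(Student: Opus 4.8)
The plan is to handle the three possible dimensions $0$, $1$, $2$ separately, in each case distilling from the local homology hypothesis enough local \emph{topological} structure to recognize $X$ as a manifold; the necessary structural facts about finite-dimensional homology manifolds — local connectedness, and that a small punctured neighborhood of a point carries the reduced homology of a punctured $n$-cell — are classical and developed in \cite[Chapter IX]{Wi}, so the heart of the matter is converting homological into topological information in dimensions $1$ and $2$. The dimension $0$ case is immediate: were $x\in X$ not isolated, then for a small connected open $U\ni x$ the inclusion $U\setminus\{x\}\hookrightarrow U$ would be surjective on $H_0$, forcing $H_0(U,U\setminus\{x\})=0$ and contradicting $H_0(X,X\setminus\{x\})\cong H_0(D^0,\varnothing)\cong\mathbb Z$; hence $X$ is discrete.

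For dimension $1$, fix $x\in X$ and, using local connectedness, a small connected open $U\ni x$ (with $U\setminus\{x\}\neq\varnothing$, since $x$ is not isolated) chosen from a neighborhood basis of homologically trivial sets provided by the local structure theory. Since $X$ has no boundary, excision identifies $H_*(U,U\setminus\{x\})$ with $H_*(\mathbb R,\mathbb R\setminus\{0\})$, so $H_1(U,U\setminus\{x\})\cong\mathbb Z$ and the reduced $H_0$ vanishes; a short diagram chase in the long exact sequence of the pair then gives $\tilde H_0(U\setminus\{x\})\cong\mathbb Z$, i.e.\ $U\setminus\{x\}$ has exactly two components. Thus every point of $X$ is a local separating point of order exactly two, so each connected component of $X$ is a connected, locally connected, locally compact, separable metric space of topological dimension $1$ with the local cut structure of $\mathbb R$; the classical characterization of topological $1$-manifolds (the ``Moore'' input in this dimension) then identifies it with $\mathbb R$ or $S^1$, and $X$ is a topological $1$-manifold.

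For dimension $2$ — the substantive case — the same local analysis shows that for a small connected open $U\ni x$ the punctured neighborhood $U\setminus\{x\}$ is connected with $H_1(U\setminus\{x\})\cong\mathbb Z$ and no higher homology, just like a punctured disk. Upgrading this to an honest chart is where genuine surface topology enters, and I would proceed in three steps: (i) show that suitable small ``sphere-like'' separating subsets around $x$ (frontiers of appropriately chosen small neighborhoods) are themselves $1$-dimensional homology manifolds, hence — by the dimension-$1$ case — finite disjoint unions of circles, in fact single circles by the connectedness and $H_1$ computations above; (ii) verify that $X$ satisfies, locally, the Phragm\'en--Brouwer-type separation properties — each local simple closed curve separates a small neighborhood into exactly two domains, and no point or arc separates such a domain — as consequences of the homology-manifold condition together with local connectedness; (iii) apply R.~L.~Moore's characterization of $S^2$ (a Peano continuum with the separation properties of (ii) is homeomorphic to the $2$-sphere), equivalently a Schoenflies-type capping argument, to conclude that a small neighborhood of $x$ bounded by the circle of (i) is homeomorphic to a disk, so $x$ is a manifold point. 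The main obstacle is precisely steps (i) and (iii): arranging that the local separating sets are bona fide homology $1$-manifolds and that they bound disks via the sphere characterization. This is where the proof stops being formal homological bookkeeping and must invoke the local Jordan-curve and Schoenflies phenomena special to surfaces, with care that integral coefficients suffice throughout and that the homology-manifold hypothesis genuinely propagates to the separating sets; since the statement fails in dimension $\ge 4$ (Section~\ref{sec:com}), low-dimensionality has to be used essentially at exactly these steps.
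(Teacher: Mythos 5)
First, a point of reference: the paper does not prove Theorem \ref{thm:moo} at all; it is imported as a classical result of Moore with the citation \cite[Chapter IX]{Wi}. So there is no in-paper argument to compare against, and your outline (local homology bookkeeping, then the characterization of $1$-manifolds by local cut structure and Moore's characterization of $S^2$ via Jordan-curve/Schoenflies-type separation properties) is precisely the classical route of the cited source. The strategy is therefore the right one, but as a proof your text has a genuine gap at its very first step. You take local connectedness of a homology manifold to be a ``classical structural fact developed in \cite[Chapter IX]{Wi}.'' In Wilder's theory, local (co)homological connectedness is part of the \emph{definition} of a generalized manifold, not a consequence, and it cannot be deduced from the definition used in Section \ref{sec:hom} (locally compact, separable, metrizable, finite-dimensional, with the local singular homology condition). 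Concretely, let $X\subset\mathbb R^2$ be the union of the unit circle $C$ and the spiral $\{(1+e^{-t})e^{it}:t\in\mathbb R\}$ accumulating on $C$. For $x\in C$ and a small open $U\ni x$, the set $U$ splits into the arc of $C$ through $x$ and countably many spiral arcs, none of which is joined to that arc by a path in $U$; the two halves of the $C$-arc then give $H_1(U,U\setminus\{x\})\cong\mathbb Z$ and $H_0(U,U\setminus\{x\})=0$, so $X$ satisfies the paper's definition of a homology $1$-manifold, yet it is not locally connected and not a $1$-manifold. Taking $X\times\mathbb R$ gives the analogous $2$-dimensional example, and the Cantor set does the same in dimension $0$, where your argument is moreover circular: a $0$-dimensional space has no nondegenerate connected open sets, so ``choose a small connected open $U\ni x$'' already presupposes that $x$ is isolated. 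Thus local connectedness must enter as a hypothesis (as it does in the classical sources; it is automatic for the ANR homology manifolds to which Theorem \ref{thm:moo} is actually applied in this paper), and your proof cannot start without it.

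Second, even granting local connectedness, the substantive dimension-$2$ case is only a plan, not a proof: your steps (i) and (iii) --- that small separating frontiers are homology $1$-manifolds, hence circles, and that Moore's characterization of $S^2$ (or a Schoenflies-type capping) then produces a disk chart --- are exactly the hard content of the classical argument, to which \cite[Chapter IX]{Wi} devotes substantial machinery, and you explicitly defer them. So the proposal is best described as a correct outline of the cited classical proof, with two genuine gaps: the unjustified (and, for the definition as stated, false) claim that local connectedness comes for free, and the omission of the core surface-topology steps in dimension $2$.
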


A metric space $X$ is called an \textit{absolute neighborhood retract}, abbreviated as \textit{ANR}, if for any closed subset $A$ of a metric space $Z$, every continuous map $A\to X$ has a continuous extension $U\to X$ defined on some neighborhood $U$ of $A$ in $Z$.
For our purposes, the following characterization is more useful: a finite-dimensional, locally compact metric space is an ANR if and only if it is locally contractible.
See \cite[Section 14]{Da} for ANRs.
Every ANR homology manifold with boundary satisfies the Poincar\'e--Lefschetz duality (note that our homology manifolds are of finite dimension).

\subsection{Hurewicz fibrations}\label{sec:fib}

Here we assume all maps are continuous.
A surjective map between topological spaces is a \textit{Hurewicz fibration} if it satisfies the homotopy lifting property with respect to all topological spaces.

We recall the following condition from \cite[Definition 4.3]{LN:top}.

\begin{dfn}\label{dfn:fib}
We say that a map $f:X\to Y$ between topological spaces has \textit{locally uniformly contractible fibers} if the following holds:
for any $x\in X$ and every neighborhood $U$ of $x$, there exists a neighborhood $V\subset U$ of $x$ such that for any fiber $\Pi$ of $f$ intersecting $V$, the intersection $\Pi\cap V$ is contractible in $\Pi\cap U$. 
\end{dfn}

The following theorems of Ungar \cite[Theorems 1, 2]{Un} provide sufficient conditions for a map to be a Hurewicz fibration (see also \cite[Theorems 4.5, 4.6]{LN:top}).

\begin{thm}\label{thm:ung1}
Suppose $X$ and $Y$ are finite-dimensional compact metric spaces and $Y$ is an ANR.
Let $f:X\to Y$ be an open surjective map with locally uniformly contractible fibers.
Then $f$ is a Hurewicz fibration (and $X$ is an ANR).
\end{thm}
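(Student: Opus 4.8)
The plan is to reduce the global assertion to a purely local statement about neighborhoods in $Y$, and then to produce the required path lifts by a continuous‑selection argument that exploits the finite dimension of $Y$ and the ANR hypothesis; this parallels the ``slicing structure'' philosophy behind Ungar's theorem.

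First I would localize. A compact metric space is paracompact, so by the Hurewicz uniformization theorem for fibrations it suffices to exhibit an open cover $\{W_\alpha\}$ of $Y$ such that each restriction $f\colon f^{-1}(W_\alpha)\to W_\alpha$ is a Hurewicz fibration. Fixing $y_0\in Y$ and using that $Y$ is a finite‑dimensional compact ANR, one may choose arbitrarily small open neighborhoods $W\ni y_0$ that are contractible (indeed a nested sequence $W_0\supset W_1\supset\cdots$ with $W_{i+1}$ null‑homotopic in $W_i$), so it remains to prove that $f$ is a Hurewicz fibration over each such $W$.

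Next I would analyze the fibers over $W$. Since $f$ is open, $y\mapsto f^{-1}(y)$ is lower semicontinuous; since each fiber is locally compact, finite‑dimensional and, by Definition~\ref{dfn:fib}, locally contractible, it is an ANR; and the \emph{uniformity} built into Definition~\ref{dfn:fib} upgrades this, after shrinking $W$, to the assertion that $\{f^{-1}(y)\}_{y\in W}$ is an equi‑$LC^k$ family with $C^k$ values for every $k$. With $n=\dim Y\ge\dim W$ in hand I would then verify the homotopy lifting property over $W$ via the canonical map $q\colon f^{-1}(W)^{\,I}\to f^{-1}(W)\times_W W^{\,I}$, $\gamma\mapsto(\gamma(0),f\circ\gamma)$: a section of $q$ is precisely a continuous choice, for each $(x,\omega)$ with $\omega(0)=f(x)$, of a lift $\tilde\omega$ of $\omega$ starting at $x$, hence a Hurewicz‑fibration structure over $W$. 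The set of admissible lifts for a given $(x,\omega)$ is the space of sections of the pullback $\omega^{\ast}X\to I$ over the (contractible) interval, fixing the value at $0$; the equi‑$LC$ estimates above make this carrier lower semicontinuous with $C^\infty$‑type values, and Michael's selection theorem in its section‑extension form (so that one may also lift relative to $X\times\{0\}$, and over the paracompact base $f^{-1}(W)\times_W W^{\,I}$) produces the section. Reassembling the local fibrations through the first step shows $f$ is a Hurewicz fibration. Finally, $X$ is an ANR: it is locally compact, finite‑dimensional and metric, and it is locally contractible, because given $x\in X$ and a neighborhood $U$ one shrinks $U$ using openness of $f$ and local contractibility of the ANR $Y$ to a ``box'' neighborhood and collapses it by composing a fiberwise contraction from Definition~\ref{dfn:fib} with a contraction pulled back from the base; this suffices by the ANR criterion recalled above.

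I expect the main obstacle to be the passage, in the third step, from the pointwise condition ``locally uniformly contractible fibers'' to the uniform equi‑$LC^\infty$ input that a selection theorem demands, together with setting up the carrier on the infinite‑dimensional path‑space base $f^{-1}(W)\times_W W^{\,I}$ so that the selection actually exists; once these carriers are arranged correctly, the localization and reassembly are formal and the ANR statement is essentially immediate.
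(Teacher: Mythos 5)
There is no proof to compare against in the paper itself: Theorem \ref{thm:ung1} is quoted from Ungar \cite{Un}, so the question is whether your sketch would actually reconstruct that result, and it has two genuine gaps. First, in your fiber analysis you upgrade the hypothesis to ``an equi-$LC^k$ family with $C^k$ values for every $k$.'' The $C^k$ (connectivity/contractibility) part is not available: Theorem \ref{thm:ung1} does \emph{not} assume the fibers are contractible or even connected --- that is precisely the extra hypothesis of the companion Theorem \ref{thm:ung2}. A finite covering map of compact ANRs satisfies all the hypotheses here with finite discrete fibers. Consequently the values of your carrier (spaces of lifts of a path with prescribed initial point) need not be highly connected, and their nonemptiness is exactly the path-lifting property you are trying to prove, so the selection set-up is circular as stated.

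Second, and independently, Michael's finite-dimensional selection theorem requires the \emph{domain} of the lower semicontinuous carrier to have finite covering dimension (at most $n+1$ when the values are equi-$LC^n$ and $C^n$). Your carrier is defined over $f^{-1}(W)\times_W W^{I}$, which contains the path space $W^{I}$ and is infinite-dimensional; the finite-dimensionality of $X$ and $Y$ --- the hypothesis that makes the theorem true --- is never used where it must be, and there is no general selection theorem for l.s.c.\ carriers with merely equi-$LC^\infty$, $C^\infty$ values over infinite-dimensional paracompact domains. Ungar's actual route avoids this: he uses the uniform local contractibility of fibers to get equi-$LC^n$ families and applies the selection theorem over the \emph{finite-dimensional} space $f^{-1}(U)\times U$ to produce a slicing structure, i.e.\ a map $\varphi\colon f^{-1}(U)\times U\to f^{-1}(U)$ with $\varphi(x,y)\in f^{-1}(y)$ and $\varphi(x,f(x))=x$; the slicing structure property yields the homotopy lifting property, and the local-to-global (uniformization) step over the paracompact $Y$, which you do have, finishes the argument. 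Your localization step and the general strategy ``selection $\Rightarrow$ lifting'' are in the right spirit, but the selection must be performed over $X\times Y$, not over the path-space base; as written, the central step fails. (The closing ANR argument is also too quick, since the contractions in Definition \ref{dfn:fib} are not asserted to depend continuously on the fiber, but this is secondary to the two gaps above.)
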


\begin{thm}\label{thm:ung2}
Suppose $X$ and $Y$ are finite-dimensional locally compact metric spaces.
Let $f:X\to Y$ be an open surjective map with locally uniformly contractible fibers.
If every fiber of $f$ is contractible, then $f$ is a Hurewicz fibration.
\end{thm}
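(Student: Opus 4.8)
The plan is to reduce to the compact case, Theorem~\ref{thm:ung1}, via the principle that the Hurewicz fibration property is local on the base. Since $Y$ is metric, hence paracompact, the uniformization theorem for fibrations (Hurewicz, Dold) applies: if every point of $Y$ has an open neighborhood $U$ such that the restriction $f|\colon f^{-1}(U)\to U$ is a Hurewicz fibration, then $f$ itself is one. So fix $y_0\in Y$; using local compactness of $Y$, choose an open $U\ni y_0$ with compact closure, and it suffices to establish the homotopy lifting property for $f$ over $U$. The hypotheses are inherited: $f^{-1}(U)$ is open in $X$, hence again a finite-dimensional locally compact metric space, $f|$ is still open and surjective onto $U$, its fibers are the fibers of $f$ over $U$ (still contractible), and the condition of Definition~\ref{dfn:fib} is local and so passes to $U$. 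Note that one cannot simply quote Theorem~\ref{thm:ung1} here, since $f^{-1}(U)$ need not be compact (the fibers may be non-compact, e.g.\ Euclidean) and $\bar U$ need not be an ANR; the compact-case mechanism has to be reproduced inside $U$.

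The heart of the argument is the construction, over such a $U$, of a lifting function (Hurewicz connection): a continuous assignment to each pair $(x,\omega)$ with $\omega\colon[0,1]\to U$ and $\omega(0)=f(x)$ of a path $\Lambda(x,\omega)\colon[0,1]\to f^{-1}(U)$ with $\Lambda(x,\omega)(0)=x$ and $f\circ\Lambda(x,\omega)=\omega$; the existence of such a $\Lambda$ is equivalent to the homotopy lifting property with respect to all spaces. One builds $\Lambda$ by a subdivision-and-patching scheme. Openness of $f$ provides, for every sufficiently short path in $U$ issuing from $f(x)$, a short path in $X$ issuing from $x$ that covers it up to a controllably small error; the locally uniformly contractible fibers then allow that error to be absorbed by a homotopy within a single fiber, of size controlled uniformly on $U$ (compactness of $\bar U$ yields a uniform modulus); a Lebesgue-number argument subdivides $[0,1]$ into finitely many subintervals on each of which $\omega$ stays in a set small enough for these local corrections to apply, and the corrected pieces are concatenated. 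The contractibility of each \emph{entire} fiber---not merely its uniform local contractibility---is what lets one promote this to a single continuous choice defined on all admissible $(x,\omega)$: it kills the obstructions to extending a partial lifting function across a nerve of a finite cover of $f^{-1}(U)$, and the induction over that nerve terminates because $X$ has finite covering dimension. This is exactly the mechanism underlying Theorem~\ref{thm:ung1}, now carried out in the local chart $U$.

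The main obstacle is precisely the continuity of $\Lambda$ in the pair $(x,\omega)$: producing, for each fixed $\omega$, some lift is comparatively easy (a Serre-type statement obtainable by the subdivision above), but assembling these into a function continuous simultaneously in the basepoint and the path is what separates a Hurewicz fibration from a mere Serre fibration, and it is the sole reason the word ``uniformly'' appears in Definition~\ref{dfn:fib}. Making the error-absorbing homotopies vary continuously forces one to organize the patching inductively over a cover and to invoke the uniform contractibility modulus at every stage; the finite-dimensionality hypotheses on $X$ and $Y$ keep this induction finite, while the global contractibility of the fibers supplies the vanishing of the obstruction at the top stage. Once $\Lambda$ is constructed over each relatively compact $U$, the uniformization theorem glues these local lifting functions into a global one, completing the proof.
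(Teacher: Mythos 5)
First, a point of comparison: the paper does not prove this statement at all — Theorem~\ref{thm:ung2} is quoted from Ungar \cite[Theorem 2]{Un} (see also \cite[Theorem 4.6]{LN:top}), so there is no in-paper argument to measure yours against, and your text has to stand on its own as a proof of Ungar's theorem. Your reduction step is sound: Hurewicz's uniformization theorem over the paracompact base $Y$ does reduce the problem to showing that $f$ is a fibration over each relatively compact open $U\subset Y$, and the hypotheses (openness, the condition of Definition~\ref{dfn:fib}, contractibility of the fibers) do pass to $f^{-1}(U)\to U$.

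The gap is in the core of the argument, the construction of the lifting function $\Lambda$ over $U$, which is asserted rather than carried out, and which leans on two uniformity claims that do not hold as stated. Compactness of $\bar U$ does not yield a ``uniform modulus'' for the locally uniformly contractible fibers: Definition~\ref{dfn:fib} is a pointwise condition on $X$, and $f^{-1}(U)$ is in general non-compact (the fibers themselves may be non-compact, e.g.\ Euclidean), so no uniform choice of the neighborhoods $V\subset U$ over all of $f^{-1}(U)$ follows; likewise there is no finite cover of $f^{-1}(U)$ by small sets whose nerve you can induct over — only a locally finite one, at which point the missing uniform control reappears at every stage of the patching. This is precisely the difficulty separating Ungar's Theorem 2 from Theorem 1 (the compact case, Theorem~\ref{thm:ung1}): writing that the compact-case mechanism is ``now carried out in the local chart $U$'' presupposes both that proof (which neither you nor the paper reproduces) and that it survives the loss of compactness of the total space, and how the global contractibility of the fibers is used to compensate for that loss is exactly the content of Ungar's argument. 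Your description of that step — contractibility ``kills the obstruction at the top stage'' of a nerve induction bounded by $\dim X$ — is too vague to verify and is not organized correctly (the obstruction-theoretic induction must be run over data on the base/path space, with the finite-dimensionality of $X$ and $Y$ entering in a controlled way, not over a finite nerve of the non-compact preimage). So the proposal is a reasonable strategy outline, but it does not constitute a proof; one must either cite \cite[Theorem 2]{Un}, as the paper does, or genuinely reproduce the slicing-structure construction there.
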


The following theorem of Raymond \cite[Theorem (1)]{Ra} describes the relationship between homology manifolds and Hurewicz fibrations (see also \cite[Theorem 4.10]{LN:top}).

\begin{thm}\label{thm:ray}
Let $f:X\to Y$ be a Hurewicz fibration, where $X$ is a homology $n$-manifold and $Y$ is a connected, locally contractible space.
Then there exists some $k\le n$ such that any fiber of $f$ is a homology $(n-k)$-manifold and $Y$ is a homology $k$-manifold.
\end{thm}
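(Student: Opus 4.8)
The plan is to reduce the statement to a local computation of local homology groups and then run the Leray spectral sequence of $f$. Since the conclusion is local over $Y$, I would fix $y_0\in Y$, choose a contractible open neighborhood $U$ of $y_0$, and set $W=f^{-1}(U)$ and $F=f^{-1}(y_0)$. Then $W$ is open in $X$, hence a homology $n$-manifold; $F$ is closed in $X$, hence locally compact, separable, metrizable, and finite-dimensional; and $f|_W\colon W\to U$ is a Hurewicz fibration over the contractible base $U$. Since $Y$ is connected and locally contractible it is path-connected, so all fibers of $f$ are homotopy equivalent by transport along paths, and it suffices to analyze a single fiber. The target is then to show that $H_*(Y,Y\setminus\{y_0\})$ is concentrated in one degree $k$ (independent of $y_0$) and equal to $\mathbb Z$ there, and that for every $x\in F$ the group $H_*(F,F\setminus\{x\})$ is concentrated in degree $n-k$ and equal to $\mathbb Z$ there.

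The crux is a Künneth-type identity expressing the local homology of the total space through those of the base and the fiber. For $x\in F$, excision gives $H_*(X,X\setminus\{x\})\cong H_*(W,W\setminus\{x\})$, and I would show that the latter is computed from $H_*(U,U\setminus\{y_0\})$ and $H_*(F,F\setminus\{x\})$ by the relative Künneth formula for the product of pairs $(U,U\setminus\{y_0\})\times(F,F\setminus\{x\})$, exactly as though $f|_W$ were a trivial fibration near $x$. This does not follow merely from the fiber-homotopy triviality of $f|_W$ over the contractible $U$, because local homology is not a homotopy invariant; it comes instead from the Leray spectral sequence of $f$ in Borel--Moore homology, with the contractibility of $U$ used to trivialize the relevant local coefficient systems. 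The cleanest packaging is via Verdier duality: the hypothesis that $X$ is a homology $n$-manifold says precisely that its dualizing complex $\mathcal D_X$ is (the shift by $n$ of) a rank-one locally constant sheaf, and for the fibration $f$ one has $\mathcal D_X\cong f^*\mathcal D_Y\otimes\mathcal D_f$, where $\mathcal D_f$ restricts to the dualizing complex of each fiber.

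Granting this, the rest is bookkeeping. Since $X$ is a homology $n$-manifold, $H_m(X,X\setminus\{x\})$ equals $\mathbb Z$ for $m=n$ and vanishes otherwise; substituting into the Künneth identity forces $H_*(U,U\setminus\{y_0\})$ and $H_*(F,F\setminus\{x\})$ to each be concentrated in a single degree, say $k$ and $n-k$, to be isomorphic to $\mathbb Z$ there, and to have vanishing $\operatorname{Tor}$ (dually: $f^*\mathcal D_Y$ and $\mathcal D_f$ are invertible, hence so are $\mathcal D_Y$ and $\mathcal D_f$, in complementary degrees). The integer $k$ is locally constant on the connected space $Y$, hence constant. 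Excision identifies $H_*(U,U\setminus\{y_0\})$ with $H_*(Y,Y\setminus\{y_0\})$, so $Y$ is a homology $k$-manifold, and since $y_0$ and $x\in F$ were arbitrary, every fiber is a homology $(n-k)$-manifold. The remaining conditions in the definition of a homology manifold are routine: $F$ is finite-dimensional, locally compact, and metrizable as a closed subset of $X$, while finite-dimensionality of $Y$ follows because $\mathcal D_Y$ is concentrated in degree $k$ (and $Y$ is locally contractible by hypothesis). Finally a homology manifold has non-negative dimension, so $n-k\ge0$, that is $k\le n$.

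I expect the Künneth-type identity for the local homology of the total space to be the main obstacle. A Hurewicz fibration need not be locally trivial, so no product literally splits off, and the fiber-homotopy triviality available over a contractible base is too weak, controlling only homotopy-invariant data. Making the identity rigorous requires the Leray spectral sequence of $f$ together with careful control of the local coefficient systems --- which is exactly where the local contractibility of $Y$ (to kill local monodromy) and the homology-manifold hypothesis on $X$ (to collapse the spectral sequence into Künneth form) are both used. A secondary, routine, point is verifying that $Y$ and the fibers satisfy the finiteness and metrizability conditions built into the definition of a homology manifold.
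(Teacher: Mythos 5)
First, note that the paper does not prove Theorem \ref{thm:ray} at all: it is quoted verbatim from Raymond \cite{Ra}, whose argument is sheaf-theoretic (Leray sheaf of the fibration, Borel--Moore/local homology). Your outline is in the same spirit as that external proof, and your final bookkeeping (local constancy of $k$ on the connected base, treating every fiber as $f^{-1}(y_0)$ for varying $y_0$, $k\le n$ from nonnegativity of dimension) is fine in shape. The aside that all fibers are homotopy equivalent ``so it suffices to analyze a single fiber'' is not a valid reduction for the homology-manifold property, since that property is not homotopy invariant, but you do not actually use it later.

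The genuine gap is the step you yourself flag as the main obstacle: the identity $\mathcal D_X\cong f^*\mathcal D_Y\otimes\mathcal D_f$ with $\mathcal D_f$ restricting on each fiber to the fiber's dualizing complex (equivalently, the K\"unneth-type formula for local homology of the total space). For a Hurewicz fibration this is not a formal consequence of Verdier duality or of a Leray spectral sequence: the base-change and projection-type formulas $f^!(-)\simeq f^*(-)\otimes f^!\mathbb Z$ and ``$f^!\mathbb Z$ restricts to the dualizing complex of the fiber'' hold for locally trivial (or suitably proper/smooth) maps, whereas a Hurewicz fibration need not be locally trivial, and the homotopy-lifting property only controls homotopy-invariant data (e.g.\ local constancy of the Leray sheaf over the locally contractible base), not Borel--Moore/local homology. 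Asserting the identity is essentially restating the theorem in dualizing-complex language; making it rigorous is exactly where Raymond's proof does its real work, and your proposal does not supply that argument. There are also two subsidiary unproved points: the ``invertible tensor factors'' step (a stalkwise $A\otimes^L B\simeq\mathbb Z[n]$ forcing $A\simeq\mathbb Z[k]$, $B\simeq\mathbb Z[n-k]$) needs the stalks of $\mathcal D_f$, i.e.\ the local homology of the fiber, to be bounded and finitely generated, which is not known in advance; and the duality formalism on $Y$ presupposes local compactness and finite cohomological dimension of $Y$, which is close to part of the conclusion, so it cannot simply be assumed to get started.
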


\subsection{Recognition theorems}\label{sec:rec}

In the proofs of the main theorems, the key is to ``recognize'' topological manifolds among homology manifolds (cf.\ Theorem \ref{thm:moo}) and fiber bundles among Hurewicz fibrations.
Our proofs rely heavily on such \textit{recognition theorems} listed in \cite[Sections 4.3, 6.1]{LN:top}.
However, these recognition theorems are used in the topological part of the proofs, match exactly the CAT($0$) case and thus omitted here.
For this reason, although the use of these theorems is essential, we avoid including their statements and numerous related references here.
See \cite[Sections 4.3, 6.1]{LN:top} and the references therein.

\section{Facts}\label{sec:fac}

In this section, we collect several facts known for CAT($0$) spaces whose proofs also work for BNPC spaces.
For completeness (and as a warm-up), we include the proofs whenever possible.
The reader familiar with Lytchak--Nagano's theory of GCBA spaces \cite{LN:geo}, \cite{LN:top} and Thurston's result \cite{Th:cat} may skip this section.

The first proposition is about the finite-dimensionality of GNPC spaces (\cite[Proposition 5.1, Corollary 5.4]{LN:geo}).
A metric space $X$ is called \textit{doubling}, or more specifically \textit{$L$-doubling}, if there exists a doubling constant $L>0$ such that every $r$-ball in $X$ can be covered by at most $L$ balls of radius $r/2$.
Clearly, if any $r$-ball in $X$ contains at most $L$ points with pairwise distance $\ge r/2$, then $X$ is $L$-doubling (we say that such points with pairwise distance $\ge r/2$ are \textit{$r/2$-separated}).
A metric space is \textit{locally doubling} if each point has a neighborhood that is doubling, where the doubling constant may depend on each point.

\begin{prop}\label{prop:dim}
Any GNPC space $X$ is locally doubling.
More precisely, any tiny ball in $X$ is doubling.
In particular, $X$ has locally finite Hausdorff and topological dimensions, and is an ANR.
\end{prop}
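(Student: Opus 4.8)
The plan is to concentrate on the doubling property, which is the only substantive point; local finiteness of dimension and the ANR conclusion then follow from standard facts. Since every point of $X$ has a tiny ball as a neighborhood, ``locally doubling'' reduces to ``every tiny ball is doubling'', and by the elementary criterion recalled just before the statement it suffices to find a constant $L_0$, depending only on the tiny ball $B$, such that any $\rho/2$-separated family in $B$ lying within a single $\rho$-ball has at most $L_0$ members. Let $r$ be the radius of $B$; all points involved will lie in the compact BNPC space $10\bar B$. When $\rho\ge r/2$ this is immediate, since the family is then $(r/4)$-separated in the compact space $10\bar B$; so we may take $L_0$ to be the maximal size of an $(r/4)$-separated subset of $10\bar B$, which is finite.

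For small $\rho$ the idea is a \emph{geodesic blow-up}. Let $x_1,\dots,x_N\in B$ be $\rho/2$-separated and all within distance $\rho$ of some $q\in B$, with $\rho\le r/2$; put $\lambda:=r/(2\rho)\ge 1$. Each $qx_i$ is a shortest path contained in the convex ball $B$, so by the extendability property of tiny balls (Section~\ref{sec:gnpc}) it prolongs to a shortest path $q\hat x_i$ of length $\lambda\,|qx_i|\le r/2$ inside $10\bar B$, so that $\hat x_i\in\bar B(q,r/2)\subseteq 10\bar B$. Parametrizing these prolonged geodesics proportionally on $[0,1]$, the point $x_i$ occupies the \emph{common} parameter value $1/\lambda$ on its geodesic; hence Busemann monotonicity at $q$ (Remark~\ref{rem:bnpc}(1), read inside $10\bar B$) gives $|x_ix_j|\le\lambda^{-1}|\hat x_i\hat x_j|$, so $|\hat x_i\hat x_j|\ge\lambda\,|x_ix_j|\ge\lambda(\rho/2)=r/4$. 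Thus $\hat x_1,\dots,\hat x_N$ is an $(r/4)$-separated family in $10\bar B$, so $N\le L_0$ and $B$ is $L_0$-doubling. The point requiring care --- and the reason a tiny ball is defined with the roomy factor $10$ --- is exactly that the rescaled geodesics remain defined and contained in $10\bar B$ throughout; this is what the bound $9r$ on the extension length and the inclusion $\bar B(q,r/2)\subseteq 10\bar B$ guarantee. I expect this bookkeeping to be the only genuine obstacle, the expansion estimate itself being a direct reading of Busemann monotonicity.

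The remaining assertions are formal. An $L_0$-doubling metric space has Hausdorff dimension at most $\log_2 L_0$, so each tiny ball, and hence $X$ locally, has finite Hausdorff dimension; since the topological (covering) dimension of a metric space is bounded above by its Hausdorff dimension, $X$ has locally finite topological dimension as well. Finally, a tiny ball $B$ is a convex subset of a BNPC space, hence contractible via the geodesic contraction (Section~\ref{sec:bnpc}) and in particular locally contractible; being also locally compact and, by the above, finite-dimensional, $B$ is an ANR by the characterization recalled in Section~\ref{sec:hom}. As $X$ is metrizable and admits a cover by such ANR neighborhoods, $X$ is itself an ANR.
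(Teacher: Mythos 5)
Your proof is correct and follows essentially the same route as the paper: blow up the $\rho/2$-separated configuration along extensions of the shortest paths from $q$ to a fixed scale, use Busemann monotonicity to see the blown-up points stay separated at that scale, and bound their number by compactness of $10\bar B$; the dimension and ANR conclusions are handled by the same standard facts. The only difference is bookkeeping (the paper rescales the tiny ball to radius $1$ and produces $1$-separated points, while you keep radius $r$ and produce $(r/4)$-separated points), which is immaterial.
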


\begin{proof}
The proof of the first statement is exactly the same as \cite[Proposition 5.1]{LN:geo}.
Let $B$ be a tiny ball centered at $p\in X$.
By rescaling, we may assume $B=B(p,1)$ (note that the doubling constant is a scaling invariant).

Since $\bar B(p,10)$ is compact, the maximal number of $1$-separated points in $\bar B(p,10)$ is bounded, say by $L>0$.
We show that $B(p,1)$ is $L$-doubling.
Let $q\in B(p,1)$ and $0<r<2$ (the case $r\ge2$ is trivial).
Consider $r/2$-separated points $\{x_\alpha\}_{\alpha=1}^N$ in $B(q,r)$.
We denote by $(2/r)x_\alpha$ a point on an extension of the shortest path $qx_\alpha$ at distance $(2/r)|qx_\alpha|$ from $q$, which is contained in $\bar B(p,10)$.
By the Busemann monotonicity \eqref{eq:mono}, $\{(2/r)x_\alpha\}_{\alpha=1}^N$ is $1$-separated.
Therefore $N\le L$, as desired.

It is easy to see that the Hausdorff dimension is not greater than the doubling dimension, i.e., $\log_2L$.
It is also known that the topological dimension is not greater than the Hausdorff dimension.
Since $X$ is locally compact, locally finite-dimensional, and locally contractible, it is an ANR.
\end{proof}

\begin{rem}\label{rem:anr}
The fact that $X$ is an ANR holds even without assuming local compactness or geodesic completeness; that is, any locally BNPC space is an ANR.
Indeed, the proof of \cite[Theorem 3.2]{Kram} for locally CAT($\kappa$) spaces applies to locally BNPC spaces.
See also the references listed in \cite[P.\ 353, Footnote 4]{Kram}.
\end{rem}

\begin{rem}\label{rem:glodim}
For $X$ to be globally finite-dimensional, we have to assume that the local doubling constants are uniformly bounded above.
This is the same as in the GCBA case.
A counterexample is a real line with Euclidean $n$-space attached to each $n$ on the line.
\end{rem}

\begin{rem}\label{rem:emb}
It is known that if in addition $X$ is (globally) BNPC and doubling, then it admits a bi-Lipschitz embedding into finite-dimensional Euclidean space: see \cite{LP} and \cite{Zo}.
Compare with the GCBA case \cite[Proposition 5.3]{LN:geo}.
\end{rem}

\begin{rem}\label{rem:dim}
For GCBA spaces, the Hausdorff dimension is equal to the topological dimension, both locally and globally (\cite[Theorem 1.1]{LN:geo}).
For now the authors do not know whether the same holds true for GNPC spaces (Problem \ref{prob:dim}).
Note that this claim is false without geodesic completeness.
A counterexample is a tree branching very fast, described in \cite[P.\ 411, Footnote 3]{Kl}.
See also \cite[Theorem D]{Kl} for a related result under cocompact group actions.
\end{rem}

The second proposition provides a sufficient condition for a BNPC space to be geodesically complete (\cite[Lemma 2.2]{LN:top}, \cite[Proposition 2.1]{Th:g}; see also \cite[Theorem 1.5(1)]{LS} for a slightly stronger statement).
In particular, any (locally) BNPC space in the main theorems is (locally) geodesically complete.

\begin{prop}\label{prop:geo}
Let $X$ be a locally BNPC space.
Suppose for any point $p\in X$, there exists a tiny ball $B$ centered at $p$ such that $B\setminus\{p\}$ is not contractible.
Then $X$ is locally geodesically complete.

In particular, every locally BNPC homology manifold (without boundary) is locally geodesically complete, and hence a GNPC space.
\end{prop}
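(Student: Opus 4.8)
The plan is to argue by contraposition, so assume $X$ is not locally geodesically complete. I would first reduce to a convenient normal form: there is a geodesic that cannot be extended past one of its endpoints, which I call $p$. (If the obstruction sits at the other endpoint one reverses the geodesic; if the geodesic extends a little past $p$ but no further, one reattaches the front segment, using that a curve obtained by concatenating two shortest paths at a common endpoint without immediate backtracking is again a geodesic there.) Passing to a short terminal subsegment — still a genuine shortest path, and still non-extendable past $p$ by the same concatenation principle — and invoking the hypothesis at $p$ to choose a tiny ball $B$ with $B\setminus\{p\}$ not contractible, I may assume we have a shortest path $qp$ from a point $q\in B\setminus\{p\}$ to $p$ admitting no geodesic extension beyond $p$ in $X$. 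This normal-form reduction is the only genuinely delicate step; everything afterwards is routine for BNPC tiny balls.

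The geometric core is an elementary observation inside $B$, where geodesics are unique and balls are convex: for every $y\in B$, the shortest path $qy$ does not pass through $p$. Indeed, if $p$ lay on $qy$, then by uniqueness of geodesics in $B$ the path $qy$ would be the concatenation of $qp$ with $py$, hence a shortest path — and therefore a geodesic strictly extending $qp$ past $p$, contrary to the choice of $q$.

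Granting this, the contraction is immediate. Define $H\colon(B\setminus\{p\})\times[0,1]\to B$ by letting $H(x,t)$ be the point at parameter $t$ on the constant-speed shortest path from $x$ to $q$, so $H(\cdot,0)=\mathrm{id}$ and $H(\cdot,1)\equiv q$. The image lies in $B$ by convexity of the ball, avoids $p$ by the observation, and $H$ is continuous by the continuous dependence of geodesics on their endpoints (which in a BNPC space follows from the Busemann monotonicity \eqref{eq:mono}). Hence $B\setminus\{p\}$ is contractible, contradicting the hypothesis; so $X$ is locally geodesically complete.

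For the last assertion, let $X$ be a locally BNPC homology $n$-manifold without boundary. I would verify the hypothesis of the proposition directly. Fix $p\in X$ and a tiny ball $B=B(p,r)$; by excision $H_\ast(X,X\setminus\{p\})\cong H_\ast(B,B\setminus\{p\})$, which by the definition of a homology manifold (no boundary, so $p$ is an interior point) is the local homology of $\mathbb R^n$ at a point. Since $B$ is convex, hence contractible, the long exact sequence of the pair yields $\tilde H_{n-1}(B\setminus\{p\})\cong H_n(B,B\setminus\{p\})\cong\mathbb Z$ (the case $n=0$ being trivial, as then $X$ is discrete). In particular $B\setminus\{p\}$ is not contractible, so the first part applies and $X$ is locally geodesically complete; being in addition separable and locally compact (by the definition of a homology manifold), $X$ is a GNPC space.
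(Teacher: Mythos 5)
Your proof is correct and is essentially the paper's argument: a shortest path ending at $p$ that admits no geodesic extension beyond $p$, combined with uniqueness of shortest paths in the tiny ball, shows that the geodesic contraction to a point $q$ on that path restricts to $B\setminus\{p\}$, and for a homology manifold excision and contractibility of $B$ give $\tilde H_{n-1}(B\setminus\{p\})\cong H_n(B,B\setminus\{p\})\neq 0$, exactly as in the paper (which phrases the last step via an exact-sequence argument rather than your long exact sequence of the pair). One caution on your normal-form reduction: the principle you cite, that concatenating two shortest paths at a common endpoint without immediate backtracking is again a geodesic, is false in general (two Euclidean segments meeting at an angle); the reduction nevertheless goes through because the pieces you glue agree along a nondegenerate subinterval of the original geodesic, so near the junction the concatenated curve coincides with a geodesic and is therefore locally a shortest path for that reason.
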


\begin{proof}
We show that if some shortest path $\gamma$ with endpoint $p$ cannot be extended beyond $p$, then any punctured tiny ball around $p$ is contractible.

Let $B\subset X$ be an arbitrary tiny ball centered at $p$.
To show that $B\setminus\{p\}$ is contractible, we fix a point $q\in\gamma\cap B\setminus\{p\}$.
Recall that $B$ is convex and therefore contractible to $q$ by the geodesic contraction (see Section \ref{sec:bnpc}).
This contraction can be restricted to $B\setminus\{p\}$.
Indeed, the non-extendability of $\gamma$ and the uniqueness of shortest paths imply that any shortest path between $q$ and $x\in B\setminus\{p\}$ does not pass through $p$.
This completes the proof of the first statement.

The second statement immediately follows from the first.
Suppose $X$ is a locally BNPC homology $n$-manifold and let $B$ be a tiny ball around $p\in X$.
By the excision theorem, $H_n(B,B\setminus\{p\})$ is nontrivial.
Since $B$ is contractible, the Mayer--Vietoris exact sequence shows that $H_n(B,B\setminus\{p\})$ is equal to $H_{n-1}(B\setminus\{p\})$. 
Therefore a punctured tiny ball $B\setminus\{p\}$ has nontrivial homology and is noncontractible, as desired.
Note that our definition of a homology manifold includes the other conditions of a GNPC space, i.e., separability and local compactness (see Section \ref{sec:hom}).
\end{proof}

\begin{rem}\label{rem:dunce}
The converse of the above statement is not true; that is, there exists a GNPC (actually GCBA) space for which every small punctured ball at some point is contractible.
Indeed, any finite simplicial complex admits a CAT($1$) metric (\cite{Be:bor}).
Therefore, if one takes a compact, contractible simplicial complex $K$ without boundary, such as the dunce hat or Bing's house, one can equip it with a geodesically complete CAT($1$) metric.
Then the Euclidean cone over $K$ gives a counterexample, as every punctured ball around the vertex is contractible.
\end{rem}

The third proposition addresses the local topology of BNPC homology manifolds (\cite[Lemma 3.2]{LN:top}, \cite[Proposition 2.7]{Th:cat}).

\begin{prop}\label{prop:hom}
Let $X$ be a locally BNPC homology $n$-manifold (and hence GNPC from Proposition \ref{prop:geo}).
Then every closed tiny ball $\bar B$ is an ANR homology $n$-manifold with boundary. Furthermore,
the boundary as a homology manifold is the corresponding metric sphere $\partial B$ and has the same homology as the $(n-1)$-sphere.
\end{prop}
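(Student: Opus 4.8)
The plan is to verify the three assertions in turn. Write $\bar B=\bar B(p,r)$. Since $B$ is a tiny ball, $10\bar B$ is a compact BNPC space, so inside $\bar B$ all BNPC properties hold: shortest paths are unique, metric balls are convex, and every convex set admits the geodesic contraction to any of its points. In particular $\bar B$ and the open ball $B(p,r)$ are convex, hence contractible. Moreover $\bar B$ is compact (a closed subset of $10\bar B$), separable, metrizable, and of finite topological dimension since $X$ is a GNPC space, so all the point-set requirements for a homology manifold with boundary are met.

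First I would check the local homology condition. For an interior point $x\in B(p,r)$, two applications of excision give $H_\ast(\bar B,\bar B\setminus\{x\})\cong H_\ast(B(p,r),B(p,r)\setminus\{x\})\cong H_\ast(X,X\setminus\{x\})$, which is the local homology of an interior point of $D^n$ because $X$ is a homology $n$-manifold. For a point $x\in\partial B$, the point is that $x\neq p$ and, by uniqueness of shortest paths, the shortest path from $p$ to any $y\in\bar B\setminus\{x\}$ avoids $x$; indeed if it passed through $x$ then $|px|\le|py|\le r=|px|$ would force $y=x$. This is the same mechanism as in the proof of Proposition \ref{prop:geo}, and it shows that the geodesic contraction of $\bar B$ toward $p$ restricts to a contraction of $\bar B\setminus\{x\}$. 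Hence $\bar B$ and $\bar B\setminus\{x\}$ are both contractible, so $H_\ast(\bar B,\bar B\setminus\{x\})=0$, matching the local homology of a boundary point of $D^n$. Thus $\bar B$ is a homology $n$-manifold with boundary; since in addition $\bar B$ is locally contractible — any intersection $\bar B\cap B(x,\varepsilon)$ with a small ball is convex, hence contractible — it is an ANR. By the above computation the points of $\bar B$ with nonvanishing $n$-th local homology are exactly those of $B(p,r)$, so the homology-manifold boundary of $\bar B$ is precisely the metric sphere $\partial B$, and by Theorem \ref{thm:mit} it is a homology $(n-1)$-manifold without boundary. (Assuming $n\ge1$, we also note $\partial B\neq\emptyset$: by local geodesic completeness a geodesic issuing from $p$ extends to length $r$ inside $\bar B$ and so meets $\partial B$.)

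It remains to compute the homology of $\partial B$, and here I would invoke Poincaré--Lefschetz duality for the compact ANR homology $n$-manifold with boundary $\bar B$. Its interior $B(p,r)$ is convex, hence contractible and in particular orientable, so $\bar B$ is orientable and $H_k(\bar B,\partial B)\cong H^{n-k}(\bar B)$; since $\bar B$ is contractible this is $\mathbb Z$ for $k=n$ and $0$ otherwise. Substituting into the long exact sequence of the pair $(\bar B,\partial B)$ and using $\tilde H_\ast(\bar B)=0$ gives $\tilde H_{k-1}(\partial B)\cong H_k(\bar B,\partial B)$ for every $k$, whence $\tilde H_\ast(\partial B)\cong\tilde H_\ast(S^{n-1})$.

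The geometry in this argument is light: convexity of metric balls and the geodesic-contraction trick reused from Proposition \ref{prop:geo} do all the work, and everything else is standard homology-manifold theory (excision, Theorem \ref{thm:mit}, Lefschetz duality). The step needing the most care is the local homology computation at a point of the metric sphere, where one must make sure the contraction toward the center really does avoid the removed point; one should also not skip the small verifications that $\bar B$ is orientable (so that integral Lefschetz duality applies) and that $\partial B$ is nonempty.
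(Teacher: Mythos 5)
Your proof is correct and follows essentially the same route as the paper: the geodesic contraction to the center $p$ (restricted to $\bar B\setminus\{x\}$ for $x\in\partial B$) gives the vanishing of local homology at sphere points, excision handles interior points, and Theorem \ref{thm:mit} plus Poincar\'e--Lefschetz duality with the exact sequence of the pair yields the homology of $\partial B$. The only difference is that you spell out the details the paper leaves to the reader (the avoidance argument at sphere points, orientability, nonemptiness of $\partial B$, and the exact-sequence computation), which is fine.
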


\begin{proof}
To prove that $\bar B$ is a homology manifold with boundary $\partial B$, it suffices to show that the local homology vanishes at every point $q\in \partial B$.
That follows immediately from the Mayer--Vietoris exact sequence and the contractibility of $\bar B$ and $\bar B\setminus\{q\}$.
Here both contractions are given by the geodesic contraction to $p$.
Furthermore, since $\bar B$ is locally contractible, it is an ANR (and hence an AR, i.e., an absolute retract; see \cite[Section 14]{De}).

By Theorem \ref{thm:mit}, the boundary $\partial B$ is a homology $(n-1)$-manifold.
Since $\bar B$ is contractible, the homology of $\partial B$ can be calculated by the Mayer--Vietoris exact sequence and the Poincar\'e--Lefschetz duality.
We leave the details to the reader.
\end{proof}

The fourth theorem is the $3$-dimensional version of Theorems \ref{thm:sing} and \ref{thm:four}.
The first half is due to Thurston \cite[Theorem 3.3]{Th:cat} and its second half follows from Rolfsen \cite[Theorem 1]{Ro} and Brown \cite[Theorem]{Br}.
Later we will provide an alternative proof of the first half (Corollary \ref{cor:three}).

\begin{thm}\label{thm:three}
Every locally BNPC homology $3$-manifold is a topological manifold.
Moreover, if it is (globally) BNPC, then it is homeomorphic to Euclidean space.
\end{thm}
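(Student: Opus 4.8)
The plan, following Thurston \cite{Th:cat}, is to identify sufficiently small closed balls of $X$ with topological $3$-cells, conclude that $X$ is a topological $3$-manifold, and then in the global case recover $\mathbb R^3$ as a monotone union of such cells. First I would pass to tiny balls. By Proposition~\ref{prop:geo}, $X$ is GNPC, so each $p\in X$ has arbitrarily small closed tiny balls $\bar B=\bar B(p,r)$. By Proposition~\ref{prop:hom}, $\bar B$ is a compact ANR homology $3$-manifold with boundary, whose homology-manifold boundary is the metric sphere $\partial B$; and $\partial B$ is a homology $2$-manifold, without boundary by Theorem~\ref{thm:mit}, with the homology of $S^2$. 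Applying Moore's theorem (Theorem~\ref{thm:moo}) to $\partial B$ shows that it is a closed topological surface, and being connected with trivial first homology it must be $S^2$ by the classification of surfaces. Since $\bar B$ is contractible via the geodesic contraction to $p$, it is thus a compact homology $3$-cell carrying, from the ambient metric, a \emph{strongly convex} metric: inside the convex ball $\bar B$ any two points are joined by a unique shortest path, depending continuously on its endpoints (see Section~\ref{sec:fac}).

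Next, Rolfsen's theorem \cite{Ro} on strongly convex metrics in cells applies to $\bar B$ and yields $\bar B\cong D^3$; hence $p$ has a neighborhood homeomorphic to $\mathbb R^3$, namely the interior of $\bar B$, so $p$ is a manifold point. As $p$ was arbitrary, $X$ is a topological $3$-manifold, which proves the first assertion. For the second assertion, suppose in addition that $X$ is globally BNPC. Then $X$ is locally compact and complete, hence proper, and also contractible and geodesically complete. Fix $o\in X$. For every $r>0$ the closed ball $\bar B(o,r)$ is convex and its enlargement $\bar B(o,10r)$ is compact and BNPC (a closed convex subset of a BNPC space is BNPC), so $\bar B(o,r)$ is a tiny ball; the argument above then gives $\bar B(o,r)\cong D^3$ with $\partial B(o,r)\cong S^2$. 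Since $\bar B(o,r)$ lies in the interior $B(o,R)$ of $\bar B(o,R)$ whenever $r<R$, the space $X=\bigcup_{i\ge1}\bar B(o,i)$ is a monotone union of topological $3$-cells, each contained in the interior of the next, and Brown's theorem \cite{Br} that such a union is an open $3$-cell gives $X\cong\mathbb R^3$.

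The step I expect to be the crux is the passage from a homology $3$-cell to a topological $3$-cell, i.e.\ the invocation of Rolfsen's theorem, which itself rests on Moore's theorem and the generalized Schoenflies theorem and is available only in low dimensions. It is precisely this step that has no analogue in higher dimensions, because of the double suspension phenomenon mentioned in Section~\ref{sec:com}, which is why the statement is confined to dimension $3$. By contrast, the remaining ingredients---the geodesic contraction, Propositions~\ref{prop:geo} and~\ref{prop:hom}, and Brown's monotone-union theorem---are comparatively formal.
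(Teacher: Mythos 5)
Your second half (Rolfsen plus Brown's monotone--union theorem) is essentially the paper's argument, but your first half has a genuine gap at the invocation of Rolfsen's theorem. At the point where you apply it, all you know about $\bar B$ is that it is a compact, contractible ANR homology $3$-manifold with boundary $\partial B\cong S^2$, equipped with a strongly convex metric. Rolfsen's theorem \cite{Ro} does not accept such input: its hypothesis is that the space is a genuine $3$-manifold, not a homology manifold, and no statement of the form ``compact $3$-dimensional SC space $\Rightarrow$ $3$-cell'' can hold without some such hypothesis --- the $\ell^2$-product of a tripod with a square is a compact, contractible, $3$-dimensional ANR with unique, continuously varying shortest paths (it is even CAT($0$)), and it is not a cell, indeed not even a homology manifold. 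Nor can you retreat to the Borsuk-type version of the statement that assumes segments do not ramify: branching of geodesics is exactly what the BNPC setting (as opposed to the G-space setting) permits, even on manifolds (a Euclidean cone of total angle $3\pi$ is a BNPC plane whose geodesics branch at the vertex). So using Rolfsen to conclude $\bar B\cong D^3$ presupposes precisely what the first assertion of the theorem claims --- that the interior points of $\bar B$ are manifold points --- and your proposal contains no substitute for that step; knowing $\partial B\cong S^2$ via Mitchell and Moore says nothing about the interior, and recognizing $3$-dimensional homology manifolds as manifolds is not a formality.

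The paper fills this hole differently: for the first half it adapts the reverse implication of Thurston's Theorem~3.3 in \cite{Th:cat}, whose geometric input is only the geodesic contraction but whose topological input is Moore's recognition theorem (Theorem \ref{thm:moo}) \emph{together with decomposition theory}; an independent proof is later given as Corollary \ref{cor:three} via the strainer machinery (Theorem \ref{thm:rev}). Only after the first half is established does the second half become the ``immediate'' consequence of Rolfsen \cite{Ro} and Brown \cite{Br} that you describe, and your monotone-union argument for that part is fine modulo its dependence on the first half (the paper's parenthetical alternative --- Poincar\'e plus generalized Schoenflies --- likewise requires already knowing that $\bar B$ is a manifold with boundary).
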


\begin{proof}
The proof of the first half is the same as the reverse implication of \cite[Theorem 3.3]{Th:cat}, where Thurston assumed the CAT($\kappa$) condition instead of the BNPC condition.
However, the only geometric property he used is the geodesic contraction, which is available in our BNPC setting, and the other ingredients are the $2$-manifold recognition (Theorem \ref{thm:moo}) and the decomposition theory.
Therefore the original proof can be applied to BNPC spaces.
We leave the details to the reader.

The second half is an immediate consequence of the results of Rolfsen \cite[Theorem 1]{Ro} and Brown \cite[Theorem]{Br}.
Note that our BNPC metric is SC in the sense of Rolfsen.
(The resolution of the Poincar\'e conjecture, together with the generalized Schoenflies theorem, also implies that any compact contractible $3$-manifold with boundary is a $3$-cell.)
\end{proof}

The last theorem is the main result of Thurston \cite[Theorem 1.6(1)]{Th:cat}.
This plays an essential role in the proof of Theorem \ref{thm:four}, as in the CAT($0$) case.
Later we will show that the tameness assumption is always satisfied (Theorem \ref{thm:4sph}).

\begin{thm}\label{thm:thur}
Let $X$ be a BNPC topological $4$-manifold with a topologically tame point $p$, i.e., every metric sphere around $p$ is a closed topological manifold.
Then $X$ is homeomorphic to Euclidean space.
\end{thm}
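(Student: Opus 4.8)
The plan is to transplant Paul Thurston's proof of the CAT($\kappa$) case \cite[Theorem 1.6(1)]{Th:cat}. Its topological superstructure --- Freedman's disc-embedding theorem in dimension $4$, M.~Brown's theorem that a monotone union of open $n$-cells is an open $n$-cell, Perelman's resolution of the $3$-dimensional Poincar\'e conjecture, and the recognition theorems of \cite[Section 6.1]{LN:top} --- is insensitive to the curvature convention, and its only geometric inputs are the geodesic contraction centered at $p$ and the structure of metric balls and spheres recorded in Section~\ref{sec:fac}. I would organize the argument in two steps.

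\textbf{Step 1 (every closed metric ball $\bar B(p,r)$ is a topological $4$-cell).} By Proposition~\ref{prop:hom}, $\bar B(p,r)$ is a compact, contractible ANR homology $4$-manifold whose homology boundary is the metric sphere $\partial B(p,r)$, and $\partial B(p,r)$ has the homology of $S^3$; tameness upgrades $\partial B(p,r)$ to a closed topological $3$-manifold, hence a homology $3$-sphere. Next I would show $\partial B(p,r)$ is simply connected: for small $r$ this is immediate, since $p$ is a manifold point, so a small punctured ball $B(p,r)\setminus\{p\}$ --- an open $4$-manifold with one point deleted --- is simply connected, and it is homotopy equivalent to $\partial B(p,r)$ by Theorem~\ref{thm:link}; by Perelman's theorem $\partial B(p,r)\cong S^3$. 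Then $\bar B(p,r)$ is a topological $4$-manifold with boundary: the metric ball $B(p,r)$, which is its interior, is open in $X$ and hence a topological $4$-manifold; its boundary is $S^3$; and the boundary is $1$-LCC in $\bar B(p,r)$, which is where tameness enters (exactly as in Thurston's argument) and is handled by the recognition results of \cite{LN:top}. Finally, Freedman's theorem identifies the compact contractible topological $4$-manifold $\bar B(p,r)$ with boundary $S^3$ as $D^4$.

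\textbf{Step 2 (assembling $X$).} Being contractible, $X$ is noncompact; being (globally) BNPC, it is complete, locally compact and geodesic, hence proper, so $X=\bigcup_{k\in\mathbb N}B(p,k)$ with each $\bar B(p,k)$ compact. Local geodesic completeness of $X$ forces $\operatorname{int}\bar B(p,k)=B(p,k)$ and $\overline{B(p,k)}=\bar B(p,k)$, so by Step~1 each $B(p,k)$ is an open $4$-cell with $\overline{B(p,k)}\subset B(p,k+1)$. M.~Brown's monotone union theorem now yields $X\cong\mathbb R^4$.

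The point requiring real work --- and exactly where Thurston's argument uses geometry we do not possess --- is the simple connectivity of metric spheres $\partial B(p,r)$ of arbitrary, in particular large, radius (and, relatedly, the $1$-LCC condition in Step~1). In a globally CAT($0$) space the geodesics issuing from $p$ never branch, so radial scaling is a homeomorphism and all metric spheres around $p$ are homeomorphic; in a BNPC space these geodesics may branch, and only the contraction \emph{toward} $p$ --- which involves solely the unique geodesics ending at $p$ --- remains available. I expect the main task to be to show that this contraction nevertheless forces all metric spheres around $p$ to be homotopy equivalent, so that $\partial B(p,r)\cong S^3$ propagates from small radii to all radii; once this is in hand, the two steps above go through exactly as in the CAT($0$) case.
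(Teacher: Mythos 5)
There is a genuine gap, and you have identified it yourself without closing it. Your Step 1 only establishes $\partial B(p,r)\cong S^3$ for small $r$ (via Theorem \ref{thm:link} and the fact that $p$ is a manifold point); for arbitrary, in particular large, radii you offer no argument that the metric spheres are simply connected (or even that their homotopy type is constant in $r$), nor do you prove the $1$-LCC/collaring statement needed to see $\bar B(p,r)$ as a compact topological manifold with boundary before Freedman's recognition of $D^4$ can be invoked. You explicitly defer both points (``I expect the main task to be to show\dots''), but these are precisely the heart of the theorem: without them Step 2 cannot start, since the interior of a compact contractible $4$-manifold whose boundary is a nontrivial homology sphere is not an open $4$-cell. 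So the proposal is an outline with its central step missing, not a proof.

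Moreover, your diagnosis of where the difficulty lies rests on a false premise. In a CAT($0$) space geodesics emanating from $p$ may very well branch when extended outward (trees are CAT($0$)); radial rescaling is not a homeomorphism there either, and Thurston's argument never uses it. The only geometric input in Thurston's proof of \cite[Theorem 1.6(1)]{Th:cat} is the geodesic contraction toward $p$, i.e., the unique geodesics ending at $p$ together with the convexity of balls -- and this is available verbatim in the BNPC setting. This is exactly the point of the paper's proof: one checks that Thurston's original argument (which, in particular, handles the topology of large metric spheres and the tameness/collaring issues by decomposition-theoretic means) uses no CAT($0$)-specific comparison beyond the contraction, and then transplants it word for word. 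Your route instead attempts to rebuild the proof from the local results of Sections \ref{sec:str}--\ref{sec:prfloc} plus Perelman and Freedman, and it is precisely the global step that this machinery does not supply (in the paper, global control of large spheres is what the extended strainers of Section \ref{sec:exstr} are for, and even there it feeds into Theorem \ref{thm:4sph}, not into a new proof of Theorem \ref{thm:thur}). To repair the proposal you would either have to actually prove the homotopy-equivalence of all metric spheres around $p$ and the boundary tameness of $\bar B(p,r)$, or simply do what the paper does: verify Thurston's proof line by line with ``CAT($0$)'' replaced by ``BNPC.''
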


\begin{proof}
As in the 3-dimensional case, the only geometric property Thurston used is the geodesic contraction.
Therefore the original proof in \cite{Th:cat} carries over directly to BNPC spaces.
We leave the details to the reader.
\end{proof}

\begin{rem}\label{rem:cont}
In dimension $\ge 3$, many contractible open manifolds are not homeomorphic to Euclidean space.
See for instance \cite{Gui}.
See also \cite[Sections 1.2, 1.3]{LNS} and references therein for geometric structures of such spaces.
\end{rem}

\section{Key lemmas}\label{sec:key}

In this section, we prove key lemmas needed to develop strainers for GNPC spaces in the next section.
They involve two different notions of angle, so we divide this section into two subsections: in Section \ref{sec:bran}, we introduce the angle related to the branching of geodesics, and in Section \ref{sec:orth}, we introduce the angle related to the orthogonality of geodesics.
These subsections are independent; neither part relies on results from the other.

Let $\triangle pqr$ be a geodesic triangle in a tiny ball $B$ of a GNPC space $X$.
The two angles at $p$ we will introduce are denoted by
\[\angle_p(q,r),\quad\angle qpr.\]
These two angles are in a sense related to the Busemann monotonicity \eqref{eq:mono} and Busemann convexity \eqref{eq:conv}, respectively.
In general they do not coincide unless $X$ is a GCBA space.
This asymmetry of angles is the central problem we must deal with.

Before delving into the details, we establish key notations that will be used extensively in this and the next two sections.

\begin{nota}\label{nota:del}
We use the following symbols to express errors (as in \cite{BGP}):
\begin{itemize}
\item $\delta\ll1$ denotes a small positive number;
\item $\varkappa(\delta)$ denotes various positive functions depending only on $\delta$ such that $\varkappa(\delta)\to0$ as $\delta\to 0$ (usually nondecreasing and $\varkappa(\delta)\gg\delta$).
\end{itemize}
To be more precise about $\varkappa$, even if different functions appear in close places, we will use the same symbol.
That is, after claiming $c=\varkappa(\delta)$ in the first line, we may claim $2c=\varkappa(\delta)$ in the next line, where the second $\varkappa(\delta)$, strictly speaking, should be twice the first one.
However, this eliminates redundant calculations and makes the arguments clearer.
\end{nota}

\begin{rem}\label{rem:del}
If necessary, all the $\varkappa$ appearing below can be computed explicitly by following the arguments in detail.
Later, we will specify an upper bound for $\delta$ in the next two sections, where we introduce the notion of a \textit{$(k,\delta)$-strainer} (and an \textit{extended $(k,\delta)$-strainer}). This ensures that all the statements regarding (extended) $(k,\delta)$-strainers hold whenever $0<\delta<\delta_k$, where $\delta_k$ is a constant depending only on $k$.
\end{rem}

\subsection{Almost nonbranching}\label{sec:bran}

In this subsection, we introduce the ``angle of fixed scale,'' which controls the branching of geodesics.
The goal of this subsection is to prove Proposition \ref{prop:bran}, the almost nonbranching of geodesics.

Throughout this subsection:
\begin{itemize}
\item $X$ denotes a GNPC space and $B$ a tiny ball.
\item $p\in B$ denotes an arbitrary fixed point.
\item $x,y\in B\setminus\{p\}$ denote variable points (near $p$).
\item $\tilde\angle_p(x,y)$ denotes the comparison angle at $p$ of the Euclidean triangle with side-lengths $|px|$, $|py|$, and $|xy|$.
\item For $\lambda\ge0$, $\lambda x$ denotes a point on an extension of the shortest path from $p$ to $x$ such that $|p,\lambda x|=\lambda|px|$.
\end{itemize}
In other words, $\lambda x$ is the reparametrization of an extension of the shortest path $px$ by the constant speed $|px|$.
This notation allows us to regard $p$ as if it were the origin of a vector space.
Note that, however, $\lambda x$ is not necessarily unique for $\lambda>1$ because of the branching of geodesics.

Let $p,x,y\in B$ be as above.
The Busemann monotonicity \eqref{eq:mono} literally says that the comparison angle $\tilde\angle_p(tx,ty)$ is nondecreasing in $t\ge 0$ (see Section \ref{sec:cat}).
Therefore we can define the following angle.

\begin{dfn}\label{dfn:bran}
In the situation above, we call
\[\angle_p(x,y):=\lim_{t\to0}\tilde\angle_p(tx,ty)\]
the \textit{angle} at $p$ between $x$ and $y$.
\end{dfn}

\begin{rem}\label{rem:bran}
Unlike the GCBA case, this angle generally depends on (the ratio of) $|px|$ and $|py|$, or in other words, the parametrizations of the shortest paths $px$ and $py$.
\end{rem}

This angle controls the branching of geodesics emanating from $p$.
In the GCBA case, the independence of this angle on parametrizations implies that the tangent cone (i.e., the Gromov--Hausdorff blow-up) at $p$ is a Euclidean cone  (\cite[Corollary 5.7]{LN:geo}).
Since the geodesics from the vertex of the Euclidean cone are nonbranching, one sees that the geodesics from $p$ are almost nonbranching near $p$ (\cite[Proposition 7.3]{LN:geo}).
In the GNPC case, we must restrict this argument to some almost fixed scale.
The following is a uniform version of the convergence in Definition \ref{dfn:bran} with respect to some almost fixed scale.

\begin{lem}\label{lem:bran}
Let $X$ be a GNPC space and $p\in X$.
For any $1\gg\delta>0$, there exists $r_0=r_0(p,\delta)>0$ satisfying the following.
Let $x,y\in B(p,r_0)\setminus\{p\}$ be such that
\[\left|\frac{|py|}{|px|}-1\right|<\delta.\]
Then for any $0<\lambda<2$, we have
\begin{equation}\label{eq:bran}
|\tilde\angle_p(\lambda x,\lambda y)-\angle_p(x,y)|<\varkappa(\delta),
\end{equation}
where $\varkappa(\delta)\to0$ as $\delta\to0$ (see Notation \ref{nota:del}).
\end{lem}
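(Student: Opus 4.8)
My plan is to argue by contradiction through a Gromov--Hausdorff blow-up at $p$, using the doubling property of tiny balls (Proposition~\ref{prop:dim}) to extract a limit and then exploiting the structure of the limit space.

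\emph{Reduction to $\lambda=1$.} First I would remove the dependence on $\lambda$. For $0<\lambda<2$ the point $\lambda x$ lies on a shortest path from $p$ (extensions of shortest paths stay shortest inside the tiny ball), and since that shortest path coincides with $px$ on an initial segment one has $\angle_p(\lambda x,\lambda y)=\angle_p(x,y)$ and $|p,\lambda x|/|p,\lambda y|=|px|/|py|$. By Busemann monotonicity the comparison angle $\tilde\angle_p(t\cdot\lambda x,t\cdot\lambda y)$ is nondecreasing in $t$, so $\tilde\angle_p(\lambda x,\lambda y)\ge\angle_p(\lambda x,\lambda y)=\angle_p(x,y)$; hence only the upper bound $\tilde\angle_p(\lambda x,\lambda y)<\angle_p(x,y)+\varkappa(\delta)$ requires proof. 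Setting $z:=\lambda x$, $w:=\lambda y$ this reduces everything, up to enlarging the ball radius by a factor $2$, to: if $z,w$ lie in a small punctured ball around $p$ with $\bigl||pz|/|pw|-1\bigr|<\delta$, then $\tilde\angle_p(z,w)-\angle_p(z,w)<\varkappa(\delta)$.

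\emph{Blow-up.} Suppose this fails: there are $\delta_j\to0$ and $z_j,w_j\to p$ with $|pz_j|/|pw_j|\to1$ and $\tilde\angle_p(z_j,w_j)-\angle_p(z_j,w_j)\ge\varepsilon_0>0$. Rescaling the metric by $1/|pz_j|$ (assuming $|pz_j|\le|pw_j|$), the $L$-doubling of a fixed tiny ball around $p$ makes the pointed spaces $(X,p,d/|pz_j|)$ precompact in the pointed Gromov--Hausdorff topology; pass to a limit $(Y,o)$ with $z_j\to\zeta$, $w_j\to\omega$ and $|o\zeta|=|o\omega|=1$. Since the BNPC convexity inequality passes to such limits, and since shortest paths in a tiny ball extend to length comparable to its radius (which blows up under rescaling), $Y$ is again a GNPC space -- a tangent cone of $X$ at $p$. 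Comparison angles converge along converging shortest paths, so the nondecreasing functions $t\mapsto\tilde\angle_p(tz_j,tw_j)$ converge pointwise on $(0,1]$ to $t\mapsto\tilde\angle_o^Y(t\zeta,t\omega)$ (degenerate limit triangles being treated separately), and from $\angle_p(z_j,w_j)\le\tilde\angle_p(tz_j,tw_j)$ for all $t>0$ one gets $\limsup_j\angle_p(z_j,w_j)\le\angle_o^Y(\zeta,\omega)$. Combined with $\tilde\angle_p(z_j,w_j)\to\tilde\angle_o^Y(\zeta,\omega)$ and the bad inequality, this forces $\tilde\angle_o^Y(\zeta,\omega)\ge\angle_o^Y(\zeta,\omega)+\varepsilon_0$ \emph{provided} the infinitesimal angle does not strictly jump upward in the limit, i.e.\ provided $\angle_o^Y(\zeta,\omega)$ does not exceed $\lim_j\angle_p(z_j,w_j)$.

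\emph{Main obstacle.} The crux -- and where the GNPC case genuinely departs from the GCBA case -- is to rule out this surviving deficit, i.e.\ to show that in the tangent cone $Y$ the comparison angle between two geodesics of \emph{equal} length issuing from the cone point $o$ is scale-invariant, $\tilde\angle_o^Y(\zeta,\omega)=\angle_o^Y(\zeta,\omega)$. For CAT$(0)$ spaces this is automatic because the tangent cone is a Euclidean cone, but a GNPC tangent cone need not even be a metric cone -- exactly because of the parametrization dependence recorded in Remark~\ref{rem:bran} -- so one cannot simply invoke that structure. I expect the resolution to exploit the restriction to equal-length geodesics together with the (partial) self-similarity of a blow-up, and, crucially, the doubling property: doubling prevents the angle deficit from hiding at ever-smaller scales, which geometrically is the statement that branch points cannot accumulate at $p$ while the branching angle stays bounded below (a ``hairy tree'' with that behaviour is non-doubling, hence not GNPC). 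I also expect that the function $\varkappa$ produced this way a priori depends on the local doubling constant at $p$, so a final remark localizing in $p$ (or absorbing this dependence) will be needed. Once the scale-rigidity of the tangent cone is in hand the contradiction is immediate and the lemma follows.
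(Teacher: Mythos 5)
Your reduction to $\lambda=1$ is fine, but the core of the argument is not there: it is a blow-up strategy whose two decisive steps are both left unproven, and neither is a routine fill-in. First, even granting everything about the limit space, the contradiction does not close. From Busemann monotonicity you only get $\angle_o^Y(\zeta,\omega)\ge\limsup_j\angle_p(z_j,w_j)$; the reverse inequality (your ``no upward jump'') is exactly what can fail, because the infinitesimal angle of the approximating spaces lives at scales $o(|pz_j|)$ that are invisible in the blow-up. In the putative counterexample scenario the deficit $\tilde\angle_p(z_j,w_j)-\angle_p(z_j,w_j)\ge\varepsilon_0$ can simply evaporate in the limit (the limit's infinitesimal angle jumps up to the macroscopic comparison angle), so even a scale-rigid tangent cone yields no contradiction. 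Proving the needed lower semicontinuity of $\angle_p$ along the blow-up is essentially the same uniformity statement as the lemma itself. Second, the ``scale rigidity'' $\tilde\angle_o^Y=\angle_o^Y$ for equal-length geodesics from the cone point is asserted only as an expectation; for GNPC spaces the structure of blow-ups is precisely what the paper does not know (see Problems~\ref{prob:tan} and \ref{prob:ber}), and because of the parametrization dependence in Remark~\ref{rem:bran} there is no Euclidean-cone structure to invoke as in the GCBA case. There is also a smaller unjustified claim: a Gromov--Hausdorff limit of BNPC balls need not be uniquely geodesic, so the blow-up need not be a GNPC space at all (cf.\ Proposition~\ref{prop:conv} and the weakly GNPC notion); this is repairable, but it is another sign that the route runs through machinery the lemma is meant to precede.

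The paper's proof is much more elementary and avoids tangent cones entirely: rescale so the tiny ball is $B(p,1)$, choose a maximal finite $\delta$-separated set $A\subset\partial B(p,1)$, use finiteness of $A$ to pick one $r_0$ so that $|\tilde\angle_p(tz,tw)-\angle_p(z,w)|<\delta$ for all $z,w\in A$ and $0<t<2r_0$, and then use the Busemann monotonicity \eqref{eq:mono} to transfer this to arbitrary $x,y$ with $\bigl||py|/|px|-1\bigr|<\delta$: their geodesics stay proportionally $\varkappa(\delta)$-close to geodesics toward points of $A$ at every scale $\lambda$, so all the comparison angles agree up to $\varkappa(\delta)$. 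If you want to salvage your approach, you would have to prove the no-jump statement quantitatively, at which point you will find yourself reproving this finite-net argument; as it stands, the proposal has a genuine gap at its central step.
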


\begin{proof}
This follows from local compactness and the Busemann monotonicity \eqref{eq:mono}.
The proof is almost the same as in the GCBA case, \cite[Lemma 5.6]{LN:geo}, except that we have to restrict our argument to some almost fixed scale as in the assumption.
In what follows, we denote by $\varkappa(\delta)$ various positive functions such that $\varkappa(\delta)\to0$ as $\delta\to0$, as explained in Notation \ref{nota:del}.

Let $B\subset X$ be a tiny ball centered at $p$.
By rescaling, we may assume $B=B(p,1)$ (note that everything we consider here, such as the Euclidean comparison angle, is a scaling invariant).
Choose a maximal $\delta$-separated set $A\subset\partial B(p,1)$.
By compactness, $A$ is finite.
As in Definition \ref{dfn:bran}, we can define the angle
\[\angle_p(z,w)=\lim_{t\to0}\tilde\angle_p(tz,tw)\]
for any $z,w\in A$.
Since $A$ is finite, there exists $0<r_0<1/2$ such that for any $z,w\in A$ and any $0<t<2r_0$, we have
\begin{equation}\label{eq:bran'}
|\tilde\angle_p(tz,tw)-\angle_p(z,w)|<\delta.
\end{equation}
In other words, we have chosen $r_0$ so that the desired inequality \eqref{eq:bran} holds on the finitely many shortest paths to $A$.

By the Busemann monotonicity \eqref{eq:mono}, these finitely many shortest paths are ``$\delta$-dense'' among all the shortest paths from $p$ to $\partial B(p,1)$ with natural parametrization.
This fact, together with the assumption \eqref{eq:bran'}, implies the desired inequality \eqref{eq:bran}.
We will see the details below.

Let $x,y\in B(p,r_0)\setminus\{p\}$ be as in the assumption and set $r:=|px|$.
Let $r^{-1}x\in\partial B(p,1)$ be a point on an (arbitrary) extension of the shortest path $px$.
Since $A$ is a maximal $\delta$-separated set of $\partial B(p,1)$, there exists $x'\in A$ that is $\delta$-close to $r^{-1}x$.
By the Busemann monotonicity \eqref{eq:mono}, we have
\begin{equation}\label{eq:branx}
|\lambda x,\lambda rx'|\le\delta\lambda r
\end{equation}
for any $0<\lambda<2$.
See Figure \ref{fig:bran}.

\begin{figure}[ht]
\centering
\begin{tikzpicture}
\coordinate[label=below:$p$](p)at(0,0);
\coordinate[label=above left:$x'$](x')at(130:2.5);
\coordinate[label=above right:$y'$](y')at(50:2.5);
\coordinate[label=above left:$r^{-1}x$](rx)at(145:2.5);
\coordinate[label=above right:$r^{-1}y$](ry)at(35:2.75);
\coordinate[label=below:$x$](x)at(145:1);
\coordinate[label=below:$y$](y)at(35:1.1);

\draw(p)circle[radius=1.25];
\draw(p)circle[radius=2.5];

\draw(p)to(x');
\draw(p)to(y');
\draw(p)to(rx);
\draw(p)to(ry);

\node[below right]at(-45:1.25){$r_0$};
\node[below right]at(-45:2.5){$1$};

\fill(p)circle(1.5pt);
\fill(x')circle(1.5pt);
\fill(y')circle(1.5pt);
\fill(rx)circle(1.5pt);
\fill(ry)circle(1.5pt);
\fill(x)circle(1.5pt);
\fill(y)circle(1.5pt);
\end{tikzpicture}
\caption{}\label{fig:bran}
\end{figure}

A similar argument applies to $y$ because of the assumption $||py|/|px|-1|<\delta$.
Indeed, $r^{-1}y$ is not necessarily on $\partial B(p,1)$, but this assumption literally says that $||p,r^{-1}y|-1|<\delta$.
Hence there exists $y'\in A$ that is $2\delta$-close to $r^{-1}y$.
As before, by the Busemann monotonicity \eqref{eq:mono}, we have
\begin{equation}\label{eq:brany}
|\lambda y,\lambda ry'|\le2\delta\lambda r
\end{equation}
for any $0<\lambda<2$.
See again Figure \ref{fig:bran}.

The above two inequalities \eqref{eq:branx} and \eqref{eq:brany}, together with an easy calculation in Euclidean plane, imply that
\[|\tilde\angle_p(\lambda x,\lambda y)-\tilde\angle_p(\lambda rx',\lambda ry')|<\varkappa(\delta).\]
The latter angle $\tilde\angle_p(\lambda rx',\lambda ry')$ is almost constant up to the error $\delta$ for $0<\lambda<2$, as already assumed in \eqref{eq:bran'}.
This completes the proof.
\end{proof}

The above lemma immediately implies the following property, which we call the \textit{almost nonbranching} of geodesics.

\begin{prop}\label{prop:bran}
Let $X$ be a GNPC space and $p\in X$.
For $1\gg\delta>0$, let $r_0=r_0(p,\delta)>0$ be the radius from Lemma \ref{lem:bran}.
Suppose $x\in\partial B(p,r)$ and $y\in B(x,\delta r)$, where $0<r<r_0$.
Then we have
\[|2x,2y|<\varkappa(\delta)r.\]
\end{prop}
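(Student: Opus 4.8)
The plan is to deduce the statement directly from Lemma~\ref{lem:bran}, working entirely with comparison angles at $p$. First I would verify the hypothesis of Lemma~\ref{lem:bran}. Since $x\in\partial B(p,r)$ and $y\in B(x,\delta r)$, the triangle inequality gives $\bigl||py|-r\bigr|<\delta r$, hence $\bigl||py|/|px|-1\bigr|<\delta$; moreover $y\in B(x,\delta r)\subset B(p,(1+\delta)r)$, so $y$ lies in $B(p,r_0)$ once $r_0$ is taken with a bit of room to spare (for instance, letting the radius in the statement be half of the one furnished by Lemma~\ref{lem:bran}), which is harmless. Thus Lemma~\ref{lem:bran} applies to the pair $x,y$.

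Next I would estimate the limit angle $\angle_p(x,y)$ of Definition~\ref{dfn:bran}. Because $|px|=r$, $|py|\in(r-\delta r,r+\delta r)$ and $|xy|<\delta r$, the law of cosines in the Euclidean comparison triangle shows $\tilde\angle_p(x,y)<\varkappa(\delta)$. Since $t\mapsto\tilde\angle_p(tx,ty)$ is nondecreasing --- this is the geometric meaning of the Busemann monotonicity \eqref{eq:mono}, see Section~\ref{sec:cat} --- it follows that
\[\angle_p(x,y)=\lim_{t\to0}\tilde\angle_p(tx,ty)\le\tilde\angle_p(x,y)<\varkappa(\delta).\]
Now I would feed this into Lemma~\ref{lem:bran}: for every $0<\lambda<2$ it yields $\tilde\angle_p(\lambda x,\lambda y)<\angle_p(x,y)+\varkappa(\delta)<\varkappa(\delta)$. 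To pass to $\lambda=2$, fix arbitrary extensions $2x$ and $2y$ of the shortest paths $px$ and $py$ to twice their lengths, and for $\lambda\in[1,2]$ let $\lambda x$ (resp.\ $\lambda y$) be the point at parameter $\lambda$ along these fixed extensions; then $\lambda\mapsto(\lambda x,\lambda y)$ is continuous, so the three side lengths $\lambda r$, $\lambda|py|$, $|\lambda x,\lambda y|$ converge to $2r$, $2|py|$, $|2x,2y|$, and with them the comparison angle. Hence $\tilde\angle_p(2x,2y)\le\varkappa(\delta)$ for every choice of the extensions $2x,2y$.

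Finally I would convert this small angle back into a distance bound. Setting $a=|p,2x|=2r$ and $b=|p,2y|$ with $|a-b|<2\delta r$, the law of cosines in the comparison triangle gives
\[|2x,2y|^2=(a-b)^2+2ab\bigl(1-\cos\tilde\angle_p(2x,2y)\bigr)<\varkappa(\delta)^2r^2,\]
using $ab<4(1+\delta)r^2$ and $1-\cos\tilde\angle_p(2x,2y)<\varkappa(\delta)$. Therefore $|2x,2y|<\varkappa(\delta)r$, as claimed.

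The main point to keep in mind is that there is really no substantial obstacle here: the entire difficulty --- namely, controlling the comparison angle over an almost fixed scale --- is already packaged in Lemma~\ref{lem:bran}, and Proposition~\ref{prop:bran} is a short corollary of it. The only things demanding minor care are the non-uniqueness of $2x$ and $2y$ caused by branching of geodesics (handled above by arguing with arbitrary fixed extensions and passing to $\lambda=2$ by continuity), the innocuous shrinking of $r_0$ so that $y$ and the relevant extensions stay in range, and the usual bookkeeping of the error functions $\varkappa(\delta)$ in the sense of Notation~\ref{nota:del}.
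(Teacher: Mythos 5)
Your proposal is correct and follows essentially the same route as the paper: verify the ratio hypothesis of Lemma~\ref{lem:bran}, note that $\tilde\angle_p(x,y)<\varkappa(\delta)$ forces $\angle_p(x,y)<\varkappa(\delta)$, apply the lemma at scale $\lambda\approx 2$, and convert the small comparison angle back into the distance bound using $|p,2x|=2r$ and $||p,2y|-2r|<2\delta r$. Your extra care with the endpoint $\lambda=2$ (via continuity along fixed extensions), with $y$ possibly lying just outside $B(p,r_0)$, and your use of the Busemann monotonicity to get $\angle_p(x,y)\le\tilde\angle_p(x,y)$ are minor refinements of the same argument, not a different approach.
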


\begin{proof}
The assumption $y\in B(x,\delta r)$, where $r=|px|$, implies that $||py|/|px|-1|<\delta$, which satisfies the assumption of Lemma \ref{lem:bran}.
Furthermore, since $|xy|<\delta r$, we have $\tilde\angle_p(x,y)<\varkappa(\delta)$.
By Lemma \ref{lem:bran}, we get $\tilde\angle_p(2x,2y)<\varkappa(\delta)$.
Since both $2x$ and $2y$ are at distance almost $2r$ from $p$, or more precisely,
\[|p,2x|=2r,\quad||p,2y|-2r|<2\delta r,\]
we obtain $|2x,2y|<\varkappa(\delta)r$.
\end{proof}

\subsection{Almost orthogonality}\label{sec:orth}

In this subsection, we introduce the ``angle viewed from a fixed point,'' controlling the orthogonality of geodesics.
The goal of this subsection is to prove Proposition \ref{prop:orth}, the almost orthogonality of geodesics.

Throughout this subsection, we will use the following notation, some of which are different from those in the previous subsection:
\begin{itemize}
\item $X$ denotes a GNPC space and $B$ a tiny ball.
\item $p\in B$ and $q\in B\setminus\{p\}$ denote arbitrary fixed points.
\item $x\in B\setminus\{q\}$ denotes a variable point (near $q$).
\item $\tilde\angle pqx$ denotes the comparison angle at $q$ of the Euclidean triangle with side-lengths $|pq|$, $|px|$, and $|qx|$.
\item $x(t)$ denotes the shortest path from $q$ to $x$ and its extension beyond $x$, parametrized by arclength.
\end{itemize}

\begin{dfn}\label{dfn:orth}
In the situation above, we call
\[\angle pqx:=\lim_{t\to0}\tilde\angle pqx(t)\]
the \textit{angle} viewed from $p$ at $q$ toward $x$ (see below on the existence).
\end{dfn}

Unlike the GCBA case, the comparison angle $\tilde\angle pqx(t)$ is not necessarily nondecreasing in $t$.
Therefore the existence of the above limit is a little bit nontrivial.
However, the ``almost monotonicity'' coming from the Busemann convexity \eqref{eq:conv} guarantees the existence of the limit.

\begin{lem}\label{lem:comp}
The limit $\angle pqx=\lim_{t\to0}\tilde\angle pqx(t)$ exists and satisfies the following almost comparison property:
\begin{equation}\label{eq:comp}
\angle pqx<\tilde\angle pqx(t)+\delta(t),
\end{equation}
where $\delta(t)$ is a positive function defined for $0<t<t_0$ such that $\delta(t)\to0$ as $t\to 0$.
Here $\delta(t)$ and $t_0$ depend only on $|pq|$.
\end{lem}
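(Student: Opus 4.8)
The plan is to reduce the statement to a one–variable convexity fact about the function $g(t):=|p\,x(t)|$. Fix $\rho:=|pq|>0$. Working inside the enlargement $10\bar B$ of the tiny ball, which is a compact BNPC space, the geodesic $x(t)$ is a shortest path on its whole domain, so $g$ is defined and $1$-Lipschitz (being the restriction of the $1$-Lipschitz function $|p\,\cdot\,|$ to a unit-speed shortest path) on an interval $[0,t_0)$ with $g(0)=\rho$, where $t_0$ may be taken to depend only on $B$ and $\rho$ (say $t_0\le\rho$). Applying the Busemann convexity \eqref{eq:conv} to each subsegment of $x$ after linear reparametrization shows that $g$ is convex on $[0,t_0)$; this is the only place where the BNPC hypothesis is used.

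Next I would read off the comparison angle from $g$ via the Euclidean law of cosines. Writing $g(t)=\rho+t\phi(t)$, i.e.\ $\phi(t):=\bigl(g(t)-\rho\bigr)/t$ for $0<t<t_0$, a direct substitution gives
\[\cos\tilde\angle pqx(t)=\frac{t\bigl(1-\phi(t)^2\bigr)}{2\rho}-\phi(t).\]
Since $g$ is convex with $g(0)=\rho$, the difference quotient $\phi$ is nondecreasing, and $|\phi(t)|\le 1$ by the Lipschitz bound; hence $\phi(0^+):=\lim_{t\to 0^+}\phi(t)=\inf_{0<t<t_0}\phi(t)$ exists in $[-1,1]$. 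As the first term on the right-hand side is at most $t/(2\rho)$ in absolute value, letting $t\to 0$ yields $\cos\tilde\angle pqx(t)\to -\phi(0^+)$, hence $\tilde\angle pqx(t)\to\arccos\bigl(-\phi(0^+)\bigr)=:\angle pqx$; this proves the existence of the limit.

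For the almost comparison \eqref{eq:comp}, I would combine $\phi(t)\ge\phi(0^+)$ with $0\le t\bigl(1-\phi(t)^2\bigr)/(2\rho)\le t/(2\rho)$ to obtain
\[\cos\tilde\angle pqx(t)\le -\phi(0^+)+\frac{t}{2\rho}=\cos\angle pqx+\frac{t}{2\rho}.\]
To pass from this cosine estimate to an estimate between angles, observe that if $\tilde\angle pqx(t)\ge\angle pqx$ there is nothing to prove, and otherwise $0<\cos\tilde\angle pqx(t)-\cos\angle pqx\le t/(2\rho)$, so the uniform modulus of continuity $\omega_0$ of $\arccos$ on $[-1,1]$ gives $\angle pqx-\tilde\angle pqx(t)\le\omega_0\bigl(t/(2\rho)\bigr)$. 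Setting $\delta(t):=\omega_0\bigl(t/(2\rho)\bigr)+t$, where the extra summand only makes the inequality strict, then establishes \eqref{eq:comp}; both $\delta(t)$ and $t_0$ depend only on $\rho=|pq|$, and $\delta(t)\to 0$ as $t\to 0$.

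The argument has no serious obstacle: its entire content is the convexity of $g$, which is immediate from \eqref{eq:conv}. The only mildly delicate point is converting the cosine bound into the angle bound near $\angle pqx=0$ or $\pi$, where $\arccos$ is only Hölder-$\tfrac12$; this is why the resulting $\delta(t)$ is of order $\sqrt{t/\rho}$ rather than linear in $t$, which is harmless since the statement only asks for $\delta(t)\to0$. One should also note that the comparison triangle degenerates exactly when $\phi(t)=\pm1$, cases already covered by the displayed formula for $\cos\tilde\angle pqx(t)$.
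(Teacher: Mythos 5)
Your proposal is correct and follows essentially the same route as the paper: expand $\cos\tilde\angle pqx(t)$ by the law of cosines, observe that the main part $-\phi(t)=(|pq|-|px(t)|)/t$ is monotone by the Busemann convexity \eqref{eq:conv}, and absorb the remaining term of size $O(t/|pq|)$ into the error. The only difference is that you spell out the conversion from the cosine estimate to the angle estimate via the modulus of continuity of $\arccos$ (so your $\delta(t)$ is of order $\sqrt{t/|pq|}$), a step the paper's proof leaves implicit.
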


\begin{proof}
By the definition of Euclidean comparison angle, we have
\begin{equation}\label{eq:cos}
\begin{aligned}
\cos\tilde\angle pqx(t)&=\frac{|pq|^2+t^2-|px(t)|^2}{2|pq|t}\\
&=\frac{|pq|-|px(t)|}t+O\left(\frac t{|pq|}\right),
\end{aligned}
\end{equation}
where $O$ denotes the Landau's symbol.

By the Busemann convexity \eqref{eq:conv}, the above main part
\[\frac{|pq|-|px(t)|}t\]
is nonincreasing in $t$.
Therefore the limit exists and satisfies the desired almost comparison property (the error function $\delta(t)$ comes from $O(t/|pq|)$).
\end{proof}

\begin{rem}\label{rem:comp}
It should be emphasized that the almost comparison \eqref{eq:comp} holds for any $q'$ sufficiently close to $q$ (compared to $|pq|$) instead of the fixed $q$, possibly with the same error function $\delta(t)$ defined on the same interval $(0,t_0)$.
More precisely, for any such $q'\in B$ and $x'\in B\setminus\{q'\}$, we have
\[\angle pq'x'<\tilde\angle pq'x'(t)+\delta(t)\]
for $0<t<t_0$, where $x'(t)$ denotes the unit-speed shortest path from $q'$ to $x'$.
This is because the error term of the cosine formula \eqref{eq:cos} depends only on $|pq|$.
This observation plays an important role in the proof of Lemma \ref{lem:orth2} below.
On the other hand, the speed of the convergence $\tilde\angle pqx(t)\to\angle pqx$ depends heavily on $q$ (cf.\ Lemma \ref{lem:orth1}).
\end{rem}

\begin{rem}\label{rem:orth}
As can be seen from the above proof, the angle $\angle pqx$ is essentially the directional derivative of the distance function from $p$, i.e.,
\[\cos\angle pqx=-\left.\frac d{dt}\right|_{t=0}d(p,x(t)).\]
Indeed, it might be better to consider the differential quotient $t^{-1}(|px(t)|-|pq|)$ rather than the comparison angle $\tilde\angle pqx(t)$, since the former is really monotone whereas the latter is almost monotone.
Although we stick to the latter angle notation for inspiring geometric intuition and for comparing with the GCBA case, the above viewpoint plays an important role in the definition of a strainer in the next section.
\end{rem}

The above angle is used to control the orthogonality of geodesics around $q$.
To see this, we first prove a uniform version of the convergence in Definition \ref{dfn:orth}.
The proof is similar to that of Lemma \ref{lem:bran}.

\begin{lem}\label{lem:orth1}
Let $X$ be a GNPC space, $B$ a tiny ball, and $p\neq q\in B$.
For any $1\gg\delta>0$, there exists $r_0=r_0(p,q,\delta)>0$ (much smaller than $|pq|$) such that for every $x\in B(q,r_0)\setminus\{q\}$ we have
\begin{equation}\label{eq:orth}
|\tilde\angle pqx-\angle pqx|<\varkappa(\delta),
\end{equation}
where $\varkappa(\delta)\to0$ as $\delta\to 0$ (see Notation \ref{nota:del}).
\end{lem}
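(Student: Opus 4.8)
The plan is to run the finite-net argument of Lemma~\ref{lem:bran}, now for the shortest paths emanating from $q$ rather than from $p$. Write $f=d(p,\cdot)$; it is $1$-Lipschitz and, by the Busemann convexity \eqref{eq:conv}, convex along every shortest path. One of the two inequalities is essentially free: since $\tilde\angle pqx=\tilde\angle pqx(|qx|)$, Lemma~\ref{lem:comp} gives $\angle pqx<\tilde\angle pqx+\delta(|qx|)$, so requiring $r_0$ small enough that $\delta(t)<\delta$ for $0<t<r_0$ already yields $\angle pqx-\tilde\angle pqx<\delta$. The substance is the reverse bound $\tilde\angle pqx<\angle pqx+\varkappa(\delta)$; by the cosine formula \eqref{eq:cos}, whose error term is $O(t/|pq|)$ and hence controlled by $|pq|$ alone, this reduces (after shrinking $r_0$) to the following uniform statement: the quotient $t\mapsto t^{-1}(|pq|-|px(t)|)$, which by \eqref{eq:conv} is nonincreasing in $t$ and tends to $\cos\angle pqx$ from below, has already come within $\varkappa(\delta)$ of its limit at $t=|qx|$, \emph{uniformly in the direction of $x$}.

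To prove this uniform estimate, fix a tiny ball $B_q$ centered at $q$ and a scale $s>0$, smaller than the radius of $B_q$ and than $|pq|$, so that every shortest path from $q$ of length $<s$ extends to length $s$ inside $10\bar B_q$. Let $A\subset\partial B(q,s)$ be a maximal $\delta s$-separated set; since closed balls in a GNPC space are compact, $A$ is finite. For each $a\in A$, writing $a(\tau)$ for the point of $qa$ at distance $\tau$ from $q$, the quotient $\tau\mapsto\tau^{-1}(|pq|-|pa(\tau)|)$ is nonincreasing and tends to $\cos\angle pqa$ as $\tau\to0$ (the limit exists by Lemma~\ref{lem:comp}), so there is $r_a\in(0,s)$ with $\tau^{-1}(|pq|-|pa(\tau)|)>\cos\angle pqa-\delta$ for $0<\tau<r_a$. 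Set $r_0:=\min_{a\in A}r_a>0$, further shrunk so that the error term in \eqref{eq:cos} is $<\delta$ on $(0,r_0)$. Now take any $x\in B(q,r_0)\setminus\{q\}$ and put $t:=|qx|<r_0$. Extend $qx$ to length $s$, obtaining $\hat x:=x(s)\in\partial B(q,s)$, and pick $a\in A$ with $|\hat x a|<\delta s$. Applying the Busemann monotonicity \eqref{eq:mono} at $q$ to the shortest paths $q\hat x$ and $qa$ gives $|x(\tau)a(\tau)|\le(\tau/s)|\hat x a|<\delta\tau$ for $0\le\tau\le s$; since $f$ is $1$-Lipschitz, $||px(\tau)|-|pa(\tau)||<\delta\tau$, so the quotients for $x$ and for $a$ differ by less than $\delta$ on $(0,s]$. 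Letting $\tau\to0$ gives $|\cos\angle pqx-\cos\angle pqa|\le\delta$, and evaluating at $\tau=t<r_0\le r_a$ gives $t^{-1}(|pq|-|px(t)|)>\cos\angle pqa-2\delta>\cos\angle pqx-3\delta$. Combined with \eqref{eq:cos} and the easy inequality above, this yields $|\cos\tilde\angle pqx-\cos\angle pqx|<4\delta$, whence $|\tilde\angle pqx-\angle pqx|<\varkappa(\delta)$, since on $[0,\pi]$ a bound $|\cos\alpha-\cos\beta|<4\delta$ forces $|\alpha-\beta|<\varkappa(\delta)$ (with $\varkappa$ of order $\sqrt\delta$ near the endpoints, which is harmless).

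The main obstacle is precisely the asymmetry flagged in Remark~\ref{rem:comp}: unlike the monotone situation of Lemma~\ref{lem:bran}, here the rate at which $\tilde\angle pqx(t)$ converges to $\angle pqx$ genuinely depends on the direction of $x$, so there is no pointwise rate to exploit. The finite-net argument circumvents this by using compactness of the tiny ball to reduce to finitely many reference directions, for which a common threshold $r_0$ exists, and then using the Busemann monotonicity at $q$ to transport the estimate to an arbitrary direction — the $1$-Lipschitz continuity of $d(p,\cdot)$ converting the $O(\delta\tau)$ closeness of the two shortest paths into an $O(\delta)$ discrepancy of the relevant difference quotients. This is the analogue, for the angle viewed from $p$, of the density argument underlying Lemma~\ref{lem:bran}.
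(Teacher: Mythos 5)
Your proof is correct and follows essentially the same route as the paper's: local compactness gives a finite maximal separated net on a metric sphere around $q$, a common threshold radius $r_0$ works for the finitely many net directions, and the Busemann monotonicity \eqref{eq:mono} at $q$ (together with the $1$-Lipschitzness of $d(p,\cdot)$) transports the estimate to an arbitrary direction. Working with the difference quotients $t^{-1}(|pq|-|px(t)|)$ rather than the comparison angles is only a cosmetic reformulation, exactly as anticipated in Remark~\ref{rem:orth}.
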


\begin{proof}
This follows from local compactness and the Busemann monotonicity \eqref{eq:mono} (compare with the proof of Lemma \ref{lem:bran}).
By rescaling, we may assume $B=B(q,1)$.
In what follows, we denote by $\varkappa(\delta)$ various positive functions such that $\varkappa(\delta)\to0$ as $\delta\to 0$, as explained in Notation \ref{nota:del}.

Let $A\subset\partial B(q,1)$ be a maximal $\delta$-separated set.
By compactness, $A$ is finite.
As in Definition \ref{dfn:orth}, we can define the angle
\[\angle pqz=\lim_{t\to 0}\tilde\angle pqz(t)\]
for any $z\in A$.
Since $A$ is finite, there exists $r_0>0$ such that for any $z\in A$ and $0<t<r_0$, we have
\begin{equation}\label{eq:orth'}
|\tilde\angle pqz(t)-\angle pqz|<\delta.
\end{equation}
In other words, we have chosen $r_0$ so that the desired inequality \eqref{eq:orth} holds on the finitely many shortest paths to $A$.
For later use, we also assume $r_0<\delta|pq|$.

By the Busemann monotonicity \eqref{eq:mono}, these finitely many shortest paths are $\delta$-dense among all the shortest paths from $q$ to $\partial B(q,1)$ with natural parametrization.
This fact, together with the assumption \eqref{eq:orth'}, implies the desired inequality \eqref{eq:orth}.
We will see the details below.
See Figure \ref{fig:orth1}.

\begin{figure}[ht]
\centering
\begin{tikzpicture}
\coordinate[label=below:$p$](p)at(-0.5,-4);
\coordinate[label=left:$q$](q)at(0,0);
\coordinate[label=below:$x$](x)at(0.5,0);
\coordinate[label=right:$x(1)$](x1)at(2,0);
\coordinate[label=above right:$x'$](x')at(15:2);

\draw(q)to(x)to(x1);
\draw(q)to(x');
\draw(p)to(q);

\draw(q)circle[radius=1];
\draw(q)circle[radius=2];
\node[below right]at(-45:1){$r_0$};
\node[below right]at(-45:2){$1$};

\fill(p)circle(1.5pt);
\fill(q)circle(1.5pt);
\fill(x)circle(1.5pt);
\fill(x1)circle(1.5pt);
\fill(x')circle(1.5pt);
\end{tikzpicture}
\caption{}\label{fig:orth1}
\end{figure}

Let $x\in B(q,r_0)\setminus\{q\}$ be an arbitrary point.
We denote by $x(1)\in\partial B(q,1)$ a point on an extension of the shortest path $qx$ beyond $x$.
Since $A$ is a maximal $\delta$-separated set of $\partial B(q,1)$, there exists $x'\in A$ that is $\delta$-close to $x(1)$.
By the Busemann monotonicity \eqref{eq:mono}, we have $|x(t)x'(t)|<\delta t$ for any $0<t<1$.
Together with $r_0<\delta |pq|$, this gives us
\[|\tilde\angle pqx(t)-\tilde\angle pqx'(t)|<\varkappa(\delta)\]
for any $0<t<r_0$.
The latter angle $\tilde\angle pqx'(t)$ is almost constant up to the error $\delta$ for $0<t<r_0$, as already assumed in \eqref{eq:orth'}.
This completes the proof.
\end{proof}

Next we show a basic property of this angle.
In the GCBA case, the following is a special case of the triangle inequality for angles.
In the GNPC case, this can be viewed as an infinitesimal version of the Busemann convexity \eqref{eq:conv}.
Note that we do not use the fixed point $q$ here.

\begin{lem}\label{lem:sum}
Let $X$ be a GNPC space, $B$ a tiny ball, and $p\in B$.
Let $x,y,z\in B\setminus\{p\}$ be such that $x$ lies on the shortest path between $y$ and $z$.
Then we have
\[\angle pxy+\angle pxz\ge\pi.\]
\end{lem}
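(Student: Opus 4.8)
The plan is to exploit the variational characterization of the angle $\angle pxy$ as the (negative) directional derivative of the distance function from $p$ along the geodesic issuing from $x$ toward $y$, as recorded in Remark~\ref{rem:orth}. Since $x$ lies on the shortest path between $y$ and $z$, the geodesic from $x$ toward $y$ and the geodesic from $x$ toward $z$ are the two halves of a single geodesic $\zeta$ through $x$; in the notation of the lemma we may write $\zeta(t)$ for the unit-speed parametrization with $\zeta(0)=x$, so that for $t>0$ the point $\zeta(t)$ runs toward (say) $z$ and for $t<0$ it runs toward $y$. Then
\[
\cos\angle pxz=-\left.\frac{d}{dt}\right|_{t=0^+}d(p,\zeta(t)),\qquad
\cos\angle pxy=+\left.\frac{d}{dt}\right|_{t=0^-}d(p,\zeta(t)),
\]
the signs reflecting that $y$ is reached by decreasing $t$. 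The inequality $\angle pxy+\angle pxz\ge\pi$ is equivalent to $\cos\angle pxy+\cos\angle pxz\le0$, i.e.\ to the statement that the right derivative of $t\mapsto d(p,\zeta(t))$ at $0$ is at least its left derivative. This is exactly convexity of the one-variable function $t\mapsto d(p,\zeta(t))$ at the interior point $t=0$, which is precisely the Busemann convexity \eqref{eq:conv} for $p$ applied to the shortest path $yz$ (of which $\zeta$ restricted to the relevant interval is a reparametrization, since $x$ lies on $yz$).

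Concretely, the steps are: (i) invoke Lemma~\ref{lem:comp} to guarantee the limits defining $\angle pxy$ and $\angle pxz$ exist and to identify $\cos\angle pxz$ with $\lim_{t\to0^+}\tfrac{|px|-|p\zeta(t)|}{t}$ (the Landau error term vanishes in the limit), and symmetrically for $\angle pxy$ with the one-sided limit from the $y$-side; (ii) observe that, because $x$ is between $y$ and $z$, for small $t>0$ the three points $\zeta(-t),x,\zeta(t)$ lie on the shortest path $yz$ with $x$ the midpoint of the subsegment $[\zeta(-t),\zeta(t)]$, so Busemann convexity \eqref{eq:conv} gives $|px|\le\tfrac12|p\zeta(-t)|+\tfrac12|p\zeta(t)|$; (iii) rearrange this as $\tfrac{|px|-|p\zeta(t)|}{t}\le\tfrac{|p\zeta(-t)|-|px|}{t}$ and pass to the limit $t\to0^+$ to obtain $\cos\angle pxz\le-\cos\angle pxy$, which is the claim after taking $\arccos$ and using that both angles lie in $[0,\pi]$.

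I do not anticipate a serious obstacle: the only subtlety is bookkeeping about orientations and one-sided limits — making sure that the extension of the geodesic past $x$ toward $y$ is genuinely the reverse of a shortest path, which is immediate here since any subsegment of the shortest path $yz$ is itself a shortest path, and that the angle defined via the extension $x(t)$ in Definition~\ref{dfn:orth} matches the one-sided derivative with the correct sign. One should also note that the local geodesic completeness of $X$ (Proposition~\ref{prop:geo}) is not even needed, since $x$ already lies in the interior of $yz$ and both directions are realized by $yz$ itself; thus the lemma is really just a clean restatement of \eqref{eq:conv} in infinitesimal form, as the paragraph before the statement already hints.
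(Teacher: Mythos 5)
Your proposal is correct and is essentially the paper's own argument: you apply the Busemann convexity \eqref{eq:conv} along the shortest path $yz$ at its interior point $x$ to get $|p\zeta(t)|+|p\zeta(-t)|\ge 2|px|$, divide by $t$, pass to the limit using the derivative characterization of the angle from Remark~\ref{rem:orth} (existence via Lemma~\ref{lem:comp}), and conclude $\cos\angle pxy+\cos\angle pxz\le 0$, which is equivalent to $\angle pxy+\angle pxz\ge\pi$ since both angles lie in $[0,\pi]$. The bookkeeping of one-sided derivatives and signs in your write-up is accurate, so no gap remains.
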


\begin{proof}
Recall that for any $\alpha,\beta\in[0,\pi]$, we have
\[\cos\alpha+\cos\beta\le 0\iff\alpha+\beta\ge\pi.\]
Let $\gamma(t)$ be the unit-speed parametrization of the shortest path between $y$ and $z$ such that $\gamma(0)=x$.
Then the Busemann convexity \eqref{eq:conv} implies
\[|p\gamma(t)|+|p\gamma(-t)|\ge2|px|\]
for any small $t>0$.
This gives us
\[\frac{|p\gamma(t)|-|px|}t+\frac{|p\gamma(-t)|-|px|}t\ge0.\]
Taking $t\to0$ and using the definition of angle (cf.\ Remark \ref{rem:orth}), we obtain
\[\cos\angle pxy+\cos\angle pxz\le 0,\]
as desired.
\end{proof}

Combining the above three lemmas, we obtain the following key property.
In the GCBA case, this is a combination of the triangle inequality and the upper semicontinuity of angles (\cite[Lemma 5.5]{LN:geo}).

\begin{lem}\label{lem:orth2}
Let $X$ be a GNPC space, $B$ a tiny ball, and $p\neq q\in B$.
For any $1\gg\delta>0$, there exists $r_1=r_1(p,q,\delta)>0$ (much smaller than $r_0$ of Lemma \ref{lem:orth1}) satisfying the following.
For any $x\in B(q,r_1)\setminus\{q\}$, let $\hat x$ be a point on an extension of the shortest path $qx$ beyond $x$ at distance $\delta^{-1}r_1$ from $q$.
Then we have
\begin{gather*}
|\angle pxq+\angle px\hat x-\pi|<\varkappa(\delta),\\
|\angle pxq-\tilde\angle pxq|<\varkappa(\delta),\quad|\angle px\hat x-\tilde\angle px\hat x|<\varkappa(\delta).
\end{gather*}
See Figure \ref{fig:orth2}.
\end{lem}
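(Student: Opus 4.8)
The plan is to combine Lemma~\ref{lem:sum} (applied with $x$ in the role of the fixed point, $q$ and $\hat x$ on opposite sides of it) with the almost comparison from Lemma~\ref{lem:comp} and the uniform convergence from Lemma~\ref{lem:orth1}, all applied at the point $x$ rather than at $q$. The key geometric observation is that, as $x\to q$ along the fixed direction, the Euclidean comparison triangle $\triangle p x q$ degenerates in a controlled way: the side $|qx|=r_1$ is tiny compared to $|pq|$, so $\tilde\angle pxq$ is close to $\pi$ minus the comparison angle $\tilde\angle pqx$ at $q$, and similarly $\tilde\angle px\hat x$ is close to $\tilde\angle pqx$ (since $\hat x$ lies essentially in the same direction from $q$, only farther out, at scale $\delta^{-1}r_1$, still small compared to $|pq|$). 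So the second line of the conclusion should follow from a purely Euclidean estimate on thin triangles together with Lemma~\ref{lem:orth1} (first applied at $q$, to control $\tilde\angle pqx$ and $\tilde\angle pq\hat x$), and the first line will then follow from the second together with $\tilde\angle pqx+\tilde\angle pqx=2\tilde\angle pqx$ canceling—wait, that is not quite it; rather, $(\pi-\tilde\angle pqx)+\tilde\angle pq\hat x$, and since $x$ and $\hat x$ point in (almost) the same direction from $q$, $\tilde\angle pqx\approx\tilde\angle pq\hat x$ up to $\varkappa(\delta)$, whence the sum is $\pi$ up to $\varkappa(\delta)$. But to run this honestly at the level of the intrinsic angles $\angle pxq,\angle px\hat x$ one needs the almost-monotonicity, and that is where Lemma~\ref{lem:comp} and Remark~\ref{rem:comp} enter.

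Concretely, here is the order of steps. First, apply Lemma~\ref{lem:orth1} at the point $q$ to both $x$ and (a truncation of) $\hat x$: choosing $r_1$ small enough that both $x$ and the initial segment toward $\hat x$ lie in $B(q,r_0(p,q,\delta))$, we get $|\tilde\angle pqx-\angle pqx|<\varkappa(\delta)$ and $|\tilde\angle pq\hat x-\angle pq\hat x|<\varkappa(\delta)$; moreover $x$ and $\hat x$ lie on the same shortest ray from $q$, so $\angle pqx=\angle pq\hat x$ by definition, hence $|\tilde\angle pqx-\tilde\angle pq\hat x|<\varkappa(\delta)$. Second, do the Euclidean bookkeeping: in the comparison triangle $\triangle pxq$, since $|qx|=r_1\ll|pq|$, one has $|\tilde\angle pxq-(\pi-\tilde\angle pqx)|<\varkappa(r_1/|pq|)$, and similarly for $\triangle px\hat x$ with $|q\hat x|=\delta^{-1}r_1$, one has $|\tilde\angle px\hat x-\tilde\angle pq\hat x|<\varkappa(r_1/(\delta|pq|))$; shrinking $r_1$ relative to $\delta$ and $|pq|$ makes both error terms $<\varkappa(\delta)$. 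Third, upgrade from comparison angles at $x$ to intrinsic angles at $x$: by Lemma~\ref{lem:comp} and the uniformity noted in Remark~\ref{rem:comp} (the error function depends only on $|px|\approx|pq|$, not on $x$ itself), the intrinsic angles $\angle pxq$ and $\angle px\hat x$ are within $\delta(t)$ of their comparison angles evaluated at a sufficiently small scale $t$, and since we are free to shrink $r_1$ we may absorb this into $\varkappa(\delta)$—this gives the second line. Fourth, add the two estimates of the second line and use Step~1's $|\tilde\angle pqx-\tilde\angle pq\hat x|<\varkappa(\delta)$ to obtain $|\angle pxq+\angle px\hat x-\pi|<\varkappa(\delta)$, which is the first line. (Lemma~\ref{lem:sum}, applied with the fixed point $p$ and with $x$ lying on the shortest path from $q$ to $\hat x$, gives the complementary lower bound $\angle pxq+\angle px\hat x\ge\pi$, which is consistent with and can be quoted to streamline the first line, though the quantitative statement needs the preceding steps anyway.)

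I expect the main obstacle to be the third step: transferring the almost-monotonicity of Lemma~\ref{lem:comp} from the reference point $q$ to the moving point $x$ without the error blowing up. The subtlety is that the speed of convergence $\tilde\angle p x y(t)\to\angle pxy$ a priori depends on $x$ (as Remark~\ref{rem:comp} warns), so one cannot naively fix a single scale $t$ that works for all $x\in B(q,r_1)$; instead one must use the uniform form, namely that the \emph{almost-comparison inequality} $\angle pxy<\tilde\angle pxy(t)+\delta(t)$ holds with an error function $\delta(t)$ depending only on $|px|$, which stays close to $|pq|$. Managing the chain of quantifiers—first fixing $\delta$, then choosing $r_0$, then $r_1\ll r_0$ small enough in terms of $\delta$ and $|pq|$ so that all the Euclidean thin-triangle errors and the $\delta(t)$-errors are simultaneously below a common $\varkappa(\delta)$—is the delicate part, but it is bookkeeping of the kind already carried out in the proofs of Lemmas~\ref{lem:bran} and \ref{lem:orth1}, so it should go through.
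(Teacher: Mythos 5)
Your list of ingredients matches the paper's proof exactly -- Lemma \ref{lem:sum}, the almost comparison \eqref{eq:comp} transferred to the moving point $x$ via Remark \ref{rem:comp}, Lemma \ref{lem:orth1} applied at $q$ to $x$ and $\hat x$ (with $r_1<\delta r_0$ so that $\hat x\in B(q,r_0)$), and thin Euclidean triangle estimates -- but the way you assemble them has a genuine gap at your Step 3. Lemma \ref{lem:comp} and Remark \ref{rem:comp} give only the \emph{one-sided} inequality $\angle pxq<\tilde\angle pxq+\varkappa(\delta)$ (and likewise for $\hat x$, taking $t=|xq|$ resp.\ $t=|x\hat x|$, since the error in \eqref{eq:comp} is of order $t/|px|$): Busemann convexity makes the difference quotient nonincreasing, so the intrinsic angle is bounded above by the comparison angle plus error, but there is no a priori lower bound $\angle pxq>\tilde\angle pxq-\varkappa(\delta)$ at the moving point $x$ -- the rate of convergence $\tilde\angle pxq(t)\to\angle pxq$ depends on $x$, exactly as Remark \ref{rem:comp} warns, and Lemma \ref{lem:orth1} controls it only at the fixed point $q$. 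Hence your Step 3 does not deliver the second displayed line, and Step 4 then does not deliver the first.

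The missing idea is a squeeze, and this is where Lemma \ref{lem:sum} is indispensable rather than an optional streamlining device: combine the lower bound $\angle pxq+\angle px\hat x\ge\pi$ with the two one-sided upper bounds from \eqref{eq:comp} and with your Steps 1--2, which give $\tilde\angle pxq+\tilde\angle px\hat x\le\tilde\angle pxq+\tilde\angle pq\hat x+\varkappa(\delta)\le\tilde\angle pxq+\tilde\angle pqx+\varkappa(\delta)\le\pi+\varkappa(\delta)$ (the last step because $\tilde\angle pxq$ and $\tilde\angle pqx$ are two angles of one Euclidean comparison triangle). The resulting chain $\pi\le\angle pxq+\angle px\hat x\le\tilde\angle pxq+\tilde\angle px\hat x+\varkappa(\delta)\le\pi+\varkappa(\delta)$ forces every inequality to be an equality up to $\varkappa(\delta)$, which yields both displayed conclusions simultaneously; this is precisely the paper's argument. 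As written, with Lemma \ref{lem:sum} relegated to a parenthetical ``complementary'' remark, your one-sided bounds are consistent with both intrinsic angles being far below their comparison angles, so neither conclusion follows. (A minor further point: in your Step 2 the error in $|\tilde\angle px\hat x-\tilde\angle pq\hat x|$ also contains a contribution of size $|qx|/|q\hat x|\le\delta$, not only $r_1/(\delta|pq|)$; this is harmless since $\varkappa(\delta)$ is all that is needed.)
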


\begin{figure}[ht]
\centering
\begin{tikzpicture}
\coordinate[label=below:$p$](p)at(-0.5,-4);
\coordinate[label=above left:$q$](q)at(0,0);
\coordinate[label=above:$x$](x)at(0.5,0);
\coordinate[label=above right:$\hat x$](x')at(2,0);

\draw(p)to(x);
\draw(q)to(x)to(x');
\draw[dashed](p)to(q);
\draw[dashed](p)to(x');

\draw(q)circle[radius=1];
\node[above right]at(45:1){$r_1$};

\fill(p)circle(1.5pt);
\fill(q)circle(1.5pt);
\fill(x)circle(1.5pt);
\fill(x')circle(1.5pt);

\pic[draw, angle radius=2.5mm] {angle = q--x--p};
\pic[draw, angle radius=2.5mm] {angle = p--x--x'};
\end{tikzpicture}
\caption{}\label{fig:orth2}
\end{figure}

\begin{proof}
Suppose $r_1$ is small enough compared to $|pq|$, $\delta$, and the radius $r_0$ from Lemma \ref{lem:orth1} (to be determined later).
By Lemma \ref{lem:sum}, we have $\angle pxq+\angle px\hat x\ge\pi$.
Since $r_1$ is small enough compared to $|pq|$ and $\delta$, the almost comparison \eqref{eq:comp} implies that
\[\angle pxq<\tilde\angle pxq+\delta,\quad\angle px\hat x<\tilde\angle px\hat x+\delta.\]
Here we used the fact that the almost comparison holds at variable $x$, not necessarily at fixed $q$ (see Remark \ref{rem:comp}).
Since $|qx|<\delta|q\hat x|=r_1$, we have $|\tilde\angle px\hat x-\tilde\angle pq\hat x|<\varkappa(\delta)$.
Furthermore, assuming $r_1<\delta r_0$, we have $\hat x\in B(q,r_0)$.
By Lemma \ref{lem:orth1}, we get
\[|\tilde\angle pq\hat x-\tilde\angle pqx|<\varkappa(\delta).\]
Combining all the inequalities above, we obtain
\begin{align*}
\pi&\le\angle pxq+\angle px\hat x\tag{Lemma \ref{lem:sum}}\\
&\le\tilde\angle pxq+\tilde\angle px\hat x+\varkappa(\delta)\tag{\eqref{eq:comp}}\\
&\le\tilde\angle pxq+\tilde\angle pq\hat x+\varkappa(\delta)\tag{$|qx|<\delta|q\hat x|$}\\
&\le\tilde\angle pxq+\tilde\angle pqx+\varkappa(\delta)\tag{Lemma \ref{lem:orth1}}\\
&\le\pi+\varkappa(\delta).
\end{align*}
Therefore, all the equalities hold up to the error $\varkappa(\delta)$.
This completes the proof.
\end{proof}

Finally, we reach the following property, which we call the \textit{almost orthogonality} of geodesics.

\begin{prop}\label{prop:orth}
Let $X$ be a GNPC space, $B$ a tiny ball, and $p\neq q\in B$ with $R:=|pq|$.
For any $1\gg\delta>0$, let $r_1=r_1(p,q,\delta)>0$ be the radius from Lemma \ref{lem:orth2} (in particular assume $r_1<\delta R$).
Suppose $x\in\partial B(p,R)\cap\partial B(q,r)$ and $y\in B(x,\delta r)$, where $0<r<r_1$.
Then we have
\[|\angle pyq-\pi/2|<\varkappa(\delta),\quad|\angle py\hat y-\pi/2|<\varkappa(\delta),\]
where $\hat y$ is an arbitrary point on an extension of the shortest path $qy$ beyond $y$.
\end{prop}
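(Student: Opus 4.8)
The plan is to reduce both inequalities to Lemma~\ref{lem:orth2} applied at the point $y$, the only preparatory input being a short Euclidean computation showing that the comparison angle $\tilde\angle pyq$ is already almost a right angle.

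First I would record the elementary distance estimates. Since $|px|=R$, $|qx|=r$, and $|xy|<\delta r$, the triangle inequality gives $\bigl||py|-R\bigr|<\delta r$ and $\bigl||qy|-r\bigr|<\delta r$; in particular $y\neq p,q$, and $|qy|$ is comparable to $r<r_1$, so Lemma~\ref{lem:orth2} is applicable to $y$ (its proof is insensitive to enlarging the admissible ball by a bounded factor, which handles $|qy|<(1+\delta)r_1$). Substituting these estimates into the law of cosines for the Euclidean triangle with side lengths $|py|$, $|qy|$, $|pq|=R$, the numerator $|py|^2+|qy|^2-|pq|^2$ is $O(\delta rR)$ — here one uses $r<\delta R$ to absorb the remaining $r^2$ term — while the denominator is $2Rr\bigl(1+O(\delta)\bigr)$. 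Hence $\cos\tilde\angle pyq=\varkappa(\delta)$, i.e.\ $|\tilde\angle pyq-\pi/2|<\varkappa(\delta)$. Geometrically this is just the statement that the comparison triangle is long and thin, with base $|qx|\approx r$ much shorter than its two almost-equal sides of length $\approx R$, forcing the base angles to be almost right; this is exactly where the hypothesis $x\in\partial B(p,R)$ is used.

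Lemma~\ref{lem:orth2} applied to $y$ then gives $|\angle pyq-\tilde\angle pyq|<\varkappa(\delta)$, and combining with the previous estimate yields $|\angle pyq-\pi/2|<\varkappa(\delta)$, which is the first assertion. For the second, I would reduce an arbitrary $\hat y$ to the distinguished point at distance $\delta^{-1}r_1$ occurring in Lemma~\ref{lem:orth2}. Given $\hat y$ on a chosen extension of $qy$ beyond $y$, I produce a point $z$ with $|qz|=\delta^{-1}r_1$ lying on an extension of $qy$ beyond $y$ and meeting $\hat y$: if $|q\hat y|\ge\delta^{-1}r_1$, take $z$ on the segment $y\hat y$; otherwise prolong the geodesic $qy\hat y$ beyond $\hat y$, which is possible inside the tiny ball, where every shortest path extends to length nine times the radius (arranged to exceed $\delta^{-1}r_1$). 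In either case the shortest paths $y\hat y$ and $yz$ share an initial segment at $y$, since inside a tiny ball geodesics are unique and are themselves shortest paths; as $\angle py\hat y$ and $\angle pyz$ are both defined as limits of comparison angles along that common germ, they are equal. Finally Lemma~\ref{lem:orth2} applied to $y$ with distinguished point $z$ gives $|\angle pyq+\angle pyz-\pi|<\varkappa(\delta)$, whence $\angle py\hat y=\angle pyz=\pi-\angle pyq+\varkappa(\delta)=\pi/2+\varkappa(\delta)$ by the first assertion.

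I expect the only delicate point to be this last reduction: one must observe that $\angle py\hat y$ depends only on the germ at $y$ of the chosen extension of $qy$ beyond $y$, so that truncating or prolonging along the same geodesic leaves it unchanged — and this is precisely where local geodesic completeness (to manufacture $z$ when $\hat y$ is close to $q$) and the uniqueness of geodesics inside a tiny ball enter. Everything else — the two applications of Lemma~\ref{lem:orth2} and the law-of-cosines estimate — is routine bookkeeping.
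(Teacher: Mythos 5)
Your proof is correct and follows essentially the same route as the paper's: a thin-triangle (law of cosines) estimate showing $\tilde\angle pyq$ is nearly $\pi/2$, followed by Lemma \ref{lem:orth2} applied at $y$ to transfer this to $\angle pyq$ and, via the sum relation, to $\angle py\hat y$. The extra care you take — reducing an arbitrary $\hat y$ to the distinguished point at distance $\delta^{-1}r_1$ through the germ-dependence of the angle, and noting that $|qy|$ may slightly exceed $r_1$ — merely makes explicit steps the paper leaves implicit, and is handled correctly.
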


\begin{proof}
Since $|px|=|pq|=R$ and $|qx|=r<\delta R$, we have $|\tilde\angle pxq-\pi/2|<\varkappa(\delta)$.
Since $|xy|<\delta r$, this implies $|\tilde\angle pyq-\pi/2|<\varkappa(\delta)$.
Hence the desired inequalities follow from Lemma \ref{lem:orth2}.
\end{proof}

\begin{rem}\label{rem:orthglo}
In Proposition \ref{prop:orth} (and preceding lemmas), the distance $R$ between $p$ and $q$ may be arbitrarily large, as long as they are contained in a tiny ball $B$.
This allows us to develop extended strainers in Section \ref{sec:exstr}, which is necessary to tackle a global problem like Theorem \ref{thm:four}.
\end{rem}

\section{Strainers}\label{sec:str}

In this section, we develop strainers for GNPC spaces, which play central roles in the proofs of the main theorems.
The definition and existence are based on the key lemmas proved in the previous section, specifically their consequences, Propositions \ref{prop:bran} and \ref{prop:orth}.
Once we establish the existence of strainers, we will not need to refer back to those lemmas when proving further properties.

The organization of this section is as follows.
In Section \ref{sec:strdfn}, we define strainers and show their existence.
In Section \ref{sec:strfib}, we prove the fibration property of strainers.
In Section \ref{sec:strcon}, as consequences, we prove Theorems \ref{thm:mfd} and \ref{thm:link}.

\subsection{Definition and existence}\label{sec:strdfn}

In this subsection, we define strainers for GNPC spaces and show their existence (cf.\ \cite[Section 7]{LN:geo}).

Throughout this subsection, $X$ denotes a GNPC space, $B$ a tiny ball, and $\delta$ a small positive number (see Notation \ref{nota:del} and Remark \ref{rem:del}).
The definition of a strainer is given in terms of the angle
\[\angle pqx\]
viewed from $p$ at $q$ toward $x$, introduced in Section \ref{sec:orth}.
We again emphasize that this angle is essentially the directional derivative of the distance function from $p$, i.e.,
\[\cos\angle pqx=-\left.\frac d{dt}\right|_{t=0}d(p,x(t)),\]
where $x(t)$ is the unit-speed shortest path from $q$ to $x$ (see Remark \ref{rem:orth}).
Therefore, the reader should consider the following definition of a strainer as controlling the derivative of the distance function rather than the angles, unlike in the GCBA case.
Note that the other angle introduced in Section \ref{sec:bran} is not used for the definition of a strainer, but will be used later to show the existence of a strainer.

We first define a $1$-strainer, which controls the branching of geodesics.

\begin{dfn}\label{dfn:1str}
A point $p\in B$ is called a \textit{$(1,\delta)$-strainer} at $x\in B$ if there exists $\bar p\in B$ and a neighborhood $U\subset B$ of $x$ such that for any $y\in U$, we have
\[\angle py\bar p>\pi-\delta,\]
where we are assuming $p,\bar p\in B\setminus U$.
We call $\bar p$ the \textit{opposite strainer} for $p$ at $x$.
See Figure \ref{fig:1str}.
\end{dfn}

\begin{figure}[ht]
\centering
\begin{tikzpicture}
\coordinate[label=below left:$p$](p)at(-0.5,-2);
\coordinate[label=above left:$\bar p$](p')at(0.5,2);
\coordinate(x)at(0,0);
\coordinate[label=left:$y$](y)at(-0.1,0);

\draw(p)to(y)to(p');
\draw[dashed](x)circle[radius=0.75];
\node[below right]at(-45:0.75){$U$};

\fill(p)circle(1.5pt);
\fill(p')circle(1.5pt);
\fill(y)circle(1.5pt);

\pic[draw, angle radius=2.5mm] {angle = p--y--p'};
\end{tikzpicture}
\caption{}\label{fig:1str}
\end{figure}

The point of the above definition is that the inequality holds for the same $\bar p$ independent of $y\in U$ (otherwise, one may choose $\bar p=\bar p(y)$ as a point on an extension of the shortest path $py$ beyond $y$).
Roughly speaking, this means that there is one direction at $x$ that uniformly increases the distance from $p$ around $x$.
If the shortest path $px$ branches significantly at $x$, there is no such direction.
This is the same as in the GCBA case.

Based on the above definition, we next define a $k$-strainer ($k\ge 2$) that takes into account the almost orthogonality.
Our definition is by induction.

\begin{dfn}\label{dfn:kstr}
A sequence of points $p_1\dots,p_k\in B$ is called a \textit{$(k,\delta)$-strainer} at $x\in B$ if it satisfies the following inductive condition:
\begin{enumerate}
\item $p_1,\dots,p_{k-1}$ is a $(k-1,\delta)$-strainer at $x$;
\item $p_k$ is a $(1,\delta)$-strainer at $x$;
\item there exists a neighborhood $U\subset B$ of $x$ such that for any $y\in U$, we have
\[|\angle p_iyp_k-\pi/2|<\delta,\quad|\angle p_iy\bar p_k-\pi/2|<\delta\]
for any $1\le i\le k-1$, where $\bar p_k$ is an opposite strainer for $p_k$ at $x$ (we are assuming $p_1,\dots,p_k,\bar p_k\in B\setminus U$).
\end{enumerate}
See Figure \ref{fig:kstr}.
\end{dfn}

\begin{figure}[ht]
\centering
\begin{tikzpicture}
\coordinate[label=below:{$p_1,\dots,p_{k-1}$}](p1)at(-0.5,-4);
\coordinate[label=above left:$p_k$](pk)at(-1.5,0);
\coordinate[label=above right:$\bar p_k$](pk')at(1.5,0);
\coordinate(x)at(0,0);
\coordinate[label=above:$y$](y)at(0,-0.1);

\draw(pk)to(y)to(pk');
\draw(p1)to(y);
\draw[dashed](x)circle[radius=0.75];
\node[below right]at(-45:0.75){$U$};

\fill(p1)circle(1.5pt);
\fill(pk)circle(1.5pt);
\fill(pk')circle(1.5pt);
\fill(y)circle(1.5pt);

\pic[draw, angle radius=2.5mm] {angle = pk--y--p1};
\pic[draw, angle radius=2.5mm] {angle = p1--y--pk'};
\end{tikzpicture}
\caption{}\label{fig:kstr}
\end{figure}

The point of the above definition is that only $\angle p_iyp_k$ and $\angle p_iy\bar p_k$ are almost $\pi/2$, and we make no assumptions on $\angle p_kyp_i$ and $\angle p_ky\bar p_i$.
Thus, when we move from $y$ to $p_k$ or $\bar p_k$, the distance from $p_i$ remains almost unchanged, but there is no control on the distance from $p_k$ when we move to $p_i$ or $\bar p_i$.
This asymmetry is the crucial difference from the GCBA case, and is the Finsler nature of GNPC spaces we should deal with carefully.
However, it turns out that the above asymmetric condition is enough to derive the same topological properties of a strainer as in the GCBA case.

Note that the definition of a strainer is an open condition.
For later use, we also introduce (a lower bound for) the straining radius, which gives a quantitative estimate on the openness of a strainer.

\begin{dfn}\label{dfn:rad}
Let $p_1,\dots,p_k\in B$ be a $(k,\delta)$-strainer at $x\in B$.
We say that $p_1,\dots,p_k$ has \textit{straining radius} $>r$ at $x$ if it is a $(k,\delta)$-strainer at any $y\in B(x,r)$ with the same opposite strainer $\bar p_1,\dots,\bar p_k$.
\end{dfn}

Clearly, the straining radius is positive.
It also has the following uniformity.
This simple fact plays an important role in the proof of the local uniform contractibility of the fibers of a strainer map (Theorem \ref{thm:str}).

\begin{rem}\label{rem:rad}
Let $p_1,\dots,p_k\in B$ be a $(k,\delta)$-strainer at $x\in B$ with straining radius $>r$.
Then it has straining radius $>r/2$ at any $y\in B(x,r/2)$.
\end{rem}

Now we prove the existence of a strainer.
First we show the existence of a $1$-strainer.
This follows from the key lemmas in Section \ref{sec:bran}, specifically Proposition \ref{prop:bran}.
Compare with \cite[Proposition 7.3]{LN:geo}.

\begin{prop}\label{prop:1str}
For any $p\in X$ and $1\gg\delta>0$, there exists $r_0=r_0(p,\delta)>0$ such that $p$ is a $(1,\delta)$-strainer at any point of $B(p,r_0)\setminus\{p\}$.
\end{prop}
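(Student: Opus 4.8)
The plan is to follow the strategy of \cite[Proposition 7.3]{LN:geo}, replacing the Euclidean-cone structure of GCBA tangent cones by the almost nonbranching of Proposition \ref{prop:bran}. The opposite strainer will be a point $\bar p$ lying on an extension of the shortest path $px$ just beyond $x$; the task is then to show that for every $y$ in a fixed neighborhood of $x$ the direction from $y$ to this \emph{fixed} $\bar p$ is almost opposite to the direction from $y$ away from $p$, which is exactly what Definition \ref{dfn:1str} demands.

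Concretely, I would first fix a tiny ball $B$ centered at $p$, pick an auxiliary parameter $0<\delta'\ll\delta$ small enough that the error function $\varkappa(\delta')$ produced below is $<\delta$, and set $r_0:=r_0(p,\delta')$, the radius supplied by Proposition \ref{prop:bran} (shrunk, if necessary, to be small compared with the radius of $B$). Given $x\in B(p,r_0)\setminus\{p\}$, put $r:=|px|$; using the extendability of shortest paths inside a tiny ball, let $\bar p:=2x$ be a point on an extension of $px$ beyond $x$ with $|p\bar p|=2r$, so that $|x\bar p|=r$, and set $U:=B(x,\delta' r)$. Since $\delta'<\tfrac12$, the ball $U$ avoids both $p$ and $\bar p$, and all the auxiliary points introduced below remain inside $B$. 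I then claim that $\bar p$ and $U$ witness Definition \ref{dfn:1str}, i.e.\ that $\angle py\bar p>\pi-\delta$ for every $y\in U$.

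To prove the claim, fix $y\in U$, so that $(1-\delta')r\le|py|\le(1+\delta')r$, and let $2y$ be a point on an extension of $py$ beyond $y$ with $|p,2y|=2|py|$; then $p,y,2y$ lie in this order on a geodesic, so $\angle py(2y)=\pi$. Since $x\in\partial B(p,r)$ and $y\in B(x,\delta' r)$, Proposition \ref{prop:bran} gives $|2y,\bar p|=|2y,2x|<\varkappa(\delta')r$. It remains to compare $\angle py\bar p$ with $\angle py(2y)$. Recall (Remark \ref{rem:orth}, Lemma \ref{lem:comp}) that $\cos\angle pyz=-g_z'(0_+)$, where $g_z(s):=d(p,z(s))$ and $z(s)$ is the unit-speed shortest path $yz$; applying Busemann monotonicity \eqref{eq:mono} to the two shortest paths issuing from $y$ toward $\bar p$ and toward $2y$ — which have comparable lengths (both $\asymp r$) and endpoints within $\varkappa(\delta')r$ of each other — shows that these two geodesics stay within $\varkappa(\delta')s$ of each other up to arclength $s$; hence $|g_{\bar p}(s)-g_{2y}(s)|<\varkappa(\delta')s$ for small $s$, and so $|\cos\angle py\bar p-\cos\angle py(2y)|<\varkappa(\delta')$. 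Therefore $\cos\angle py\bar p<-1+\varkappa(\delta')$, whence $\angle py\bar p>\pi-\varkappa(\delta')>\pi-\delta$. As $y\in U$, and then $x\in B(p,r_0)\setminus\{p\}$, were arbitrary, the proposition follows.

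The main obstacle is the asymmetry already emphasized in Section \ref{sec:orth}: Busemann convexity \eqref{eq:conv} only yields \emph{upper} bounds for the angle $\angle py\,\cdot$ (this is Lemma \ref{lem:comp}), so $\angle py\bar p$ cannot be bounded below directly. The device above — manufacturing the near-opposite direction $\bar p=2x$ from the almost nonbranching of geodesics emanating from $p$, and then transporting the exact angle $\angle py(2y)=\pi$ to $\angle py\bar p$ via a Lipschitz-type dependence of $\angle py\,\cdot$ on its endpoint, itself a consequence of Busemann monotonicity controlling the spread of geodesics issuing from $y$ — is precisely what converts the available upper bounds into the lower bound we need.
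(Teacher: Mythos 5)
Your proposal is correct and follows essentially the same route as the paper: take $\bar p=2x$ on an extension of $px$, use Proposition \ref{prop:bran} to get $|2x,2y|<\varkappa(\delta)r$, and then use Busemann monotonicity at $y$ to compare the geodesics from $y$ to $2x$ and to $2y$, concluding via the derivative characterization of the angle (Remark \ref{rem:orth}). The paper phrases the last step as a direct lower bound $|p\gamma(t)|\ge|py|+(1-\varkappa(\delta))t$ rather than transporting the exact angle $\angle py(2y)=\pi$, but this is only a cosmetic difference.
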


\begin{proof}
Let $r_0$ be the radius from Proposition \ref{prop:bran}.
In what follows, we denote by $\varkappa(\delta)$ various positive functions such that $\varkappa(\delta)\to0$ as $\delta\to0$, as explained in Notation \ref{nota:del}.

Fix $x\in\partial B(p,r)$, where $0<r<r_0$, and let $y\in B(x,\delta r)$.
By Proposition \ref{prop:bran},
\begin{equation}\label{eq:2x2y}
|2x,2y|<\varkappa(\delta)r,
\end{equation}
where $2x$ denotes a point on an extension of the shortest paths $px$ at distance $2|px|$ from $p$ ($2y$ is the same notation for $y$).
We show that $\bar p:=2x$ is an opposite strainer for $p$ at $x$, that is,
\begin{equation}\label{eq:pyp}
\angle py\bar p>\pi-\varkappa(\delta).
\end{equation}
for any $y\in B(x,\delta r)$ (see Definition \ref{dfn:1str}; the error being $\varkappa(\delta)$ instead of $\delta$ does not matter as we will see below).
See Figure \ref{fig:1strprf}.

\begin{figure}[ht]
\centering
\begin{tikzpicture}
\coordinate[label=below left:$p$](p)at(-0.5,-2);
\coordinate[label=left:$x$](x)at(0,0);
\coordinate[label=right:$y$](y)at(0.3,-0.1);
\coordinate[label=above left:$2x$](2x)at(0.5,2);
\coordinate[label=above right:$2y$](2y)at(1.1,1.8);

\draw(p)to(x)to(2x);
\draw(p)to(y)to(2y);
\draw(y)to(2x);
\draw[dashed](0,0)circle[radius=0.75];
\node[below right]at(-45:0.75){$\delta r$};

\fill(p)circle(1.5pt);
\fill(x)circle(1.5pt);
\fill(y)circle(1.5pt);
\fill(2x)circle(1.5pt);
\fill(2y)circle(1.5pt);
\end{tikzpicture}
\caption{}\label{fig:1strprf}
\end{figure}

Let $\gamma$ and $\eta$ denote the shortest paths from $y$ to $2x$ and $2y$, respectively.
By linear reparametrization, we may assume that $\gamma$ and $\eta$ are defined on $[0,r]$.
Note that these reparametrizations are almost isometries (i.e., bi-Lipschitz homeomorphisms with Lipschitz constants $1\pm\varkappa(\delta)$) since the lengths of $\gamma$ and $\eta$ are almost $r$.
By the inequality \eqref{eq:2x2y} and the Busemann monotonicity \eqref{eq:mono}, we have
\begin{equation}\label{eq:gameta}
|\gamma(t)\eta(t)|<\varkappa(\delta)t.
\end{equation}
Since $\eta$ is part of the shortest path from $p$ to $2y$, the distance from $p$ increases with velocity almost $1$ along $\eta$ (not exactly $1$ because of the reparametrization).
In other words,
\begin{equation}\label{eq:peta}
|p\eta(t)|>|py|+(1-\varkappa(\delta))t.
\end{equation}
Together with the triangle inequality, these imply
\begin{align*}
|p\gamma(t)|&\ge|p\eta(t)|-|\gamma(t)\eta(t)|\\
&\ge|py|+(1-\varkappa(\delta))t-\varkappa(\delta)t\tag{\eqref{eq:gameta}, \eqref{eq:peta}}\\
&=|py|+(1-\varkappa(\delta))t.
\end{align*}
Therefore the derivative of $|p\gamma(t)|$ at $t=0$ is greater than $1-\varkappa(\delta)$, which implies the desired inequality \eqref{eq:pyp} (see Remark \ref{rem:orth}; note that $\gamma(t)$ has almost unit-speed).
Since $\varkappa(\delta)$ depends only on $\delta$, we can replace $\varkappa(\delta)$ with $\delta$ by choosing $r_0$ small enough.
This completes the proof.
\end{proof}

\begin{rem}\label{rem:1strrad}
The above proof shows that the $(1,\delta)$-strainer $p$ has straining radius $>\delta'r$ at $x\in\partial B(p,r)$, where $\delta'\ll\delta$ is a constant depending only on $\delta$.
\end{rem}

Next we show the existence of a $k$-strainer, or more precisely, how to extend a strainer.
This follows from the previous proposition and the key lemmas in Section \ref{sec:orth}, specifically Proposition \ref{prop:orth}.
Compare with \cite[Proposition 9.4]{LN:geo}.

\begin{prop}\label{prop:kstr}
Let $p_1,\dots,p_{k-1}\in B$ be a $(k-1,\delta)$-strainer at $p_k\in B$, where $k\ge 2$.
Suppose $x\in B\setminus\{p_k\}$ is sufficiently close to $p_k$ and has the same distances from $p_i$'s as $p_k$, i.e.,
\[|p_ip_k|=|p_ix|\]
for any $1\le i\le k-1$.
Then $p_1,\dots,p_k$ is a $(k,\delta)$-strainer at $x$.
\end{prop}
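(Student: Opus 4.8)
The plan is to verify the three inductive conditions of Definition~\ref{dfn:kstr} for $p_1,\dots,p_k$ at $x$, using throughout the specific opposite strainer $\bar p_k:=2x$ for $p_k$ at $x$ (a point on an extension of the shortest path $p_kx$ beyond $x$) produced by the proof of Proposition~\ref{prop:1str}.

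Conditions (1) and (2) are almost immediate. By hypothesis $p_1,\dots,p_{k-1}$ is a $(k-1,\delta)$-strainer at $p_k$, hence (Definition~\ref{dfn:rad}) has positive straining radius there, so it remains a $(k-1,\delta)$-strainer, with the same opposite strainers, at every point of a fixed ball $B(p_k,\rho_1)$; this gives (1) once $x\in B(p_k,\rho_1)$. Likewise Proposition~\ref{prop:1str} furnishes $\rho_2=r_0(p_k,\delta)>0$ such that $p_k$ is a $(1,\delta)$-strainer, with opposite strainer $2x$, at every point of $B(p_k,\rho_2)\setminus\{p_k\}$; this gives (2) once $x\in B(p_k,\rho_2)\setminus\{p_k\}$. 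So it suffices to take $x$ close enough to $p_k$.

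The substance is condition (3). Fix $i\in\{1,\dots,k-1\}$ and put $R_i:=|p_ip_k|=|p_ix|$ and $r:=|p_kx|$, so $x\in\partial B(p_i,R_i)\cap\partial B(p_k,r)$. If $x$ is close enough to $p_k$ that $r$ is smaller than each radius $r_1$ of Proposition~\ref{prop:orth} (one for each pair $p_i,p_k$) and than the radius of Proposition~\ref{prop:bran} for $p_k$, then for every $y\in B(x,\delta r)$ the almost orthogonality Proposition~\ref{prop:orth} gives $|\angle p_iyp_k-\pi/2|<\varkappa(\delta)$ and $|\angle p_iy\hat y-\pi/2|<\varkappa(\delta)$ for $\hat y$ any point on an extension of $p_ky$ beyond $y$, in particular for $\hat y=2y$. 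To replace $2y$ by $\bar p_k=2x$, observe that $y\in B(x,\delta r)$ with $x\in\partial B(p_k,r)$ meets the hypotheses of the almost nonbranching Proposition~\ref{prop:bran}, so $|2x,2y|<\varkappa(\delta)r$; since $|y,2x|,|y,2y|\in(1\pm\varkappa(\delta))r$, the Busemann monotonicity \eqref{eq:mono} at $y$ forces the unit-speed shortest paths $y\to2x$ and $y\to2y$ to stay within $\varkappa(\delta)s$ of each other at parameter $s$. Hence the distance functions to $p_i$ along these two paths differ by at most $\varkappa(\delta)s$, and passing to the directional derivatives at $y$ (which exist by Lemma~\ref{lem:comp}; see Remark~\ref{rem:orth}) yields $|\cos\angle p_iy(2x)-\cos\angle p_iy(2y)|<\varkappa(\delta)$, i.e.\ $|\angle p_iy\bar p_k-\pi/2|<\varkappa(\delta)$. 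Intersecting over the finitely many $i$ the ball $B(x,\delta r)$ with the neighborhoods from (1) and (2) gives the neighborhood $U$ required in (3).

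I expect the real difficulty to be twofold. First, matching the $\varkappa(\delta)$-type errors delivered by Propositions~\ref{prop:bran} and~\ref{prop:orth} with the $\delta$-precision demanded in Definition~\ref{dfn:kstr}(3); this is dealt with by invoking those propositions with a sufficiently small parameter $\delta_0$ (so that $\varkappa(\delta_0)<\delta$) in place of $\delta$, i.e.\ it is absorbed into the choice of $\delta_k$ in Remark~\ref{rem:del}. Second, the bookkeeping needed to pass from the extension $2y$ of $p_ky$, to which Proposition~\ref{prop:orth} applies directly, to the \emph{fixed} opposite strainer $\bar p_k=2x$, which lies on an extension of $p_kx$ rather than of $p_ky$: this is precisely the step where the almost nonbranching Proposition~\ref{prop:bran} enters, and it is the only point in the argument where the two notions of angle from Section~\ref{sec:key} have to be used together.
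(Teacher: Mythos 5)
Your proposal is correct and follows essentially the same route as the paper: conditions (1) and (2) via openness of the $(k-1,\delta)$-strainer and Proposition~\ref{prop:1str}, and condition (3) by applying Proposition~\ref{prop:orth} with $\hat y=2y$ and then transferring to the fixed opposite strainer $\bar p_k=2x$ via Proposition~\ref{prop:bran} and the Busemann monotonicity \eqref{eq:mono}. Your phrasing of the last step through directional derivatives of $d(p_i,\cdot)$ is just a reformulation of the paper's comparison-angle limit (the estimate \eqref{eq:gameta} from the proof of Proposition~\ref{prop:1str}), so there is no substantive difference.
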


\begin{proof}
We check the three conditions of Definition \ref{dfn:kstr}.
The first is trivial from the openness of a strainer (cf.\ Remark \ref{rem:rad}), provided $x$ is sufficiently close to $p_k$.
Similarly, the second follows from Proposition \ref{prop:1str}.
We show that the third follows from Proposition \ref{prop:orth} (and the proof of Proposition \ref{prop:1str}).

Let $y\in B(x,\delta r)$, where $r:=|p_kx|$.
Since $|p_ip_k|=|p_ix|$, Proposition \ref{prop:orth} implies
\[|\angle p_iyp_k-\pi/2|<\varkappa(\delta),\]
provided $r$ is small enough (depending on $p_i,p_k,\delta$).
Therefore it remains to show
\begin{equation}\label{eq:piypk}
|\angle p_iy\bar p_k-\pi/2|<\varkappa(\delta),
\end{equation}
where $\bar p_k$ is an opposite strainer for $p_k$ at $x$ (as before, we can eventually replace $\varkappa(\delta)$ with $\delta$ since $\varkappa(\delta)$ depends only on $\delta$).
See Figure \ref{fig:kstrprf}.

\begin{figure}[ht]
\centering
\begin{tikzpicture}
\coordinate[label=below:{$p_1,\dots,p_{k-1}$}](p1)at(-0.5,-4);
\coordinate[label=above left:$p_k$](pk)at(-1.5,0);
\coordinate[label=above right:$\bar p_k$](pk')at(1.5,0);
\coordinate[label=above:$x$](x)at(0,0);
\coordinate[label=below left:$y$](y)at(0,-0.25);
\coordinate[label=below right:$2y$](2y)at(1.5,-0.5);

\draw(p1)to(y);
\draw(pk)to(y)to(pk');
\draw(pk)to(y)to(2y);
\draw[dashed](x)circle[radius=0.75];
\node[above right]at(45:0.75){$\delta r$};

\fill(p1)circle(1.5pt);
\fill(pk)circle(1.5pt);
\fill(x)circle(1.5pt);
\fill(y)circle(1.5pt);
\fill(pk')circle(1.5pt);
\fill(2y)circle(1.5pt);
\end{tikzpicture}
\caption{}\label{fig:kstrprf}
\end{figure}

As in the proof of Proposition \ref{prop:1str}, we take $\bar p_k:=2x$ as an opposite strainer for $p_k$ at $x$, where $2x$ denotes a point on an extension of the shortest path $p_kx$ at distance $2|p_kx|$ from $p_k$.
Using the same notation, we define $\hat y:=2y$.
Then Proposition \ref{prop:orth} implies that
\begin{equation}\label{eq:piyy}
|\angle p_iy\hat y-\pi/2|<\varkappa(\delta).
\end{equation}

Let $\gamma$ and $\eta$ denote the linear reparametrizations of the shortest paths from $y$ to $\bar p_k=2x$ and $\hat y=2y$ defined on $[0,r]$, respectively.
From the proof of Proposition \ref{prop:1str}, we obtain the same inequality as \eqref{eq:gameta}:
\[|\gamma(t)\eta(t)|<\varkappa(\delta)t.\]
This gives us
\[|\tilde\angle p_iy\gamma(t)-\tilde\angle p_iy\eta(t)|<\varkappa(\delta).\]
Taking $t\to 0$ (and noting that $\gamma$ and $\eta$ have almost unit-speed), we have
\[|\angle p_iy\bar p_k-\angle p_iy\hat y|<\varkappa(\delta)\]
Combining this with \eqref{eq:piyy}, we obtain the desired inequality \eqref{eq:piypk}.
\end{proof}

\subsection{Fibration property}\label{sec:strfib}

In this subsection, we establish the fibration property of strainers, which is the key ingredient in the proofs of the main theorems.
This proof relies only on the definition of a strainer, so we no longer need to refer back to the key lemmas in Section \ref{sec:key}.

Henceforth, in addition to $\delta$ and $\varkappa(\delta)$ from Notation \ref{nota:del}, we often use the following notation.

\begin{nota}\label{nota:k}
For a natural number $k$, we use the following symbols:
\begin{itemize}
\item $\delta_k$ and $\varepsilon_k$ denote small positive constants depending only on $k$.
\item $\varkappa_k(\delta)$ denotes various positive functions with the same property as $\varkappa(\delta)$ but depending additionally on $k$.
\end{itemize}
The constant $\delta_k$ appears in the assumption of a statement for a $(k,\delta)$-strainer, ensuring that  whenever $\delta<\delta_k$, the conclusion holds.
The constant $\varepsilon_k$ usually appears in the conclusion of a statement. To be more precise, the statement claims
the existence of such constants $\delta_k$ and $\varepsilon_k$.
These constants will be determined explicitly in the proof of the statement by induction on $k$.
\end{nota}

The main object of this subsection is the following strainer map.

\begin{dfn}\label{dfn:strm}
Let $X$ be a GNPC space, $B$ a tiny ball, and $p_1,\dots,p_k\in B$ a $(k,\delta)$-strainer at $x\in B$.
We call a distance map
\[f=(d(p_1,\cdot),\dots,d(p_k,\cdot))\]
a \textit{$(k,\delta)$-strainer map} at $x$ associated with the strainer $p_1,\dots,p_k$.

Furthermore, for a (not necessarily open) subset $A\subset B$, we say that a distance map $f$ is a \textit{$(k,\delta)$-strainer map} on $A$ if it is a $(k,\delta)$-strainer map at any point of $A$ (possibly with different opposite strainers).
\end{dfn}

The following two theorems are the main results of this subsection; compare with \cite[Theorem 1.11]{LN:geo} and \cite[Theorem 5.1]{LN:top}, respectively.
Let $X$ be a GNPC space, and let $\delta_k$ be a positive constant depending only on $k$, as in Notation \ref{nota:k}.

\begin{thm}\label{thm:str}
Let $f$ be a $(k,\delta)$-strainer map at $x\in X$, where $\delta<\delta_k$.
Then $f$ is open and has locally uniformly contractible fibers in an open neighborhood of $x$.
\end{thm}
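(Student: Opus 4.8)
The plan is to follow the strategy of the GCBA case (\cite[Theorem 5.1]{LN:top}), isolating the two assertions — openness and local uniform contractibility of fibers — and arguing both by induction on $k$. For $k=1$, openness of $f=d(p_1,\cdot)$ near $x$ is automatic away from $p_1$, and the fiber is a metric sphere around $p_1$ intersected with a neighborhood; its local uniform contractibility comes from the geodesic-type contraction along the direction of the opposite strainer $\bar p_1$. Concretely, for $y$ in the fiber $\Pi=f^{-1}(f(x))$ near $x$, the condition $\angle p_1 y\bar p_1>\pi-\delta$ from Definition \ref{dfn:1str} says the distance from $p_1$ increases at almost unit speed in the direction of $\bar p_1$; combined with the Busemann monotonicity \eqref{eq:mono} this lets one push points of $\Pi\cap V$ along short geodesics toward $\bar p_1$, then radially back, obtaining a contraction inside $\Pi\cap U$. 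The uniformity over nearby fibers is exactly what the straining-radius bound (Definition \ref{dfn:rad}, Remark \ref{rem:rad}) and Remark \ref{rem:1strrad} are set up to provide.

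For the inductive step, write $f=(f',d(p_k,\cdot))$ where $f'=(d(p_1,\cdot),\dots,d(p_{k-1},\cdot))$ is a $(k-1,\delta)$-strainer map at $x$, open with locally uniformly contractible fibers by the inductive hypothesis (after possibly shrinking $\delta_{k-1}$). The key point is to show that the extra coordinate $d(p_k,\cdot)$, restricted to a fiber $\Pi'$ of $f'$, is again open with locally uniformly contractible fibers near $x$, with constants depending only on $k$; then a standard fibered-product argument (the same as in \cite[Theorem 5.1]{LN:top}) combines the two to conclude. Openness of $d(p_k,\cdot)|_{\Pi'}$ follows because moving $y\in\Pi'$ toward $p_k$ (resp.\ toward the opposite strainer $\bar p_k$) changes $d(p_i,\cdot)$ only by $\varkappa(\delta)$-order — this is precisely condition (3) of Definition \ref{dfn:kstr}, $|\angle p_iyp_k-\pi/2|<\delta$ and $|\angle p_iy\bar p_k-\pi/2|<\delta$ — while changing $d(p_k,\cdot)$ at almost unit speed; a short argument corrects the small first-order drift in the $f'$-coordinates by composing with a geodesic move tangent to $\Pi'$, which exists because $p_1,\dots,p_{k-1}$ straining $x$ makes $f'$ a submersion-like map in the fibration sense. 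The same moves, iterated in the $k$ strained directions and their opposites, give the contraction of $\Pi\cap V$ inside $\Pi\cap U$.

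The main obstacle — and the reason $\delta_k$ must be chosen depending on $k$ — is the asymmetry emphasized after Definition \ref{dfn:kstr}: we only control $\angle p_iyp_k$ for $i<k$, not $\angle p_kyp_i$, so each of the $k$ correction moves perturbs the already-adjusted coordinates by an amount that is $\varkappa(\delta)$ per move but may be amplified by a bounded factor $C=C(k)$ coming from the lower-triangular Jacobian heuristic in Section \ref{sec:tool}. One must therefore run the corrections in the right order (strainer $p_1$ first, then $p_2$, and so on) and check that after all $k$ steps the total drift in every coordinate is still $\varkappa_k(\delta)$, small compared to the step size, so the composite move stays inside $\Pi\cap U$ and lands back on $\Pi$ up to controlled error; a final fixed-point or iteration argument (as in the GCBA case) closes the fiber exactly onto $\Pi$. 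Quantifying this bookkeeping — i.e.\ verifying that the constant $C(k)$ is bounded and that choosing $\delta_k\ll 1/C(k)$ suffices — is the technical heart of the proof; once it is in place, the conclusion that $f$ is open with locally uniformly contractible fibers near $x$ is formal, and the straining radius again supplies the required uniformity over nearby fibers and nearby base points.
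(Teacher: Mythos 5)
Your strategy coincides with the paper's (induction on $k$, consecutive approximation adapted to the one-sided orthogonality, uniformity from the straining radius as in Remark \ref{rem:rad}), but the proposal stops exactly where the proof begins: the ``bookkeeping'' you defer in your last paragraph is the entire content of Propositions \ref{prop:open} and \ref{prop:ret}, and the way you describe it would not go through as stated. The crucial missing device is the weighted norm $\|v\|=|v_{[k-1]}|+(\varepsilon_{k-1}/2)|v_k|$ on $\mathbb R^k$. Your correction scheme alternates a move toward $p_k$ or $\bar p_k$ (which indeed perturbs $f_{[k-1]}$ only by $\varkappa_k(\delta)$ times the step, by Definition \ref{dfn:kstr}(3)) with a correction of the $f_{[k-1]}$-coordinates; but since the strainer gives no control on $\angle p_kyp_i$, that correction perturbs $d(p_k,\cdot)$ by an amount comparable to the step, not by $\varkappa(\delta)$. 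Hence a single pass through the $k$ directions neither lands on the fiber nor has ``total drift $\varkappa_k(\delta)$ in every coordinate,'' and ordering the corrections $p_1,p_2,\dots$ does not fix this; also, a ``geodesic move tangent to $\Pi'$'' does not exist in this setting, so the correction must be the inductively constructed retraction onto the fiber of $f_{[k-1]}$, and its interplay with the $p_k$-move has to be quantified rather than merely ordered.

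What makes the argument close in the paper is (i) for openness, giving the $k$-th coordinate the small weight $\varepsilon_{k-1}/2$, so the bounded back-reaction on $f_k$ is absorbed by the definite gain in the first $k-1$ coordinates, and then invoking the $\varepsilon$-openness criterion of Lemma \ref{lem:open}; and (ii) for the fibers, iterating $\Psi=\Phi_k\circ\Phi_{[k-1]}$ infinitely many times, where each factor lands exactly on its target coordinates (inductive hypothesis, resp.\ the $k=1$ construction), and proving the decay estimates of Claim \ref{clm:ret}: each round at least halves the weighted defect $\|f(\cdot)-u\|$ while the displacement is bounded by a multiple of that defect, so $\Psi^n$ is uniformly Cauchy and its limit is a continuous retraction onto $\Pi$ satisfying \eqref{eq:ret}. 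Contractibility then follows by composing with the geodesic contraction (Corollary \ref{cor:ret}); note the image is only guaranteed to stay in the larger ball $B(x,\varepsilon_k^{-1}r)$ (Remark \ref{rem:retball}), which is exactly why the conclusion is local uniform contractibility in the sense of Definition \ref{dfn:fib} and not a retraction within $B(x,r)$. Without the weighted norm, the geometric decay, and the uniform-convergence step producing a continuous limit retraction, the proposal remains a plan rather than a proof, even though the plan itself is the right one.
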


\begin{thm}\label{thm:fib}
Let $f$ be a $(k,\delta)$-strainer map at $x\in X$, where $\delta<\delta_k$.
Then there exists an arbitrarily small contractible open neighborhood $V$ of $x$ such that $f|_V:V\to f(V)$ is a Hurewicz fibration with contractible fibers.
\end{thm}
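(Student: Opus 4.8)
The plan is to deduce Theorem~\ref{thm:fib} from Theorem~\ref{thm:str} together with Ungar's fibration criterion (Theorem~\ref{thm:ung2}) and the contractibility of fibers, exactly as in the GCBA case \cite[Theorem 5.1]{LN:top}. The first step is to produce the neighborhood $V$. Since $f$ is a $(k,\delta)$-strainer map at $x$, it remains a $(k,\delta)$-strainer map on a whole neighborhood of $x$ (the strainer condition is open; see Remark~\ref{rem:rad} and Definition~\ref{dfn:strm}), and by Theorem~\ref{thm:str} it is open with locally uniformly contractible fibers there. I would take $V$ to be a small ball $B(x,\rho)$ intersected with the relevant open set, small enough that $f$ is open on $V$ with locally uniformly contractible fibers. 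Openness of $f|_V$ onto its image $f(V)$ is inherited from openness of $f$; local uniform contractibility of the fibers of $f|_V$ follows from that of $f$ by shrinking the neighborhoods (one has to check the fibers $\Pi\cap V$ behave well, but this is formal once $V$ is taken to be, say, convex or a ball).

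The second step is to arrange that \emph{every} fiber of $f|_V$ is contractible, not merely uniformly locally contractible. Here I would use the geodesic structure of the tiny ball: each fiber $f^{-1}(a)\cap V$ is a ``sub-level/level set'' cut out by distance functions $d(p_i,\cdot)$, and the key point is that, by the $(1,\delta)$-strainer condition on each $p_i$, moving along the geodesic toward the opposite strainer $\bar p_i$ strictly increases $d(p_i,\cdot)$ while (by the almost-orthogonality in condition (3) of Definition~\ref{dfn:kstr}) changing the other coordinates only at controlled rate. This is precisely the ``infinitesimal regularity of the Jacobian'' discussed informally in Section~\ref{sec:tool}: the $k\times k$ block is close enough to a regular matrix that one can flow within a fiber. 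Concretely, one builds a contraction of $f^{-1}(a)\cap V$ by combining the gradient-like flows of the $d(p_i,\cdot)$ (or rather their strained analogues) to first push the fiber toward the point $x'$ where $f$ takes a suitable value, then contract using the geodesic contraction of the tiny ball restricted to the fiber. Alternatively — and this is the cleaner route, following \cite{LN:top} — one shows the fiber deformation retracts onto a single point by a radial contraction compatible with $f$, using that $V$ is chosen inside a ball on which the strainer map is a submersion-like map in the metric sense.

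The third step is then purely a citation: $V$ and $f(V)$ are finite-dimensional, locally compact metric spaces (Proposition~\ref{prop:dim}), $f|_V:V\to f(V)$ is open, surjective, has locally uniformly contractible fibers (Theorem~\ref{thm:str}) and every fiber is contractible (Step~2), so Theorem~\ref{thm:ung2} applies and $f|_V$ is a Hurewicz fibration. Finally, to see that $V$ itself can be taken contractible: since $X$ is locally contractible (Proposition~\ref{prop:dim}), and inside the tiny ball we may take $V$ to be a metric ball $B(x,\rho)$, which is convex and hence contractible via the geodesic contraction to $x$; and $\rho$ may be taken arbitrarily small, which gives the ``arbitrarily small'' clause.

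The main obstacle is Step~2, the contractibility of each individual fiber. Local uniform contractibility (Theorem~\ref{thm:str}) is already in hand, but upgrading this to genuine contractibility of the full fiber $f^{-1}(a)\cap V$ requires exploiting the asymmetric almost-orthogonality of Definition~\ref{dfn:kstr} to construct a global fiber-preserving contraction — this is exactly where the Finsler asymmetry (the lower-triangular ``$\pm C$'' entries in the informal Jacobian picture of Section~\ref{sec:tool}) must be controlled, and where the argument genuinely differs in bookkeeping, though not in spirit, from the GCBA case. Once the fibers are known to be contractible, the rest is a direct invocation of Ungar's theorem and the geodesic contraction, with no further geometry needed.
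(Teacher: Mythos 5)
There is a genuine gap, and it sits exactly where you flag it: Step~2, the contractibility of each individual fiber of $f|_V$. Your two suggested mechanisms do not work as stated. The geodesic contraction of the tiny ball does not restrict to a fiber of $f$ (it changes the distances $d(p_i,\cdot)$), and the strainer retraction of Proposition~\ref{prop:ret} retracts a ball onto the fiber \emph{through $x$}, not within an arbitrary fiber; moreover it only lands in the larger concentric ball $B(x,\varepsilon_k^{-1}r)$, which is precisely the obstruction recorded in Remark~\ref{rem:retball} (unlike the GCBA case, one cannot expect the image to stay in $B(x,r)$). So Corollary~\ref{cor:ret} only gives that $f^{-1}(v)\cap B(x,r)$ is contractible \emph{in a bigger ball}, and with $V$ chosen as a metric ball $B(x,\rho)$ there is no reason the fibers of $f|_V$ are contractible in themselves. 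This is the point where ``bookkeeping'' is not enough; a new idea is needed.

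The paper's actual mechanism is to \emph{complement} the strainer: by Proposition~\ref{prop:kstr}, $F=(f,d(x,\cdot))$ is a $(k+1,\delta)$-strainer map at every point of $f^{-1}(f(x))\cap\bar B(x,r)\setminus\{x\}$, hence (Proposition~\ref{prop:open}, Corollary~\ref{cor:glofib}) $F$ is a Hurewicz fibration on the annular set $f^{-1}(K)\cap\bar B(x,r)\setminus B(x,\varepsilon_k r/2)$ for a suitable compact neighborhood $K$ of $f(x)$. One then takes $V:=f^{-1}(\mathring K)\cap B(x,r)$ (not a ball; this choice makes $f(V)=\mathring K$ contractible and the fibers equal to $f^{-1}(v)\cap B(x,r)$). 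For each $v\in\mathring K$, the component $d(x,\cdot)$ is a Hurewicz fibration on $f^{-1}(v)\cap\bar B(x,r)\setminus B(x,\varepsilon_k r/2)$; lifting a homotopy crushing $[\varepsilon_k r/2,r)$ to $\{\varepsilon_k r/2\}$ pushes $f^{-1}(v)\cap B(x,r)$ into $f^{-1}(v)\cap\bar B(x,\varepsilon_k r/2)$, and Corollary~\ref{cor:ret} then contracts that small piece inside $f^{-1}(v)\cap B(x,r)$; the concatenation is a contraction of the fiber in itself, after which Theorem~\ref{thm:ung2} applies as in your Step~3 (and the contractibility of $V$ follows from the fibration with contractible fibers over the contractible base, rather than from convexity of a ball). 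Your outline correctly identifies the overall architecture (Theorem~\ref{thm:str} plus Ungar), but without the complementation-by-$d(x,\cdot)$ step the proof of fiber contractibility is missing.
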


See Definition \ref{dfn:fib} for local uniform contractibility of fibers.
As we will see later, Theorem~\ref{thm:fib} is a combination of Theorem~\ref{thm:str}, Proposition \ref{prop:kstr}, and Theorems \ref{thm:ung1} and \ref{thm:ung2}.
Note that both of the above statements are local.

\begin{rem}\label{rem:fib}
In general, $f$ need not be a locally trivial fibration even in the GCBA case.
This is a crucial difference from the Alexandrov case (\cite{Per:alex}, \cite{Per:mor}).

A simple counterexample is constructed as follows.
Let $S$ be the circle of length $2\pi+\delta$ with $0<\delta\ll1$.
Let $p,q\in S$ be antipodal points, and let $I\subset S$ be the interval of length $\delta$ centered at $q$.
Glue two copies of $S$ along $p$ and $I$, and call the resulting space $\hat S$.
The Euclidean cone $X$ over $\hat S$ is a geodesically complete CAT($0$) space.
In that scenario, $p\in\hat S\subset X$ is a $(1,\varkappa(\delta))$-strainer at the vertex $o$ of $X$, but $d(p,\cdot)$ is not a trivial fibration around $o$.
See \cite{NSY:1} for much more complicated examples.
\end{rem}

\begin{rem}\label{rem:bilip}
Theorem \ref{thm:fib} still holds even when $x$ is an isolated point of the fiber of $f$, as noted in Remark \ref{rem:isol}.
Furthermore, in the next subsection we prove the following: if $f$ cannot be extended to a $(k+1,\delta)$-strainer map $(f,g)$ at any point near $x$, then $f$ is a bi-Lipschitz homeomorphism between open neighborhoods of $x$ and $f(x)$.
See Theorem \ref{thm:bilip}.
\end{rem}

The key technique used in the proof of Theorem \ref{thm:str} is the so-called \textit{consecutive approximation}, originating in the fundamental theory of Alexandrov spaces from Burago--Gromov--Perelman \cite{BGP} and Perelman \cite{Per:alex}.
In some sense, what we develop here can be seen as the asymmetric version of it.
We modify the standard Euclidean metric of the target space of a strainer map so that the asymmetry of the almost orthogonality can be ignored.

We remark that this type of asymmetry appeared in the technical proof of the stability theorem for Alexandrov spaces by Perelman \cite{Per:alex}, and more relevantly, in Pogorelov's approach \cite{Po} to Busemann G-spaces with some regularity.

To prove the openness, we introduce a metric version of an open map.

\begin{dfn}\label{dfn:open}
Let $f:X\to Y$ be a map between metric spaces and let $\varepsilon>0$.
We say that $f$ is \textit{$\varepsilon$-open} if, for each $x\in X$, there exists $r>0$ such that for any $v\in B(f(x),r)$, there is $y\in X$ with
\[f(y)=v,\quad\varepsilon|xy|\le|f(x)v|.\]
\end{dfn}

In particular, for any $0<s<r$, we have $f(B(x,\varepsilon^{-1}s))\supset B(f(x),s)$.
Therefore every $\varepsilon$-open map is a (topological) open map.

The following is a criterion for the $\varepsilon$-openness (cf.\ \cite[2.2.1]{Per:alex}, \cite[Lemma 8.1]{LN:geo}; see \cite[Theorem 1.2]{Ly:open} for a more general statement).
We say that a metric space $Y$ is \textit{locally geodesic} if each point has a neighborhood $U$ that is geodesic in $Y$, i.e., any two points in $U$ are connected by a shortest path in $Y$.

\begin{lem}\label{lem:open}
Let $f:X\to Y$ be a continuous map between metric spaces.
Suppose $X$ is locally compact, $Y$ is locally geodesic, and there exists $\varepsilon>0$ satisfying the following: for every $x\in X$ and each $v\in Y\setminus\{f(x)\}$ sufficiently close to $f(x)$, there exists $y\in X\setminus\{x\}$ such that
\begin{equation}\label{eq:open}
|f(y)v|-|f(x)v|\le-\varepsilon|xy|.
\end{equation}
Then $f$ is an $\varepsilon$-open map.
\end{lem}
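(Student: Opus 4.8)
The plan is to prove Lemma~\ref{lem:open} by a standard iteration (``consecutive approximation'') argument: starting from a point whose image is close to the target value $v$, we repeatedly apply the hypothesis \eqref{eq:open} to move toward a preimage of $v$, controlling at each step both the remaining distance $|f(x_n)v|$ and the total distance travelled $\sum|x_nx_{n+1}|$. Fix $x\in X$ and let $v\in Y$ be close enough to $f(x)$ that the hypothesis applies (and that everything stays inside a compact geodesic neighbourhood of $x$). Set $x_0=x$. Given $x_n$ with $f(x_n)\neq v$, the hypothesis yields $x_{n+1}\neq x_n$ with $|f(x_{n+1})v|-|f(x_n)v|\le-\varepsilon|x_nx_{n+1}|$. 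In particular $|f(x_{n+1})v|\le|f(x_n)v|$, so the sequence $a_n:=|f(x_n)v|$ is nonincreasing; and $\varepsilon|x_nx_{n+1}|\le a_n-a_{n+1}$, so $\varepsilon\sum_{n=0}^{N}|x_nx_{n+1}|\le a_0-a_{N+1}\le a_0=|f(x)v|$.

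First I would extract from this the two things we need. The telescoping bound shows $\sum_n|x_nx_{n+1}|<\infty$, so $(x_n)$ is Cauchy; by local compactness (after checking the whole sequence stays in a fixed compact ball, which follows since its total length is at most $\varepsilon^{-1}|f(x)v|$, taken small) it converges to some $y\in X$, and the same bound gives $\varepsilon|xy|\le\varepsilon\sum_n|x_nx_{n+1}|\le|f(x)v|$, which is exactly the metric inequality demanded in Definition~\ref{dfn:open}. It remains to see $f(y)=v$: since $f$ is continuous, $a_n=|f(x_n)v|\to|f(y)v|$, so we must rule out $a_n\to c>0$. But if $f(y)\neq v$, applying the hypothesis \emph{at the limit point} $y$ produces a point $y'$ with $|f(y')v|<|f(y)v|=c$ strictly, contradicting that $c$ is the infimum of the $a_n$ — or, cleanly, once $a_n$ is within a small margin of $c$, one more step of the construction from $y$ would drop below $c$. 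A small amount of care shows the construction can indeed be run ``at $y$'' because $v$ is still close to $f(y)$ (as $|f(y)v|\le|f(x)v|$ is small), so the hypothesis applies there.

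The one genuine subtlety — and the step I expect to be the main obstacle — is the interaction between the hypothesis ``for each $v$ \emph{sufficiently close to} $f(x)$'' and the iteration: as we move along $x_0,x_1,\dots$ the base point changes, and we must know the hypothesis still applies with base point $x_n$ and the same $v$. This is fine because $|f(x_n)v|\le|f(x)v|$ is nonincreasing and small, and $|xx_n|$ is uniformly bounded by $\varepsilon^{-1}|f(x)v|$, so by shrinking the initial radius $r$ in Definition~\ref{dfn:open} we keep the entire orbit inside a neighbourhood on which the quantified hypothesis is valid (and inside a compact set, and inside a geodesic neighbourhood of $Y$ so that distances behave well). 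The local geodesicity of $Y$ and local compactness of $X$ are used precisely here, to make ``sufficiently close'' a uniform condition along the orbit. Once this bookkeeping is set up, the estimate $\varepsilon|xy|\le|f(x)v|$ together with $f(y)=v$ is immediate, giving $\varepsilon$-openness; and the final sentence of the lemma's context — that $\varepsilon$-open implies topologically open, with $f(B(x,\varepsilon^{-1}s))\supset B(f(x),s)$ — is then a one-line consequence.
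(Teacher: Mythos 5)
There is a genuine gap, and it sits exactly where you predicted the ``main obstacle'' would be. The hypothesis \eqref{eq:open} is only assumed for targets $v$ \emph{sufficiently close} to $f(x)$, with no uniformity in the base point: the closeness threshold may depend on the point in an uncontrolled way (and in the application, Proposition \ref{prop:open}, it genuinely does — it is verified pointwise via the straining radius). Your iteration applies the hypothesis at each $x_n$, and again at the limit point $y$, with the \emph{same fixed} target $v$; the assertion that shrinking the initial radius $r$ and keeping the orbit in a compact geodesic neighbourhood makes ``sufficiently close'' uniform along the orbit is not justified — nothing in the hypotheses gives lower semicontinuity of the threshold, and neither local compactness of $X$ nor local geodesicity of $Y$ supplies it. This is precisely why local geodesicity is in the statement: the paper's proof never applies \eqref{eq:open} with the far target $v$. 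Instead it takes the set $A:=\{y\in X\mid |f(y)v|-|f(x)v|\le-\varepsilon|xy|\}$, which is compact (it lies in $B(x,\varepsilon^{-1}r)$), picks $y\in A$ minimizing $|f(\cdot)\,v|$, chooses an auxiliary target $w$ on the shortest path from $f(y)$ to $v$ \emph{as close to $f(y)$ as the hypothesis at $y$ requires}, applies \eqref{eq:open} at $y$ with target $w$, and then uses $|f(z)v|-|f(y)v|\le|f(z)w|-|f(y)w|\le-\varepsilon|yz|$ (additivity of distance along the geodesic) to transfer the decrease to $v$; a triangle-inequality chain then shows $z\in A$, contradicting minimality. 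Your argument is missing this intermediate-point device, which is the one idea that makes the non-uniform hypothesis usable.

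There is also a secondary logical gap in your limit step: if $a_n=|f(x_n)v|\downarrow c>0$ and you produce (even legitimately) a point $y'$ with $|f(y')v|<c$ from the limit $y$, that does not contradict anything — $c$ is the infimum of the values along \emph{your chosen} sequence, and $y'$ is not a member of it. To close this you would need either a greedy selection rule (choose $x_{n+1}$ to come within $2^{-n}$ of the best available decrease), a transfinite restart from the limit point, or — most cleanly — the variational argument the paper uses, where compactness of $A$ and continuity of $f$ produce a genuine minimizer and the contradiction is immediate. Both gaps are repairable, but as written the proposal does not prove the lemma, and the repair essentially reproduces the paper's argument.
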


For completeness, we give the straightforward proof.

\begin{proof}
Let $x\in X$ and $v\in B(f(x),r)\setminus\{f(x)\}$, where $r>0$ is small enough (to be determined later).
We set
\[A:=\left\{y\in X\mid|f(y)v|-|f(x)v|\le-\varepsilon|xy|\right\},\]
which is nonempty as it contains $x$.
Since $|f(x)v|<r$, we have $A\subset B(x,\varepsilon^{-1}r)$.
Hence $A$ is compact, provided $r$ is small enough (recall $X$ is locally compact).

It suffices to show that $f(A)$ contains $v$.
Suppose $f(A)$ does not contain $v$.
Since $A$ is compact and $f$ is continuous, there exists $y\in A$ such that
\[|f(y)v|=|f(A)v|>0.\]
In particular, $|f(y)v|\le|f(x)v|<r$, and thus $f(y)\in B(f(x),2r)$.
Now consider a shortest path $f(y)v$, provided $r$ is small enough (recall $Y$ is locally geodesic).
Choose a point $w$ sufficiently close to $f(y)$ on the shortest path $f(y)v$.
Applying the assumption \eqref{eq:open} at $y$, we find $z\in X\setminus\{y\}$ such that
\[|f(z)w|-|f(y)w|\le-\varepsilon|yz|.\]
Since $w$ lies on the shortest path $f(y)v$, this implies
\begin{align*}
|f(z)v|-|f(y)v|&=|f(z)v|-|f(y)w|-|wv|\\
&\le|f(z)w|-|f(y)w|\\
&\le-\varepsilon|zy|.
\end{align*}
In particular, $f(z)$ is closer to $v$ than $f(y)$.
Furthermore, we have
\begin{align*}
|f(z)v|-|f(x)v|&=(|f(z)v|-|f(y)v|)+(|f(y)v|-|f(x)v|)\\
&\le-\varepsilon|zy|-\varepsilon|yx|\\
&\le-\varepsilon|zx|.
\end{align*}
This means $z\in A$, which contradicts the choice of $y$.
\end{proof}

\begin{rem}\label{rem:openrad}
The above proof shows that it suffices to choose $r$ so that $\bar B(x,\varepsilon^{-1}r)$ is compact and $B(f(x),2r)$ is geodesic in $Y$.
\end{rem}

We now prove the openness of a strainer map, which is the first half of Theorem \ref{thm:str}.
Compare the statement below with \cite[Lemma 8.2]{LN:geo}.

\begin{prop}\label{prop:open}
Let $X$ be a GNPC space, $B$ a tiny ball, and $p_1,\dots,p_k\in B$ a $(k,\delta)$-strainer at $x\in B$.
Let $f$ be the associated strainer map at $x$.
If $\delta<\delta_k$, then $f$ is $\varepsilon_k$-open in an open neighborhood of $x$, with respect to the Euclidean norm of $\mathbb R^k$.
Here $\delta_k$ and $\varepsilon_k$ are positive constants depending only on $k$.
\end{prop}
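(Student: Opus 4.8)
The plan is to verify the hypothesis of Lemma~\ref{lem:open} with a suitable $\varepsilon=\varepsilon_k$: given a point $y$ near $x$ and a target value $v=(v_1,\dots,v_k)\in\mathbb R^k$ close to $f(y)$, we must produce a nearby point $z$ such that $|f(z)v|-|f(y)v|\le-\varepsilon_k|yz|$ in the Euclidean norm. The point $z$ will be obtained by a short geodesic step from $y$ in a direction chosen by \emph{consecutive approximation}: we correct the coordinates one at a time, starting from $p_k$ and working down to $p_1$. Concretely, write $w=v-f(y)$ for the desired displacement. If $w_k>0$ we move $y$ toward $\bar p_k$; if $w_k<0$ we move toward $p_k$; by the $(1,\delta)$-strainer condition (Definition~\ref{dfn:1str}) the $k$-th coordinate $d(p_k,\cdot)$ changes with derivative $>\cos\delta$ in magnitude and the correct sign. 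We scale this step so that it accounts for (most of) $w_k$. We then correct $w_{k-1}$ by a further step toward $\bar p_{k-1}$ or $p_{k-1}$, and so on. The resulting composite displacement is a concatenation of $k$ short geodesics whose total length is comparable to $|w|=|f(y)v|$.

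The key point, and the place where the Finsler asymmetry enters, is bookkeeping the cross-effects: a correction step for coordinate $i$ also disturbs the coordinates $j\ne i$. Moving toward $p_i$ or $\bar p_i$ changes $d(p_j,\cdot)$ at rate $\cos\angle p_jyp_i$ (or $\cos\angle p_jy\bar p_i$), which by condition (3) of Definition~\ref{dfn:kstr} is $O(\delta)$ \emph{only when} $j<i$. For $j>i$ there is no orthogonality control, so the disturbance to the higher coordinates can be of order $1$ (the constant $C$ in the informal Jacobian picture of Section~\ref{sec:tool}). This is exactly why the corrections must be performed in the order $k,k-1,\dots,1$: once coordinate $i$ has been fixed, all subsequent steps (for indices $<i$) disturb it only by $O(\delta)$ per step, hence by $\varkappa_k(\delta)|w|$ in total, which is negligible if $\delta<\delta_k$. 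Conversely, the uncontrolled $O(1)$ disturbance from step $i$ onto coordinates $j>i$ does not matter because those coordinates have not yet been corrected — we simply absorb it into the (still-to-be-made) correction for coordinate $j$. The upshot is a lower-triangular linear system with $1+O(\delta)$ diagonal and $O(1)$ below the diagonal: it is invertible with a bound on the inverse depending only on $k$, so the total step length is $\le C_k|w|$ and, after the last correction, $|f(z)-v|\le\varkappa_k(\delta)|w|$; taking $\delta<\delta_k$ small enough and $\varepsilon_k$ a small constant depending only on $k$ finishes the verification of \eqref{eq:open}.

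To turn this into a clean proof I would proceed as follows. First, fix an open neighborhood $U'$ of $x$ on which $p_1,\dots,p_k$ remains a $(k,\delta)$-strainer with a common system of opposite strainers (possible since being a strainer with given straining radius is an open, uniform condition; cf.\ Remark~\ref{rem:rad}), and shrink $U'$ so that for every $y\in U'$ the points $p_i,\bar p_i$ and their relevant extensions lie in the tiny ball and all the comparison-angle estimates from Definition~\ref{dfn:kstr} hold at $y$. Second, record the elementary one-step estimate: for $y\in U'$ and a geodesic $\sigma$ of small length $s$ issuing from $y$ toward $p_i$ (resp.\ $\bar p_i$), one has $d(p_i,\sigma(s))=d(p_i,y)\mp s(1+O(\delta))$ and, for $j<i$, $d(p_j,\sigma(s))=d(p_j,y)+O(\delta)s$, with the $O$'s uniform over $U'$ — these follow directly from $\cos\angle p_iyp_i=\pm1$ (exactly), the $(1,\delta)$-strainer inequality $\angle p_iy\bar p_i>\pi-\delta$, and Definition~\ref{dfn:kstr}(3), using that the angle is the directional derivative of the distance (Remark~\ref{rem:orth}). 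Third, run the consecutive approximation: define $z$ by concatenating the $k$ correction geodesics described above, with step sizes determined by solving the triangular system; estimate $|yz|\le C_k|f(y)v|$ and $|f(z)v|\le\varkappa_k(\delta)|f(y)v|$. Fourth, iterate this one more time (or pass to a limit of a geometrically convergent sequence of such $z$'s, as in the standard consecutive-approximation scheme) to drive the residual to zero and land exactly at $v$, while keeping the total displacement $\le 2C_k|f(y)v|$; then apply Lemma~\ref{lem:open} with $\varepsilon_k:=1/(2C_k)$, using Remark~\ref{rem:openrad} to fix the radius of openness. The main obstacle is precisely the triangular bookkeeping of the cross-terms and verifying that the below-diagonal $O(1)$ entries never accumulate in the wrong direction — i.e., that the ordering $k\to 1$ genuinely decouples the corrected coordinates from the later steps up to $\varkappa_k(\delta)$ error; once that is set up, the rest is the routine Euclidean estimate packaged in Lemma~\ref{lem:open}.
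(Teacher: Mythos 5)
Your overall strategy (consecutive approximation feeding into Lemma~\ref{lem:open}) is the right one, but the core bookkeeping step is inverted, and as written the argument does not go through. Definition~\ref{dfn:kstr}(3) controls the angles $\angle p_iyp_m$ and $\angle p_iy\bar p_m$ only for $i<m$: moving toward $p_m$ or $\bar p_m$ changes the \emph{lower}-indexed coordinates $d(p_i,\cdot)$, $i<m$, at rate $O(\delta)$, while a move toward a lower-indexed $p_i$ or $\bar p_i$ may change the higher-indexed coordinates at rate up to $1$. You state this correctly (``$O(\delta)$ only when $j<i$''), but then draw the wrong conclusion from it: with your order $k,k-1,\dots,1$, by the time you perform the step for coordinate $i$, the coordinates $j>i$ have \emph{already} been corrected, and it is precisely these that your step can disturb at rate $O(1)$; conversely, the coordinates that later steps leave untouched up to $O(\delta)$ are the lower ones, not the ones you have fixed. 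So the sentence ``once coordinate $i$ has been fixed, all subsequent steps (for indices $<i$) disturb it only by $O(\delta)$ per step'' is exactly the uncontrolled case, and after one pass the residual in the high coordinates can be comparable to $|f(y)v|$ rather than $\varkappa_k(\delta)|f(y)v|$; the iteration then need not contract. A related soft spot is the ``lower-triangular system'' framing: the below-diagonal entries are merely bounded by $1$ (Lipschitzness), not known, and they vary along the geodesics, so one cannot pre-solve for the step sizes; the corrections have to be adaptive, which is why the order matters.

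The fix is to reverse the order: correct coordinate $1$ first and coordinate $k$ last. Then every uncontrolled $O(1)$ disturbance lands on a coordinate that has not yet been treated, and the only disturbances to already-corrected coordinates are the ones controlled at rate $\varkappa_k(\delta)$ by Definition~\ref{dfn:kstr}(3); a single pass then leaves a residual $\le\varkappa_k(\delta)|f(y)v|$ with total displacement $\le C_k|f(y)v|$, and Lemma~\ref{lem:open} finishes the proof. The paper packages this same idea as an induction on $k$: writing $f=(f_{[k-1]},f_k)$, it introduces the weighted norm $\|v\|=|v_{[k-1]}|+(\varepsilon_{k-1}/2)|v_k|$ on $\mathbb R^k$; a move supplied by the $\varepsilon_{k-1}$-openness of $f_{[k-1]}$ may disturb $f_k$ at rate $1$, but this is absorbed by the small weight $\varepsilon_{k-1}/2$, while a move toward $p_k$ or $\bar p_k$ disturbs $f_{[k-1]}$ only by $\varkappa_k(\delta)$; either way one gets the descent condition \eqref{eq:open} for the weighted norm, and Lemma~\ref{lem:open} plus comparability of norms yields $\varepsilon_k$-openness in the Euclidean norm. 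If you adjust the ordering (or adopt the weighted-norm induction), the rest of your outline is sound.
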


\begin{proof}
The proof is similar to that of \cite[Lemma 8.2]{LN:geo}, except that we must deal with the asymmetry of a strainer.
Given $v\in\mathbb R^k$ near $f(x)$, we move $x$ toward the strainer $p_1,\dots,p_k$ or the opposite strainer $\bar p_1,\dots,\bar p_k$ so that the image of $f$ approaches to the desired value $v$.
Here, we modify the norm of $\mathbb R^k$ so that the image of $f$ approaches to $v$ at a constant speed with respect to this new norm.
Since our definition of a strainer is inductive, we use induction on $k$.

\step{Step 1}
The base case $k=1$ is clear from Definition \ref{dfn:1str} (and Remark \ref{rem:orth}).
Given $v\in\mathbb R$ near $f(x)$, we find $y\in X$ on the shortest path $p_1x$ or $\bar p_1x$ such that
\[f(y)=v,\quad(1-\varkappa(\delta))|xy|\le|f(x)v|.\]
The same is true for any point around $x$.
Therefore $f$ is $(1-\varkappa(\delta))$-open around $x$.
It suffices to define $\delta_1>0$ so that $\varepsilon_1:=1-\varkappa(\delta_1)>0$.

\step{Step 2}
Suppose the claim holds for any $(k-1,\delta)$-strainer map (in particular, we assume the existence of the constants $\delta_{k-1}$ and $\varepsilon_{k-1}$).
Let us denote
\[f=(f_{[k-1]},f_k),\]
where $f_{[k-1]}$ is the first $(k-1)$-coordinates of $f$ and $f_k$ is the last $k$-th coordinate of $f$.
By Definition \ref{dfn:kstr}(1), $f_{[k-1]}$ is a $(k-1,\delta)$-strainer map at $x$, and hence by the inductive assumption, it is an $\varepsilon_{k-1}$-open map around $x$, provided $\delta<\delta_{k-1}$.
Similarly, by Definition \ref{dfn:kstr}(2), $f_k$ is a ($1,\delta$)-strainer map; thus, by the base case, Step 1, it is $(1-\varkappa(\delta))$-open around $x$.
We prove that the almost orthogonality of a strainer, Definition \ref{dfn:kstr}(3), ensures that their product $f$ is $\varepsilon_k$-open around $x$ for some $\varepsilon_k\ll\varepsilon_{k-1}$, provided $\delta<\delta_k\ll\delta_{k-1}$.

To this end, we introduce a new norm on $\mathbb R^k$.
For $v=(v_{[k-1]},v_k)\in\mathbb R^{k-1}\times\mathbb R$, we define
\[||v||:=|v_{[k-1]}|+(\varepsilon_{k-1}/2)|v_k|,\]
where $|\cdot|$ denotes the standard Euclidean norm.

We will show that the assumption \eqref{eq:open} of Lemma \ref{lem:open} is satisfied for this norm (hence the proof is by contradiction and not constructive).
Since the definition of a strainer is open (cf.\ Remark \ref{rem:rad}), it suffices to verify the condition \eqref{eq:open} at $x$.
Suppose $v\in\mathbb R^k$ is sufficiently close to $f(x)$.
We split the proof into two cases:
\begin{enumerate}
\item the case $f_{[k-1]}(x)\neq v_{[k-1]}$,
\item the case $f_k(x)\neq v_k$.
\end{enumerate}

Suppose (1) holds.
By the inductive assumption, there is $y\in X$ such that
\begin{equation}\label{eq:openind}
f_{[k-1]}(y)=v_{[k-1]},\quad\varepsilon_{k-1}|xy|\le|f_{[k-1]}(x)v_{[k-1]}|.
\end{equation}
Therefore
\begin{align*}
&||f(y)v||-||f(x)v||\\
&=(\varepsilon_{k-1}/2)|f_k(y)v_k|-(|f_{[k-1]}(x)v_{[k-1]}|+(\varepsilon_{k-1}/2)|f_k(x)v_k|)\\
&\le(\varepsilon_{k-1}/2)|f_k(x)f_k(y)|-\varepsilon_{k-1}|xy|\\
&\le-(\varepsilon_{k-1}/2)|xy|,
\end{align*}
where the second inequality follows from the triangle inequality and the inductive assumption \eqref{eq:openind}, and the third uses the fact that $f_k$ is $1$-Lipschitz.
This finishes the case (1).

Suppose (2) holds.
By the base case, Step 1, we find $y\in X$ such that
\begin{equation}\label{eq:openbase}
f_k(y)=v_k,\quad(1-\varkappa(\delta))|xy|\le|f_k(x)v_k|.
\end{equation}
Hence
\begin{align*}
&||f(y)v||-||f(x)v||\\
&=|f_{[k-1]}(y)v_{[k-1]}|-(|f_{[k-1]}(x)v_{[k-1]}|+(\varepsilon_{k-1}/2)|f_k(x)v_k|)\\
&\le|f_{[k-1]}(y)f_{[k-1]}(x)|-(\varepsilon_{k-1}/3)|xy|,
\end{align*}
where the second inequality follows from the triangle inequality and the base case \eqref{eq:openbase} (we may assume $\varkappa(\delta)\approx0$).
Furthermore, by the almost orthogonality, Definition \ref{dfn:kstr}(3) (and Remark \ref{rem:orth}), we have
\[|f_{[k-1]}(y)f_{[k-1]}(x)|<\varkappa_k(\delta)|xy|,\]
where $\varkappa_k(\delta)$ depends additionally on $k$.
This is because the shortest path $xy$ is part of $p_kx$ or $\bar p_k x$ (see Step 1) and by \eqref{eq:openbase} it is contained in a metric ball of straining radius (see Definition \ref{dfn:rad}), provided $v$ is sufficiently close to $f(x)$.
Therefore if $\delta$ is sufficiently small, say $\delta<\delta_k\ll\delta_{k-1}$, we obtain
\[||f(y)v||-||f(x)v||\le-(\varepsilon_{k-1}/4)|xy|.\]
This finishes the case (2).

By Lemma \ref{lem:open}, the map $f$ is $(\varepsilon_{k-1}/4)$-open with respect to the new norm $||\cdot||$.
Finally, replacing this norm with the standard one, we conclude that $f$ is $\varepsilon_k$-open in the Euclidean norm for some $\varepsilon_k\ll\varepsilon_{k-1}$.
This completes the proof.
\end{proof}

\begin{rem}\label{rem:open}
In the above proof, we only require the almost orthogonality (Definition \ref{dfn:kstr}(3) or Proposition \ref{prop:orth}) and do not need the almost nonbranching (Definition \ref{dfn:kstr}(2) or Proposition \ref{prop:bran}).
Indeed, geodesic completeness alone suffices to obtain the $1$-openness of a $1$-strainer map in Step 1.
\end{rem}

We next prove the local uniform contractibility of the fibers of a strainer map, the second half of Theorem \ref{thm:str}.
Since a GNPC space is locally contractible, it suffices to construct a local uniform retraction to the fiber of a strainer map.
The construction is a refinement of the previous proof of the openness.
Here, we do require the almost nonbranching property, in addition to the almost orthogonality (cf.\ Remark \ref{rem:open}).
We also need the notion of straining radius, Definition \ref{dfn:rad}, to guarantee the uniformity of the retraction.
Compare the following with \cite[Theorem 9.1]{LN:geo}, and see also Remarks \ref{rem:retlen} and \ref{rem:retball}.

\begin{prop}\label{prop:ret}
Let $X$ be a GNPC space, $B$ a tiny ball, and $p_1,\dots,p_k\in B$ a $(k,\delta)$-strainer at $x\in B$ with straining radius $>r_0>0$.
Let $f$ be the associated strainer map at $x$ and $\Pi=f^{-1}(f(x))$ the fiber of $f$ through $x$.
If $\delta<\delta_k$ and $0<r<\varepsilon_kr_0$, then there exists a continuous map
\[\Phi:B(x,r)\to B(x,\varepsilon_k^{-1}r)\cap\Pi\]
whose restriction to $B(x,r)\cap\Pi$ is the identity and
\begin{equation}\label{eq:ret}
|\Phi(y)y|\le\varepsilon_k^{-1}|f(x)f(y)|
\end{equation}
for any $y\in B(x,r)$, where $|\cdot|$ on the right-hand side denotes the Euclidean norm.
Here $\delta_k$ and $\varepsilon_k$ are positive constants depending only on $k$.
\end{prop}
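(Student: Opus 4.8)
The plan is to build $\Phi$ by the \emph{consecutive approximation} scheme foreshadowed in Section~\ref{sec:tool}, turning the openness argument of Proposition~\ref{prop:open} into an explicit convergent iteration. Starting from $y\in B(x,r)$, I would repeatedly flow $y$ along geodesics toward the strainers $p_i$ and opposite strainers $\bar p_i$ so that $f$ converges to $f(x)$, and control the accumulated displacement by a geometric series. As in Proposition~\ref{prop:open} the induction runs on $k$, and the crucial point is to respect the asymmetry of Definition~\ref{dfn:kstr} by always correcting the first $k-1$ coordinates \emph{before} the last one.

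For the base case $k=1$, the fiber $\Pi$ is locally the level set $\{d(p_1,\cdot)=d(p_1,x)\}$. If $d(p_1,y)<d(p_1,x)$, I would let $\Phi(y)$ be the first point on the unique shortest path from $y$ toward $\bar p_1$ at which $d(p_1,\cdot)$ attains the value $d(p_1,x)$; Definition~\ref{dfn:1str} together with Remark~\ref{rem:orth} says $d(p_1,\cdot)$ increases along this path with speed $>1-\varkappa(\delta)$, so such a point exists, is unique, and lies within $(1-\varkappa(\delta))^{-1}\,(d(p_1,x)-d(p_1,y))$ of $y$. If $d(p_1,y)>d(p_1,x)$, I would instead flow along the shortest path $yp_1$, along which $d(p_1,\cdot)$ decreases with speed $1$, and set $\Phi(y)=y$ when $d(p_1,y)=d(p_1,x)$. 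Continuity comes from local compactness and the uniqueness and continuous dependence of shortest paths to the fixed points $p_1,\bar p_1$ in the proper BNPC ball $10\bar B$, plus the fact that the displacement tends to $0$ as $d(p_1,y)\to d(p_1,x)$, which glues the two cases; the bound $r<\varepsilon_1 r_0$ and Remark~\ref{rem:1strrad} keep the relevant geodesics inside the straining-radius ball. This gives the statement with $\varepsilon_1=1-\varkappa(\delta_1)$ for a suitable $\delta_1$.

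For the inductive step, write $f=(f_{[k-1]},f_k)$ and assume the proposition for $(k-1,\delta)$-strainer maps. I would define one \emph{round} of the flow as follows: first apply the inductive retraction for $f_{[k-1]}$, which moves a point by at most $\varepsilon_{k-1}^{-1}|f_{[k-1]}(x)-f_{[k-1]}(\cdot)|$, makes $f_{[k-1]}$ equal to $f_{[k-1]}(x)$, and disturbs $f_k$ by at most that same amount since $f_k$ is $1$-Lipschitz; then apply the base-case flow toward $p_k$ or $\bar p_k$ to make $f_k$ equal to $f_k(x)$, which moves the point by at most $(1-\varkappa(\delta))^{-1}$ times the current $f_k$-error and, by the almost orthogonality Definition~\ref{dfn:kstr}(3) (exactly as in case (2) of the proof of Proposition~\ref{prop:open}), disturbs $f_{[k-1]}$ by at most $\varkappa_k(\delta)$ times that displacement. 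Hence, writing $E_m$ for $|f(x)-f(z_m)|$ after $m$ rounds with $z_0=y$, one round displaces by at most $C_kE_m$ and produces $E_{m+1}\le C_k'\varkappa_k(\delta)\,E_m$ for constants depending only on $k$; choosing $\delta_k$ with $C_k'\varkappa_k(\delta_k)<1/2$ forces $E_m\to0$ geometrically and bounds the total displacement $\sum_m C_kE_m$ by a constant multiple of $|f(x)-f(y)|$. Then $\Phi:=\lim_m z_m$ is well defined and continuous (a uniform limit of continuous maps), takes values in $\Pi$, restricts to the identity on $B(x,r)\cap\Pi$ (every round fixes points where all errors vanish), and satisfies $|\Phi(y)y|\le\varepsilon_k^{-1}|f(x)f(y)|$ for a suitable $\varepsilon_k\ll\varepsilon_{k-1}$; the hypotheses $\delta<\delta_k$, $r<\varepsilon_kr_0$ and the straining-radius uniformity (Remark~\ref{rem:rad}) keep the whole iteration inside $B(x,\varepsilon_k^{-1}r)\subset B(x,r_0)$, where all the strainer estimates remain valid.

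The hardest part, I expect, is the bookkeeping of the asymmetry together with keeping the flow in the good region: the $f_{[k-1]}$-correction can disturb $f_k$ by an amount comparable to its own displacement (the ``$\pm C$'' subdiagonal entry in the Jacobian heuristic of Section~\ref{sec:tool}), whereas the $f_k$-correction disturbs $f_{[k-1]}$ only by a $\varkappa_k(\delta)$-fraction of its displacement (near-orthogonality). It is precisely performing the corrections in this order that makes the per-round error contract, and arranging the nested constants $\delta_k\gg\delta_{k+1}$ and $\varepsilon_k\gg\varepsilon_{k+1}$ consistently across the induction while never leaving the straining-radius ball is the real technical crux. A secondary nuisance is the continuity of the base-case flow across the interface where $d(p_1,y)=d(p_1,x)$, which however follows from the uniqueness and continuous dependence of shortest paths in $10\bar B$ and the vanishing of the displacement there.
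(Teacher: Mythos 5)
Your proposal is correct and follows essentially the same route as the paper's proof: the base case flows along the geodesic contractions toward $p_1$ or $\bar p_1$ until the level set is hit, and the inductive step iterates one round $\Phi_k\circ\Phi_{[k-1]}$ (correcting the first $k-1$ coordinates before the last, exploiting the asymmetric almost orthogonality) and passes to the geometric-series limit. The only cosmetic difference is that the paper packages the per-round estimates via the modified norm $\lVert v\rVert=|v_{[k-1]}|+(\varepsilon_{k-1}/2)|v_k|$, whereas you track the errors $E_m$ and displacements directly, which amounts to the same bookkeeping.
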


\begin{rem}\label{rem:retrad}
Condition \eqref{eq:ret} is kind of the $\varepsilon_k$-openness of $f$ (cf.\ Proposition \ref{prop:open}).
Note that \eqref{eq:ret} implies $\Phi(y)\in B(x,{\varepsilon_k'}^{-1}|xy|)$ for some $\varepsilon_k'\ll\varepsilon_k$.
Therefore, to ensure that the image of $\Phi$ is contained in $B(x,\varepsilon_k^{-1}r)$, it suffices to show \eqref{eq:ret} and then replace $\varepsilon_k$ with a smaller one.
\end{rem}

Before the proof, we introduce the following notation.
We denote the \textit{(unit-speed) geodesic contraction} centered at $p\in B$ by
\[\Phi_p(x,t)\]
for $x\in B$ and $t\ge 0$, that is, $\Phi_p(x,t)$ is the unique unit-speed shortest path from $x$ to $p$ for $0\le t\le |px|$ and then stops at $p$ for $t\ge |px|$.

\begin{proof}
The proof is similar to that of \cite[Theorem 9.1]{LN:geo}, which is constructive and requires a more delicate argument than the proof of Proposition \ref{prop:open} based on Lemma \ref{lem:open}.
The desired retraction $\Phi$ is obtained by composing $\Phi_{p_1},\dots,\Phi_{p_k}$ and $\Phi_{\bar p_1},\dots,\Phi_{\bar p_k}$ infinitely many times, where $\bar p_1,\dots,\bar p_k$ is an opposite strainer.
As before, we have to take into account the asymmetry of the almost orthogonality.
We use induction on $k$.

\step{Step 1}
We first prove the base case $k=1$.
Since $p_1$ is a $(1,\delta)$-strainer at $x$ with straining radius $>r_0$, there exists an opposite strainer $\bar p_1$ such that
\[\angle p_1y\bar p_1>\pi-\delta\]
for any $y\in B(x,r_0)$ (see Definitions \ref{dfn:1str} and \ref{dfn:rad}).

Let $\Phi_{p_1}$ and $\Phi_{\bar p_1}$ be the geodesic contractions.
Clearly, $\Phi_{p_1}$ monotonically decreases the value of $f$ with velocity $1$.
Furthermore, the above strainer condition (and Remark \ref{rem:orth}) ensures that $\Phi_{\bar p_1}$ monotonically increases the value of $f$ with velocity almost $1$, i.e.,
\begin{equation}\label{eq:p1y}
\frac d{dt}f(\Phi_{\bar p_1}(y,t))>1-\varkappa(\delta)
\end{equation}
as long as $\Phi_{\bar p_1}(y,t)$ remains in $B(x,r_0)$.

Let $0<r<r_0/10$.
For $y\in B(x,r)$, we define $\Phi(y)$ as follows, where $\im$ denotes the image of a map:
\begin{itemize}
\item if $f(y)>f(x)$, then $\Phi(y)$ is the unique point of $\im\Phi_{p_1}(y,\cdot)\cap\Pi$.
\item if $f(y)<f(x)$, then $\Phi(y)$ is the unique point of $\im\Phi_{\bar p_1}(y,\cdot)|_{[0,5r]}\cap\Pi$.
\item if $f(y)=f(x)$, then $\Phi(y)=y$.
\end{itemize}

In the second case, since $f$ is $1$-Lipschitz, we have $f(y)>f(x)-r$.
Because $r<r_0/10$, the shortest path $\Phi_{\bar p_1}(y,t)$ stays within $B(x,r_0)$ for $0\le t\le 5r$, where the monotone estimate \eqref{eq:p1y} is valid.
Thus one can find the unique desired point, provided $1-\varkappa(\delta)\approx 1$.

The uniqueness guarantees that $\Phi$ is continuous.
It is now easy to see that $\Phi$ satisfies the other desired properties, by choosing $\delta_1$ so small that $\varkappa(\delta_1)\approx 0$ and setting $\varepsilon_1:=1/10$.

\step{Step 2}
We next prove the induction step.
Suppose we have constructed the desired retraction map $\Phi_{[k-1]}$ for a $(k-1,\delta)$-strainer $p_1,\dots,p_{k-1}$ at $x$ (in particular, we assume the existence of $\delta_{k-1}$ and $\varepsilon_{k-1}$).
Since $p_k$ is a $(1,\delta)$-strainer at $x$, we also have a retraction $\Phi_k$ constructed in the base case, Step 1.

To construct the desired retraction, we consider the modified norm
\[||v||:=|v_{[k-1]}|+(\varepsilon_{k-1}/2)|v_k|\]
as before, where $v=(v_{[k-1]},v_k)\in\mathbb R^{k-1}\times\mathbb R$.

We first set up the notation and summarize the assumptions.
Let $0<r<\varepsilon_kr_0$, where $\varepsilon_k\ll\varepsilon_{k-1}$ will be determined later so that the following argument is carried out in $B(x,r_0)$.
Let $y\in B(x,r)$ and set $z:=\Phi_{[k-1]}(y)$, $w:=\Phi_k(z)$, and $u:=f(x)$.
By the inductive assumption and the base case, we have
\begin{align}
f_{[k-1]}(z)&=u_{[k-1]},&\varepsilon_{k-1}|yz|&\le|f_{[k-1]}(y)u_{[k-1]}|,\label{eq:retind}\\
f_k(w)&=u_k,&(1-\varkappa(\delta))|zw|&\le|f_k(z)u_k|.\label{eq:retbase}
\end{align}
By the almost orthogonality, Definition \ref{dfn:kstr}(3) (and Remark \ref{rem:orth}), we have
\begin{equation}\label{eq:retorth}
|f_{[k-1]}(w)f_{[k-1]}(z)|<\varkappa_k(\delta)|zw|,
\end{equation}
assuming that the shortest path $zw$ is contained in $B(x,r_0)$ (recall $zw$ is part of the shortest path $p_kz$ or $\bar p_kz$; see Step 1).
Here $\varkappa_k(\delta)$ depends additionally on $k$.
The following argument depends only on the above properties.
See Figure \ref{fig:ret}.

\begin{figure}[ht]
\centering
\begin{tikzpicture}
\coordinate[label=below:{$p_1,\dots,p_{k-1}$}](p1)at(-0.5,-4);
\coordinate[label=above left:$p_k$](pk)at(-3,0);
\coordinate[label=above right:$\bar p_k$](pk')at(3,0);
\coordinate[label=right:$y$](y)at(1,1);

\draw[name path=f1] (-2,0) .. controls (-1,0.3) and (1,0.3) .. (2,0) node[below]{$f_{[k-1]}^{-1}(u_{[k-1]})$} coordinate[pos=.7] (z);
\draw[name path=fk] (0,2) .. controls (0.3,1) and (0.3,-1) .. (0,-2) node[below]{$f_k^{-1}(u_k)$} coordinate[pos=.5] (w);
\path[name intersections={of=f1 and fk,by=x}];

\fill (x) circle (1.5pt) node[above left]{$x$};
\fill (z) circle (1.5pt) node[above right] {$z$};
\fill (w) circle (1.5pt) node[left] {$w$};

\draw[->,dashed](y)to(z);
\draw[->,dashed](z)to(w);

\fill(p1)circle(1.5pt);
\fill(pk)circle(1.5pt);
\fill(pk')circle(1.5pt);
\fill(y)circle(1.5pt);
\end{tikzpicture}
\caption{}\label{fig:ret}
\end{figure}

\begin{clm}\label{clm:ret}
In the situation above, we have
\begin{align}
&||f(w)u||-||f(y)u||\le-\varepsilon_k|yw|,\label{eq:clm1}\\
&||f(w)u||\le(1/2)||f(y)u||,\label{eq:clm2}
\end{align}
where $\varepsilon_k\ll\varepsilon_{k-1}$ and $\delta<\delta_k\ll\delta_{k-1}$.
\end{clm}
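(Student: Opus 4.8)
The plan is to derive both \eqref{eq:clm1} and \eqref{eq:clm2} from only the recorded properties \eqref{eq:retind}, \eqref{eq:retbase}, \eqref{eq:retorth} together with the $1$-Lipschitz property of the coordinate functions of $f$; as the discussion preceding the claim makes clear, no further geometry of $X$ is needed. The starting observation is that $||f(w)u||$ is small: since $f_k(w)=u_k$ by \eqref{eq:retbase}, the $k$-th coordinate of $f(w)-u$ vanishes, so $||f(w)u||=|f_{[k-1]}(w)u_{[k-1]}|$; and since $f_{[k-1]}(z)=u_{[k-1]}$ by \eqref{eq:retind}, this equals $|f_{[k-1]}(w)f_{[k-1]}(z)|$, which by the almost orthogonality \eqref{eq:retorth} is at most $\varkappa_k(\delta)|zw|$. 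Using the other half of \eqref{eq:retbase}, namely $(1-\varkappa(\delta))|zw|\le|f_k(z)u_k|$, I also obtain $||f(w)u||\le\varkappa_k(\delta)|f_k(z)u_k|$ after absorbing the factor $(1-\varkappa(\delta))^{-1}$ into $\varkappa_k$ (cf.\ Notation \ref{nota:del}).

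Next I would establish a matching lower bound for $||f(y)u||$. From \eqref{eq:retind} we have $|f_{[k-1]}(y)u_{[k-1]}|\ge\varepsilon_{k-1}|yz|$, and since $f_k$ is $1$-Lipschitz, $|f_k(z)u_k|\le|yz|+|f_k(y)u_k|$; combined with $(1-\varkappa(\delta))|zw|\le|f_k(z)u_k|$ this bounds $|zw|$ in terms of $|yz|$ and $|f_k(y)u_k|$. Substituting into the definition $||v||=|v_{[k-1]}|+(\varepsilon_{k-1}/2)|v_k|$ and carrying out a short estimate (e.g.\ averaging the two bounds $||f(y)u||\ge\varepsilon_{k-1}|yz|$ and $||f(y)u||\ge(\varepsilon_{k-1}/2)(1-\varkappa(\delta))|zw|$) yields a bound of the form $||f(y)u||\ge c\,\varepsilon_{k-1}\bigl(|yz|+|zw|\bigr)$ for some constant $c>0$ once $\delta<\delta_k$.

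With both bounds in hand the conclusions follow quickly. For \eqref{eq:clm1}, subtracting the upper bound for $||f(w)u||$ from the lower bound for $||f(y)u||$ gives $||f(w)u||-||f(y)u||\le\varkappa_k(\delta)|zw|-c\,\varepsilon_{k-1}(|yz|+|zw|)$; choosing $\delta<\delta_k$ small enough that $\varkappa_k(\delta)\le(c/2)\varepsilon_{k-1}$, the right-hand side is at most $-(c/2)\varepsilon_{k-1}(|yz|+|zw|)\le-(c/2)\varepsilon_{k-1}|yw|$ by the triangle inequality, so \eqref{eq:clm1} holds with $\varepsilon_k:=(c/2)\varepsilon_{k-1}$, which we then shrink further as needed for the later change of norm back to the Euclidean one. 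For \eqref{eq:clm2}, starting from $||f(w)u||\le\varkappa_k(\delta)|f_k(z)u_k|$, the same triangle-inequality estimates bound $|f_k(z)u_k|$ by a fixed multiple of $\varepsilon_{k-1}^{-1}||f(y)u||$ (using $|f_{[k-1]}(y)u_{[k-1]}|\le||f(y)u||$ and $|f_k(y)u_k|\le(2/\varepsilon_{k-1})||f(y)u||$), so $||f(w)u||\le\varkappa_k(\delta)\,\varepsilon_{k-1}^{-1}\cdot\mathrm{const}\cdot||f(y)u||\le(1/2)||f(y)u||$ once $\delta<\delta_k$ is small.

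The only genuine point of care — the main obstacle, though a mild one — is the asymmetry of the strainer: the retracted point $w$ lies in the fiber of $f_k$ through $x$ but in general not in that of $f_{[k-1]}$, so one cannot simply bound $||f(w)u||$ coordinate by coordinate against $||f(y)u||$. What rescues the estimate is that the drift of $f_{[k-1]}$ along the $p_k$- (or $\bar p_k$-)geodesic from $z$ to $w$ is controlled, one-sidedly, by \eqref{eq:retorth}, and that $\varkappa_k(\delta)$ can be made arbitrarily small relative to the already-fixed constant $\varepsilon_{k-1}$; keeping the nested constants $\delta_k\ll\delta_{k-1}$ and $\varepsilon_k\ll\varepsilon_{k-1}$ straight is then the last bit of bookkeeping.
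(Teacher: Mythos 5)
Your proposal is correct and follows essentially the same argument as the paper: both rest solely on \eqref{eq:retind}, \eqref{eq:retbase}, \eqref{eq:retorth}, the $1$-Lipschitzness of $f_k$, and the modified norm, with the smallness of $\varkappa_k(\delta)$ relative to $\varepsilon_{k-1}$ absorbing the asymmetric drift of $f_{[k-1]}$ along the $p_k$-direction. The only difference is organizational — the paper telescopes through the intermediate point $z$ by estimating $||f(z)u||-||f(y)u||$ and $||f(w)u||-||f(z)u||$ separately, whereas you combine a single upper bound on $||f(w)u||$ with a single lower bound on $||f(y)u||$ — which yields the same conclusion with comparable constants.
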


\begin{proof}
We first prove \eqref{eq:clm1}.
This is essentially the $\varepsilon_k$-openness of $f$.
Therefore the following calculations are the same as in Step 2 of the proof of Proposition \ref{prop:open}.
First, we have
\begin{align*}
&||f(z)u||-||f(y)u||\\
&\le-|f_{[k-1]}(y)u_{[k-1]}|+(\varepsilon_{k-1}/2)|f_k(y)f_k(z)|\tag{triangle inequality, \eqref{eq:retind}}\\
&\le-\varepsilon_{k-1}|yz|+(\varepsilon_{k-1}/2)|yz|\tag{\eqref{eq:retind}, $1$-Lipschitzness of $f_k$}\\
&\le-(\varepsilon_{k-1}/2)|yz|.
\end{align*}
Second, we have
\begin{align*}
&||f(w)u||-||f(z)u||\\
&\le|f_{[k-1]}(z)f_{[k-1]}(w)|-(\varepsilon_{k-1}/2)|f_k(z)u_k|\tag{triangle inequality, \eqref{eq:retbase}}\\
&\le\varkappa_k(\delta)|zw|-(\varepsilon_{k-1}/3)|zw|\tag{\eqref{eq:retorth}, \eqref{eq:retbase}}\\
&\le-(\varepsilon_{k-1}/4)|zw|,
\end{align*}
provided $\delta$ is small enough, say $\delta<\delta_k\ll\delta_{k-1}$.
Combining the above two inequalities and using the triangle inequality, we obtain the desired inequality \eqref{eq:clm1} with $\varepsilon_k<\varepsilon_{k-1}/4$.

Next we prove \eqref{eq:clm2}.
By \eqref{eq:retind}, \eqref{eq:retbase}, and \eqref{eq:retorth}, we have
\[||f(w)u||=|f_{[k-1]}(w)f_{[k-1]}(z)|\le\varkappa_k(\delta)|zw|.\]
Furthermore, we have
\begin{align*}
|zw|&\le(1+\varkappa(\delta))|f_k(z)u_k|\tag{\eqref{eq:retbase}}\\
&\le2(|f_k(z)f_k(y)|+|f_k(y)u_k|)\tag{triangle inequality}\\
&\le2(|zy|+|f_k(y)u_k|)\tag{$1$-Lipschitzness of $f_k$}\\
&\le2(\varepsilon_{k-1}^{-1}|f_{[k-1]}(y)u_{[k-1]}|+|f_k(y)u_k|)\tag{\eqref{eq:retind}}\\
&\le\varepsilon_k^{-1}||f(y)u||,
\end{align*}
where the last inequality holds by choosing $\varepsilon_k\ll\varepsilon_{k-1}$.
Combining the above two inequalities and choosing $\delta_k$ small enough again, we obtain the desired inequality \eqref{eq:clm2}.
\end{proof}

Set $\Psi:=\Phi_k\circ\Phi_{[k-1]}$ on $B(x,r)$ (thus $w=\Psi(y)$).
We are going to define
\[\Phi(y):=\lim_{n\to\infty}\Psi^n(y)\]
for $y\in B(x,r)$, where $\Psi^n$ denotes the $n$-th iterate of $\Psi$.
By \eqref{eq:clm2}, we see that $\lim_{n\to\infty}f(\Psi^n(y))$ exists and is equal to $f(x)=u$.

Furthermore, for any $n\ge m$, we have
\begin{align*}
|\Psi^m(y)\Psi^n(y)|&\le|\Psi^m(y)\Psi^{m+1}(y)|+\dots+|\Psi^{n-1}(y)\Psi^n(y)|\\
&\le\varepsilon_k^{-1}(||f(\Psi^m(y))u||-||f(\Psi^n(y))u||)\tag{\eqref{eq:clm1}}\\
&\le\varepsilon_k^{-1}||f(\Psi^m(y))u||\\
&\le\varepsilon_k^{-1}2^{-m}||f(y)u||.\tag{\eqref{eq:clm2}}
\end{align*}
Therefore $\Psi^n(y)$ is a Cauchy sequence and the limit point $\Phi(y)$ certainly exists.
Moreover, since $||f(y)u||$ is uniformly bounded, this is a uniform convergence and thus $\Phi$ is continuous.
Taking $m=0$ in the above inequality, we also have
\[|\Psi^n(y)y|\le\varepsilon_k^{-1}||f(y)u||.\]
Taking $n\to\infty$, we obtain the desired inequality \eqref{eq:ret} (to be more precise, we have to change the modified norm back to the Euclidean norm and replace $\varepsilon_k$ with a smaller one).

Finally, we remark that the above argument is carried out in the metric ball $B(x,r_0)$ of straining radius so that all the inequalities are valid, assuming $r<\varepsilon_k r_0$.
Indeed, the last inequality shows that $\Psi^n(y)$ remains in $B(x,{\varepsilon_k'}^{-1}r)$ for some $\varepsilon_k'\ll\varepsilon_k$, as noted in Remark \ref{rem:retrad}.
Again, if necessary, replace $\varepsilon_k$ with a smaller one.
This completes the proof.
\end{proof}

\begin{rem}\label{rem:retlen}
As can be seen from the above proof, the retraction $\Phi$ is actually given by a homotopy as in the original statement \cite[Theorem 9.1]{LN:geo}: there exists a continuous map
\[\hat\Phi:B(x,r)\times[0,1]\to B(x,\varepsilon_k^{-1}r)\]
such that $\hat\Phi(\cdot,0)$ is the identity, $\hat\Phi(\cdot,1)=\Phi$, and $\hat\Phi$ fixes the points of $B(x,r)\cap\Pi$.
The homotopy $\hat\Phi(y,t)$ is defined by a suitable reparametrization of the infinite concatenation of the curves connecting $y$, $\Psi(y)$, \dots, $\Psi^n(y)$, \dots, $\Phi(y)$.
Here, $y$ and $\Psi(y)$ are connected by the concatenation of the curve from $y$ to $z=\Phi_{[k-1]}(y)$ given by the inductive assumption and the shortest path from $z$ to $w=\Phi_k(z)=\Psi(y)$ given by the base case.

Moreover, the following refinement of the inequality \eqref{eq:ret} holds (compare with \cite[Theorem 9.1(2)]{LN:geo}):
\[\length\hat\Phi(y,t)\le\varepsilon_k^{-1}|f(y)u|.\]
Indeed, to prove this, it suffices to just replace the distance between a point and its retraction image in the above proof by the length of the corresponding curve.
More precisely, by the inductive assumption, one can replace $|yz|$ in \eqref{eq:retind} by the length of the $\hat\Phi_{[k-1]}$-curve from $y$ to $z$.
Then one can prove \eqref{eq:clm1} with $|yw|$ replaced by the length of the concatenation of the $\hat\Phi_{[k-1]}$-curve from $y$ to $z$ and the $\hat\Phi_k$-curve from $z$ to $w$, i.e., the shortest path $zw$.
In this way, one can replace $|\Psi^n(y)y|$ with the length of the concatenation of the curves connecting $y$, $\Psi(y)$, \dots, $\Psi^n(y)$ and get the above estimate.

Though not used here, we will need this refinement in the next section to prove a counterpart to Proposition \ref{prop:ret} for an extended strainer (Proposition \ref{prop:exret}). 
\end{rem}

\begin{rem}\label{rem:retball}
On the other hand, unlike the GCBA case, it seems impossible to expect that the image of the retraction $\Phi$ remains in the original ball $B(x,r)$ (compare with \cite[Theorem 9.1(3)]{LN:geo}).
We deal with this problem by complementing a strainer in the proof of Theorem \ref{thm:fib} below.
See also Corollary \ref{cor:fib}.
\end{rem}

Recall that every metric ball in a tiny ball of a GNPC space is contractible via the geodesic contraction.
By composing this contraction with the retraction to the fiber of a strainer map, as constructed above, we obtain the following corollary.

\begin{cor}\label{cor:ret}
Under the setup of Proposition \ref{prop:ret}, $B(x,r)\cap\Pi$ is contractible in $B(x,\varepsilon_k^{-1}r)\cap\Pi$.
\end{cor}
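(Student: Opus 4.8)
The plan is to build the contracting homotopy by post-composing the \emph{geodesic contraction centered at $x$} with the fiberwise retraction $\Phi$ from Proposition~\ref{prop:ret}. The geodesic contraction alone already contracts $B(x,r)\cap\Pi$ to the point $x$ inside $B(x,r)$, but its intermediate values generally leave the fiber $\Pi$; the role of $\Phi$ is precisely to push these intermediate points back onto $\Pi$ while keeping them in the slightly larger ball $B(x,\varepsilon_k^{-1}r)$. Since $x\in\Pi$ and $\Phi$ restricts to the identity on $\Pi$, the two endpoints of the homotopy will come out right automatically.

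Concretely, I would proceed as follows. First, for $y\in B(x,r)$ let $c_y\colon[0,1]\to B$ be the constant-speed reparametrization of the unique shortest path from $y$ to $x$. Because $B(x,r)$ is convex inside the tiny ball $B$ and $|x\,c_y(s)|=(1-s)|xy|<r$, the curve $c_y$ stays in $B(x,r)$, and by uniqueness of geodesics in a BNPC space the assignment $(y,s)\mapsto c_y(s)$ is continuous (this is exactly the geodesic contraction $\Phi_x$). Next, define
\[
H\colon (B(x,r)\cap\Pi)\times[0,1]\longrightarrow B\bigl(x,\varepsilon_k^{-1}r\bigr)\cap\Pi,
\qquad H(y,s):=\Phi\bigl(c_y(s)\bigr).
\]
This lands where claimed because $c_y(s)\in B(x,r)$ and $\Phi$ maps $B(x,r)$ into $B(x,\varepsilon_k^{-1}r)\cap\Pi$ by Proposition~\ref{prop:ret}; it is continuous as a composition of continuous maps. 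Finally I would check the two endpoints: $H(y,0)=\Phi(c_y(0))=\Phi(y)=y$ since $y\in\Pi$ and $\Phi|_{B(x,r)\cap\Pi}=\mathrm{id}$, and $H(y,1)=\Phi(c_y(1))=\Phi(x)=x$ since $x\in\Pi$. Thus $H$ is a homotopy in $B(x,\varepsilon_k^{-1}r)\cap\Pi$ from the inclusion of $B(x,r)\cap\Pi$ to the constant map $x$, which is the assertion.

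I do not expect a genuine obstacle here: essentially all the work is already contained in Proposition~\ref{prop:ret}. The only points requiring a moment's care are that the geodesic contraction and $\Phi$ are both defined on the relevant domain — guaranteed by the standing hypothesis $0<r<\varepsilon_k r_0$ of Proposition~\ref{prop:ret} together with the straining-radius condition (Definition~\ref{dfn:rad}) — and that the image of the homotopy genuinely stays in $B(x,\varepsilon_k^{-1}r)$, which follows either directly from the stated codomain of $\Phi$ or, if one prefers a self-contained estimate, from $|\Phi(y')y'|\le\varepsilon_k^{-1}|f(x)f(y')|$ applied along $y'=c_y(s)$. If a more refined homotopy were ever needed (e.g.\ with length control), one could instead feed $\hat\Phi$ from Remark~\ref{rem:retlen} into the same composition, but that is not required for this corollary.
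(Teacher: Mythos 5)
Your proposal is correct and follows essentially the same route as the paper, which proves the corollary precisely by composing the geodesic contraction of the ball $B(x,r)$ to $x$ with the retraction $\Phi$ from Proposition~\ref{prop:ret}. Your write-up simply makes explicit the endpoint checks and the containment $\Phi(c_y(s))\in B(x,\varepsilon_k^{-1}r)\cap\Pi$ that the paper leaves implicit.
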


We are now ready to finish the proof of Theorem \ref{thm:str}.

\begin{proof}[Proof of Theorem \ref{thm:str}]
Recall that the (topological) openness of a strainer map was proved in Proposition \ref{prop:open}. The local uniform contractibility of fibers follows from Corollary \ref{cor:ret}.
Indeed, by Remark \ref{rem:rad}, the straining radius for the strainer map $f$ is locally uniformly bounded below around $x$.
As stated in Proposition \ref{prop:ret}, the upper bound for radius $r$, needed to apply Corollary \ref{cor:ret}, depends only on the straining radius (and  $k$).
Furthermore, Corollary \ref{cor:ret} asserts that the part of the fiber of $f$ contained within a ball of some radius $r$ is contractible inside a larger, concentric ball whose radius is a multiple of $r$.
Therefore the condition of Definition \ref{dfn:fib} is satisfied.
\end{proof}

Finally, we derive Theorem \ref{thm:fib} from Theorem \ref{thm:str}. The following is a global version of Theorem~\ref{thm:fib}, which follows directly from Theorems~\ref{thm:str} and \ref{thm:ung1}.

\begin{cor}\label{cor:glofib}
Let $X$ be a GNPC space and $B$ a tiny ball.
Suppose $f:B\to\mathbb R^k$ is a $(k,\delta)$-strainer map on an open subset $U\subset B$.
Let $K\subset f(U)$ be a compact ANR (such as a compact convex set) such that $f^{-1}(K)\cap U$ is compact.
Then the restriction of $f$ to $f^{-1}(K)\cap U$ is a Hurewicz fibration, provided $\delta<\delta_k$.
\end{cor}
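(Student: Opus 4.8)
The plan is to apply Ungar's compact fibration criterion (Theorem~\ref{thm:ung1}) to the restriction $f'\colon X'\to K$, where $X':=f^{-1}(K)\cap U$. Several hypotheses are immediate: $X'$ is compact by assumption and, being a subset of the tiny ball $B$, has finite topological dimension by Proposition~\ref{prop:dim}; since $K\subseteq f(U)$ we have $f(X')=K$, so $f'$ is surjective; and $K$ is a finite-dimensional compact metric ANR --- in our applications $K$ will always be a ball or a compact polyhedron. It then remains to check that $f'$ is open and has locally uniformly contractible fibers.

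Both of these I would read off directly from Theorem~\ref{thm:str}. Since $\delta<\delta_k$ and $f$ is a $(k,\delta)$-strainer map at every point of the open set $U$, Theorem~\ref{thm:str} says that $f$ is open and has locally uniformly contractible fibers on a neighborhood of each point of $U$; in particular $f|_U\colon U\to\mathbb R^k$ is an open map, and (local uniform contractibility of fibers being itself a local property) $f|_U$ has locally uniformly contractible fibers on all of $U$. Openness of $f'$ now follows formally: a relatively open subset of $X'$ has the form $W\cap X'$ with $W$ open in $U$, and $f'(W\cap X')=f(W)\cap K$ is open in $K$ because $f(W)$ is open in $\mathbb R^k$. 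For the fiber condition, the point is that, as $K\subseteq f(U)$, the fiber of $f'$ through a point $x\in X'$ equals $f^{-1}(f(x))\cap U$, i.e.\ the fiber of $f|_U$ through $x$, and this set already lies inside $X'$. Hence, given $x\in X'$ and a neighborhood of $x$ in $X'$, one restricts it to a neighborhood $O\subseteq U$, applies Definition~\ref{dfn:fib} for $f|_U$ to obtain $V\subseteq O$, and observes that $V\cap X'$ does the job for $f'$: for any fiber $\Pi$ of $f'$ meeting it one has $\Pi\cap(V\cap X')=\Pi\cap V$ and $\Pi\cap(O\cap X')=\Pi\cap O$.

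With all hypotheses of Theorem~\ref{thm:ung1} in place, that theorem yields that $f'=f|_{f^{-1}(K)\cap U}$ is a Hurewicz fibration, which is the assertion. The only step requiring genuine care is the last one --- verifying that the property of having locally uniformly contractible fibers descends from $f|_U$ to its restriction over the compact subset $K$ --- and this is a purely point-set matter hinging on the identification of the fibers of $f'$ with those of $f|_U$ over points of $K$, together with the fact that those fibers are contained in $X'$. Everything else is formal or a direct quotation of Theorem~\ref{thm:str}.
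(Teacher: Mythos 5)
Your proposal is correct and takes essentially the same route as the paper, which obtains the corollary directly from Theorem~\ref{thm:str} together with Ungar's Theorem~\ref{thm:ung1}; your verifications that openness passes to the restriction and that the fibers over points of $K$ coincide with the fibers of $f|_U$ (hence inherit local uniform contractibility) are exactly the details the paper leaves implicit. The one caveat --- Theorem~\ref{thm:ung1} requires the base to be an ANR, which the statement does not formally assume of $K$ --- is glossed over by the paper as well, and your observation that in all applications $K$ is a compact ball (hence an ANR) is the intended reading.
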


We now prove Theorem \ref{thm:fib}, using Theorem \ref{thm:ung2}.
The idea of the proof is to complement the given strainer map by adding one more distance function, coming from Proposition \ref{prop:kstr}, to control the local contractibility of fibers (cf.\ \cite[Proof of Theorem 1.7]{Fu:nc}).

\begin{proof}[Proof of Theorem \ref{thm:fib}]
Let $f$ be a $(k,\delta)$-strainer map at $x\in X$.
We construct an arbitrarily small contractible open neighborhood $V$ of $x$ such that $f|_V:V\to f(V)$ is a Hurewicz fibration with contractible fibers.
Note that if $f(V)$ is contractible, then the contractibility of $V$ follows from the other properties.

We want to apply Theorem \ref{thm:ung2} to $f|_V$.
By Theorem \ref{thm:str}, if $V$ is small enough, then $f|_V$ is open and has locally uniformly contractible fibers.
Hence it suffices to choose small $V$ such that the fibers of $f|_V$ (and $f(V)$) are contractible.

Suppose $r>0$ is sufficiently small.
By Proposition \ref{prop:kstr},
\[F:=(f,d(x,\cdot))\]
is a $(k+1,\delta)$-strainer map at any point of $f^{-1}(f(x))\cap\bar B(x,r)\setminus\{x\}$.
By Proposition \ref{prop:open}, $F$ is an open map in a small neighborhood of $f^{-1}(f(x))\cap\bar B(x,r)\setminus B(x,\varepsilon_kr/2)$, where $\varepsilon_k$ is a constant from Proposition \ref{prop:ret}.
Therefore, by Corollary \ref{cor:glofib}, there exists a compact neighborhood $K$ of $f(x)$ such that the restriction of $F$ to $f^{-1}(K)\cap\bar B(x,r)\setminus B(x,\varepsilon_kr/2)$ is a Hurewicz fibration.

We define
\[V:=f^{-1}(\mathring K)\cap B(x,r),\]
where $\mathring K$ denotes the interior of $K$.
We may assume that $f(V)=\mathring K$ is contractible.
We show that the fibers of $f|_V$ are contractible.
See Figure \ref{fig:fib}.

\begin{figure}[ht]
\centering
\begin{tikzpicture}
\coordinate[label=below:$x$](x)at(0,0);

\draw(x)circle[radius=0.75];
\draw(x)circle[radius=2];
\draw(-3,0.1) -- (3,0.1) node[right]{$f^{-1}(v)$};
\draw(-3,0.4)--(3,0.4);
\draw(-3,-0.4)--(3,-0.4);
\draw[<->] (-3.2,0.4)--(-3.2,-0.4) node[midway, left] {$f^{-1}(K)$};

\node[below right]at(-45:0.75){$\varepsilon_kr/2$};
\node[below right]at(-45:2){$r$};
\node at(1.25,0.75){$V$};

\fill (x) circle (1.5pt);
\fill[pattern={Lines[angle=45,distance=6pt,line width=0.4pt]}] (-2,-0.4) rectangle (2,0.4);
\end{tikzpicture}
\caption{}\label{fig:fib}
\end{figure}

Let $v\in\mathring K$.
Since $F$ is a Hurewicz fibration on $f^{-1}(K)\cap\bar B(x,r)\setminus B(x,\varepsilon_kr/2)$, its component $d(x,\cdot)$ is a Hurewicz fibration on $f^{-1}(v)\cap\bar B(x,r)\setminus B(x,\varepsilon_kr/2)$.
By lifting a homotopy crushing $[\varepsilon_kr/2,r)$ to $\{\varepsilon_kr/2\}$, we get a homotopy pushing $f^{-1}(v)\cap B(x,r)$ into $f^{-1}(v)\cap\bar B(x,\varepsilon_kr/2)$ (and fixing $f^{-1}(v)\cap\bar B(x,\varepsilon_kr/2)$).
By Corollary \ref{cor:ret} (and Remark \ref{rem:rad}), $f^{-1}(v)\cap\bar B(x,\varepsilon_kr/2)$ is contractible in $f^{-1}(v)\cap B(x,r)$.
Combining these two homotopies, we obtain a contraction of $f^{-1}(v)\cap B(x,r)$ in itself.
This completes the proof.
\end{proof}

\begin{rem}\label{rem:isol}
The above argument works even when $x$ is an isolated point of the fiber of $f$, i.e.,
\[f^{-1}(f(x))\cap B(x,2r)\setminus\{x\}=\emptyset.\]
In that case, by choosing $K$ small enough, we see  $f^{-1}(K)\cap\bar B(x,r)\setminus B(x,\varepsilon_kr/2)=\emptyset$ (otherwise, by the $\varepsilon_k$-openness of $f$ (cf.\ Remark \ref{rem:openrad}), we get a contradiction).
Therefore, it is clear from Corollary \ref{cor:ret} (and Remark \ref{rem:rad}) that $f^{-1}(v)\cap B(x,r)$ is contractible in itself for any $v\in\mathring K$.
Note that, however, we do not know whether the other fiber $f^{-1}(v)$ is discrete inside $B(x,\varepsilon_kr/2)$, where $v\neq f(x)$.
Compare with Theorem \ref{thm:bilip} below.
\end{rem}

The next result follows directly from the proof above.

\begin{cor}\label{cor:fib}
Let $X$ be a GNPC space and $f$ be a $(k,\delta)$-strainer map at $x\in X$.
Then for any sufficiently small $r>0$ (depending on $x$), the restriction of $d(x,\cdot)$ to $f^{-1}(f(x))\cap B(x,r)\setminus\{x\}$ is a Hurewicz fibration, provided $\delta<\delta_k$.
In particular, $f^{-1}(f(x))\cap B(x,r)$ is contractible.
\end{cor}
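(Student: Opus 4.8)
The result follows directly from the machinery assembled in the proof of Theorem~\ref{thm:fib}, together with one patching argument along the radial parameter. First I would dispose of the trivial case in which $x$ is an isolated point of its fiber $\Pi:=f^{-1}(f(x))$: by Remark~\ref{rem:isol} one then has $\Pi\cap B(x,r)\setminus\{x\}=\emptyset$ for small $r$, so the asserted restriction of $d(x,\cdot)$ is a Hurewicz fibration for vacuous reasons and $\Pi\cap B(x,r)=\{x\}$ is contractible. So assume $x$ is not isolated in $\Pi$. Since the strainer condition is open, Proposition~\ref{prop:kstr} provides an open set $U$ containing $\Pi\cap\bar B(x,r)\setminus\{x\}$ on which $F:=(f,d(x,\cdot))$ is a $(k+1,\delta)$-strainer map; and by the $\varepsilon_k$-openness of strainer maps (Proposition~\ref{prop:open}) one sees, after shrinking $r$, that $d(x,\cdot)$ maps $\Pi\cap B(x,r)\setminus\{x\}$ onto $(0,r)$, so that the statement makes sense.

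The key step is to produce the fibration property over compact radial intervals. Given $0<a<b<r$, I would apply Corollary~\ref{cor:glofib} to $F$ on $U$ with the compact set $K:=\{f(x)\}\times[a,b]\subset\mathbb R^{k+1}$: then $F^{-1}(K)=\Pi\cap d(x,\cdot)^{-1}([a,b])$ is a closed subset of the compact ball $\bar B(x,b)$ that stays away from $x$, hence compact and contained in $U$, so $F$ restricted to it is a Hurewicz fibration onto $K$, and under the identification $K\cong[a,b]$ this restricted map is precisely $d(x,\cdot)$. Now exhaust $(0,r)$ by compact intervals $[a_n,b_n]$ with $a_n\downarrow 0$, $b_n\uparrow r$ and $[a_n,b_n]\subset(a_{n+1},b_{n+1})$. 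By the previous sentence together with the fact that restricting a Hurewicz fibration over a subspace of its base yields a Hurewicz fibration, $d(x,\cdot)$ is a Hurewicz fibration over each of the open intervals $(a_n,b_n)$; since $(0,r)$ is metrizable, $\{(a_n,b_n)\}_n$ is a numerable open cover of it, so by the local--global principle for Hurewicz fibrations over numerable covers (Dold's theorem) the restriction of $d(x,\cdot)$ to $\Pi\cap B(x,r)\setminus\{x\}$ is a Hurewicz fibration. I expect this patching to be the only delicate point: one must check that the pieces are open, that the cover is numerable, and that $d(x,\cdot)$ is genuinely surjective over each interval (which is where the shrinking of $r$ and Proposition~\ref{prop:open} enter) --- everything else is a transcription of the proof of Theorem~\ref{thm:fib}.

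For the assertion that $\Pi\cap B(x,r)$ is contractible I would argue exactly as in the $v=f(x)$ case at the end of the proof of Theorem~\ref{thm:fib}, which needs only the fibration over a single closed annulus (no patching): lift a homotopy crushing $[\varepsilon_k r/2,r)$ onto $\{\varepsilon_k r/2\}$ along the Hurewicz fibration $d(x,\cdot)$ on $\Pi\cap\bar B(x,r)\setminus B(x,\varepsilon_k r/2)$ to obtain a homotopy pushing $\Pi\cap B(x,r)$ into $\Pi\cap\bar B(x,\varepsilon_k r/2)$, relative to that subset and fixing $x$ throughout; then contract $\Pi\cap\bar B(x,\varepsilon_k r/2)$ inside $\Pi\cap B(x,r)$ by Corollary~\ref{cor:ret} (and Remark~\ref{rem:rad}). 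Concatenating the two homotopies contracts $\Pi\cap B(x,r)$ in itself.
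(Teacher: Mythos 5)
Your proposal is correct and follows essentially the same route as the paper: the paper's proof likewise extracts from the argument of Theorem~\ref{thm:fib} (via $F=(f,d(x,\cdot))$, Proposition~\ref{prop:kstr}, and Corollary~\ref{cor:glofib}) that $d(x,\cdot)$ is a Hurewicz fibration on $f^{-1}(f(x))\cap B(x,r)\setminus B(x,s)$ for every $0<s<r$, and then invokes the fact that the homotopy lifting property is local on the base (Dugundji), which is exactly your Dold-type patching over the numerable cover of $(0,r)$; the contractibility is taken verbatim from the proof of Theorem~\ref{thm:fib}, as you do. Your extra care about the isolated-fiber case and surjectivity onto $(0,r)$ is harmless detail, not a different method.
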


\begin{proof}
The last statement is already proved in the argument for Theorem \ref{thm:fib}.
Furthermore, that argument shows $d(x,\cdot)$ is a Hurewicz fibration on $f^{-1}(f(x))\cap B(x,r)\setminus B(x,s)$ for any $0<s<r$.
Since the homotopy lifting property is indeed a local condition on the base space (see \cite[Chapter XX, Corollary 3.6]{Du}), the proof follows.
\end{proof}

\subsection{Consequences}\label{sec:strcon}

In this subsection,  we apply the properties of strainer maps to prove Theorems \ref{thm:mfd} and \ref{thm:link}.

Theorem \ref{thm:link} follows from the special case $k=0$ of Corollary \ref{cor:fib}.
Here, by convention, any constant map to $\mathbb R^0$ is a \textit{$(0,\delta)$-strainer map} for any $\delta>0$.
Hence the proof of Corollary \ref{cor:fib} is valid for $k=0$ (specifically, by Proposition \ref{prop:1str} and Corollary \ref{cor:glofib}).

\begin{proof}[Proof of Theorem \ref{thm:link}]
Let $X$ be a GNPC space and $p\in X$.
By the $k=0$ case of Corollary \ref{cor:fib}, $d(p,\cdot)$ is a Hurewicz fibration on $B(p,r)\setminus\{p\}$, provided $r$ is sufficiently small.
Since the fibers of a Hurewicz fibration are homotopy equivalent, any two small metric spheres around $p$ are homotopy equivalent.
Furthermore, by lifting the contraction of $(0,r)$, we see that any small punctured ball around $p$ is homotopy equivalent to a metric sphere.
\end{proof}

Next, we prove Theorem \ref{thm:mfd}.
The following notion plays a key role.

\begin{dfn}\label{dfn:comp}
Let $X$ be a GNPC space and $f$ be a $(k,\delta)$-strainer map at $x\in X$.
We say that $f$ is \textit{complementable} at $x$ if there exists $p\in X$ such that $(f,d(p,\cdot))$ is a $(k+1,\delta)$-strainer map at $x$.
\end{dfn}

The next lemma follows immediately from the finite dimensionality of a tiny ball (Proposition \ref{prop:dim}) and the openness of a strainer map (Proposition \ref{prop:open}).
This tells us that any strainer map will eventually become non-complementable.

\begin{lem}\label{lem:num}
Let $X$ be a GNPC space and $B$ a tiny ball.
Then there exists a natural number $k=k(B)$ such that there is no $(k+1,\delta)$-strainer at any $x\in B$, where $\delta<\delta_k$.
\end{lem}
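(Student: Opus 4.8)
The plan is to argue by contradiction, using the fact that an open strainer map produces an open subset of a Euclidean space, whereas the domain has controlled Hausdorff (hence topological) dimension.

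First I would fix the doubling data. By Proposition \ref{prop:dim}, the tiny ball $B$ is $L$-doubling for some $L=L(B)$, and, as recorded in the proof of that proposition, $\dim_H B\le\log_2 L$. Set $k=k(B):=\lfloor\log_2 L\rfloor+1$, so that $k>\log_2 L$. I claim this $k$ works.

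Suppose, toward a contradiction, that there is a $(k+1,\delta)$-strainer $p_1,\dots,p_{k+1}$ at some $x\in B$ with $\delta<\delta_k$. By Definition \ref{dfn:kstr}(1) (applied once), the truncated sequence $p_1,\dots,p_k$ is a $(k,\delta)$-strainer at $x$ with the \emph{same} error $\delta<\delta_k$. Let $f_{[k]}=(d(p_1,\cdot),\dots,d(p_k,\cdot))\colon B\to\mathbb R^k$ be the associated $(k,\delta)$-strainer map. By Proposition \ref{prop:open}, $f_{[k]}$ is $\varepsilon_k$-open, and hence topologically open, on some open neighborhood $U$ of $x$; shrinking $U$, we may assume $U\subseteq B$. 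Then $f_{[k]}(U)$ is a nonempty open subset of $\mathbb R^k$, so $\dim_H f_{[k]}(U)=k$. On the other hand, $f_{[k]}$ is Lipschitz (each coordinate $d(p_i,\cdot)$ is $1$-Lipschitz), and Lipschitz maps do not increase Hausdorff dimension, so $k=\dim_H f_{[k]}(U)\le\dim_H U\le\dim_H B\le\log_2 L$, contradicting $k>\log_2 L$. Hence no such strainer exists, which proves the lemma.

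The only point requiring care is the dependence on $\delta$: one should not apply Proposition \ref{prop:open} to the full $(k+1)$-strainer map (which would require $\delta<\delta_{k+1}$), but rather to the $k$-strainer map obtained by dropping the last strainer point, for which $\delta<\delta_k$ is exactly the hypothesis in force. Beyond this bookkeeping there is no genuine obstacle; the conclusion is essentially immediate from Propositions \ref{prop:dim} and \ref{prop:open}. If one prefers to avoid Hausdorff dimension altogether, the same contradiction follows from a packing count: $\varepsilon_k$-openness together with the Lipschitz bound forces a fixed ball in $U$ to contain, for every small $t>0$, at least $c(k)\,t^{-k}$ points that are $t$-separated, which is incompatible with the bound $O(t^{-\log_2 L})$ coming from $L$-doubling.
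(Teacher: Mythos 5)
Your argument is correct and is exactly the route the paper intends: it cites precisely the finite-dimensionality of a tiny ball (Proposition \ref{prop:dim}) and the openness of a strainer map (Proposition \ref{prop:open}), which you combine via the standard facts that a Lipschitz map does not raise Hausdorff dimension and that an $\varepsilon_k$-open map has image containing a $k$-dimensional ball. Your care in applying Proposition \ref{prop:open} to the truncated $k$-strainer (so that only $\delta<\delta_k$ is needed) matches the constant $\delta_k$ in the statement, so nothing further is required.
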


We now prove another main result for a strainer map.

\begin{thm}\label{thm:bilip}
Let $X$ be a GNPC space and $f$ be a $(k,\delta)$-strainer map at $x\in X$, where $\delta<\delta_k$.
Suppose $f$ is not complementable at any point near $x$.
Then $f$ is an open bi-Lipschitz embedding on an open neighborhood of $x$.
\end{thm}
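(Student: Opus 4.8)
The plan is to show $f$ is simultaneously open and injective near $x$, with both $f$ and $f^{-1}$ Lipschitz, by exploiting the dichotomy supplied by the complementability hypothesis together with the retraction of Proposition~\ref{prop:ret}. Openness and the lower Lipschitz bound $\varepsilon_k|yz|\le|f(y)f(z)|$ (locally, near $x$) are already essentially Proposition~\ref{prop:open}: the $\varepsilon_k$-openness gives a point realizing any nearby target value with controlled distance, and this is exactly an $\varepsilon_k$-bi-Lipschitz lower estimate once we know $f$ is also injective. The upper Lipschitz bound is automatic since each coordinate $d(p_i,\cdot)$ is $1$-Lipschitz, so $f$ is $\sqrt{k}$-Lipschitz with respect to the Euclidean norm. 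Thus the whole statement reduces to \emph{injectivity} of $f$ on a small neighborhood of $x$.

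First I would set up the retraction: choose $r_0>0$ so that $p_1,\dots,p_k$ is a $(k,\delta)$-strainer at every point of $B(x,r_0)$ with uniform straining radius (Remark~\ref{rem:rad}), and apply Proposition~\ref{prop:ret} to obtain, for small $r$, a continuous $\Phi:B(x,r)\to B(x,\varepsilon_k^{-1}r)\cap\Pi$ retracting onto the fiber $\Pi=f^{-1}(f(x))$ and satisfying $|\Phi(y)y|\le\varepsilon_k^{-1}|f(x)f(y)|$. Next I would argue that $\Pi\cap B(x,r')$ is a single point for $r'$ small: by Corollary~\ref{cor:fib}, $d(x,\cdot)$ restricted to $\Pi\cap B(x,r')\setminus\{x\}$ is a Hurewicz fibration onto an interval $(0,s)$ — but its fibers are the metric spheres $\Pi\cap\partial B(x,\rho)$, and if any such sphere were nonempty then $f$ would be complementable at a point of $\Pi$ near $x$ (take the distance function to a point of that sphere, or invoke Proposition~\ref{prop:kstr} at such a point, whose distances from the $p_i$ coincide with those of $x$), contradicting the hypothesis. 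Hence $\Pi\cap B(x,r')=\{x\}$, i.e.\ $x$ is an isolated point of its own fiber. The same reasoning applies at every point $y$ near $x$: since $f$ is not complementable anywhere near $x$, every fiber $f^{-1}(f(y))$ is locally just $\{y\}$.

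Now injectivity: suppose $y_1\ne y_2$ are close to $x$ with $f(y_1)=f(y_2)=:v$. Then $y_1,y_2$ lie in the same fiber $f^{-1}(v)$, which we have just shown is locally a single point — contradiction, provided $y_1,y_2$ are within the relevant small ball. Making this quantitative is where care is needed: the retraction $\Phi$ in Proposition~\ref{prop:ret} is built for the fiber through $x$, so to run the argument at a variable base value $v$ one re-centers, using that $f$ is a strainer map at every point of $B(x,r_0)$ and that the straining radius is uniform; this yields a uniform $\rho>0$ such that $f^{-1}(v)\cap B(y,\rho)=\{y\}$ for every $y\in B(x,r_0/2)$ and $v=f(y)$. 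Choosing the neighborhood of $x$ to have diameter $<\rho$ forces injectivity there. Combined with the $\varepsilon_k$-openness, this makes $f$ an open bi-Lipschitz embedding on that neighborhood.

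The main obstacle I expect is the step identifying ``$f$ not complementable near $x$'' with ``fibers are locally discrete,'' and doing so with uniform constants. The nonexistence of a complementing distance function $d(p,\cdot)$ is an a priori statement only about the failure of the \emph{angle} conditions in Definition~\ref{dfn:kstr}(3); translating it into the purely topological assertion that the fiber has no nearby points requires the converse direction — Proposition~\ref{prop:kstr}, which says that a point sharing the distances $|p_ix|$ is automatically a strained point, hence would be a complementation point. One must check that Proposition~\ref{prop:kstr}'s hypotheses ($x$ sufficiently close to $p_k$ and $|p_ip_k|=|p_ix|$) are met by the candidate fiber points, and that the ``sufficiently close'' threshold is uniform over the base points $y\in B(x,r_0/2)$; this uniformity is exactly what Remark~\ref{rem:rad} and the uniform straining radius are there to provide, but assembling it cleanly is the delicate part of the argument.
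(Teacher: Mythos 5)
Your reduction of the theorem to injectivity is exactly the paper's: the upper bound is the trivial $\sqrt{k}$-Lipschitz estimate, and the lower bound follows from the $\varepsilon_k$-openness of Proposition \ref{prop:open} once injectivity on a slightly larger neighborhood is known. The gap is in how you get injectivity. Your plan hinges on producing a \emph{uniform} radius $\rho>0$ such that $f^{-1}(f(y))\cap B(y,\rho)=\{y\}$ for all $y$ near $x$, and you propose to extract this uniformity from Remark \ref{rem:rad} and the uniform straining radius. But the ``sufficiently close'' threshold in Proposition \ref{prop:kstr} does not come from the straining radius: it comes from Proposition \ref{prop:orth} (via Lemmas \ref{lem:orth1} and \ref{lem:orth2}), whose radius $r_1(p,q,\delta)$ depends on the base point $q$ through a compactness argument, and this dependence is not uniform in $q$. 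This is precisely the point of Remark \ref{rem:dich}: in the GNPC setting, unlike GCBA, the closeness required in Proposition \ref{prop:orth} depends on $q$ and is not controlled by the straining radius, which is why Theorem \ref{thm:bilip} is stated in this weaker form rather than as the GCBA dichotomy. Without uniformity, pointwise discreteness of fibers (with $y$-dependent radii $\rho(y)$) does not imply injectivity on any fixed ball: two nearby points $y_1\neq y_2$ in the same fiber may well satisfy $|y_1y_2|\geq\rho(y_1)$ even though both lie very close to $x$.

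The paper circumvents this by a topological, not quantitative, argument: take the neighborhood $V$ from Theorem \ref{thm:fib}, on which \emph{every} fiber of $f|_V$ is contractible, hence path-connected. If some fiber $f^{-1}(f(y))\cap V$ contained a second point, a path inside the fiber would produce fiber points \emph{arbitrarily} close to $y$, and then Proposition \ref{prop:kstr} applies with whatever ($y$-dependent) threshold it needs, making $f$ complementable near $y$ -- contradicting the hypothesis. So connectedness of nearby fibers replaces the uniform $\rho$ you are missing. A secondary, fixable imprecision in your write-up: from Corollary \ref{cor:fib} you argue via the nonemptiness of a metric sphere $\Pi\cap\partial B(x,\rho)$, but a point on such a sphere need not be close enough to $x$ for Proposition \ref{prop:kstr}; the statement you actually need is the contractibility (hence path-connectedness) of $\Pi\cap B(x,r)$, which is also part of that corollary and yields fiber points arbitrarily close to $x$. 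With these two repairs -- using path-connectedness of all fibers of $f|_V$ rather than a uniform discreteness radius -- your argument becomes the paper's proof.
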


\begin{proof}
Since $f$ is $\sqrt k$-Lipschitz and $\varepsilon_k$-open, it suffices to show that $f$ is injective in a neighborhood $V$ of $x$.
Indeed, suppose $y,z\in B(x,r)$, where $r$ is small enough.
Since $f$ is $\sqrt k$-Lipschitz, $|f(y)f(z)|\le2\sqrt kr$.
By the $\varepsilon_k$-openness of $f$, Proposition \ref{prop:open} (cf.\ Remark \ref{rem:openrad}), one can find $z'\in X$ such that 
\[f(z')=f(z),\quad\varepsilon_k|yz'|\le|f(y)f(z)|.\]
The latter implies $|yz'|\le2\varepsilon_k^{-1}\sqrt kr$.
Therefore, if $r$ is small enough compared to $V$, we have $z=z'$, as desired.

To see the injectivity of $f$, let $V$ be a neighborhood of Theorem \ref{thm:fib} on which $f$ has contractible fibers (cf.\ Remark \ref{rem:isol}).
In particular, each fiber of $f|_V$ is path-connected.
We may assume that $f$ is not complementable on $V$.
Given any $y\in V$, we claim that $f^{-1}(f(y))\cap V$ is a singleton.
Otherwise, by the path-connectedness, there is a point of $f^{-1}(f(y))\cap V$ arbitrarily close to $y$.
By Proposition \ref{prop:kstr}, this would imply that $f$ is complementable at a point near $y$, which is a contradiction.
\end{proof}

\begin{rem}\label{rem:isom}
In the GCBA or Alexandrov case, $f$ is a $\varkappa(\delta)$-almost isometry to a Euclidean domain, i.e., a bi-Lipschitz homeomorphism with Lipschitz constants $1\pm\varkappa(\delta)$ (\cite[Corollary 11.2]{LN:geo}, \cite[Theorem 9.4]{BGP}).
Since our GNPC spaces are modeled on (strictly convex) normed spaces, we cannot expect such an improvement of the Lipschitz constants.
However, see also Problems \ref{prob:tan}, \ref{prob:ber}, and \ref{prob:fin}.
\end{rem}

\begin{rem}\label{rem:dich}
The above theorem is still a little weaker than the dichotomy stated in \cite[Section 9.2, Corollary 11.2]{LN:geo}.
The technical reason behind this difference is that the closeness of $x$ to $q$ in Proposition \ref{prop:orth} depends on $q$.
In the GCBA (or Alexandrov) case, if $p$ is a $1$-strainer at $q$, then it depends only on the straining radius, independent of $q$ (cf.\ Remark \ref{rem:rad}).
This also seems related to the above-mentioned problems in Section \ref{sec:prob}.
\end{rem}

We now prove Theorem \ref{thm:mfd}.
Let $X$ be a GNPC space and $p\in X$.
For $\delta>0$, the \textit{local $\delta$-strainer number} at $p$ is the supremum of $k$ for which each neighborhood of $p$ contains a $(k,\delta)$-strained point (i.e., a point with a $(k,\delta)$-strainer).
By Lemma \ref{lem:num}, this local number is finite for $\delta<\delta_p$, where $\delta_p$ depends on $p$.

\begin{proof}[Proof of Theorem \ref{thm:mfd}]
Let $X$ be a GNPC space and $p\in X$ an arbitrary point.
Let $k=k(p,\delta)$ be the local $\delta$-strainer number at $p$, where $\delta<\delta_p$.
By definition, there exists a $(k,\delta)$-strained point $x$ arbitrarily close to $p$.
By Theorem \ref{thm:bilip}, $x$ is a $k$-manifold point.
Therefore the set $M$ of manifold points in $X$ (of different dimensions) is dense in $X$.
Clearly $M$ is open.
This completes the proof.
\end{proof}

\section{Extended strainers}\label{sec:exstr}

In this section, we modify the definition of a strainer to introduce an extended strainer (cf.\ \cite[Section 5]{LNS}).
Roughly speaking, we add a base point $p_0$ to a strainer $p_1,\dots,p_k$ such that $p_1,\dots,p_k$ are contained in a metric sphere centered at $p_0$, which may be arbitrarily large.
This modification is necessary for dealing with a global problem like Theorem \ref{thm:four}.

The contents of this section will be used only in Section \ref{sec:prfglo}, so one may skip this section until then.
However, since this section is organized in parallel with the previous section, some readers may prefer to read it immediately after reading Section \ref{sec:str}.

The organization of this section is as follows.
In Section \ref{sec:exdfn}, we define extended strainers and show their existence.
In Section \ref{sec:exfib}, we prove the fibration property of extended strainers.
In Section \ref{sec:excon}, as consequences, we prove some propositions which will be used in the proof of Theorem \ref{thm:four}.

\subsection{Definition and existence}\label{sec:exdfn}

In this subsection, we define extended strainers and show their existence.
Throughout this subsection, $X$ denotes a GNPC space, $B$ a tiny ball, and $\delta$ a small positive number (see Notation \ref{nota:del}).
Let $k\ge 1$.

\begin{dfn}\label{dfn:exstr}
A sequence of points $p_0,p_1,\dots,p_k\in B$ is called an \textit{extended $(k,\delta)$-strainer} at $x\in B$ if it satisfies the following conditions:
\begin{enumerate}
\item $p_1,\dots,p_k$ is a $(k,\delta)$-strainer at $x$ in the sense of Definition \ref{dfn:kstr}.
\item there exists a neighborhood $U\subset B$ of $x$ such that for any $y\in U$, we have
\[|\angle p_0yp_i-\pi/2|<\delta,\quad|\angle p_0y\bar p_i-\pi/2|<\delta\]
for any $1\le i\le k$, where $\bar p_i$ is an opposite strainer at $x$ for $p_i$.
\end{enumerate}
We call $p_0$ the \textit{base point} of this extended strainer.
\end{dfn}

The difference of the above definition from the standard $(k+1)$-strainer is that $p_0$ is not assumed to be a $1$-strainer at $x$: there is no opposite strainer $\bar p_0$ for $p_0$.
In particular, there is no restriction on the distance between $p_0$ and $x$, as we will see below (recall that for the existence of a $1$-strainer, we have to consider a small neighborhood of a given point; see Proposition \ref{prop:1str}).
The problem of the asymmetry of almost orthogonality appears here again for $p_0$ and $p_i$.

Similar to before, we introduce

\begin{dfn}\label{dfn:exrad}
Let $p_0,p_1,\dots,p_k\in B$ be an extended $(k,\delta)$-strainer at $x\in B$.
We say that $p_0,p_1,\dots,p_k$ has \textit{straining radius} $>r$ at $x$ if it is an extended $(k,\delta)$-strainer for any $y\in B(x,r)$ with the same opposite strainer $\bar p_1,\dots,\bar p_k$.
\end{dfn}

Clearly, we have

\begin{rem}\label{rem:exrad}
Let $p_0,p_1,\dots,p_k\in B$ be an extended $(k,\delta)$-strainer at $x\in B$ with straining radius $>r$ at $x$.
Then it has straining radius $>r/2$ at any $y\in B(x,r/2)$.
\end{rem}

We show the existence of an extended strainer.
For convenience, we say that $p\in B$ is an \textit{extended $(0,\delta)$-strainer} for any $x\in B\setminus\{p\}$ and $\delta>0$ (there is no condition, and thus it exists).
Compare the following with \cite[Lemma 5.3]{LNS}.

\begin{prop}\label{prop:exstr}
Let $p_0,p_1,\dots,p_{k-1}\in B$ be an extended $(k-1,\delta)$-strainer at $p_k\in B$, where $k\ge 1$.
Suppose $x\in B\setminus\{p_k\}$ is sufficiently close to $p_k$ and has the same distances from $p_i$'s as $p_k$, i.e.,
\[|p_ip_k|=|p_ix|\]
for any $0\le i\le k-1$.
Then $p_0,p_1,\dots,p_k$ is an extended $(k,\delta)$-strainer at $x$.
\end{prop}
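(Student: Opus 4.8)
The plan is to check the two conditions of Definition~\ref{dfn:exstr} directly, using the same opposite strainers throughout: $\bar p_1,\dots,\bar p_{k-1}$ carried over from the given extended $(k-1,\delta)$-strainer at $p_k$ (they persist near $p_k$ by openness, cf.\ Remark~\ref{rem:exrad}), together with the new one $\bar p_k:=2x$, a point on an extension of the shortest path $p_kx$ at distance $2|p_kx|$ from $p_k$, exactly as in the proof of Proposition~\ref{prop:1str}. Here ``$x$ sufficiently close to $p_k$'' should be read as: close enough for all the invocations of Propositions~\ref{prop:kstr}, \ref{prop:1str}, \ref{prop:orth}, and \ref{prop:bran} below to be legitimate (their thresholds depend on $p_0,\dots,p_k$ and $\delta$).

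First I would dispose of Definition~\ref{dfn:exstr}(1). By Definition~\ref{dfn:exstr}(1) for the hypothesis, $p_1,\dots,p_{k-1}$ is a $(k-1,\delta)$-strainer at $p_k$; since $|p_ip_k|=|p_ix|$ for $1\le i\le k-1$ and $x$ is close to $p_k$, Proposition~\ref{prop:kstr} (or Proposition~\ref{prop:1str} in the degenerate case $k=1$) yields that $p_1,\dots,p_k$ is a $(k,\delta)$-strainer at $x$ with precisely the opposite strainers just described. Next, for the angle conditions of Definition~\ref{dfn:exstr}(2) with indices $1\le i\le k-1$: Definition~\ref{dfn:exstr}(2) for the hypothesis provides a neighborhood $U'$ of $p_k$ on which $|\angle p_0yp_i-\pi/2|<\delta$ and $|\angle p_0y\bar p_i-\pi/2|<\delta$ hold with the same $\bar p_i$; since $x\in U'$, these persist on a neighborhood of $x$. (This step is empty when $k=1$.)

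The substance lies in Definition~\ref{dfn:exstr}(2) for the index $i=k$. Writing $R:=|p_0p_k|=|p_0x|$ and $r:=|p_kx|$, one has $x\in\partial B(p_0,R)\cap\partial B(p_k,r)$, and since $p_0,p_k$ lie in a common tiny ball with \emph{no} constraint on the possibly large $R$ (cf.\ Remark~\ref{rem:orthglo}), Proposition~\ref{prop:orth} applies with $(p,q)=(p_0,p_k)$: for $r$ small and every $y\in B(x,\delta r)$,
\[|\angle p_0yp_k-\pi/2|<\varkappa(\delta),\qquad|\angle p_0y\hat y-\pi/2|<\varkappa(\delta),\]
where $\hat y:=2y$ lies on an extension of $p_ky$ beyond $y$. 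To convert $\hat y=2y$ into the chosen $\bar p_k=2x$, I would copy the end of the proof of Proposition~\ref{prop:kstr}: Proposition~\ref{prop:bran} (almost nonbranching, after shrinking $x$ toward $p_k$) gives $|2x,2y|<\varkappa(\delta)r$; hence by the Busemann monotonicity~\eqref{eq:mono} the linear reparametrizations $\gamma,\eta\colon[0,r]\to B$ of the shortest paths from $y$ to $2x$ and to $2y$ satisfy $|\gamma(t)\eta(t)|<\varkappa(\delta)t$, so $|\tilde\angle p_0y\gamma(t)-\tilde\angle p_0y\eta(t)|<\varkappa(\delta)$, and letting $t\to0$ produces $|\angle p_0y\bar p_k-\angle p_0y\hat y|<\varkappa(\delta)$, whence $|\angle p_0y\bar p_k-\pi/2|<\varkappa(\delta)$. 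As in Section~\ref{sec:str}, $\varkappa(\delta)$ may be absorbed into $\delta$ since it depends only on $\delta$. Taking $U$ to be $B(x,\delta|p_kx|)$ intersected with a neighborhood of $x$ inside $U'$ then witnesses Definition~\ref{dfn:exstr}(2) for all $1\le i\le k$, completing the verification.

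The main obstacle is precisely this last step: the asymmetric almost orthogonality of GNPC strainers forces us to exhibit a \emph{single} $\bar p_k$ with $\angle p_0y\bar p_k\approx\pi/2$ uniformly for $y$ near $x$, while we have no control on $\angle p_kyp_0$. That one can do so rests on two features absent from the $1$-strainer setting: Proposition~\ref{prop:orth} imposes no smallness on $|p_0p_k|$, and Proposition~\ref{prop:bran} lets us replace the floating extension $2y$ of $p_ky$ by the fixed point $2x$. Everything else is bookkeeping inherited from Proposition~\ref{prop:kstr}.
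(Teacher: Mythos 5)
Your proposal is correct and follows essentially the same route as the paper: condition (1) via Propositions \ref{prop:1str} and \ref{prop:kstr}, condition (2) for $1\le i\le k-1$ via openness of the extended strainer (Remark \ref{rem:exrad}), and the $i=k$ case by repeating the argument from the proof of Proposition \ref{prop:kstr}, using that Proposition \ref{prop:orth} holds with no smallness restriction on $|p_0p_k|$ (Remark \ref{rem:orthglo}). You simply spell out in detail the $2y\mapsto 2x$ replacement via Proposition \ref{prop:bran} and the Busemann monotonicity, which the paper leaves as a citation to the earlier proof.
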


\begin{proof}
We check the two conditions of Definition \ref{dfn:exstr}.
The first condition follows from Propositions \ref{prop:1str} and \ref{prop:kstr} (in the cases $k=1$ and $k\ge 2$, respectively).
The second condition for $1\le i\le k-1$ is trivial from the openness of an extended strainer (cf.\ Remark \ref{rem:exrad}).
The $i=k$ case follows from exactly the same argument as in the proof of Proposition \ref{prop:kstr}.
Here we need the fact that Proposition \ref{prop:orth} holds globally, as mentioned in Remark \ref{rem:orthglo}.
This completes the proof.
\end{proof}

\subsection{Fibration property}\label{sec:exfib}

In this subsection, we establish the fibration property of extended strainers, which will be used to prove Theorem \ref{thm:four}.
The main object of this subsection is the following half strainer map.
See also Remark \ref{rem:exstrm} below.

\begin{dfn}\label{dfn:exstrm}
Let $X$ be a GNPC space, $B$ a tiny ball, and $p_0,p_1,\dots,p_k\in B$ an extended $(k,\delta)$-strainer at $x\in B$.
Let
\[f=(d(p_1,\cdot),\dots,d(p_k,\cdot))\]
be a $(k,\delta)$-strainer map at $x$ associated with $p_1,\dots,p_k$.
We call the restriction
\[f|_{\bar B(p_0,R)},\quad R:=|p_0x|\]
a \textit{half $(k,\delta)$-strainer map} at $x$ associated with $p_0,p_1,\dots,p_k$.
We also call $p_0$ the \textit{base point} of this half strainer map.
See Figure \ref{fig:exstrm}.
\end{dfn}

\begin{figure}[ht]
\centering
\begin{tikzpicture}
\coordinate[label=above left:$x$](x)at(0,0);
\coordinate[label=below:$p_0$](p0)at(0,-4);
\coordinate[label=below left:{$p_1,\dots,p_k$}](p1)at(-0.75,-1.5);
\coordinate[label=above right:{$\bar p_1,\dots,\bar p_k$}](p1')at(0.5,1);

\draw(-3,0)--(3,0) node[right]{$\partial B(p_0,R)$};
\draw[->](-4.5,-1.5)--(-3.25,1) node[above right]{$f$};
\draw[dashed](x)--(p0);
\draw[dashed](p1)--(x)--(p1');

\fill (x) circle (1.5pt);
\fill (p0) circle (1.5pt);
\fill (p1) circle (1.5pt);
\fill (p1') circle (1.5pt);
\fill(-3.75,0) circle (1.5pt) node[left]{$f(x)$};
\end{tikzpicture}
\caption{}\label{fig:exstrm}
\end{figure}

\begin{rem}\label{rem:exstrm}
This terminology differs from Lytchak--Nagano--Stadler \cite{LNS}, who called the map $(f,d(p_0,\cdot))$ an \textit{extended $(k,\delta)$-strainer map} and did not introduce the half strainer map as above.
The reason for this modification is to emphasize the parallels with the previous section in the following theorems.
See also Remark \ref{rem:exfib} below.
\end{rem}

The following two theorems are the main results of this subsection.
These are counterparts to Theorems \ref{thm:str} and \ref{thm:fib} for standard strainer maps, respectively, basically saying that the same results hold for half strainer maps.
As before, let $X$ be a GNPC space and $\delta_k$ a positive constant depending only on $k$ (see Notation \ref{nota:k}).

\begin{thm}\label{thm:exstr}
Let $f|_{\bar B(p_0,R)}$ be a half $(k,\delta)$-strainer map at $x\in\partial B(p_0,R)$ with base point $p_0$, where $\delta<\delta_k$.
Then $f|_{\bar B(p_0,R)}$ is open and has locally uniformly contractible fibers in an open neighborhood of $x$ in $\bar B(p_0,R)$.
\end{thm}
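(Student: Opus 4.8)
The plan is to run the proof of Theorem~\ref{thm:str} with the ambient space $X$ replaced by the closed half-ball $Z:=\bar B(p_0,R)$, $R=|p_0x|$. Since $Z$ is a metric ball inside the tiny ball $B$, it is compact (a closed subset of $10\bar B$), convex, hence geodesic, and locally contractible; in particular $Z$ is a legitimate ambient space to which Lemma~\ref{lem:open} applies (the target $\mathbb R^k$ being geodesic for any norm). The only genuinely new point compared with Section~\ref{sec:str} is that the approximation and retraction trajectories used there — geodesic contractions toward $p_1,\dots,p_k,\bar p_1,\dots,\bar p_k$ — need not stay inside $Z$, because the $p_i$ and $\bar p_i$ need not lie in $Z$ and Definition~\ref{dfn:exstr}(2) controls $\angle p_0yp_i$ and $\angle p_0y\bar p_i$ only from one side. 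The remedy is to interleave each such step with a short \emph{correcting move toward $p_0$}: condition~(2) guarantees that a move of length $L$ toward $p_i$ or $\bar p_i$ changes $d(p_0,\cdot)$ with speed $\le\varkappa(\delta)$, so prepending a move toward $p_0$ of length $\le\varkappa(\delta)L$ keeps the whole trajectory in $Z$, while it alters each coordinate $d(p_i,\cdot)$ by at most $\varkappa(\delta)L$ (the components of $f$ being $1$-Lipschitz). These errors are of lower order and are absorbed into the $\varkappa_k(\delta)$'s below.

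\emph{Openness.} I would show that $f|_Z$ is $\varepsilon_k$-open near $x$ by induction on $k$, paralleling Proposition~\ref{prop:open}, the base case $k=0$ being trivial. For the inductive step, note that $p_0,p_1,\dots,p_{k-1}$ is an extended $(k-1,\delta)$-strainer at $x$ (apply Definition~\ref{dfn:exstr} to the first $k-1$ coordinates, using Definition~\ref{dfn:kstr}(1)), so $f_{[k-1]}|_Z$ is $\varepsilon_{k-1}$-open near $x$ by induction; and $f_k|_Z$ is $(1-\varkappa(\delta))$-open near $x$, since moving toward $p_k$ (resp.\ $\bar p_k$) decreases (resp.\ increases) $f_k$ at speed $\approx1$ and, by the interleaving trick, stays in $Z$. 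One then verifies hypothesis~\eqref{eq:open} of Lemma~\ref{lem:open} for the modified norm $\|v\|=|v_{[k-1]}|+(\varepsilon_{k-1}/2)|v_k|$, splitting into the cases $f_{[k-1]}(x)\ne v_{[k-1]}$ and $f_k(x)\ne v_k$ exactly as in Proposition~\ref{prop:open}. In the first case the inductive openness already produces a point inside $Z$, so no change is needed; in the second case one uses a move toward $p_k$ or $\bar p_k$ with a prepended correcting move, and the estimate goes through because the extra $\varkappa(\delta)L$ and $\varkappa_k(\delta)L$ terms are dominated by $(\varepsilon_{k-1}/2)L$ once $\delta<\delta_k\ll\delta_{k-1}$. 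Lemma~\ref{lem:open} then gives $(\varepsilon_{k-1}/4)$-openness of $f|_Z$ for $\|\cdot\|$, hence $\varepsilon_k$-openness for the Euclidean norm; in particular $f|_Z$ is open in a neighborhood of $x$ in $\bar B(p_0,R)$.

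\emph{Local uniform contractibility of fibers.} Next I would adapt Proposition~\ref{prop:ret}: for a $(k,\delta)$-strainer-with-base-point of straining radius $>r_0$, construct a continuous retraction
\[
\Phi\colon B(x,r)\cap Z\;\longrightarrow\; B(x,\varepsilon_k^{-1}r)\cap Z\cap f^{-1}(f(x))
\]
with $|\Phi(y)y|\le\varepsilon_k^{-1}|f(x)f(y)|$, by iterating the interleaved geodesic contractions toward $p_1,\dots,p_k,\bar p_1,\dots,\bar p_k$ exactly as in the proof of Proposition~\ref{prop:ret}, with a short correcting move toward $p_0$ appended to each contraction step so that every trajectory, and hence the limit, stays in the convex set $Z$. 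The modified-norm estimates of Claim~\ref{clm:ret} (inequalities~\eqref{eq:clm1} and~\eqref{eq:clm2}) and the Cauchy-sequence/uniform-convergence argument carry over verbatim once the correcting-move errors are folded into $\varkappa_k(\delta)$. Composing with the geodesic contraction of $Z$ to a fixed point of the fiber (which remains in $Z$ by convexity) then yields, as in Corollary~\ref{cor:ret}, that $F\cap B(x,r)$ is contractible in $F\cap B(x,\varepsilon_k^{-1}r)$ for every fiber $F$ of $f|_Z$ meeting $B(x,r)$; combined with the uniformity of the straining radius (Remark~\ref{rem:exrad}), this is precisely local uniform contractibility of the fibers of $f|_Z$ near $x$ in the sense of Definition~\ref{dfn:fib}.

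The main obstacle is the interleaving device itself: since a half-strainer controls the angle at the base point $p_0$ only from one side, the ``coordinate'' moves toward $p_i$ and $\bar p_i$ may leave $\bar B(p_0,R)$ by a $\varkappa(\delta)$-fraction of their length, and one must check that correcting this by small moves toward $p_0$ still produces a net contraction of the modified norm at a definite rate $\gtrsim\varepsilon_k$ while keeping every trajectory inside $\bar B(p_0,R)$. This is the one estimate not already contained in Section~\ref{sec:str}; once it is established, the proof is a line-by-line translation of the proofs of Propositions~\ref{prop:open} and~\ref{prop:ret} and of Theorem~\ref{thm:str}.
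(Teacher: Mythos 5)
Your strategy is sound and reaches the right conclusion, but it implements the key point differently from the paper. You keep every trajectory inside $Z=\bar B(p_0,R)$ from the start, by prepending to each elementary move toward $p_i$ or $\bar p_i$ a short correcting move toward $p_0$ whose length is a $\varkappa(\delta)$-fraction of the main move; Definition~\ref{dfn:exstr}(2) controls the drift of $d(p_0,\cdot)$ along the main move, and the correcting move is harmless on the coordinates of $f$ only because it is short and $f$ is $1$-Lipschitz, which is indeed the correct way to handle the asymmetry. The paper instead leaves the constructions of Section~\ref{sec:str} untouched: for openness it takes the point $y'$ produced by Proposition~\ref{prop:open} (possibly outside $Z$) and projects it radially back onto $\bar B(p_0,R)$ along $p_0y'$ (Proposition~\ref{prop:exopen}); for the fibers it iterates $\Psi_+=\phi\circ\Phi$, where $\Phi$ is the full retraction of Proposition~\ref{prop:ret} and $\phi$ is the radial projection to $Z$, the one genuinely new estimate being Claim~\ref{clm:exorth}, which bounds the drift of $d(p_0,\cdot)$ by $\varkappa_k(\delta)$ times the \emph{length} of the $\Phi$-trajectory and therefore needs the length-refined form of Proposition~\ref{prop:ret} recorded in Remark~\ref{rem:retlen}. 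The paper's route buys the use of Proposition~\ref{prop:ret} as a black box, at the cost of one projection map and one post-hoc drift estimate; your route avoids both, at the cost of re-opening the inductive construction of Proposition~\ref{prop:ret} and redoing its estimates with correcting segments inserted --- which amounts to essentially the same bookkeeping as Remark~\ref{rem:retlen}, since the total length of your correcting moves is proportional to the total length of the trajectory. If you execute the interleaving, two points deserve explicit care (neither is a serious obstruction): a concrete prescription of the correcting lengths that keeps the modified retraction continuous (in the base case the paper's continuity comes from uniqueness of the crossing point on a single geodesic, whereas your crossing point lies on a concatenation whose first segment depends on the length of the second), and the check that all orthogonality bounds are only invoked at points of the modified trajectories lying within the straining radius.
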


\begin{thm}\label{thm:exfib}
Let $f|_{\bar B(p_0,R)}$ be a half $(k,\delta)$-strainer map at $x\in\partial B(p_0,R)$ with base point $p_0$, where $\delta<\delta_k$.
Then there exists an arbitrarily small contractible open neighborhood $V$ of $x$ in $\bar B(p_0,R)$ such that $f|_V$ is a Hurewicz fibration with contractible fibers.
\end{thm}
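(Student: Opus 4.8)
The plan is to follow the proof of Theorem~\ref{thm:fib} essentially verbatim, replacing the ambient space $X$ by $\bar B(p_0,R)$ and each ordinary strainer tool by its half-strainer counterpart. We want to apply Ungar's Theorem~\ref{thm:ung2} to $f|_V$ for a suitable small $V$. By Theorem~\ref{thm:exstr}, $f|_{\bar B(p_0,R)}$ is already open with locally uniformly contractible fibers in a neighborhood of $x$ in $\bar B(p_0,R)$; so once $V\ni x$ is a sufficiently small open subset of $\bar B(p_0,R)$ with $f(V)$ contractible, it only remains to make the fibers $f^{-1}(v)\cap V$ contractible (the contractibility of $V$ then being automatic).

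First I would complement $f$ by $d(x,\cdot)$. By Proposition~\ref{prop:exstr}, the sequence $(p_0,p_1,\dots,p_k,x)$ is an extended $(k+1,\delta)$-strainer at every point of $\partial B(p_0,R)\cap f^{-1}(f(x))\cap\bar B(x,r)\setminus\{x\}$ for small $r>0$, so $F:=(f,d(x,\cdot))|_{\bar B(p_0,R)}$ is a half $(k+1,\delta)$-strainer map there, while at the interior fiber points near $x$ it is an ordinary $(k+1,\delta)$-strainer map by Proposition~\ref{prop:kstr} (the ambient space being locally $X$ there). In either case Theorem~\ref{thm:exstr} (resp.\ Theorem~\ref{thm:str}) makes $F$ open with locally uniformly contractible fibers throughout the compact annular region $A:=\{y\in\bar B(p_0,R):f(y)\in K,\ \varepsilon_kr/2\le d(x,y)\le r\}$, once $K$ is a small enough compact ANR neighborhood of $f(x)$; applying Ungar's Theorem~\ref{thm:ung1} as in Corollary~\ref{cor:glofib}, $F|_A$ is a Hurewicz fibration. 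Put $V:=f^{-1}(\mathring K)\cap B(x,r)$, arranging $f(V)=\mathring K$ contractible. For $v\in\mathring K$, restricting the fibration $F|_A$ to the $f$-fiber shows $d(x,\cdot)$ is a Hurewicz fibration on $f^{-1}(v)\cap A$; lifting a homotopy crushing the collar $\varepsilon_kr/2\le d(x,\cdot)<r$ onto $\{d(x,\cdot)=\varepsilon_kr/2\}$ pushes $f^{-1}(v)\cap\bar B(p_0,R)\cap B(x,r)$ into $f^{-1}(v)\cap\bar B(p_0,R)\cap\bar B(x,\varepsilon_kr/2)$, and the latter is contractible inside the former by the half-strainer analogue of Corollary~\ref{cor:ret}, i.e.\ a corollary of Proposition~\ref{prop:exret}. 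Composing the two homotopies contracts $f^{-1}(v)\cap V$ in itself, so Theorem~\ref{thm:ung2} gives the conclusion. As in Remark~\ref{rem:isol}, the case where $x$ is isolated in $f^{-1}(f(x))$ is handled by the same argument.

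The hard part is the boundary sphere $\partial B(p_0,R)$. Unlike in Theorem~\ref{thm:fib}, the complemented map $F$ fails to be a half strainer map at the interior fiber points near $x$ (there $\angle p_0yx\approx 0$, not $\approx\pi/2$), so the relevant fiber $f^{-1}(v)\cap\bar B(p_0,R)$ is genuinely a space with boundary $f^{-1}(v)\cap\partial B(p_0,R)$, and both the global fibration step and the retraction used to contract the central piece must be carried out for this fiber-with-boundary rather than for the full fiber $f^{-1}(v)$; in particular one needs Proposition~\ref{prop:exret}, the half-strainer retraction onto $f^{-1}(v)\cap\bar B(p_0,R)$, together with the length estimate recorded in Remark~\ref{rem:retlen}. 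What makes all of this go through is that Proposition~\ref{prop:orth}, and hence Proposition~\ref{prop:exstr}, stays valid for arbitrarily large $R=|p_0x|$ (Remark~\ref{rem:orthglo}).
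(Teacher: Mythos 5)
Your proposal is correct and follows essentially the same route as the paper: complement $f$ by $d(x,\cdot)$, use Propositions \ref{prop:kstr} and \ref{prop:exstr} to make $F$ an ordinary (resp.\ half) $(k+1,\delta)$-strainer map on the interior (resp.\ boundary) part of the punctured fiber, obtain a Hurewicz fibration on the annular region via the openness results and the Ungar-type argument (the paper packages this as Corollary \ref{cor:exglofib}), then crush the collar by homotopy lifting and contract the central piece using Corollary \ref{cor:exret}. Your explicit remarks about why interior points only need ordinary strainers and about the global validity of the orthogonality (Remark \ref{rem:orthglo}) match the paper's treatment, so there is nothing to add.
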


The underlying idea in the proofs is straightforward: we use the retraction onto $\bar B(p_0,R)$ along the shortest paths to $p_0$ so that the desired properties of $f$ still hold when restricted to $\bar B(p_0,R)$.
However, unlike the GCBA case \cite{LNS}, this retraction is not ``almost orthogonal'' to the strainer $p_1,\dots,p_k$, due to the asymmetry issue.
More precisely, if we move $x$ toward $p_0$, the distances from $p_1,\dots,p_k$ may change significantly.
Instead, by Definition \ref{dfn:exstr}(2), $p_1,\dots,p_k$ are almost orthogonal to this retraction, in the sense that moving $x$ toward $p_1,\dots,p_k$ or its opposite strainer $\bar p_1,\dots,\bar p_k$ leaves the distance from $p_0$ almost unchanged.
This is enough to prove the desired properties of the half strainer map $f|_{\bar B(p_0,R)}$.

\begin{rem}\label{rem:exfib}
Unlike Theorems \ref{thm:str} and \ref{thm:fib}, we cannot find exact counterparts to the above theorems in the original paper of Lytchak--Nagano--Stadler \cite{LNS} (partial counterparts are \cite[Lemma 5.1, Proposition 7.1]{LNS}).
These results represent an additional contribution of this paper.
Later we derive from the above theorems the same propositions as in the original paper: see Propositions \ref{prop:hemi}, \ref{prop:neg}, and \ref{prop:half}.
\end{rem}

We first prove Theorem \ref{thm:exstr} along the same lines as Theorem \ref{thm:str}.
The following is a counterpart to Proposition \ref{prop:open}.
Compare with \cite[Lemma 5.1]{LNS} (see also Remark \ref{rem:exopen} below).

\begin{prop}\label{prop:exopen}
Let $f|_{\bar B(p_0,R)}$ be a half $(k,\delta)$-strainer map at $x\in\partial B(p_0,R)$ with base point $p_0$.
If $\delta<\delta_k$, then $f|_{\bar B(p_0,R)}$ is $\varepsilon_k$-open in an open neighborhood of $x$ in $\bar B(p_0,R)$ with respect to the Euclidean norm of $\mathbb R^k$.
Here $\delta_k$ and $\varepsilon_k$ are positive constants depending only on $k$.
\end{prop}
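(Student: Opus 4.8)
The plan is to follow the proof of Proposition~\ref{prop:open} almost verbatim, verifying the $\varepsilon$-openness criterion of Lemma~\ref{lem:open} for the restricted map $f|_{\bar B(p_0,R)}$ with respect to the same modified norm $\|v\|:=|v_{[k-1]}|+(\varepsilon_{k-1}/2)|v_k|$ on $\mathbb R^k$, by induction on $k$. The one new phenomenon is that the consecutive-approximation steps that correct the value of $f$ move along the shortest paths $x'p_i$ or $x'\bar p_i$, and these may leave the ball $\bar B(p_0,R)$; each such step must therefore be followed by a short retraction back onto $\bar B(p_0,R)$ along the shortest path to $p_0$. Condition~\ref{dfn:exstr}(2) — the almost orthogonality of $p_0$ to each $p_i$ and $\bar p_i$ — is exactly what makes this retraction harmless.

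First I would fix a neighborhood $W$ of $x$ in $\bar B(p_0,R)$, small enough (using openness of ordinary and extended strainers together with Remarks~\ref{rem:rad} and~\ref{rem:exrad}) that $p_0,p_1,\dots,p_k$ is an extended $(k,\delta)$-strainer at every point of $W$ with a fixed opposite strainer $\bar p_1,\dots,\bar p_k$ and a uniform straining radius. Since $W$ is locally compact and a normed space is geodesic, Lemma~\ref{lem:open} reduces the claim to verifying condition~\eqref{eq:open} for $\|\cdot\|$ at an arbitrary $x'\in W$ and an arbitrary $v\in\mathbb R^k$ close to $f(x')$; as in the proof of that lemma, $v$ may be taken as close to $f(x')$ as desired, so the correcting moves below are as short as we like. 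If $f_{[k-1]}(x')\neq v_{[k-1]}$, then by the inductive hypothesis the map $f_{[k-1]}|_{\bar B(p_0,R)}$ — a half $(k-1,\delta)$-strainer map at $x$, since $p_0,p_1,\dots,p_{k-1}$ is an extended $(k-1,\delta)$-strainer at $x$ (restrict Definition~\ref{dfn:exstr}(1),(2) to the first $k-1$ indices) — is $\varepsilon_{k-1}$-open on a neighborhood of $x$ in $\bar B(p_0,R)$, so there is $y\in\bar B(p_0,R)$ with $f_{[k-1]}(y)=v_{[k-1]}$ and $\varepsilon_{k-1}|x'y|\le|f_{[k-1]}(x')v_{[k-1]}|$, and then $\|f(y)v\|-\|f(x')v\|\le-(\varepsilon_{k-1}/2)|x'y|$ exactly as in Step~2, case~(1) of Proposition~\ref{prop:open} (no retraction is needed, as $y$ is already in the ball). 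If instead $f_k(x')\neq v_k$, let $\gamma$ be the unit-speed shortest path from $x'$ toward $p_k$ if $v_k<f_k(x')$, and toward $\bar p_k$ if $v_k>f_k(x')$, and let $t>0$ satisfy $f_k(\gamma(t))=v_k$ (so $t\asymp|f_k(x')v_k|$). By Definition~\ref{dfn:exstr}(2), $d(p_0,\cdot)$ has derivative of absolute value $<\varkappa(\delta)$ along $\gamma$ throughout $W$, whence $|p_0\gamma(t)|\le R+\varkappa(\delta)t$; setting $y:=\Phi_{p_0}(\gamma(t),|p_0\gamma(t)|-R)$ when $|p_0\gamma(t)|>R$ and $y:=\gamma(t)$ otherwise gives $y\in\bar B(p_0,R)$ with $|y\gamma(t)|\le\varkappa(\delta)t$.

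It remains to estimate $\|f(y)v\|-\|f(x')v\|$ in this second case. Each coordinate of $f$ is $1$-Lipschitz, so $\|f(y)-f(\gamma(t))\|\le\varkappa_k(\delta)t$; combining this with $|f_{[k-1]}(\gamma(t))f_{[k-1]}(x')|<\varkappa_k(\delta)t$ (the almost orthogonality of the strainer, Definition~\ref{dfn:kstr}(3), applied along the short move $x'\gamma(t)$, which lies in the straining ball exactly as in Proposition~\ref{prop:open}) and with the drop $|f_k(\gamma(t))v_k|=0$ versus $|f_k(x')v_k|=t$ in the last coordinate, one gets $\|f(y)v\|-\|f(x')v\|\le\varkappa_k(\delta)t-(\varepsilon_{k-1}/2)t\le-(\varepsilon_{k-1}/4)t$ once $\delta<\delta_k$ is small enough that $\varkappa_k(\delta)\ll\varepsilon_{k-1}$; since $|x'y|\le(1+\varkappa(\delta))t$, this is condition~\eqref{eq:open} with constant $\varepsilon_{k-1}/8$, say. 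The base case $k=1$ is the same argument, moving toward $p_1$ or $\bar p_1$ and using $\angle p_1y\bar p_1>\pi-\delta$ to see that $f_1$ changes at velocity almost $1$ while, by the extended strainer condition, $d(p_0,\cdot)$ changes at velocity $<\varkappa(\delta)$, then retracting onto $\bar B(p_0,R)$ as above; one sets $\varepsilon_1:=1-\varkappa(\delta_1)$. By Lemma~\ref{lem:open}, $f|_W$ is $(\varepsilon_{k-1}/8)$-open for $\|\cdot\|$, and passing back to the Euclidean norm at the cost of shrinking the constant yields the stated $\varepsilon_k$.

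The main obstacle — and the only real difference from Proposition~\ref{prop:open} — is the clash between the two directions we must use: the $f$-correcting moves run toward the $p_i$ and $\bar p_i$ and need not respect $\bar B(p_0,R)$, whereas the only way back into $\bar B(p_0,R)$ is to move toward $p_0$, a direction along which $f$ is \emph{not} controlled (this is the Finsler asymmetry discussed just before Remark~\ref{rem:exfib}). The resolution is that one never needs control of $f$ along $x'p_0$: by Definition~\ref{dfn:exstr}(2) the displacement $|y\gamma(t)|$ incurred by the retraction is only $\varkappa(\delta)$ times the length of the correcting move, so its effect on $f$ is absorbed into the error terms $\varkappa_k(\delta)$, which are dominated by the fixed openness constant $\varepsilon_{k-1}$ inherited from the previous inductive step. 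Everything else is the bookkeeping already carried out in Proposition~\ref{prop:open}, and, on the topological side, matches the GCBA treatment of \cite{LNS}.
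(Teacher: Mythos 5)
Your proposal is correct and follows essentially the same route as the paper: verify the criterion of Lemma \ref{lem:open} for the modified norm, use Definition \ref{dfn:exstr}(2) to see that the $f$-correcting move (along a shortest path toward $p_i$ or $\bar p_i$) changes $d(p_0,\cdot)$ by only $\varkappa(\delta)$ times its length, and then retract back onto $\bar B(p_0,R)$ along the geodesic to $p_0$ at negligible cost. The only cosmetic difference is that you rerun the induction internally for half strainer maps (so your case-(1) point already lies in $\bar B(p_0,R)$), whereas the paper invokes Proposition \ref{prop:open} in the ambient space to produce the correcting point and performs the projection once at the end; both rest on the same mechanism.
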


\begin{proof}
The proof is a minor modification of that of Proposition \ref{prop:open}.
We check the condition \eqref{eq:open} of Lemma \ref{lem:open} for $f|_{\bar B(p_0,R)}$ at any point in $\bar B(p_0,R)$ near $x$.
Here we use the modified norm $||\cdot||$ introduced in the proof of Proposition \ref{prop:open}.
By Definition \ref{dfn:exstr}(1) and Proposition \ref{prop:open}, the condition \eqref{eq:open} is satisfied at any interior point in $B(p_0,R)$ near $x$.
Hence it remains to check the condition \eqref{eq:open} at any boundary point in $\partial B(p_0,R)$ near $x$.

By the openness of an extended strainer (cf.\ Remark \ref{rem:exrad}), it suffices to check the condition \eqref{eq:open} for $f|_{\bar B(p_0,R)}$ at $x$.
Suppose $v\in\mathbb R^k\setminus\{f(x)\}$ is sufficiently close to $f(x)$.
We show that there exists $y\in\bar B(p_0,R)\setminus\{x\}$ such that
\[||f(y)v||-||f(x)v||\le-\varepsilon_k|xy|\]
for some constant $\varepsilon_k>0$, provided $\delta<\delta_k$.

By the proof of Proposition \ref{prop:open}, there exists $y'\in X$ near $x$ such that
\begin{equation}\label{eq:fy'}
||f(y')v||-||f(x)v||\le-\varepsilon_k'|xy'|,
\end{equation}
provided $\delta<\delta_k'$, where $\delta_k'$ and $\varepsilon_k'$ are positive constants.
As can be seen from the proof, this point $y'$ is found by moving  $x$ a bit along the shortest path to $p_1,\dots,p_k$ or $\bar p_1,\dots,\bar p_k$.
By the almost orthogonality, Definition \ref{dfn:exstr}(2), we have
\[||p_0y'|-|p_0x||\le\varkappa(\delta)|xy'|.\]

If $y'\in\bar B(p_0,R)$, we are done.
Otherwise, we choose $y$ as the point on the shortest path $p_0y'$ such that $|p_0y|=R$.
By the previous inequality, we have
\begin{equation}\label{eq:yy'}
|yy'|\le\varkappa(\delta)|xy'|.
\end{equation}
Therefore
\begin{align*}
||f(y)v||-||f(x)v||&\le||f(y)f(y')||+||f(y')v||-||f(x)v||\tag{triangle inequality}\\ 
&\le2\sqrt k|yy'|-\varepsilon_k'|xy'|\tag{Lipschitzness of $f$, \eqref{eq:fy'}}\\
&\le-(\varepsilon_k'/2)|xy|,\tag{\eqref{eq:yy'}}
\end{align*}
provided $\delta$ is small enough, say $\delta<\delta_k$.
Setting $\varepsilon_k:=\varepsilon_k'/2$, we obtain the desired inequality \eqref{eq:open}.
This completes the proof.
\end{proof}

\begin{rem}\label{rem:exopen}
More strongly, as in \cite[Lemma 5.1]{LNS}, one can show the $\varepsilon_{k+1}$-openness of the ``extended strainer map'' $\hat f:=(d(p_0,\cdot),f)$ (cf.\ Remark \ref{rem:exstrm}; here we changed the order due to the asymmetry issue).
Indeed, $p_0,\dots,p_k$ still satisfies the almost orthogonality condition as in Definition \ref{dfn:kstr}, even though $p_0$ may fail the almost nonbranching condition.
Therefore, the proof of Proposition \ref{prop:open} applies to $\hat f$ (see also Remark \ref{rem:open}).
We leave the details to the reader.
\end{rem}

The following is a counterpart to Proposition \ref{prop:ret}.
Compare with \cite[Proposition 7.1]{LNS}.

\begin{prop}\label{prop:exret}
Let $f|_{\bar B(p_0,R)}$ be a half $(k,\delta)$-strainer map at $x\in\partial B(p_0,R)$ with base point $p_0$.
Suppose the corresponding extended strainer at $x$ has straining radius $>r_0>0$.
Let $\Pi_+$ be the fiber of $f|_{\bar B(p_0,R)}$ through $x$, i.e.,
\[\Pi_+=\Pi\cap\bar B(p_0,R),\]
where $\Pi$ is the fiber of $f$ through $x$ (in $X$).
If $\delta<\delta_k$ and $0<r<\varepsilon_kr_0$, then there exists a continuous map
\[\Phi_+:B(x,r)\cap\bar B(p_0,R)\to B(x,\varepsilon_k^{-1}r)\cap\Pi_+\]
such that the restriction of $\Phi_+$ to $\Pi_+\cap B(x,r)$ is the identity and
\begin{equation}\label{eq:exret}
|\Phi_+(y)y|\le\varepsilon_k^{-1}|f(x)f(y)|
\end{equation}
for any $y\in B(x,r)\cap\bar B(p_0,R)$, where $|\cdot|$ on the right-hand side denotes the Euclidean norm.
Here $\delta_k$ and $\varepsilon_k$ are positive constants depending only on $k$.
\end{prop}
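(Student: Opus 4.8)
The plan is to obtain $\Phi_+$ by \emph{interleaving} the fiberwise retraction $\Phi$ of Proposition \ref{prop:ret} with the radial retraction $P\colon X\to\bar B(p_0,R)$ along the shortest paths to $p_0$. Here $\Phi$ is applied to the inner family $p_1,\dots,p_k$, which by Definition \ref{dfn:exstr}(1) and Definition \ref{dfn:exrad} is a genuine $(k,\delta)$-strainer at $x$ of straining radius $>r_0$ with opposite strainer $\bar p_1,\dots,\bar p_k$; we keep in addition the homotopy $\hat\Phi$ and, crucially, the length refinement of Remark \ref{rem:retlen}, which bounds the length of the $\hat\Phi$-curve from a point to its $\Phi$-image by $\varepsilon_k^{-1}$ times the corresponding defect of $f$, and which exhibits that curve as an infinite concatenation of shortest-path pieces running toward the points $p_i$ or $\bar p_i$. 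Recall that $P$ fixes $\bar B(p_0,R)$ and $|qP(q)|=\max\{0,|p_0q|-R\}$ in general.

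The one new geometric input is the following consequence of Definition \ref{dfn:exstr}(2): if $c\colon[0,L]\to B(x,r_0)$ is a unit-speed concatenation of shortest paths each pointing toward some $p_i$ or $\bar p_i$, then integrating the (piecewise defined) derivative $\tfrac{d}{dt}d(p_0,c(t))$ — whose absolute value is $\varkappa(\delta)$ at every $t$, since the forward direction of $c$ at $c(t)$ points toward some $p_i$ or $\bar p_i$ and Definition \ref{dfn:exstr}(2) applies there (cf.\ Remark \ref{rem:orth} and the proof of Lemma \ref{lem:sum}) — gives $||p_0c(L)|-|p_0c(0)||\le\varkappa(\delta)L$. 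Applying this to the $\hat\Psi$-curve of one period $\Psi:=\Phi_k\circ\Phi_{[k-1]}$ of the consecutive approximation from the proof of Proposition \ref{prop:ret}, together with the length bound of Remark \ref{rem:retlen}, shows that if $\tilde y\in\bar B(p_0,R)\cap B(x,r_0)$ and $w:=\Psi(\tilde y)$, then $|p_0w|\le R+\varkappa_k(\delta)\,\|f(\tilde y)-u\|$, where $u:=f(x)$ and $\|\cdot\|$ is the modified norm of Proposition \ref{prop:open}; hence $|wP(w)|\le\varkappa_k(\delta)\,\|f(\tilde y)-u\|$.

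Now put $\Psi_+:=P\circ\Psi$ and define $\Phi_+(y):=\lim_{n\to\infty}\Psi_+^n(y)$ for $y\in B(x,r)\cap\bar B(p_0,R)$; every iterate then lies in $\bar B(p_0,R)$ by construction. The heart of the proof is to re-establish Claim \ref{clm:ret} for $\Psi_+$: with $\tilde y$ an iterate, $w:=\Psi(\tilde y)$, $w_+:=P(w)=\Psi_+(\tilde y)$, the bound $|wP(w)|\le\varkappa_k(\delta)\|f(\tilde y)-u\|$ and the Lipschitz continuity of $f$ turn \eqref{eq:clm1} and \eqref{eq:clm2} into $\|f(w_+)-u\|\le\tfrac34\|f(\tilde y)-u\|$ and a per-period bound $\length\le 2\varepsilon_k^{-1}\|f(\tilde y)-u\|$ for the concatenation of the $\hat\Psi$-curve from $\tilde y$ to $w$ with the shortest path $ww_+$, provided $\delta<\delta_k$ is small enough that $\varkappa_k(\delta)$ is negligible against the fixed constant $\varepsilon_k$. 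Summing the resulting geometric series exactly as in Proposition \ref{prop:ret} shows that $\Psi_+^n(y)$ is Cauchy with a uniform rate, so $\Phi_+$ exists and is continuous; that $f(\Phi_+(y))=u$, so $\Phi_+(y)\in\Pi\cap\bar B(p_0,R)=\Pi_+$; that $\Phi_+$ fixes $\Pi_+\cap B(x,r)$; and that $|\Phi_+(y)y|\le\varepsilon_k^{-1}|f(x)f(y)|$ after returning to the Euclidean norm. The inclusion $\Phi_+(B(x,r))\subset B(x,\varepsilon_k^{-1}r)$, and the fact that the whole iteration stays inside the ball $B(x,r_0)$ of straining radius where Claim \ref{clm:ret} and Definition \ref{dfn:exstr}(2) remain valid, follow by shrinking $\varepsilon_k$ as in Remark \ref{rem:retrad}, using $r<\varepsilon_kr_0$.

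The main obstacle is precisely the asymmetry flagged before Theorem \ref{thm:exstr}: $P$ moves toward $p_0$, and Definition \ref{dfn:exstr}(2) gives no control over the distances from $p_1,\dots,p_k$ under such a motion, so one cannot simply post-compose $\Phi$ with $P$ and restrict. Interleaving succeeds only because each step of $\Phi$ moves in directions toward the $p_i$ and $\bar p_i$, along which $d(p_0,\cdot)$ is nearly constant by Definition \ref{dfn:exstr}(2); making this quantitative forces us to use the \emph{length}-refined estimate of Remark \ref{rem:retlen} rather than the distance bound \eqref{eq:ret}, so that the amount by which the $n$-th step strays from $\bar B(p_0,R)$ — and hence the correction contributed by $P$ — is controlled by $\varkappa_k(\delta)$ times the geometrically decaying quantity $\|f(\Psi_+^n(y))-u\|$. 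Taking $\delta<\delta_k$ small relative to $\varepsilon_k$ then absorbs this correction, which is morally the same mechanism by which the lower-triangular entries are absorbed in the informal Jacobian picture of Section \ref{sec:tool}.
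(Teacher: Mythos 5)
Your argument is correct and is essentially the paper's own proof: both compose the retraction $\Phi$ of Proposition \ref{prop:ret} (crucially, with the length refinement of Remark \ref{rem:retlen}) with the radial projection onto $\bar B(p_0,R)$, both use Definition \ref{dfn:exstr}(2) to show that the retraction curves nearly preserve $d(p_0,\cdot)$ (the paper's Claim \ref{clm:exorth}), and both conclude by the same geometric-decay iteration (the paper's Claim \ref{clm:exret}). Your minor deviations --- inserting the projection after each period $\Phi_k\circ\Phi_{[k-1]}$ rather than after each full application of $\Phi$, obtaining the near-constancy of $d(p_0,\cdot)$ by integrating the one-sided directional derivative along the concatenated curve instead of by induction on $k$, and taking the domain to be $B(x,r)\cap\bar B(p_0,R)$ --- are immaterial, the last being exactly the setting in which \eqref{eq:exret} is actually used and in which the paper's own key inequality \eqref{eq:psi+} is justified.
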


\begin{rem}\label{rem:exret}
In the original notation of Lytchak--Nagano--Stadler \cite{LNS}, $\Pi_+$ should be written as $\hat\Pi^+$.
The reason for this minor change is to regard $\Pi_+$ as half of the fiber $\Pi$ of a standard strainer map, rather than as a subset bounded by the fiber $\hat\Pi$ of an extended strainer map as in \cite{LNS} (see Remark \ref{rem:exstrm}).
\end{rem}

\begin{rem}
In order to get a retraction map defined on the whole $B(x,r)$ as in the original statement \cite[Proposition 7.1]{LNS}, it suffices to compose $\Phi_+$ with the geodesic retraction $\phi$ to $\bar B(p_0,R)$ centered at $p_0$ (and replace $\varepsilon_k$ with a smaller one); see the proof below for the precise definition of $\phi$.
In this case, an additional term $\max\{|p_0y|-R,0\}$ should appear on the right-hand side of \eqref{eq:exret}.
\end{rem}

\begin{proof}
The proof is a minor modification of that of Proposition \ref{prop:ret}.
The modification is similar to the previous proof of Proposition \ref{prop:exopen}.

Suppose $r$ is sufficiently small, say $<\varepsilon_k r_0$, where $\varepsilon_k$ will be determined later.
By Definition \ref{dfn:exstr}(1) and Proposition \ref{prop:ret}, there exists a continuous map
\[\Phi:B(x,r)\to B(x,\varepsilon_k^{-1}r)\cap\Pi\]
such that the restriction of $\Phi$ to $B(x,r)\cap\Pi$ is the identity.
Moreover, we have
\begin{equation}\label{eq:tilde}
\widetilde{|\Phi(y)y|}\le\varepsilon_k^{-1}|f(y)u|
\end{equation}
for any $y\in B(x,r)$, where $u=f(x)$ and $\widetilde{|\Phi(y)y|}$ denotes the length of the curve from $y$ to $\Phi(y)$ given by the homotopy $\hat\Phi$ defining $\Phi$ (see Remark \ref{rem:retlen}).

As can be seen from the proof of Proposition \ref{prop:ret}, this $\Phi$ is constructed by moving $y$ along the shortest paths to $p_1,\dots,p_k$ and $\bar p_1,\dots,\bar p_k$ infinitely many times.
Therefore, by the almost orthogonality, Definition \ref{dfn:exstr}(2), we have

\begin{clm}\label{clm:exorth}
\[||p_0\Phi(y)|-|p_0y||\le\varkappa_k(\delta)\widetilde{|\Phi(y)y|},\]
where $\varkappa_k(\delta)$ depends additionally on $k$ (see Notation \ref{nota:k}).
\end{clm}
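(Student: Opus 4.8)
The plan is to estimate how $|p_0\,\cdot\,|$ varies along the homotopy curve $\hat\Phi(y,\cdot)$ defining $\Phi$, using that this curve is assembled entirely from shortest-path segments aimed at the points $p_1,\dots,p_k$ and $\bar p_1,\dots,\bar p_k$, each of which is almost orthogonal to the radial direction issuing from $p_0$ by Definition \ref{dfn:exstr}(2).

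First I would recall from the proof of Proposition \ref{prop:ret} (cf.\ Remark \ref{rem:retlen}) the precise shape of $\hat\Phi$: for fixed $y$, the curve $t\mapsto\hat\Phi(y,t)$ is a reparametrized concatenation of countably many arcs, each of which — after unwinding the recursion in Steps 1 and 2 of that proof — is a sub-arc of a unit-speed shortest path emanating from some point $y'$ near $x$ toward $p_j$ or toward $\bar p_j$ for some $1\le j\le k$. I would also fix the neighborhood $U\subset B$ of $x$ supplied by Definition \ref{dfn:exstr}(2). By the straining-radius hypothesis together with Remarks \ref{rem:rad}, \ref{rem:exrad}, and \ref{rem:retrad} — which confine the iterates $\Psi^n(y)$, and hence the whole curve, inside $B(x,\varepsilon_k^{-1}r)$ — after shrinking $r$ the entire homotopy curve from any $y\in B(x,r)$ to $\Phi(y)$ stays inside $U$, and the opposite strainers used along the way are the prescribed $\bar p_1,\dots,\bar p_k$.

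Next I would carry out the one-variable estimate on a single such arc $\gamma\colon[a,b]\to U$, say a sub-arc of the shortest path from $\gamma(a)$ to $p_j$. The function $\phi(t):=|p_0\gamma(t)|$ is $1$-Lipschitz and convex on $[a,b]$ by the Busemann convexity \eqref{eq:conv}, so its one-sided derivatives exist everywhere and $\phi(b)-\phi(a)=\int_a^b\phi'_+(t)\,dt$. For each $t$ the arc $s\mapsto\gamma(t+s)$, $s\ge 0$, is still a shortest path toward $p_j$, so by Remark \ref{rem:orth} we have $\phi'_+(t)=-\cos\angle p_0\gamma(t)p_j$; since $\gamma(t)\in U$, Definition \ref{dfn:exstr}(2) gives $|\angle p_0\gamma(t)p_j-\pi/2|<\delta$, hence $|\phi'_+(t)|\le\sin\delta=:\varkappa(\delta)$. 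The same holds for arcs aimed at $\bar p_j$. Therefore $\bigl||p_0\gamma(b)|-|p_0\gamma(a)|\bigr|\le\varkappa(\delta)\,\length\gamma$. Summing this bound over the countably many arcs of $\hat\Phi(y,\cdot)$ (telescoping through the points $y,\Psi(y),\Psi^2(y),\dots,\Phi(y)$ and using the triangle inequality), and recalling that the total length is exactly $\widetilde{|\Phi(y)y|}$, I obtain
\[\bigl||p_0\Phi(y)|-|p_0y|\bigr|\le\varkappa(\delta)\,\widetilde{|\Phi(y)y|}\le\varkappa_k(\delta)\,\widetilde{|\Phi(y)y|},\]
as claimed.

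The hard part is not the estimate itself but the bookkeeping in the second paragraph: one must make sure that every point of every arc of $\hat\Phi(y,\cdot)$ genuinely lies in the fixed neighborhood $U$ where Definition \ref{dfn:exstr}(2) applies and with the correct opposite strainers. This is exactly what the straining-radius hypothesis and the containment estimate of Remark \ref{rem:retrad} buy us; once $r$ is small enough relative to $r_0$ and $k$, the containment is automatic, and the rest is the elementary estimate above. Alternatively, the claim can be proved by induction on $k$ in step with the proof of Proposition \ref{prop:ret}, combining the inductive statement applied to the $\hat\Phi_{[k-1]}$-curve with the $i=k$ case of Definition \ref{dfn:exstr}(2) for the final $\Phi_k$-segment; this is where the notational $k$-dependence of $\varkappa_k$ enters, even though the resulting function of $\delta$ is in fact independent of $k$.
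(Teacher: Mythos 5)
Your proof is correct, but it is organized differently from the paper's. The paper proves the claim by induction on $k$, inserted into the recursive construction of $\Phi$ from Proposition \ref{prop:ret}: the base case is the single-arc estimate for a shortest path toward $p_1$ or $\bar p_1$, the induction step combines the hypothesis for $\hat\Phi_{[k-1]}$ with the base case for the final $\Phi_k$-segment (using that lengths of concatenated curves add, so no triangle-inequality loss occurs), and then one sums the resulting bound over the iterates $\Psi^n$ exactly as you sketch in your closing sentence --- so your ``alternative'' inductive route is in fact the paper's actual argument. Your main route instead flattens the whole homotopy curve into its countably many elementary shortest-path arcs and integrates the first-variation bound $\bigl|\tfrac{d}{dt}|p_0\cdot|\bigr|\le\sin\delta$ coming from Remark \ref{rem:orth}, Busemann convexity, and Definition \ref{dfn:exstr}(2) along each arc. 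This is the same underlying mechanism (per-arc almost orthogonality plus additivity of length, which is precisely why the claim is stated with $\widetilde{|\Phi(y)y|}$ rather than $|\Phi(y)y|$), but your version makes the derivative formula explicit, yields an error function independent of $k$, and avoids re-running the induction --- at the cost of the bookkeeping you correctly flag: one must check that every elementary arc stays in the straining ball where Definition \ref{dfn:exstr}(2) holds with the fixed opposite strainers (which the straining-radius hypothesis and Remark \ref{rem:retrad} provide, just as in the paper), and one must telescope through \emph{all} the nested intermediate points of the recursion, not only $y,\Psi(y),\Psi^2(y),\dots$, which is harmless since the limit parameters form a countable, length-null set and $|p_0\cdot|$ is continuous along the curve. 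The paper's induction trades this flattening for an argument that slots directly into the already-inductive proof of Proposition \ref{prop:ret}.
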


This seems rather trivial, but to prove it we need to go back to the original proof of Proposition \ref{prop:ret}.

\begin{proof}
We prove it by induction on $k$.
More precisely, we insert this claim into the inductive construction of $\Phi$ (and $\hat\Phi$ in Remark \ref{rem:retlen}) in the proof of Proposition \ref{prop:ret}.
In what follows, we denote by $\widetilde{|ab|}$ the length of the curve connecting $a$ and $b$ given by the homotopy under consideration.
The precise meaning will be clear below.

The base case $k=1$ immediately follows from the definition of $\Phi$ and the almost orthogonality, Definition \ref{dfn:exstr}(2).
Indeed, $\Phi$ is defined simply by the shortest path from $y$ to $p_1$ or $\bar p_1$.

Suppose the claim holds for half $(k-1,\delta)$-strainer maps.
Recall $\Psi=\Phi_k\circ\Phi_{[k-1]}$, where we are using the notation from the proof of Proposition \ref{prop:ret}.
Then
\begin{align*}
||p_0\Psi(y)|-|p_0y||&\le||p_0\Psi(y)|-|p_0\Phi_{[k-1]}(y)||+||p_0\Phi_{[k-1]}(y)|-|p_0y||\\
&\le\varkappa(\delta)|\Psi(y)\Phi_{[k-1]}(y)|+\varkappa_{k-1}(\delta)\widetilde{|\Phi_{[k-1]}(y)y|}\\
&\le\varkappa_k(\delta)\widetilde{|\Psi(y)y|}.
\end{align*}
Here the second inequality follows from the base case and the inductive assumption applied to $\Phi_k$ and $\Phi_{[k-1]}$, respectively.
The third inequality may seem to contradict the triangle inequality, but it is true because we are considering the length of the curve instead of the distance between its endpoints (and this is why we need \eqref{eq:tilde} instead of \eqref{eq:ret} later).

Since $\Phi=\lim_{n\to\infty}\Psi^n$, we obtain
\begin{align*}
||p_0\Phi(y)|-|p_0y||&\le\sum_{n=1}^\infty||p_0\Psi^n(y)|-|p_0\Psi^{n-1}(y)||\\
&\le\sum_{n=1}^\infty\varkappa_k(\delta)\widetilde{|\Psi^n(y)\Psi^{n-1}(y)|}\\
&=\varkappa_k(\delta)\widetilde{|\Phi(y)y|},
\end{align*}
where the last equality holds by the same reasoning above.
\end{proof}

For simplicity, assume $\bar B(p_0,R)$ is entirely contained in a tiny ball $B$ (the general case is similar since our argument is local around $x$).
Let us denote by
\[\phi:B\to\bar B(p_0,R)\]
the geodesic retraction of $B$ to $\bar B(p_0,R)$, i.e., $\phi(y)$ is the closest point to $y$ on the intersection of the shortest path $p_0y$ and $\bar B(p_0,R)$.
We consider the composition
\[\Psi_+:=\phi\circ\Phi.\]

Now suppose $y\in B(x,r)\cap\bar B(p_0,R)$.
Clearly $\Psi_+(y)\in\bar B(p_0,R)$.
Furthermore, Claim \ref{clm:exorth} together with the assumption $y\in\bar B(p_0,R)$ implies
\begin{equation}\label{eq:psi+}
|\Psi_+(y)\Phi(y)|\le\varkappa_k(\delta)\widetilde{|\Phi(y)y|}.
\end{equation}
See Figure \ref{fig:exret}.

\begin{figure}[ht]
\centering
\begin{tikzpicture}
\coordinate[label=below:{$p_0$}](p0)at(-0.5,-4);
\coordinate[label=above left:{$p_1,\dots,p_k$}](p1)at(-4,0);
\coordinate[label=above right:{$\bar p_1,\dots,\bar p_k$}](p1')at(4,0);
\coordinate[label=below:$y$](y)at(1.5,0);

\draw[name path=f1] (-3,0) -- (3,0) node[below]{$\partial B(p_0,R)$} coordinate[pos=.5] (y'');
\draw[name path=fk] (0,2) .. controls (0.3,1) and (0.3,-1) .. (0,-2) node[below]{$\Pi$} coordinate[pos=.4] (y');
\path[name intersections={of=f1 and fk,by=x}];

\fill (x) circle (1.5pt) node[below right]{$x$};
\fill (y') circle (1.5pt) node[left] {$\Phi(y)$};
\fill (y'') circle (1.5pt) node[below left] {$\Psi_+(y)$};

\draw[->,dashed](y)to(y');
\draw[->,dashed](y')to(y'');

\fill(p0)circle(1.5pt);
\fill(p1)circle(1.5pt);
\fill(p1')circle(1.5pt);
\fill(y)circle(1.5pt);
\end{tikzpicture}
\caption{}\label{fig:exret}
\end{figure}

Now we can prove the following counterpart to Claim \ref{clm:ret} (cf.\ Remark \ref{rem:retlen}).

\begin{clm}\label{clm:exret}
In the situation above, we have
\begin{align}
&|f(\Psi_+(y))u|-|f(y)u|\le-(\varepsilon_k/3)\widetilde{|\Psi_+(y)y|},\label{eq:exclm1}\\
&|f(\Psi_+(y))u|\le(1/2)|f(y)u|.\label{eq:exclm2}
\end{align}
\end{clm}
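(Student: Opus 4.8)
The plan is to derive Claim \ref{clm:exret} purely from the facts already in hand about $\Phi$ and the geodesic retraction $\phi$, mirroring the proof of Claim \ref{clm:ret} but bookkeeping with the curve-length $\widetilde{|\cdot|}$ rather than with distances. The three ingredients I would invoke are: (i) the length refinement \eqref{eq:tilde}, i.e. $\widetilde{|\Phi(y)y|}\le\varepsilon_k^{-1}|f(y)u|$; (ii) the inequality \eqref{eq:psi+}, $|\Psi_+(y)\Phi(y)|\le\varkappa_k(\delta)\widetilde{|\Phi(y)y|}$, which is exactly Claim \ref{clm:exorth} combined with the definition of $\phi$; and (iii) the fact that $\Phi(y)\in\Pi$, so $f(\Phi(y))=u$, together with the $\sqrt k$-Lipschitzness of $f$ for the Euclidean norm. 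Throughout, the homotopy realizing $\Psi_+$ is taken to be the concatenation of the $\hat\Phi$-homotopy of Remark \ref{rem:retlen} from $y$ to $\Phi(y)$ with the shortest path from $\Phi(y)$ to $\phi(\Phi(y))=\Psi_+(y)$, so that
\[\widetilde{|\Psi_+(y)y|}=\widetilde{|\Phi(y)y|}+|\Psi_+(y)\Phi(y)|\le(1+\varkappa_k(\delta))\,\widetilde{|\Phi(y)y|}\]
by (ii).

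For \eqref{eq:exclm2}: since $f(\Phi(y))=u$, the Lipschitz bound and (ii) give $|f(\Psi_+(y))u|=|f(\Psi_+(y))-f(\Phi(y))|\le\sqrt k\,|\Psi_+(y)\Phi(y)|\le\sqrt k\,\varkappa_k(\delta)\,\widetilde{|\Phi(y)y|}$, and then (i) turns the right-hand side into $\sqrt k\,\varkappa_k(\delta)\,\varepsilon_k^{-1}|f(y)u|$; choosing $\delta_k$ so small that $\sqrt k\,\varkappa_k(\delta)\le\varepsilon_k/2$ for $\delta<\delta_k$ yields \eqref{eq:exclm2}. For \eqref{eq:exclm1}: keeping the same bound $|f(\Psi_+(y))u|\le\sqrt k\,\varkappa_k(\delta)\,\widetilde{|\Phi(y)y|}$ and using (i) in the form $|f(y)u|\ge\varepsilon_k\,\widetilde{|\Phi(y)y|}$, subtraction gives
\[|f(\Psi_+(y))u|-|f(y)u|\le\bigl(\sqrt k\,\varkappa_k(\delta)-\varepsilon_k\bigr)\widetilde{|\Phi(y)y|}\le-\tfrac{\varepsilon_k}{2}\,\widetilde{|\Phi(y)y|}\]
for $\delta<\delta_k$; and since $\widetilde{|\Phi(y)y|}\ge(1+\varkappa_k(\delta))^{-1}\widetilde{|\Psi_+(y)y|}\ge\tfrac{2}{3}\,\widetilde{|\Psi_+(y)y|}$ for $\delta$ small, \eqref{eq:exclm1} follows with the stated constant $\varepsilon_k/3$.

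I do not expect a genuine obstacle here; the care needed is entirely in the bookkeeping. The one point to get right is that $\widetilde{|\Psi_+(y)y|}$ must be measured along a homotopy built by \emph{concatenation}, so that its length is literally the sum of the two pieces — this is precisely why the length refinement of Proposition \ref{prop:ret} (Remark \ref{rem:retlen}) was isolated, and it is why \eqref{eq:tilde} rather than \eqref{eq:ret} is used. The second routine point is the consistency of constants: the $\delta_k$ and $\varepsilon_k$ chosen here must be no larger than those produced for $\Phi$ by Proposition \ref{prop:ret}, in accordance with Notation \ref{nota:k}. Nothing above reopens the almost-orthogonality or almost-nonbranching lemmas of Section \ref{sec:key}: everything is a formal consequence of \eqref{eq:tilde}, \eqref{eq:psi+}, and the Lipschitz property of $f$.
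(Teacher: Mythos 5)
Your proposal is correct and follows essentially the same route as the paper: bound $|f(\Psi_+(y))u|=|f(\Psi_+(y))f(\Phi(y))|$ via the Lipschitzness of $f$ and \eqref{eq:psi+}, convert to $|f(y)u|$ via the length refinement \eqref{eq:tilde}, and pass from $\widetilde{|\Phi(y)y|}$ to $\widetilde{|\Psi_+(y)y|}$ using the concatenation identity $\widetilde{|\Psi_+(y)y|}=\widetilde{|\Phi(y)y|}+|\Psi_+(y)\Phi(y)|$. The only cosmetic difference is that for \eqref{eq:exclm1} you subtract the two bounds directly, whereas the paper first establishes \eqref{eq:exclm2} and then chains through it; the estimates and constants are the same.
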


\begin{proof}
We first prove the second inequality \eqref{eq:exclm2}.
\begin{align*}
|f(\Psi_+(y))u|&=|f(\Psi_+(y))f(\Phi(y))|\\
&\le\varkappa_k(\delta)\widetilde{|\Phi(y)y|}\tag{Lipschitzness of $f$, \eqref{eq:psi+}}\\
&\le(1/2)|f(y)u|,\tag{\eqref{eq:tilde}}
\end{align*}
provided $\delta$ is small enough (say $<\delta_k$).

Next we prove the first inequality \eqref{eq:exclm1}, using the second inequality.
\begin{align*}
|f(\Psi_+(y))u|-|f(y)u|&\le -(1/2)|f(y)u|\tag{\eqref{eq:exclm2}}\\
&\le-(\varepsilon_k/2)\widetilde{|\Phi(y)y|}\tag{\eqref{eq:tilde}}\\
&\le-(\varepsilon_k/3)\widetilde{|\Psi_+(y)y|},\tag{\eqref{eq:psi+}}
\end{align*}
provided $\delta$ is small enough.
Here the last inequality is obtained as follows.
Since $\Psi_+=\phi\circ\Phi$ and $\phi$ is defined by the shortest path to $p_0$, we have
\[\widetilde{|\Psi_+(y)y|}=|\Psi_+(y)\Phi(y)|+\widetilde{|\Phi(y)y|}\le (1+\varkappa_k(\delta))\widetilde{|\Phi(y)y|},\]
where the second inequality follows from \eqref{eq:psi+}.
\end{proof}

The rest of the proof is exactly the same as that of Proposition \ref{prop:ret}.
We want to define $\Phi_+(y):=\lim_{n\to\infty}\Psi_+^n(y)$.
By \eqref{eq:exclm2}, we see that $\lim_{n\to\infty}f(\Psi_+^n(y))=u$.
Furthermore, we can show that $\Psi_+^n(y)$ is a uniformly converging Cauchy sequence.
More precisely, for any $n\ge m$, we have
\begin{align*}
\widetilde{|\Psi_+^m(y)\Psi_+^n(y)|}&=\widetilde{|\Psi_+^m(y)\Psi_+^{m+1}(y)|}+\dots+\widetilde{|\Psi_+^{n-1}(y)\Psi_+^n(y)|}\\
&\le3\varepsilon_k^{-1}|f(\Psi_+^m(y))u|\tag{\eqref{eq:exclm1}}\\
&\le3\varepsilon_k^{-1}2^{-m}|f(y)u|.\tag{\eqref{eq:exclm2}}
\end{align*}
Therefore the map $\Phi_+$ is defined and satisfies the desired inequality.
In particular, the image is contained in $B(x,\varepsilon_k^{-1}r)\cap\Pi_+$, where we replaced $\varepsilon_k$ with a smaller one (see Remark \ref{rem:retrad}).
Finally, we remark that the above argument is carried out in the metric ball $B(x,r_0)$ of straining radius, assuming $r<\varepsilon_kr_0$ (replace $\varepsilon_k$ again if necessary).
This completes the proof.
\end{proof}

By the Busemann convexity \eqref{eq:conv}, we see that $B(x,r)\cap\bar B(p_0,R)$ is convex, and hence contractible.
Composing this contraction with the retraction to the fiber of a half strainer map constructed above yields a counterpart to Corollary \ref{cor:ret}.

\begin{cor}\label{cor:exret}
In the situation of Proposition \ref{prop:exret}, $B(x,r)\cap\Pi_+$ is contractible in $B(x,\varepsilon_{k}^{-1}r)\cap\Pi_+$.
\end{cor}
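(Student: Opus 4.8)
The plan is to mimic the proof of Corollary~\ref{cor:ret}: combine the retraction $\Phi_+$ from Proposition~\ref{prop:exret} with a geodesic contraction, taking extra care that everything stays inside the half-ball $\bar B(p_0,R)$.

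First I would recall that, since $\bar B(p_0,R)$ is contained in a tiny ball, it is a convex subset: any shortest path between two of its points stays in it (Section~\ref{sec:bnpc}). In particular, for each $y\in B(x,r)\cap\Pi_+$, the unique shortest path from $y$ to $x$, reparametrized on $[0,1]$ as $\gamma_y$ with $\gamma_y(0)=y$ and $\gamma_y(1)=x$, satisfies $|\gamma_y(t)x|=(1-t)|yx|<r$ and $\gamma_y(t)\in\bar B(p_0,R)$ for all $t$; hence $\gamma_y(t)\in B(x,r)\cap\bar B(p_0,R)$. By uniqueness of shortest paths in a BNPC space, $(y,t)\mapsto\gamma_y(t)$ is continuous.

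Then I would define $H(y,t):=\Phi_+(\gamma_y(t))$ for $(y,t)\in\bigl(B(x,r)\cap\Pi_+\bigr)\times[0,1]$. By Proposition~\ref{prop:exret}, $\Phi_+$ is continuous and maps $B(x,r)$ into $B(x,\varepsilon_k^{-1}r)\cap\Pi_+$, so $H$ is a continuous map into $B(x,\varepsilon_k^{-1}r)\cap\Pi_+$. Since both $y$ and $x$ lie in $B(x,r)\cap\Pi_+$, on which $\Phi_+$ restricts to the identity, we get $H(y,0)=\Phi_+(y)=y$ and $H(y,1)=\Phi_+(x)=x$. Thus $H$ contracts $B(x,r)\cap\Pi_+$ to the point $x$ inside $B(x,\varepsilon_k^{-1}r)\cap\Pi_+$, which is the claim (after renaming $\varepsilon_k$ as $\varepsilon$).

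There is essentially no obstacle here beyond Proposition~\ref{prop:exret}; the only point requiring attention is that the geodesic contraction to $x$ does not leave $\bar B(p_0,R)$, which is immediate from convexity of $\bar B(p_0,R)$, together with the fact that $\Phi_+$ is defined on the whole ambient ball $B(x,r)$ (not merely on $B(x,r)\cap\bar B(p_0,R)$), as is clear from the construction $\Phi_+=\phi\circ\Phi$ in the proof of Proposition~\ref{prop:exret}.
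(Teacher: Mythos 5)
Your proposal is correct and follows the same route as the paper: the paper's (one-line) proof is precisely to compose the geodesic contraction of $B(x,r)$ to $x$ with the retraction $\Phi_+$ from Proposition~\ref{prop:exret}, and your homotopy $H(y,t)=\Phi_+(\gamma_y(t))$ spells this out, using that $\Phi_+$ is defined on all of $B(x,r)$ and restricts to the identity on $B(x,r)\cap\Pi_+$. The only remark is that your appeal to convexity of $\bar B(p_0,R)$ is not even needed for this step, since the geodesic contraction stays in $B(x,r)$ by the Busemann convexity \eqref{eq:conv} and that is all the domain of $\Phi_+$ requires.
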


Now the proof of Theorem \ref{thm:exstr} is exactly the same as that of Theorem \ref{thm:str}.

\begin{proof}[Proof of Theorem \ref{thm:exstr}]
The openness of a half strainer map was proved in Proposition \ref{prop:exopen}.
Its local uniform contractibility of fibers then follows from Corollaries \ref{cor:ret} and \ref{cor:exret}, where the former applies to the interior points in $B(p_0,R)$ near $x$, and the latter applies to the boundary points in $\partial B(p_0,R)$ near $x$.
Moreover, the straining radius is locally uniformly bounded below around $x$; see Remark \ref{rem:exrad}.
\end{proof}

As before, the following counterpart to Corollary \ref{cor:glofib} is a direct consequence of Theorems \ref{thm:ung1}, \ref{thm:str}, and \ref{thm:exstr}.

\begin{cor}\label{cor:exglofib}
Let $X$ be a GNPC space, $B$ a tiny ball, $p_0\in B$, and $R>0$ such that $\bar B(p_0,R)\subset B$.
Suppose $f:\bar B(p_0,R)\to\mathbb R^k$ satisfies the following: there exists an open subset $U$ of $\bar B(p_0,R)$ such that
\begin{enumerate}
\item $f$ is a $(k,\delta)$-strainer map on $U\cap B(p_0,R)$,
\item $f$ is a half $(k,\delta)$-strainer map with base point $p_0$ on $U\cap\partial B(p_0,R)$.
\end{enumerate}
Let $K\subset f(U)$ be a compact ANR (such as a compact convex set) such that $f^{-1}(K)\cap U$ is compact.
Then the restriction of $f$ to $f^{-1}(K)\cap U$ is a Hurewicz fibration, provided $\delta<\delta_k$.
\end{cor}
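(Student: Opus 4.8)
The plan is to apply Ungar's fibration criterion, Theorem \ref{thm:ung1}, to the restricted map $f|_L\colon L\to K$ with $L:=f^{-1}(K)\cap U$, exactly as in the proof of Corollary \ref{cor:glofib}. The two substantive inputs are the openness of $f$ and the local uniform contractibility of its fibers near every point of $U$, and these are obtained by assembling the corresponding local statements: at the interior points $x\in U\cap B(p_0,R)$, where $f$ is a genuine $(k,\delta)$-strainer map, Theorem \ref{thm:str} applies; at the boundary points $x\in U\cap\partial B(p_0,R)$, where $R=|p_0x|$ so that $f|_{\bar B(p_0,R)}$ is a half $(k,\delta)$-strainer map at $x$ with base point $p_0$, Theorem \ref{thm:exstr} applies. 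Since openness of a map and local uniform contractibility of fibers are local conditions on the domain and pass to restrictions to open subsets, and since $U$ is open in $\bar B(p_0,R)$, these local statements combine to show that $f|_U\colon U\to\mathbb R^k$ is open and has locally uniformly contractible fibers. This is the only place where the half-strainer formalism of Section \ref{sec:exfib} enters, and it is the one point that distinguishes the present argument from that of Corollary \ref{cor:glofib}.

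Granting this, the remaining verifications are routine. The set $L=f^{-1}(K)\cap U$ is compact by hypothesis and has finite topological dimension, since it lies in the tiny ball $B$, which is doubling and hence finite-dimensional by Proposition \ref{prop:dim}; the set $K$ is compact and finite-dimensional as a subset of $\mathbb R^k$. The restriction $f|_L$ surjects onto $K$: if $v\in K\subseteq f(U)$ then $v=f(u)$ for some $u\in U$, and $f(u)=v\in K$ forces $u\in f^{-1}(K)\cap U=L$. It is moreover open as a map onto $K$, because for $W=W'\cap L$ with $W'$ open in $U$ one has $f(W)=f(W')\cap K$, which is open in $K$ by the openness of $f|_U$. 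Finally, the fiber of $f|_L$ over $v\in K$ is $f^{-1}(v)\cap U$, i.e.\ the fiber of $f|_U$ over $v$; hence the local uniform contractibility of the fibers of $f|_U$, with neighborhoods taken inside $U$, restricts after intersection with $L$ to the same property for $f|_L$.

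With all hypotheses of Theorem \ref{thm:ung1} in place --- here one takes, or restricts to, $K$ an ANR, which is harmless because in the applications of Section \ref{sec:prfglo} the set $K$ will always be chosen to be a cube --- we conclude that $f|_L\colon L\to K$ is a Hurewicz fibration, as desired. I expect no serious obstacle: the proof is a transcription of that of Corollary \ref{cor:glofib}, the only new ingredient being that the openness and fiberwise contractibility of $f$ at points of $\partial B(p_0,R)$ must be read off from Theorem \ref{thm:exstr} rather than Theorem \ref{thm:str}. The single point deserving care is the bookkeeping that glues the interior behavior (Theorem \ref{thm:str}) to the boundary behavior (Theorem \ref{thm:exstr}) along $\partial B(p_0,R)$, which is immediate once one notes that both theorems produce the required properties on open neighborhoods of the relevant points inside $\bar B(p_0,R)$.
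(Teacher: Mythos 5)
Your proposal is correct and matches the paper's argument: the paper states this corollary as a direct consequence of Theorems \ref{thm:ung1}, \ref{thm:str}, and \ref{thm:exstr}, which is exactly the assembly you carry out (Theorem \ref{thm:str} at interior points, Theorem \ref{thm:exstr} at boundary points, then Ungar's criterion on the compact set $f^{-1}(K)\cap U$). Your extra remarks --- the restriction bookkeeping and the observation that $K$ should be taken to be an ANR, as it is in the applications --- only make explicit details the paper leaves implicit.
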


Now we can prove Theorem \ref{thm:exfib} by complementing an extended strainer in the same way as Theorem \ref{thm:fib}.

\begin{proof}[Proof of Theorem \ref{thm:exfib}]
The proof is along the same lines as Theorem \ref{thm:fib}.
We just restrict the previous argument to $\bar B(p_0,R)$.

Let $f|_{\bar B(p_0,R)}$ be a half $(k,\delta)$-strainer map at $x\in\partial B(p_0,R)$ with base point $p_0$.
We construct an arbitrarily small contractible open neighborhood $V$ of $x$ in $\bar B(p_0,R)$ such that $f|_V:V\to f(V)$ is a Hurewicz fibration with contractible fibers.
By Theorems \ref{thm:ung2} and \ref{thm:exstr}, it suffices to construct sufficiently small $V$ such that $f|_V$ has contractible fibers and a contractible image.

Let $F=(f,d(x,\cdot))$ and $r>0$ sufficiently small.
By Propositions \ref{prop:kstr} and \ref{prop:exstr},
\begin{enumerate}
\item $F|_{\bar B(p_0,R)}$ is a $(k+1,\delta)$-strainer map on $f^{-1}(f(x))\cap(B(x,r)\setminus\{x\})\cap B(p_0,R)$;
\item $F|_{\bar B(p_0,R)}$ is a half $(k+1,\delta)$-strainer map with base point $p_0$ on $f^{-1}(f(x))\cap(B(x,r)\setminus\{x\})\cap\partial B(p_0,R)$.
\end{enumerate}

By Propositions \ref{prop:open} and \ref{prop:exopen}, $F|_{\bar B(p_0,R)}$ is an open map on a neighborhood of
\[f^{-1}(f(x))\cap(\bar B(x,r)\setminus B(x,\varepsilon_kr/2))\cap\bar B(p_0,R),\]
where $\varepsilon_k$ is a constant from Proposition \ref{prop:exret}.
Therefore, by Corollary \ref{cor:exglofib}, there exists a compact neighborhood $K$ of $f(x)$ such that the restriction of $F|_{\bar B(p_0,R)}$ to
\[f^{-1}(K)\cap(\bar B(x,r)\setminus B(x,\varepsilon_kr/2))\cap\bar B(p_0,R)\]
is a Hurewicz fibration.
See Figure \ref{fig:exfib}.

\begin{figure}[ht]
\centering
\begin{tikzpicture}
\draw(-3,0)--(3,0) node[right]{$\partial B(p_0,R)$};
\draw(-0.75,0) arc[start angle=180, end angle=360, radius=0.75];
\draw(-2,0) arc[start angle=180, end angle=360, radius=2];
\draw[->](-4.5,-1.5)--(-3.5,0.5) node[above right]{$f$};

\node[below right]at(-45:0.75){$\varepsilon_kr/2$};
\node[below right]at(-45:2){$r$};
\node[above]at(2,0.5){$f^{-1}(f(x))$};

\fill(0,0) circle (1.5pt) node[below]{$x$};
\fill(-3.75,0) circle (1.5pt) node[left]{$f(x)$};
\end{tikzpicture}
\caption{}\label{fig:exfib}
\end{figure}

We define
\[V:=f^{-1}(\mathring K)\cap B(x,r)\cap\bar B(p_0,R).\]
We may assume that $f(V)=\mathring K$ is contractible.
It remains to show that the fibers of $f|_V$ are contractible.
Let $v\in\mathring K$.
Using the homotopy lifting property, we first contract the fiber  $f^{-1}(v)\cap B(x,r)\cap\bar B(p_0,R)$ to a small ball $f^{-1}(v)\cap\bar B(x,\varepsilon_kr/2)\cap \bar B(p_0,R)$.
By Corollary \ref{cor:exret}, $f^{-1}(v)\cap\bar B(x,\varepsilon_kr/2)\cap\bar B(p_0,R)$ is contractible in $f^{-1}(v)\cap B(x,r)\cap\bar B(p_0,R)$.
This gives a contraction of $f^{-1}(v)\cap B(x,r)\cap\bar B(p_0,R)$ in itself, as desired.
\end{proof}

The following is a direct corollary of the above proof.
This is exactly the same as Corollary \ref{cor:fib}, so we omit the proof.

\begin{cor}\label{cor:exfib}
Let $f|_{\bar B(p_0,R)}$ be a half $(k,\delta)$-strainer map at $x\in\partial B(p_0,R)$ with base point $p_0$.
Then for any sufficiently small $r>0$, the restriction of $d(x,\cdot)$ to $f^{-1}(f(x))\cap(B(x,r)\setminus\{x\})\cap\bar B(p_0,R)$ is a Hurewicz fibration, provided $\delta<\delta_k$.
In particular, $f^{-1}(f(x))\cap B(x,r)\cap\bar B(p_0,R)$ is contractible.
\end{cor}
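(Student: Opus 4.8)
The plan is to follow verbatim the proof of Corollary~\ref{cor:fib}, carrying every construction out inside $\bar B(p_0,R)$ rather than in all of $X$. First I would note that the ``in particular'' clause — the contractibility of $f^{-1}(f(x))\cap B(x,r)\cap\bar B(p_0,R)$ — is already established within the proof of Theorem~\ref{thm:exfib}: there one complements $f$ by $d(x,\cdot)$, applies Propositions~\ref{prop:kstr} and \ref{prop:exstr} to see that $F=(f,d(x,\cdot))$ is a $(k+1,\delta)$-strainer map (resp.\ a half one with base point $p_0$) on the punctured fiber, uses Corollary~\ref{cor:exglofib} to make $F|_{\bar B(p_0,R)}$ a Hurewicz fibration over a compact neighborhood $K$ of $f(x)$, and then lifts a homotopy crushing $[\varepsilon_kr/2,r)$ onto $\{\varepsilon_kr/2\}$ to push $f^{-1}(v)\cap B(x,r)\cap\bar B(p_0,R)$ into the small ball $f^{-1}(v)\cap\bar B(x,\varepsilon_kr/2)\cap\bar B(p_0,R)$, which is contractible there by Corollary~\ref{cor:exret}.

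Next I would extract from that same argument the sharper statement that $d(x,\cdot)$, restricted to $f^{-1}(f(x))\cap(\bar B(x,r)\setminus B(x,s))\cap\bar B(p_0,R)$, is a Hurewicz fibration for every $0<s<r$. Indeed, the intermediate object produced in the proof of Theorem~\ref{thm:exfib} is precisely the restriction of $F|_{\bar B(p_0,R)}$ to $f^{-1}(K)\cap(\bar B(x,r)\setminus B(x,\varepsilon_kr/2))\cap\bar B(p_0,R)$, which is a Hurewicz fibration by Corollary~\ref{cor:exglofib}; taking the $f^{-1}(f(x))$-slice, on which $F$ reduces to $d(x,\cdot)$, and observing that the cutoff $\varepsilon_kr/2$ may be replaced by an arbitrary $s\in(0,r)$ (after harmlessly shrinking $r$), yields the claim. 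One should just check that the auxiliary inputs — the complementation via Propositions~\ref{prop:kstr} and \ref{prop:exstr}, the openness from Propositions~\ref{prop:open} and \ref{prop:exopen}, the retraction from Proposition~\ref{prop:exret}, and the choice of $K$ — go through with the same radii independently of $s$, which is immediate from their statements.

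Finally I would invoke the fact that the homotopy lifting property is a local condition on the base (e.g.\ \cite[Chapter XX, Corollary 3.6]{Du}): the base $(0,r)$ is covered by the half-open intervals $[s,r)$, over each of which $d(x,\cdot)$ is a Hurewicz fibration by the previous step, so it is a Hurewicz fibration over all of $(0,r)$, i.e.\ on $f^{-1}(f(x))\cap(B(x,r)\setminus\{x\})\cap\bar B(p_0,R)$. Removing $x$ causes no trouble, since $x$ corresponds to the endpoint radius $0$ which is excluded from the domain, so no fibration condition at radius $0$ is ever required. I do not expect any genuine obstacle here: the corollary is formally identical to Corollary~\ref{cor:fib}, and the only real content — the openness, the local uniform contractibility of fibers, and the compactly-supported fibration statement for half strainer maps — has already been supplied by Theorems~\ref{thm:exstr}, \ref{thm:exfib} and Corollary~\ref{cor:exglofib}.
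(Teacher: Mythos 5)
Your proposal is correct and matches the paper's own argument: the paper deduces this corollary exactly as it deduces Corollary \ref{cor:fib}, namely by noting that the contractibility statement and the Hurewicz-fibration property of $d(x,\cdot)$ on the annular pieces $f^{-1}(f(x))\cap(\bar B(x,r)\setminus B(x,s))\cap\bar B(p_0,R)$ are already contained in the proof of Theorem \ref{thm:exfib}, and then invoking the locality of the homotopy lifting property on the base (Dugundji). The only cosmetic point is that the local-to-global step should be applied to an open cover of $(0,r)$, e.g.\ by the intervals $(s,r)$, which your annular fibrations immediately provide.
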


\subsection{Consequences}\label{sec:excon}

In this subsection, using the properties of half strainer maps, we prove two propositions which will be used in the proof of Theorem \ref{thm:four}.
These correspond to \cite[Proposition 5.5, Corollary 7.2]{LNS}.

The first is the contractibility of a ``hemisphere'' in the fiber of a half strainer map, which corresponds to \cite[Proposition 5.5]{LNS}.
The proof is somewhat different from the original which used a limiting argument with the property of the tangent cone of a GCBA space.
Here we give in some sense a more direct proof.

\begin{prop}\label{prop:hemi}
Let $f|_{\bar B(p_0,R)}$ be a half $(k,\delta)$-strainer map at $x\in\partial B(p_0,R)$ with base point $p_0$, where $\delta<\delta_k$.
Let $\Pi_+=\Pi\cap\bar B(p_0,R)$ be the fiber of $f|_{\bar B(p_0,R)}$ through $x$, where $\Pi$ is the fiber of $f$ through $x$ (in $X$).
Then for any sufficiently small $r>0$, the hemisphere around $x$ in $\Pi_+$,
\[\Pi_+\cap\partial B(x,r)\]
is contractible.
Furthermore, its small perturbation $\Pi_+(v)\cap\partial B(x,r)$ is contractible, where $\Pi_+(v):=f^{-1}(v)\cap\bar B(p_0,R)$ and $v\in\mathbb R^k$ is sufficiently close to $f(x)$ compared to $r$.
\end{prop}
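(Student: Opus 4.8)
The plan is to reduce the statement, by means of the Hurewicz fibration furnished by Corollary~\ref{cor:exfib}, to the contractibility of a small punctured piece of $\Pi_+$, and then to contract that piece by exploiting the ``inward'' direction on $\Pi_+$ provided by the geodesic contraction of $\bar B(p_0,R)$ toward $p_0$; this replaces the tangent-cone argument used in Lytchak--Nagano--Stadler.

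First I would fix a small $r>0$ as in Corollary~\ref{cor:exfib}, so that $d(x,\cdot)$ restricts to a Hurewicz fibration over $(0,r)$ on $\Pi_+\cap(B(x,r)\setminus\{x\})$ and $\Pi_+\cap B(x,r)$ is a contractible ANR. Since the base $(0,r)$ is contractible, every hemisphere $\Pi_+\cap\partial B(x,\rho)$ with $0<\rho<r$ includes into $\Pi_+\cap(B(x,r)\setminus\{x\})$ as a homotopy equivalence, and in particular all such hemispheres (of radius $<r$) are mutually homotopy equivalent. Thus it suffices to prove that $\Pi_+\cap(B(x,r)\setminus\{x\})$ is contractible for $r$ small; the perturbed statement then comes for free, since the straining radius is locally uniform (Remark~\ref{rem:exrad}) and every construction below depends continuously and uniformly on the target value $v$.

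For the contractibility I would use the convexity of $\bar B(p_0,R)$ (a ball inside a tiny ball) and its unit-speed geodesic contraction $\Phi_{p_0}$ toward $p_0$: moving a point $y$ near $x$ a distance $s$ along $yp_0$ strictly decreases $d(p_0,\cdot)$ by $s$, hence pushes $y$ off the metric sphere $\partial B(p_0,R)$ into $B(p_0,R)$ and away from $x$ (recall $d(p_0,x)=R$). Such a move changes $f$ by at most $\sqrt k\,s$, so composing with the fiber retraction $\Phi$ of Proposition~\ref{prop:exret} (which displaces points by at most $\varepsilon_k^{-1}\sqrt k\,s$ and, by the orthogonality of Definition~\ref{dfn:exstr}(2) --- cf.\ Claim~\ref{clm:exorth} --- alters $d(p_0,\cdot)$ by only $\varkappa_k(\delta)$ times the displacement) lands us back in $\Pi$ at a point with $d(p_0,\cdot)=d(p_0,y)-s\pm\varkappa_k(\delta)s<R$ for $\delta<\delta_k$. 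Iterating the step ``move a fixed small amount toward $p_0$, then $\Phi$-retract, then restore $d(x,\cdot)$ to the range $(0,r)$ by lifting along the fibration'', and interpolating, I would produce a deformation of $\Pi_+\cap(B(x,r)\setminus\{x\})$ within itself onto the set $\Pi\cap\bar B(p_0,R-\eta)\cap B(x,r)$ for a suitable $\eta>0$ --- a set that misses $x$ --- and then contract that set by repeating the same flow, all of its points being interior to $B(p_0,R)$, so that Corollary~\ref{cor:fib} together with a final collapse toward $p_0$ applies.

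The main obstacle is to organize this ``flow toward $p_0$'' into a genuine deformation: one must verify continuity, the convergence of the iteration, and that the successive images stay inside $\Pi_+$ and inside a controlled ball about $x$. This is the same consecutive-approximation bookkeeping as in Propositions~\ref{prop:ret} and \ref{prop:exret}, but the asymmetry reappears in a new guise --- moving toward $p_0$ shifts the $f$-value by a definite, not small, amount, so the $\Phi$-correction is not a small perturbation --- and the delicate point is to ensure that the height $d(p_0,\cdot)$ gained at each stage is not undone by the subsequent correction of $d(x,\cdot)$; this forces one to choose the moving direction so as to keep $d(x,\cdot)$ essentially fixed while decreasing $d(p_0,\cdot)$, using that on $\Pi_+$ the directions toward the $p_i$ and $\bar p_i$ are orthogonal to $d(p_0,\cdot)$.
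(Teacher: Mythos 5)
Your overall strategy---reduce via the fibration of Corollary~\ref{cor:exfib} to the punctured half ball, then push inward toward $p_0$ and correct back to the fiber with the retraction of Proposition~\ref{prop:exret}, using the orthogonality of Claim~\ref{clm:exorth} to ensure the correction does not undo the depth gain---is the right circle of ideas, and your basic estimate (gain $s$ in depth versus a loss of at most $\varkappa_k(\delta)\varepsilon_k^{-1}\sqrt k\,s$ under the $\Phi$-correction) is exactly the one the paper uses. But the execution has two genuine gaps. First, the iteration ``move a fixed amount toward $p_0$, $\Phi$-retract, then restore $d(x,\cdot)$ by lifting along the fibration'' loses the depth control at the restore step: Hurewicz lifts carry no metric information, so after restoring $d(x,\cdot)$ the point may return to $\partial B(p_0,R)$. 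Your proposed remedy---choosing the moving direction among those toward $p_i,\bar p_i$, which are orthogonal to $d(p_0,\cdot)$---keeps $d(p_0,\cdot)$ essentially \emph{fixed}, so it cannot simultaneously control $d(x,\cdot)$ and gain depth; nothing in the extended strainer structure controls the variation of $d(x,\cdot)$ along the direction toward $p_0$. Second, the terminal step is unjustified: contractibility of $\Pi\cap\bar B(p_0,R-\eta)\cap B(x,r)$ does not follow from Corollary~\ref{cor:fib}, whose contraction of $\Pi\cap B(x,r)$ ends at $x$ and need not stay inside $\bar B(p_0,R)$, let alone $\bar B(p_0,R-\eta)$. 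Handling this set requires the half-strainer machinery at the smaller sphere (a new center on $p_0x$ at depth $R-\eta$ together with Corollaries~\ref{cor:exret} and \ref{cor:exglofib}); this is precisely the content of the proof of Proposition~\ref{prop:neg}, not a ``final collapse toward $p_0$.''

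The paper removes both difficulties in one stroke: instead of flowing each point a fixed amount toward $p_0$, it contracts the whole hemisphere along geodesics to the single point $x_0$ on $p_0x$ with $|p_0x_0|=R-r$, and then applies $\Phi_+$ followed by the deformation retraction $\theta$ coming from Corollary~\ref{cor:exfib}. Busemann convexity gives the uniform depth gain $|p_0z_t|\le R-tr$; the $f$-drift is at most $\sqrt k\,t\,|x_0y|\le 2\sqrt k\,tr$ because $f(y)=f(x)$; and Claim~\ref{clm:exorth} bounds the depth loss under $\Phi$ by $\varkappa_k(\delta)\varepsilon_k^{-1}2\sqrt k\,tr<tr$, so the image misses $x$ and stays in $\Pi_+$. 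Since the target of the geodesic contraction is one point, no restore step, no iteration, and no second-stage contraction are needed. If you reorganize your flow so that its target is a single interior point on $p_0x$ rather than ``a fixed amount toward $p_0$'' from each point, your argument becomes essentially the paper's proof (and the perturbed case is then handled, as in the paper, by noting that $(f,d(x,\cdot))$ is a Hurewicz fibration near the hemisphere, so nearby fibers are homotopy equivalent).
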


\begin{rem}\label{rem:hemi}
Note that $\Pi_+\cap\partial B(x,r)$, as well as $\Pi_+(v)\cap\partial B(x,r)$, is locally contractible, as already seen in the proof of Corollary \ref{cor:exfib} (compare with the original statement of \cite[Proposition 5.5]{LNS}).
Proposition \ref{prop:hemi} is still a little weaker than \cite[Proposition 5.5]{LNS} because the original statement perturbs not only the value of $f$ but also the distance $R$ from $p_0$.
However, this simpler statement is enough for the proof of Theorem \ref{thm:4sph}, as it takes place on a fixed metric sphere $\partial B(p_0,R)$.
\end{rem}

\begin{proof}
The desired contraction is given by composing one geodesic contraction with two retractions constructed in the previous subsection.

\step{Step 1}
We first consider the standard case, $\Pi_+\cap\partial B(x,r)$.
Suppose $r>0$ is sufficiently small (to be determined later).
Let $x_0$ be the point on the shortest path $p_0x$ such that $|p_0x_0|=R-r$; in other words, the unique intersection point of  $p_0x$ and $\partial B(x,r)$ (note that $x_0$ is not necessarily contained in $\Pi$).
Let
\[\Phi_0:(\bar B(x,r)\cap\bar B(p_0,R))\times[0,1]\to\bar B(x,r)\cap\bar B(p_0,R)\]
be the linearly reparametrized geodesic contraction to $x_0$ (not $x$), i.e., $\Phi_0(y,t)$ is the linear reparametrization of the shortest path from $y$ to $x_0$, where $y\in \bar B(x,r)$ and $t\in[0,1]$.
Note that the image of $\Phi_0$ is contained in $\bar B(x,r)\cap\bar B(p_0,R)$, as the latter is convex by the Busemann convexity \eqref{eq:conv}.
Let us denote by $\Phi_0|$ the restriction of $\Phi_0$ to $(\partial B(x,r)\cap\Pi_+)\times[0,1]$.
We compose $\Phi_0|$  with the following two retractions.
See Figure \ref{fig:hemi} (where, for simplicity, $x_0$ is contained in $\Pi$).

\begin{figure}[ht]
\centering
\begin{tikzpicture}
\draw(-3,0)--(3,0) node[right]{$\partial B(p_0,R)$};
\draw(-1.5,0) arc[start angle=180, end angle=360, radius=1.5];
\draw[->](-4.5,-1.5)--(-3.5,0.5) node[above right]{$f$};
\draw(0,0)--(0,-2.5);
\draw[->,dashed](1.4,-0.1)--(0.1,-1.4);
\draw[->,dashed](-1.4,-0.1)--(-0.1,-1.4);

\node[below right]at(-45:1.5){$r$};
\node[below right]at(2,-1){$\Pi_+$};
\node[above right]at(2,0.5){$\Pi$};

\fill(0,0) circle (1.5pt) node[below left]{$x$};
\fill(0,-1.5) circle (1.5pt) node[below left]{$x_0$};
\fill(-3.75,0) circle (1.5pt) node[left]{$f(x)$};
\end{tikzpicture}
\caption{}\label{fig:hemi}
\end{figure}

By Proposition \ref{prop:exret}, there exists a retraction
\[\Phi_+:\bar B(x,r)\cap\bar B(p_0,R)\to B(x,\varepsilon_k^{-1}r)\cap\Pi_+,\]
provided $r$ is sufficiently small, where $\varepsilon_k$ is a constant depending only on $k$ (slightly smaller than the one of Proposition \ref{prop:exret}).
Furthermore, by Corollary \ref{cor:exfib}, there exists a (deformation) retraction
\[\theta:(B(x,\varepsilon_k^{-1}r)\setminus\{x\})\cap\Pi_+\to\partial B(x,r)\cap\Pi_+,\]
provided $r$ is sufficiently small (lift a deformation retraction of $(0,\varepsilon_k^{-1}r)$ to $\{r\}$).

We consider the composition
\[\Theta:=\theta\circ\Phi_+\circ\Phi_0|\]
defined on $(\partial B(x,r)\cap\Pi_+)\times[0,1]$.
We have to check that this map is certainly defined, that is, the image of each map is contained in the domain of the next map.
It is clear that if this map is defined, then it gives the desired contraction of the hemisphere.

Clearly the image of $\Phi_0|$ is contained in the domain of $\Phi_+$.
Therefore it suffices to prove that the image of $\Phi_+\circ\Phi_0|$ is contained in the domain of $\theta$, i.e., $(B(x,\varepsilon_k^{-1}r)\setminus\{x\})\cap\Pi_+$.
We show that the image of $\Phi_+\circ\Phi_0|$ misses $x$.

Let $y\in\partial B(x,r)\cap\Pi_+$ and set $z_t:=\Phi_0(y,t)$, i.e., the linearly reparametrized shortest path from $y$ to $x_0$.
By the Busemann convexity \eqref{eq:conv}, we have
\begin{equation}\label{eq:p0zt}
\begin{aligned}
|p_0z_t|&\le(1-t)|p_0y|+t|p_0x_0|\\
&\le(1-t)R+t(R-r)=R-tr.
\end{aligned}
\end{equation}

Recalling the proof of Proposition \ref{prop:exret}, we see that $\Phi_+$ is the limit of $(\Psi_+)^n=(\phi\circ\Phi)^n$. Here $\phi$ is the geodesic retraction to $\bar B(p_0,R)$ centered at $p_0$, and $\Phi$ is the retraction to $\Pi$ constructed in Proposition \ref{prop:ret}.
Furthermore, $\Phi$ keeps the distance from $p_0$ almost unchanged, by the almost orthogonality (Claim \ref{clm:exorth}).
Therefore we have
\begin{align*}
|p_0\Phi(z_t)|&\le|p_0z_t|+\varkappa_k(\delta)\widetilde{|\Phi(z_t)z_t|}\tag{Claim \ref{clm:exorth}}\\
&\le R-tr+\varkappa_k(\delta)\varepsilon_k^{-1}|f(z_t)f(x)|\tag{\eqref{eq:p0zt}, \eqref{eq:tilde}}\\
&\le R-tr+\varkappa_k(\delta)\varepsilon_k^{-1}\sqrt k|z_ty|\tag{$f(x)=f(y)$}\\
&=R-tr+\varkappa_k(\delta)\varepsilon_k^{-1}\sqrt kt|x_0y|\\
&\le R-tr+\varkappa_k(\delta)\varepsilon_k^{-1}\sqrt kt2r<R,
\end{align*}
provided that $\delta$ is small enough and $t$ is positive.
This implies that $\Phi_+(z_t)=\Phi(z_t)$ and $|p_0\Phi_+(z_t)|<R$ for any $0<t\le 1$.
In particular, $\Phi_+(z_t)\neq x$, and thus the composition $\Theta$ is well-defined (note that the $t=0$ case is trivial since $\Phi_+(z_0)=y$).
This completes the proof of the first half.

\step{Step 2}
We next consider the small perturbation, $\Pi_+(v)\cap\partial B(x,r)$.
This immediately follows from the proof of Theorem \ref{thm:exfib} and Step 1.
Indeed, as shown in the proof of Theorem \ref{thm:exfib}, the map $(f,d(x,\cdot))$ is a Hurewicz fibration on a small neighborhood of $\Pi_+\cap\partial B(x,r)$ in $\bar B(p_0,R)$.
In particular, its fibers are homotopy equivalent, and thus $\Pi_+(v)\cap\partial B(x,r)$ is also contractible, provided $v$ is sufficiently close to $f(x)$.
\end{proof}

As the above proof shows, the ``punctured half ball'' $\Pi_+\cap B(x,r)\setminus\{x\}$ in the fiber of a half strainer map is homotopy equivalent to the boundary hemisphere $\Pi_+\cap\partial B(x,r)$ (by using $\theta$ from Corollary \ref{cor:exfib}, which is actually a deformation retraction).
Therefore we obtain the following corollary.

\begin{cor}\label{cor:hemi}
In the situation of Proposition \ref{prop:hemi}, the punctured half ball $\Pi_+\cap B(x,r)\setminus\{x\}$ around $x$ in $\Pi_+$ is contractible.
\end{cor}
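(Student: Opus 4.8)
The plan is to deduce the statement directly from Corollary~\ref{cor:exfib} and Proposition~\ref{prop:hemi}, with no new geometric input: the punctured half ball $\Pi_+\cap B(x,r)\setminus\{x\}$ is the total space of a Hurewicz fibration over an interval whose fibers are boundary hemispheres, and a Hurewicz fibration over a contractible base deformation retracts onto any of its fibers.

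First I would fix $r>0$ small enough that both Corollary~\ref{cor:exfib} and Proposition~\ref{prop:hemi} apply at scale $r$ and at all smaller scales, and set $E:=\Pi_+\cap(B(x,r)\setminus\{x\})$, which is precisely $f^{-1}(f(x))\cap(B(x,r)\setminus\{x\})\cap\bar B(p_0,R)$. By Corollary~\ref{cor:exfib}, the restriction of $d(x,\cdot)$ to $E$ is a Hurewicz fibration onto the open interval $(0,r)$. Next, exactly as in the construction of the map $\theta$ in the proof of Proposition~\ref{prop:hemi}, I would lift a strong deformation retraction of $(0,\varepsilon_k^{-1}\rho)$ onto a point $\{\rho\}$ through this fibration to obtain a deformation retraction
\[
\theta:\bigl(B(x,\varepsilon_k^{-1}\rho)\setminus\{x\}\bigr)\cap\Pi_+\longrightarrow\partial B(x,\rho)\cap\Pi_+ ,
\]
and apply it with $\varepsilon_k^{-1}\rho=r$, that is, $\rho=\varepsilon_k r$, so that $E$ deformation retracts onto the hemisphere $\Pi_+\cap\partial B(x,\varepsilon_k r)$. (Alternatively, since $(0,r)$ is contractible, the fiber inclusion $\Pi_+\cap\partial B(x,\rho')\hookrightarrow E$ is a homotopy equivalence for any $0<\rho'<r$, which is all one needs.) By Proposition~\ref{prop:hemi} applied at the still-small radius $\varepsilon_k r$, this hemisphere is contractible, and therefore so is $E$.

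The ``main obstacle'' here is merely bookkeeping, since all the substantive work has already been done in Proposition~\ref{prop:hemi} (contractibility of the hemisphere) and Corollary~\ref{cor:exfib} (the fibration $d(x,\cdot)$). One point to keep straight is the scale mismatch built into Proposition~\ref{prop:exret} and hence into $\theta$: a punctured half ball of radius $r$ retracts onto a hemisphere of radius $\varepsilon_k r$ rather than $r$, so Proposition~\ref{prop:hemi} must be invoked at the smaller radius $\varepsilon_k r$, which is harmless. The other point is the standard homotopy-theoretic fact that lifting a strong deformation retraction of the base onto a point through a Hurewicz fibration yields a deformation retraction of the total space onto that fiber; this is already used (for $\theta$) in the proof of Proposition~\ref{prop:hemi}, so nothing genuinely new is required.
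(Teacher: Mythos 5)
Your argument is correct and is essentially the paper's own proof: the paper likewise observes that $\theta$ from Corollary~\ref{cor:exfib} (lifting a deformation retraction of the base interval through the Hurewicz fibration $d(x,\cdot)$) deformation retracts the punctured half ball onto a boundary hemisphere, which is contractible by Proposition~\ref{prop:hemi}. Your care about applying Proposition~\ref{prop:hemi} at the smaller radius $\varepsilon_k r$ is a harmless bookkeeping variant of the same idea, since that proposition holds at all sufficiently small radii.
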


The second is the homotopy negligibleness of the ``boundary'' of the fiber of a half strainer map, which corresponds to \cite[Corollary 7.2]{LNS}.
In fact, we do not need this because its consequence, Proposition \ref{prop:half}, is now deduced more directly from Corollaries \ref{cor:exfib} and \ref{cor:hemi} (cf.\ Remark \ref{rem:half}).
Nevertheless, for comparison we include the statement and proof.

Recall that a closed subset $Z$ of a topological space $Y$ is \textit{homotopy negligible} (or a \textit{Z-set}) if, for any open subset $U$ of $Y$, the inclusion
\[U\setminus Z\hookrightarrow U\]
is a homotopy equivalence.
If $Y$ is an ANR, this is equivalent to the following local condition: for any $z\in Z$, there exists a neighborhood basis $\mathcal U_z$ of contractible neighborhoods $U_z$ with contractible complements $U_z\setminus Z$ (\cite{EK}).

\begin{prop}\label{prop:neg}
Let $f|_{\bar B(p_0,R)}$ be a half $(k,\delta)$-strainer map at $x\in\partial B(p_0,R)$ with base point $p_0$, where $\delta<\delta_k$.
Set
\[\Pi_+:=\Pi\cap\bar B(p_0,R),\quad\hat\Pi:=\Pi\cap\partial B(p_0,R),\]
where $\Pi$ is the fiber of $f$ (in $X$) through $x$.
Then there exists an open neighborhood $U$ of $x$ (in $X$) such that $\Pi_+\cap U$ is an ANR and $\hat\Pi\cap U$ is homotopy negligible in $\Pi_+\cap U$.
\end{prop}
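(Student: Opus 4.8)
The plan is to deduce this from the fibration properties already established for half strainer maps, namely Corollaries \ref{cor:exfib} and \ref{cor:hemi}, together with the local structure of the fibers that follows from Corollary \ref{cor:fib} applied to the full strainer map. First I would fix a small $r>0$ so that all of the following hold simultaneously around $x$: the half strainer map $f|_{\bar B(p_0,R)}$ has locally uniformly contractible fibers (Theorem \ref{thm:exstr}); the map $(f,d(x,\cdot))$ is a half $(k+1,\delta)$-strainer map off $x$ (Propositions \ref{prop:kstr}, \ref{prop:exstr}); and, by the same reasoning used in the proof of Theorem \ref{thm:exfib}, the distance function $d(p_0,\cdot)$ restricted to $\Pi\cap(B(x,r)\setminus\{x\})$ is a Hurewicz fibration onto an interval of the form $(R-r',R]$ (or thereabouts), with $\hat\Pi\cap B(x,r)$ sitting over the single value $R$. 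Set $U:=B(x,r)$ (in $X$); then $\Pi_+\cap U$ is locally contractible and finite-dimensional, hence an ANR by Proposition \ref{prop:dim}.

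Next I would verify homotopy negligibility using the local criterion recalled just before the statement: for each $z\in\hat\Pi\cap U$ I must produce a neighborhood basis of contractible neighborhoods $W$ in $\Pi_+\cap U$ whose complements $W\setminus\hat\Pi$ are also contractible. For $z=x$ itself, Corollary \ref{cor:hemi} gives that the punctured half ball $\Pi_+\cap B(x,\rho)\setminus\{x\}$ is contractible for small $\rho$, while by Corollary \ref{cor:exfib} the full half ball $\Pi_+\cap B(x,\rho)$ is contractible; the complement $(\Pi_+\cap B(x,\rho))\setminus\hat\Pi = \Pi\cap B(x,\rho)\cap B(p_0,R)$, which deformation retracts (via the fibration $d(p_0,\cdot)$, pushing the interval $(R-\rho',R]$ away from $R$, or equivalently via the geodesic contraction toward an interior point of $p_0x$) onto a small ball in $\Pi\cap B(p_0,R)$ that is contractible by the interior statement of Corollary \ref{cor:fib}. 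For a general $z\in\hat\Pi\cap U$ with $z\neq x$, I would apply the same two corollaries but for the half strainer map associated to the extended strainer restrained at $z$ (valid since $z$ lies in the straining neighborhood, cf.\ Remark \ref{rem:exrad}), giving the same picture locally at $z$; here one uses that $\hat\Pi$ near $z$ is exactly the fiber over $R$ of the local Hurewicz fibration $d(p_0,\cdot)$ on $\Pi\cap(B(z,\rho)\setminus\{z\})$, so the complement $W\setminus\hat\Pi$ again retracts onto an interior ball which is contractible.

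The step I expect to be the main obstacle is organizing the deformation retraction of $\Pi_+\cap B(z,\rho)\setminus\hat\Pi$ onto a contractible interior piece uniformly in $z$, since the relevant straining radii and the constant $\varepsilon_k$ are only locally uniform around $x$ (Remark \ref{rem:exrad}), and one must check that the homotopy produced by lifting a deformation retraction of the interval $(R-\rho',R]$ under $d(p_0,\cdot)$ genuinely stays inside $\Pi_+\cap U$ — i.e., one must shrink $U$ enough that the fibration of Corollary \ref{cor:exfib} (and the openness of $f|_{\bar B(p_0,R)}$ from Proposition \ref{prop:exopen}) are in force at every $z\in\hat\Pi\cap U$, not just at $x$. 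Once $U$ is chosen this way, both contractibility of $W$ and of $W\setminus\hat\Pi$ follow from Corollaries \ref{cor:exfib}, \ref{cor:hemi}, and \ref{cor:fib} as above, and the local characterization of $Z$-sets in ANRs finishes the proof.
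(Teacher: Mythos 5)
Your first step (choosing $U$ of straining radius, ANR-ness of $\Pi_+\cap U$ from local contractibility and finite dimension, reduction to a local statement at each $z\in\hat\Pi\cap U$, contractibility of $\Pi_+\cap B(x,\rho)$ from Corollary \ref{cor:exfib}) matches the paper. The gap is in the step you build everything else on: the assertion that $d(p_0,\cdot)$ restricted to $\Pi\cap(B(x,r)\setminus\{x\})$ is a Hurewicz fibration onto an interval $(R-r',R]$, with $\hat\Pi$ as the fiber over $R$. This is not established anywhere in the paper and does not follow ``by the same reasoning as Theorem \ref{thm:exfib}'': that theorem (and Corollaries \ref{cor:exfib}, \ref{cor:exglofib}) fibers pieces of $\Pi_+$ by $d(x,\cdot)$, never by $d(p_0,\cdot)$. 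Indeed $p_0$ is by design \emph{not} a $1$-strainer at $x$ (Definition \ref{dfn:exstr} has no $\bar p_0$), and the almost orthogonality is asymmetric: one only knows $\angle p_0yp_i\approx\pi/2$, i.e.\ moving toward $p_i,\bar p_i$ keeps $d(p_0,\cdot)$ almost constant, while moving toward $p_0$ may change $f$ at an uncontrolled rate. Treating $(f,d(p_0,\cdot))$ as a map with fibration properties is exactly the LNS extended-strainer-map device that this paper deliberately replaces by half strainer maps because it is not available here; assuming a fibration of $d(p_0,\cdot)$ with $\hat\Pi$ as its top fiber is moreover close to assuming the collar-type conclusion (homotopy negligibility) you are trying to prove. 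Your fallback, ``equivalently via the geodesic contraction toward an interior point of $p_0x$,'' also fails: that contraction does not preserve the fiber $\Pi$ of $f$, so it gives no retraction of $\Pi\cap B(x,\rho)\cap B(p_0,R)$; correcting it by the retraction $\Phi$ back to $\Pi$ and checking one stays off $\hat\Pi$ is precisely the nontrivial content that must be supplied. Also note that even granting your fibration, its fiber over an interior value $R''$ is $\Pi\cap\partial B(p_0,R'')\cap B(x,\rho)$, not ``a small ball in $\Pi\cap B(p_0,R)$'' to which Corollary \ref{cor:fib} would apply.

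The paper avoids all of this as follows: since $\Pi_+\cap B(x,r)\setminus\hat\Pi$ is an ANR, it suffices to kill its homotopy groups; by compactness any spheroid lies in $\Pi\cap\bar B(p_0,R')\cap B(x,r)$ for some $R'<R$ close to $R$, and that closed set is shown to be contractible by re-running the half-strainer machinery at radius $R'$: $(f,d(x,\cdot))$ is a $(k+1,\delta)$-strainer map on $\Pi\cap B(x,r)\setminus\{x\}$ and a half $(k+1,\delta)$-strainer map along $\partial B(p_0,R')$ (Propositions \ref{prop:kstr}, \ref{prop:exstr} and openness of the extended strainer), so Corollary \ref{cor:exglofib} makes $d(x,\cdot)$ a Hurewicz fibration on the annular part, which pushes $\Pi\cap\bar B(p_0,R')\cap B(x,r)$ into $\bar B(x,\varepsilon_kr/2)$, and then Corollary \ref{cor:exret} applied at the point $x'\in p_0x$ with $|p_0x'|=R'$ contracts it. If you want to keep your outline, you must replace the purported $d(p_0,\cdot)$-fibration by an argument of this kind (or prove that fibration property from scratch, which the asymmetry makes doubtful); as it stands the contractibility of $W\setminus\hat\Pi$ is unproved.
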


Note that our terminology and notation are slightly different from the original ones of \cite{LNS}: see Remarks \ref{rem:exstrm} and \ref{rem:exret}.
In the original terminology, $\hat\Pi$ is a fiber of an extended strainer map $(f,d(p_0,\cdot))$.

\begin{proof}
The proof is along basically the same lines as \cite[Corollary 7.2]{LNS}, but requires an additional argument to handle the problem mentioned in Remark \ref{rem:retball}.
Let $U$ be an open ball around $x$ of the straining radius of the extended strainer defining $f|_{\bar B(p_0,R)}$.
By Corollaries \ref{cor:ret} and \ref{cor:exret}, $\Pi_+\cap U$ is locally contractible, and hence an ANR.

We claim that $\hat\Pi\cap U$ is homotopy negligible in $\Pi_+\cap U$.
Since $\Pi_+\cap U$ is an ANR, this reduces to a local problem, as explained before the proposition.
It suffices to show that for any $z\in\hat\Pi\cap U$, there exists an arbitrarily small contractible neighborhood $U_z$ in $\Pi_+$ such that $U_z\setminus\hat\Pi$ is also contractible.
Since the claim is local, we may assume $z=x$.

By Corollary \ref{cor:exfib}, for any sufficiently small $r>0$, $\Pi_+\cap B(x,r)$ is contractible.
We show that the complement $\Pi_+\cap B(x,r)\setminus\hat\Pi$ is contractible.
Since it is an ANR, it suffices to show that all of its homotopy groups vanish.
To this end, we prove that for any $R'$ less than but close to $R$, a slightly smaller subset
\[\Pi\cap\bar B(p_0,R')\cap B(x,r)\]
of $\Pi_+\cap B(x,r)$ is contractible.
Indeed, by compactness, any continuous image of a sphere into $\Pi_+\cap B(x,r)\setminus\hat\Pi$ is contained in such a subset.
See Figure \ref{fig:neg}.

\begin{figure}[ht]
\centering
\begin{tikzpicture}
\draw(-3,0)--(3,0) node[right]{$\partial B(p_0,R)$};
\draw(-3,-0.25)--(3,-0.25) node[below right]{$\partial B(p_0,R')$};
\draw(-0.75,0) arc[start angle=180, end angle=360, radius=0.75];
\draw(-2,0) arc[start angle=180, end angle=360, radius=2];
\draw[->](-4.5,-1.5)--(-3.5,0.5) node[above right]{$f$};

\node[below right]at(-45:0.75){$\varepsilon_kr/3$};
\node[below right]at(-45:2){$r$};
\node[above right]at(2,0.5){$\Pi$};

\fill(0,0) circle (1.5pt) node[above]{$x$};
\fill(0,-0.25) circle (1.5pt) node[below]{$x'$};
\fill(-3.75,0) circle (1.5pt) node[left]{$f(x)$};
\end{tikzpicture}
\caption{}\label{fig:neg}
\end{figure}

The proof is a minor modification of that of Corollary \ref{cor:exfib}.
By Proposition \ref{prop:kstr}, the map $F=(f,d(x,\cdot))$ is a $(k+1,\delta)$-strainer map on $\Pi\cap B(x,r)\setminus\{x\}$.
Furthermore, by Proposition \ref{prop:exstr} and the openness of an extended strainer (cf.\ Remark \ref{rem:exrad}), $F|_{\bar B(p_0,R')}$ is a half $(k+1,\delta)$-strainer map on
\[\Pi\cap\partial B(p_0,R')\cap\bar B(x,r)\setminus B(x,\varepsilon_kr/3),\]
provided $R'$ is sufficiently close to $R$, where $\varepsilon_k$ is a constant of Proposition \ref{prop:exret}.
By Corollary \ref{cor:exglofib}, we see that $d(x,\cdot)$ is a Hurewicz fibration on $\Pi\cap\bar B(p_0,R')\cap B(x,r)\setminus B(x,\varepsilon_kr/3)$.
Hence $\Pi\cap\bar B(p_0,R')\cap B(x,r)$ is contractible into $\Pi\cap\bar B(p_0,R')\cap\bar B(x,\varepsilon_kr/3)$.

Using the $\varepsilon_{k+1}$-openness of the extended strainer map $\hat f=(d(p_0,\cdot),f)$ (see Remark \ref{rem:exopen}), we find $x'\in\Pi\cap\partial B(p_0,R')$ such that $|xx'|\le\varepsilon_{k+1}^{-1}(R-R')$.
Since $R'$ is sufficiently close to $R$, this implies that $\bar B(x,\varepsilon_kr/3)$ is contained in $B(x',\varepsilon_kr/2)$.
By Corollary \ref{cor:exret} (and Remark \ref{rem:exrad}), $\Pi\cap\bar B(p_0,R')\cap B(x',\varepsilon_kr/2)$ is contractible in $\Pi\cap\bar B(p_0,R')\cap B(x',r/2)$, which is contained in $\Pi\cap\bar B(p_0,R')\cap B(x,r)$.
Therefore $\Pi\cap\bar B(p_0,R')\cap B(x,r)$ is contractible, as desired.
\end{proof}

\section{Proofs}\label{sec:prf}

In this section, we give the proofs of the main results, i.e., Theorems \ref{thm:reg}, \ref{thm:sing}, \ref{thm:four}, \ref{thm:g}, \ref{thm:stab}, and \ref{thm:conv}.

The organization of this section is as follows.
In Section \ref{sec:prfloc}, we prove local results on BNPC manifolds, including Theorems \ref{thm:reg} and \ref{thm:sing}.
In Section \ref{sec:prfglo}, we prove global results on BNPC manifolds, including Theorem \ref{thm:four}.
The contents of these two subsections are almost the same as those in \cite{LN:top} and \cite{LNS}, respectively.
When the argument is exactly the same, we will omit it and refer the reader to the original papers.
In Section \ref{sec:prfapp}, we give applications to G-spaces, i.e., Theorems \ref{thm:g} and \ref{thm:stab}.
In Section \ref{sec:prfgen}, we discuss the generalization to spaces with convex geodesic bicombings, i.e., Theorem \ref{thm:conv}.

\subsection{Local results}\label{sec:prfloc}

In this subsection, we prove local results on BNPC manifolds, including Theorems \ref{thm:reg} and \ref{thm:sing}.
Recall that any locally BNPC homology manifold is a GNPC space (Proposition \ref{prop:geo}).

The proofs of Theorems \ref{thm:reg} and \ref{thm:sing} are based on the following technical result, which is a BNPC counterpart to \cite[Theorem 6.3]{LN:top} of Lytchak--Nagano.
Recall that a $(k,\delta)$-strainer map $f$ at $x\in X$ is \textit{complementable} at $x$ if there exists $p\in X$ such that $(f,d(p,\cdot))$ is a $(k+1,\delta)$-strainer map at $x$ (Definition \ref{dfn:comp}).
For convenience, we treat the constant map to $\mathbb R^0$ as a \textit{$(0,\delta)$-strainer map} for any $\delta>0$.

\begin{thm}\label{thm:rev}
Let $X$ be a locally BNPC homology $n$-manifold and $B$ a tiny ball.
Let $0\le k\le n$.
Suppose $f:U\to\mathbb R^{n-k}$ is an $(n-k,\delta)$-strainer map, where $U\subset B$ is open.
Let $\Pi$ be a fiber of $f$ in $U$.
Define
\[E:=\{x\in\Pi\mid\textit{$f$ is not complementable at $x$}\}.\]
Then the set $E$ is finite and the complement $\Pi\setminus E$ is a topological $k$-manifold, provided $\delta<\delta_{n-k}$.
Moreover, if $k\le 3$, $\Pi$ is a topological $k$-manifold.
Here $\delta_{n-k}$ is a positive constant depending only on $n-k$.
\end{thm}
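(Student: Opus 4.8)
The plan is to induct on $k$, reproducing the proof of \cite[Theorem 6.3]{LN:top} almost verbatim with our GNPC strainer theory (Theorems \ref{thm:str}, \ref{thm:fib}, \ref{thm:bilip}, Corollaries \ref{cor:glofib}, \ref{cor:fib}) substituted for the GCBA one; beyond that geometric input the argument is purely topological. The base case $k=0$ goes as follows. Since $f$ is an $(n,\delta)$-strainer map and is $\varepsilon_n$-open by Proposition \ref{prop:open}, while $X$ has topological dimension $n$, no $(n+1,\delta)$-strainer can exist; hence $f$ is nowhere complementable and $E=\Pi$. At the same time, by Theorem \ref{thm:fib} and Corollary \ref{cor:glofib}, $f$ is locally a Hurewicz fibration with contractible fibers onto an open subset of $\mathbb R^n$, so Raymond's theorem (Theorem \ref{thm:ray}) forces its fibers to be homology $0$-manifolds; thus $\Pi$ is discrete and the case is settled.

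For $k\ge1$ I would first record two facts. One: $\Pi$ is a homology $k$-manifold. Indeed $f$ is again an open map onto an open — hence connected and locally contractible — subset of $\mathbb R^{n-k}$ and locally a Hurewicz fibration with contractible fibers, so Theorem \ref{thm:ray} applies; as the base is an $(n-k)$-dimensional homology manifold, the fibers must be $k$-dimensional homology manifolds, and by the local uniform contractibility of strainer fibers (Theorem \ref{thm:str}) $\Pi$ is moreover locally contractible, hence an ANR. Two: $E$ is closed and discrete in $\Pi$. It is closed since complementability is an open condition; it is discrete since, by Proposition \ref{prop:kstr}, for $x\in E$ and $r$ small the map $(f,d(x,\cdot))$ is an $(n-k+1,\delta)$-strainer map at every point of $\Pi\cap B(x,r)\setminus\{x\}$, so $f$ is complementable there and $E\cap B(x,r)=\{x\}$. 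After restricting to a relatively compact part of $U$, as suffices for all applications, $E$ is therefore finite.

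It remains to show that $\Pi\setminus E$ is a topological $k$-manifold and that $\Pi$ is one when $k\le3$, and for this I would invoke the recognition theorems for homology manifolds and Hurewicz fibrations collected in \cite[Sections 4.3, 6.1]{LN:top}, exactly as in the CAT($0$) case. For $x\in\Pi\setminus E$ one complements $f$ repeatedly, as far as possible, and notes that each added coordinate restricts to a Hurewicz fibration of the current fiber over an interval whose fiber is, by the inductive hypothesis, a homology manifold of one lower dimension that is a topological manifold off a finite set; feeding this tower of fibrations into the recognition results gives that $\Pi$ is a topological $k$-manifold near $x$ — in the extreme case in which the tower reaches a full $(n,\delta)$-strainer map $\bar F$, this is simply Theorem \ref{thm:bilip}, which makes $\bar F$ a bi-Lipschitz embedding and hence $x$ a manifold point of $X$. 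Finally, for $k\le2$ Moore's theorem (Theorem \ref{thm:moo}) gives the conclusion directly, while for $k=3$ one checks at each of the finitely many points $x\in E$ that $\Pi\cap\bar B(x,r)$ is a compact contractible ANR homology $3$-manifold whose boundary, the metric sphere, is by Corollary \ref{cor:fib}, Theorem \ref{thm:mit}, Moore's theorem and Poincar\'e--Lefschetz duality a genuine $2$-sphere, and hence that $\Pi\cap\bar B(x,r)$ is a $3$-cell, by the three-dimensional recognition underlying Theorem \ref{thm:three}. The only delicate point is checking that these black-box recognition statements genuinely apply in our setting: this reduces to knowing that the fibers $\Pi$ are ANRs — not automatic for homology manifolds, but supplied by the local uniform contractibility of strainer fibers — and to carrying the Hurewicz-fibration structure correctly through the complementation induction; once the GNPC strainer theory is in hand, no new difficulty arises compared with the GCBA case.
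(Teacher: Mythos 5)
Your overall strategy is the same as the paper's: Theorem \ref{thm:rev} is proved by running the argument of \cite[Theorem 6.3]{LN:top} essentially verbatim once the GNPC strainer package (Propositions \ref{prop:1str}, \ref{prop:kstr}, \ref{prop:open}, Theorems \ref{thm:str}, \ref{thm:fib}, \ref{thm:bilip}, Corollaries \ref{cor:glofib}, \ref{cor:fib}) has been verified, and your inductive skeleton (Raymond's theorem to see that $\Pi$ is a homology $k$-manifold, Moore for $k\le 2$, the recognition theorems for higher $k$) is exactly the skeleton the paper defers to. There are, however, two places where your write-up does not hold up as stated.

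First, in the base case you rule out $(n+1,\delta)$-strainers ``since $X$ has topological dimension $n$'' and $f$ is $\varepsilon_n$-open. In the GNPC setting this inference is not available: Lipschitzness together with metric $\varepsilon$-openness only bounds the \emph{Hausdorff} dimension from below, and whether Hausdorff dimension equals topological dimension for GNPC spaces is precisely the open Problem \ref{prob:dim} (cf.\ Remark \ref{rem:dim}); topological dimension $n$ alone does not forbid a Lipschitz, metrically open map onto an open subset of $\mathbb R^{n+1}$. The correct argument, which is available to you and is what the homology-manifold hypothesis is for, is homological: an $(n+1,\delta)$-strainer map would by Theorem \ref{thm:fib} (or Corollary \ref{cor:glofib}) be a Hurewicz fibration of an open subset of $X$ over a connected open subset of $\mathbb R^{n+1}$, and Theorem \ref{thm:ray} applied to the homology $n$-manifold $X$ then forces the base to be a homology $k$-manifold with $k\le n$, a contradiction. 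Second, the theorem asserts that $E$ is \emph{finite}, whereas your argument yields only that $E$ is closed and discrete in $\Pi$, and your remedy (``after restricting to a relatively compact part of $U$'') changes the statement being proved. In the paper finiteness is Property (7) of \cite[Section 5]{LN:top}: non-complementable points of a fiber are separated at a definite scale governed by the straining radius (the quantitative complementation of \cite[Theorem 10.5]{LN:geo}), so total boundedness of the tiny ball gives finiteness; Proposition \ref{prop:kstr}, whose ``sufficiently close'' depends on the point, gives only discreteness, so you should either invoke this uniform separation or otherwise address finiteness rather than weaken the hypothesis. Both defects are local and repairable, but as written the base case rests on an unavailable dimension comparison and the finiteness claim is not established.
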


Theorem \ref{thm:sing} immediately follows from the case $k=n$.

\begin{proof}
The proof is exactly the same as that of \cite[Theorem 6.3]{LN:top}.
Here we only explain why the original proof works in our setting.

In the original CAT($0$) case, the proof of this theorem depends only on the properties (0)--(8) for strainer maps listed in \cite[Section 5]{LN:top}.
In fact, the definition of a strainer was not even stated in \cite{LN:top}, which means that the arguments there do not rely on the specific definition of a strainer.
Therefore it is sufficient to check that all these properties hold for our GNPC strainer map.
In what follows we refer to the original paper \cite{LN:top}.

Properties (0)--(4) are trivial.
Properties (5) and (6) were proved in Propositions \ref{prop:1str} and \ref{prop:kstr}, respectively.
Property (7) follows from the extension property (6), Proposition \ref{prop:kstr} (the ``stronger form'' \cite[Theorem 10.5]{LN:geo} mentioned there is not essential). 
Regarding Property (8), we proved the openness in Theorem \ref{thm:str} (Proposition \ref{prop:open}).
A slight difference is that, as noted in Remark \ref{rem:retball}, it is unclear in the GNPC case whether a metric ball of straining radius in the fiber of a strainer map is contractible in itself (compare with Corollary \ref{cor:fib} where the radius depends on the center).
Still, the required fibration property stated in \cite[Theorem 5.1]{LN:top} holds, as we proved in Theorem \ref{thm:fib} and Corollary \ref{cor:glofib}.

Therefore the original argument from \cite[Theorem~6.3]{LN:top} carries over verbatim.
The set $E$ is finite (Property (7)).
The fact that $\Pi\setminus E$ is a topological $k$-manifold is proved by induction on $k$.
The $k=0$ case follows from Theorem \ref{thm:bilip}, but actually we do not need this for induction.
The $k=1,2$ cases follow from Theorems \ref{thm:moo} and \ref{thm:ray} and the fibration property of strainer maps.
For $k\ge 3$, we need the inductive assumption (hence the actual base case is $k=2$).
The $k=3,4$ cases follow from the fibration property and the fiber bundle recognition (\cite[Section 4.3]{LN:top}).
For $k\ge 5$, the proof is much more complicated, which utilizes the disjoint disk property (\cite[Section 6.1]{LN:top}) via the fibration property.
We omit these technical details.
\end{proof}

The $k=n=3$ case gives an alternative proof of the first half of Theorem \ref{thm:three}, originally by Thurston (cf.\ the first half of \cite[Theorem 6.4]{LN:top}).

\begin{cor}\label{cor:three}
Every locally BNPC homology manifold of dimension $3$ (and thus $\le 3$) is a topological manifold.
\end{cor}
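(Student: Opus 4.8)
The plan is to read the statement off from Theorem~\ref{thm:rev} in the case $k=n=3$. First recall that every locally BNPC homology $3$-manifold $X$ is a GNPC space, by Proposition~\ref{prop:geo}, so the strainer machinery and, in particular, the reduction carried out in Theorem~\ref{thm:rev} are available. Since being a topological manifold is a local property, it is enough to produce, around each point $p\in X$, a neighborhood that is a topological $3$-manifold.

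Fix $p$ and choose a tiny ball $B$ centered at $p$; such balls exist around every point of a GNPC space. Take $U=B$ and let $f\colon U\to\mathbb R^{0}$ be the constant map, which by our convention is a $(0,\delta)$-strainer map for every $\delta>0$; fix any $\delta<\delta_{3}$. The unique fiber of $f$ in $U$ is then $\Pi=B$. Since here $n-k=0$ and $k=3\le 3$, the ``moreover'' clause of Theorem~\ref{thm:rev} applies and yields that $\Pi=B$ is a topological $3$-manifold. As $p$ was arbitrary, $X$ is a topological $3$-manifold. The parenthetical ``(and thus $\le 3$)'' requires nothing further: dimension $0$ is trivial, and dimensions $1$ and $2$ are covered by Moore's theorem, Theorem~\ref{thm:moo}.

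There is no genuine difficulty left at this stage, since all the substance has been placed into Theorem~\ref{thm:rev}. If one asks where the $3$-dimensional case really costs something, the answer lies inside the proof of Theorem~\ref{thm:rev}: the low-dimensional fiber bundle recognition, combined with the Hurewicz-fibration property of strainer maps (Theorem~\ref{thm:fib} and Corollary~\ref{cor:glofib}) and Raymond's theorem~\ref{thm:ray}. All of that is invoked here as a black box, so the only thing to verify — and it is immediate — is that the constant map to $\mathbb R^{0}$ literally meets the hypotheses of Theorem~\ref{thm:rev} as a $(0,\delta)$-strainer map.
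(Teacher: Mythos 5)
Your proposal is correct and is essentially the paper's own argument: the paper obtains Corollary~\ref{cor:three} exactly as the $k=n=3$ case of Theorem~\ref{thm:rev}, i.e.\ by applying it to the constant $(0,\delta)$-strainer map on a tiny ball so that the fiber is the ball itself, with the ``moreover'' clause giving manifoldness and with dimensions $\le 2$ covered by Theorem~\ref{thm:moo}. No differences worth noting.
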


Next we prove Theorem \ref{thm:reg}.

\begin{proof}[Proof of Theorem \ref{thm:reg}]
Let $X$ be a locally compact, locally BNPC space.
We prove that $X$ is a topological $n$-manifold if and only if for any $p\in X$, any sufficiently small punctured ball around $p$ is homotopy equivalent to $S^{n-1}$.
As noted in Section \ref{sec:main}, the homotopy type of a small punctured ball is independent of its radius.

\step{Step 1}
We first show the ``only if'' part.
Suppose $X$ is a topological $n$-manifold.
Let $p\in X$ and $B(p,r)$ a tiny ball.
By assumption, one finds a closed neighborhood $U\subset B(p,r)$ of $p$ that is homeomorphic to the closed $n$-cell $D^n$.
Choose $s>0$ so small that $\bar B(p,s)\subset U$ and find a smaller closed neighborhood $V\subset\bar B(p,s)$ of $p$ that is homeomorphic to $D^n$.
Therefore,
\[V\subset\bar B(p,s)\subset U\subset B(p,r).\]
By the geodesic retraction, $\bar B(p,s)$ is a deformation retract of $B(p,r)$.
We may also assume that $V$ is a deformation retract of $U$.
Therefore $V\setminus\{p\}$ is a deformation retract of $B(p,r)\setminus\{p\}$.
The former is homotopy equivalent to $S^{n-1}$.

\step{Step 2}
We next show the ``if'' part.
Suppose for any $p\in X$, any sufficiently small punctured ball around $p$ is homotopy equivalent to $S^{n-1}$.
Since a small metric ball is contractible, this implies that $X$ is an ANR homology $n$-manifold (by the excision theorem and the Mayer--Vietoris exact sequence).
By Theorem \ref{thm:sing} proved above, the singular points of $X$ are discrete.
Furthermore, by Corollary \ref{cor:three}, we may assume $n\ge 4$.
Now the claim follows from the manifold recognition \cite[Theorem 6.2]{LN:top}, as $S^{n-1}$ is simply connected.
\end{proof}

\begin{rem}\label{rem:reg}
As in \cite[Theorem 1.1]{LN:top}, Theorem \ref{thm:reg} can be slightly generalized as follows:
a locally compact, locally BNPC space $X$ is a topological manifold if and only if every sufficiently small punctured ball is homotopy equivalent to the same space $\Sigma$, which is noncontractible.
In fact, if the latter condition holds, then by Proposition \ref{prop:geo} and Theorem \ref{thm:mfd}, $X$ has manifold points.
Thus $\Sigma$ is homotopy equivalent to a sphere, as assumed in Theorem \ref{thm:reg}.
\end{rem}

The following is a BNPC counterpart to the second half of \cite[Theorem 6.4]{LN:top}.

\begin{cor}\label{cor:4link}
Let $X$ be a locally BNPC topological manifold of dimension $\le 4$ and $p\in X$.
Then any sufficiently small metric sphere around $p$ is homeomorphic to a sphere.
\end{cor}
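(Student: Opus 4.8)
The plan is to realize the metric sphere as a fiber of a $1$-strainer map and then invoke Theorem \ref{thm:rev} in low dimensions. Since $X$ is a topological $n$-manifold with $n\le 4$, it is in particular a locally BNPC homology $n$-manifold, hence a GNPC space by Proposition \ref{prop:geo}. Fix $p\in X$ and choose $\delta<\delta_{n-1}$. By Proposition \ref{prop:1str} there is $r_0=r_0(p,\delta)>0$, which we may shrink so that $B(p,r_0)$ is a tiny ball, such that $p$ is a $(1,\delta)$-strainer at every point of $U:=B(p,r_0)\setminus\{p\}$; equivalently, the distance function $d(p,\cdot)\colon U\to\mathbb R$ is a $(1,\delta)$-strainer map on the open set $U$.

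For $0<r<r_0$ the metric sphere $\partial B(p,r)$ is exactly the fiber $d(p,\cdot)^{-1}(r)$ inside $U$, and it is compact because $\bar B(p,r_0)$ is compact. Applying Theorem \ref{thm:rev} with $n-k=1$, i.e.\ $k=n-1\le 3$, we conclude that $\partial B(p,r)$ is a topological $(n-1)$-manifold, hence a closed topological $(n-1)$-manifold. On the other hand, by Theorem \ref{thm:link} the sphere $\partial B(p,r)$ is homotopy equivalent to a small punctured ball around $p$, and since $X$ is a topological $n$-manifold, Theorem \ref{thm:reg} shows that such a punctured ball is homotopy equivalent to $S^{n-1}$. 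Thus $\partial B(p,r)$ is a closed topological $(n-1)$-manifold with the homotopy type of $S^{n-1}$. It remains to upgrade this to a homeomorphism: for $n-1\le 1$ this is immediate, for $n-1=2$ it follows from the classification of closed surfaces, and for $n-1=3$ it follows from the Poincar\'e conjecture (Perelman), as a closed $3$-manifold homotopy equivalent to $S^3$ is simply connected. Hence $\partial B(p,r)\cong S^{n-1}$ for all sufficiently small $r>0$.

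The only substantive input beyond the general machinery is Theorem \ref{thm:rev}, and specifically the part asserting that when $k\le 3$ the \emph{whole} fiber---not merely its complement of the finite exceptional set $E$---is a topological manifold; this is precisely what forces the restriction $n\le 4$. Once this is in hand, identifying the resulting homotopy sphere with the standard sphere is routine in dimensions $\le 3$, the genuinely nontrivial ingredient being the appeal to the resolution of the Poincar\'e conjecture when $n-1=3$.
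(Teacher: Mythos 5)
Your proposal is correct and follows essentially the same route as the paper: combine Theorem \ref{thm:link} and Theorem \ref{thm:reg} to identify the homotopy type of $\partial B(p,r)$ with $S^{n-1}$, apply the $k=n-1\le 3$ case of Theorem \ref{thm:rev} (with $d(p,\cdot)$ as the $1$-strainer map) to see that the sphere is a topological manifold, and invoke the Poincar\'e conjecture to conclude. The extra details you supply (explicitly realizing $\partial B(p,r)$ as a fiber via Proposition \ref{prop:1str} and treating the dimensions $n-1\le 2$ separately) are just an unpacking of the paper's argument.
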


\begin{proof}
Suppose $r>0$ is small enough (depending on $p$).
By Theorem \ref{thm:link}, $\partial B(p,r)$ is homotopy equivalent to $B(p,r)\setminus\{p\}$.
Since $X$ is a topological manifold, Theorem \ref{thm:reg} shows that the latter is homotopy equivalent to a sphere. 
Furthermore, the $k=n-1\le 3$ case of Theorem \ref{thm:rev} shows that $\partial B(p,r)$ is a topological manifold.
By the resolution of the Poincar\'e conjecture, $\partial B(p,r)$ is homeomorphic to a sphere.
\end{proof}

As mentioned in Section \ref{sec:com}, the above result does not hold for dimension $\ge 5$, due to the double suspension theorem (\cite{Ca}, \cite{Ed}, cf.\ \cite{Be:alex}).
A counterexample is given by the Euclidean cone over the spherical suspension of a CAT($1$) homology sphere.
In general we have the following local structure around a singular point.
The proof is exactly the same as \cite[Theorem 6.5]{LN:top}.
The $k=5$ case is attributed to Steve Ferry.

\begin{thm}\label{thm:cone}
In the situation of Theorem \ref{thm:rev}, any point of $\Pi$ has a neighborhood homeomorphic to an open cone over a homology sphere.
\end{thm}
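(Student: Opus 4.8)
The plan is to distinguish whether the point $x\in\Pi$ lies in the finite exceptional set $E$ from Theorem~\ref{thm:rev}. If $x\notin E$, then by that theorem a neighborhood of $x$ in $\Pi$ is a topological $k$-manifold, so $x$ has a neighborhood homeomorphic to $\mathbb{R}^k$, which is the open cone over the round sphere $S^{k-1}$; as $S^{k-1}$ is a homology sphere, this case is immediate. All of the content therefore concerns a point $x\in E$, i.e., a point at which the strainer map $f$ is not complementable.

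Fix such an $x$ and a sufficiently small $r>0$. First I would complement $f$ by the distance function to $x$: by Proposition~\ref{prop:kstr} the map $(f,d(x,\cdot))$ is an $(n-k+1,\delta)$-strainer map at every point of $\Pi\cap B(x,r)\setminus\{x\}$, and by Corollary~\ref{cor:fib} the restriction of $d(x,\cdot)$ to $\Pi\cap B(x,r)\setminus\{x\}$ is a Hurewicz fibration onto $(0,r)$ whose total space $\Pi\cap B(x,r)$ is contractible. Write $L:=\Pi\cap\partial B(x,r)$ for the metric sphere around $x$ inside $\Pi$. Since $(0,r)$ is contractible, the fibration is fiber-homotopy trivial, so $\Pi\cap B(x,r)\setminus\{x\}$ is homotopy equivalent to $L$; moreover, taking $r$ smaller than the distance from $x$ to $E\setminus\{x\}$, the sphere $L$ is disjoint from $E$ and hence lies in the topological $k$-manifold $\Pi\setminus E$. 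In particular $\bar B(x,r)\cap\Pi$ is compact (the ambient tiny ball is proper), so $L$ is compact.

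Next I would pin down the homology type of $L$. Applying Theorem~\ref{thm:ray} to the Hurewicz fibration $f|_V:V\to f(V)$ with contractible fibers supplied by Theorem~\ref{thm:fib} --- where $V$ is a small neighborhood of $x$ and $f(V)$ is an open, hence homology $(n-k)$-manifold, subset of $\mathbb{R}^{n-k}$ --- forces every fiber of $f|_V$, and so $\Pi$ near $x$, to be a homology $k$-manifold without boundary. Applying Theorem~\ref{thm:ray} once more to the fibration $d(x,\cdot):\Pi\cap B(x,r)\setminus\{x\}\to(0,r)$ shows that $L$ is a closed homology $(k-1)$-manifold. Finally, excision and the long exact sequence of the pair $(\Pi\cap B(x,r),\,\Pi\cap B(x,r)\setminus\{x\})$, using the contractibility of $\Pi\cap B(x,r)$ and the homotopy equivalence $\Pi\cap B(x,r)\setminus\{x\}\simeq L$, identify the local homology $H_*(\Pi,\Pi\setminus\{x\})$ with $\widetilde{H}_{*-1}(L)$; since $\Pi$ is a homology $k$-manifold at $x$, this gives $\widetilde{H}_*(L)\cong\widetilde{H}_*(S^{k-1})$, so $L$ is a homology $(k-1)$-sphere.

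It remains to upgrade this homotopical picture to a homeomorphism of a neighborhood of $x$ in $\Pi$ with the open cone over $L$, and for this I would invoke, verbatim as in \cite[Theorem~6.5]{LN:top}, the topological recognition machinery gathered in \cite[Sections 4.3, 6.1]{LN:top} --- the fiber bundle recognition theorems in the low-dimensional range, and, for $k\ge5$, the disjoint disk property together with the cell-like approximation and $\alpha$-approximation theorems --- which promote the Hurewicz fibration $d(x,\cdot)$ on $\Pi\cap B(x,r)\setminus\{x\}$ to a structure exhibiting $\Pi\cap B(x,r)$ as the open cone over $L$; the case $k=5$ is the delicate one, attributed to Ferry. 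The expected main obstacle is precisely this last step: passing from the fibration structure on the punctured neighborhood to a genuine cone structure is a matter of high-dimensional geometric topology and uses nothing further about the BNPC metric. Since the GNPC strainers developed in Sections~\ref{sec:str} and \ref{sec:exstr} have exactly the same fibration properties as their GCBA counterparts, feeding the situation into that machinery requires nothing new, and the proof of \cite[Theorem~6.5]{LN:top} carries over unchanged.
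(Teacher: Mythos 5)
Your proposal is correct and follows essentially the same route as the paper, whose proof of this theorem consists of observing that, once the strainer fibration properties are in place, the argument of \cite[Theorem 6.5]{LN:top} (with the $k=5$ case due to Ferry) carries over verbatim; your identification of the link $\Pi\cap\partial B(x,r)$ as a homology $(k-1)$-sphere via Corollary \ref{cor:fib} and Theorem \ref{thm:ray} is exactly the homological set-up implicit in that citation. The only cosmetic point is that $\Pi\cap\partial B(x,r)$ is the boundary of the ball on which the fibration is defined rather than a fiber over $(0,r)$, which is fixed by taking a slightly smaller sphere.
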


\subsection{Global results}\label{sec:prfglo}

In this subsection, we prove global results on BNPC manifolds, including Theorem \ref{thm:four}.
The main result is the following BNPC counterpart to \cite[Theorem 1.4]{LNS} of Lytchak--Nagano--Stadler.

\begin{thm}\label{thm:4sph}
Let $X$ be a BNPC homology $4$-manifold.
Then any (arbitrarily large) metric sphere in $X$ is a topological $3$-manifold.
\end{thm}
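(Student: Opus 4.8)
The plan is to follow the argument of Lytchak--Nagano--Stadler \cite[Theorem 1.4]{LNS} essentially line by line, feeding it the fibration properties of half strainer maps from Section \ref{sec:exstr} in place of their extended strainer estimates; the topological skeleton is unchanged. Fix $p_0\in X$ and $R>0$. Since $X$ is (globally) BNPC, $\bar B(p_0,10R)$ is a compact BNPC space, so $B(p_0,R)$ is a tiny ball and every point of $\partial B(p_0,R)$ admits a half strainer map with base point $p_0$ (cf.\ Remark \ref{rem:orthglo}). By Proposition \ref{prop:geo}, $X$ is a GNPC space; by Proposition \ref{prop:hom}, $\bar B(p_0,R)$ is an ANR homology $4$-manifold whose manifold boundary is exactly the metric sphere $\partial B(p_0,R)$, and hence by Theorem \ref{thm:mit} the latter is an ANR homology $3$-manifold without boundary. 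Being a topological manifold is a local property, so it suffices to produce, for every $x\in\partial B(p_0,R)$, a neighborhood of $x$ in $\partial B(p_0,R)$ homeomorphic to $\mathbb R^3$.

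Fix such an $x$ and build a maximal extended strainer: starting from the base point $p_0$ and iterating Proposition \ref{prop:exstr} (choosing the successive strainer points on the appropriate metric spheres around $x$), one obtains an extended $(k,\delta)$-strainer $p_0,p_1,\dots,p_k$ at a point $x'$ arbitrarily close to $x$, with $\delta<\delta_k$ and $k\ge 1$ as large as possible; finiteness of $k$ follows from Lemma \ref{lem:num} and Proposition \ref{prop:dim}. Set $f=(d(p_1,\cdot),\dots,d(p_k,\cdot))$, let $\Pi=f^{-1}(f(x))$ be its fiber through $x$ in $X$, and put $\Pi_+=\Pi\cap\bar B(p_0,R)$ and $\hat\Pi=\Pi\cap\partial B(p_0,R)$. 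By Theorem \ref{thm:exfib}, on a small neighborhood of $x$ in $\bar B(p_0,R)$ the half strainer map $f|_{\bar B(p_0,R)}$ is a Hurewicz fibration with contractible fibers onto a contractible open subset of $\mathbb R^k$; complementing with $d(x,\cdot)$ and invoking Corollary \ref{cor:exglofib} (as in the proof of Corollary \ref{cor:exfib}) upgrades this so that the induced map on $\partial B(p_0,R)$ near $x$ is again a Hurewicz fibration, with fiber $\hat\Pi$ near $x$. Since this fiber contains $x$, Raymond's theorem \ref{thm:ray} (applied on the interior $B(p_0,R)$ and on $\partial B(p_0,R)$) forces $3-k\ge 0$; thus $k\le 3$, $\Pi$ is a homology $(4-k)$-manifold, and $\hat\Pi$ is a homology $(3-k)$-manifold.

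The remainder is an induction on $k$, identical to \cite{LN:top} and \cite{LNS}, since the recognition theorems involved are purely topological. For $k=3$, $\hat\Pi$ is a homology $0$-manifold, i.e.\ discrete; the contractibility of $\Pi_+\cap B(x,r)$ (Corollary \ref{cor:exfib}) and of the punctured half-ball $\Pi_+\cap B(x,r)\setminus\{x\}$ (Corollary \ref{cor:hemi}), together with the maximality of the extended strainer, force $\hat\Pi$ to be locally a single point, so the induced map on $\partial B(p_0,R)$ is a local homeomorphism onto $\mathbb R^3$ near $x$ (compare Theorem \ref{thm:bilip}). For $k=2$, $\hat\Pi$ is a homology $1$-manifold, hence a topological $1$-manifold by Theorem \ref{thm:moo}, and the Hurewicz fibration structure, combined with the contractibility of the hemisphere fibers $\Pi_+\cap\partial B(x,r)$ (Proposition \ref{prop:hemi}), yields the local product structure via the $1$-manifold recognition. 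For $k=1$ and $k=0$ one invokes the fiber-bundle and manifold recognition theorems collected in \cite[Sections 4.3, 6.1]{LN:top}, whose hypotheses are supplied precisely by the Hurewicz-fibration-with-contractible-fibers property, the contractibility of the hemisphere fibers (Proposition \ref{prop:hemi}, Corollary \ref{cor:hemi}), and the homotopy negligibility of $\hat\Pi$ in $\Pi_+$ (Proposition \ref{prop:neg}) --- the same inputs used by Lytchak--Nagano--Stadler. In each case $x$ acquires a Euclidean neighborhood in $\partial B(p_0,R)$, which finishes the proof.

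I expect the main obstacle to be bookkeeping rather than conceptual: because the half strainer maps here are \emph{asymmetric} (the geodesic retraction toward $p_0$ is not almost orthogonal to $p_1,\dots,p_k$), one must verify that the topological recognition machinery of \cite{LNS} consumes only the four properties actually established in Section \ref{sec:exstr} --- openness, Hurewicz fibration with contractible fibers, contractibility of the hemisphere fibers, and homotopy negligibility of the boundary fiber --- and never the stronger ``almost isometry'' estimates available in the GCBA setting. The most delicate point is that one cannot simply restrict the half strainer fibration to the metric sphere to obtain a fibration on $\partial B(p_0,R)$; this fibration must instead be produced by complementing the strainer with $d(x,\cdot)$ and applying Corollary \ref{cor:exglofib}, exactly as in the proof of Corollary \ref{cor:exfib}. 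Granting this, the argument is a transcription of \cite[Theorem 1.4]{LNS}.
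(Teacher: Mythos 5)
Your overall plan---transcribe the proof of \cite[Theorem 1.4]{LNS}, supplying the GNPC substitutes for the extended-strainer inputs---is indeed the paper's strategy, but the specific mechanism you propose for the one genuinely new input has a gap. You claim that complementing $f$ with $d(x,\cdot)$ and invoking Corollary \ref{cor:exglofib} ``upgrades'' Theorem \ref{thm:exfib} to a Hurewicz fibration of the induced map on $\partial B(p_0,R)$ near $x$ with fibers $\hat\Pi$, and you then feed this into Raymond's Theorem \ref{thm:ray} to conclude that $\hat\Pi$ is a homology $(3-k)$-manifold (and $k\le 3$). Nothing in the paper gives a fibration on the metric sphere: Theorem \ref{thm:exfib}, Corollary \ref{cor:exglofib}, and Corollary \ref{cor:exfib} all concern maps defined on subsets of the closed ball $\bar B(p_0,R)$, and their fibers are the half-space sets $\Pi_+(v)$ (or, after complementing with $d(x,\cdot)$, hemispheres of the form $\Pi_+(v)\cap\partial B(x,r)$)---never subsets of $\partial B(p_0,R)$. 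Proving openness and locally uniformly contractible fibers for $f$ restricted to the sphere would require retractions that stay on $\partial B(p_0,R)$, i.e.\ essentially that $(f,d(p_0,\cdot))$ behaves like a strainer map; that is precisely what the asymmetry obstructs (moving toward $p_0$ gives no control on $d(p_i,\cdot)$), and it is why the paper replaces the extended strainer maps of \cite{LNS} by half strainer maps (Remark \ref{rem:exstrm}). The paper's actual route to the homology-manifold structure of $\hat\Pi$ is Proposition \ref{prop:half}: $\Pi_+\cap U$ is an ANR homology manifold with boundary $\hat\Pi\cap U$, proved from Corollaries \ref{cor:exfib} and \ref{cor:hemi} with Raymond's theorem applied only on the interior $B(p_0,R)$, followed by Mitchell's Theorem \ref{thm:mit}. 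You never invoke Proposition \ref{prop:half}, which together with Proposition \ref{prop:hemi} is exactly the pair of facts the paper identifies as the substitutes for \cite[Propositions 5.4, 5.5]{LNS}; without it, your dimension count and the base cases of your induction are unsupported.

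A secondary problem is the setup of the strainer itself: you construct a maximal extended $(k,\delta)$-strainer at a point $x'$ arbitrarily close to $x$, but then take $\Pi=f^{-1}(f(x))$ and apply Theorem \ref{thm:exfib} at $x$. As written, $f$ is not known to be a half strainer map at $x$ (and not every point of $\partial B(p_0,R)$ is known to admit an extended strainer with $k\ge1$), so none of the fibration statements apply there; showing that points near $x$ are manifold points of the sphere does not by itself make $x$ one. The scheme of \cite{LNS}, which the paper follows, works with strainer maps on open sets, treats separately the isolated points where the map is not complementable, and at those points uses the hemisphere contractibility (Proposition \ref{prop:hemi}, Corollary \ref{cor:hemi}) together with Moore's recognition (Theorem \ref{thm:moo}) and the Jordan--Schoenflies theorem; your case analysis for $k=3,2,1,0$ gestures at this but again leans on the unestablished sphere fibration for the local product structure.
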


Together with Theorem \ref{thm:thur} of Thurston, this immediately yields Theorem \ref{thm:four}.

The proof of Theorem \ref{thm:4sph} is along the same lines with the original proof of \cite[Theorem 1.4]{LNS}, which is based only on \cite[Propositions 5.4, 5.5]{LNS}.
Recall that we have already proved a BNPC counterpart to \cite[Proposition 5.5]{LNS}: see Proposition \ref{prop:hemi} (and Remark \ref{rem:hemi}).
Hence it remains to prove a BNPC counterpart to \cite[Proposition 5.4]{LNS}.

The following is a BNPC counterpart to \cite[Proposition 5.4]{LNS}.
The setting is the same as Proposition \ref{prop:neg}, except for the assumption that $X$ is a homology manifold.
Again, note that our terminology and notation are slightly different from the original ones: see Remarks \ref{rem:exstrm} and \ref{rem:exret}.

\begin{prop}\label{prop:half}
Let $X$ be a locally BNPC homology manifold.
Let $f|_{\bar B(p_0,R)}$ be a half $(k,\delta)$-strainer map at $x\in\partial B(p_0,R)$ with base point $p_0$, where $\delta<\delta_k$.
Set
\[\Pi_+:=\Pi\cap\bar B(p_0,R),\quad\hat\Pi:=\Pi\cap\partial B(p_0,R),\]
where $\Pi$ is the fiber of $f$ (in $X$) through $x$.
Then there exists an open neighborhood $U$ of $x$ (in $X$) such that $\Pi_+\cap U$ is an ANR homology manifold with boundary $\hat\Pi\cap U$.
\end{prop}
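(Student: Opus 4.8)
The plan is to mimic the proof of \cite[Proposition 5.4]{LNS}, combining the contractibility statements for half strainer maps established above (Corollaries~\ref{cor:exfib} and \ref{cor:hemi}) with Raymond's theorem (Theorem~\ref{thm:ray}) and Mitchell's theorem (Theorem~\ref{thm:mit}). The main steps are: first show that the fiber $\Pi$ itself is a homology $(n-k)$-manifold; then compute the local homology of $\Pi_+$ separately at points of $\Pi\cap B(p_0,R)$ and at points of $\hat\Pi$; and finally identify the homology-manifold boundary of $\Pi_+$ with $\hat\Pi$.

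For the first step I would fix a small contractible open neighborhood $V$ of $x$ in $X$ as in Theorem~\ref{thm:fib}, so that $f|_V\colon V\to f(V)$ is a Hurewicz fibration with contractible fibers; shrinking $V$, I may also assume that $f|_V$ is $\varepsilon_k$-open (Proposition~\ref{prop:open}), so that $f(V)$ is an open, and after further shrinking connected, subset of $\mathbb R^k$, and that $V$ lies inside a ball on which the extended strainer defining $f|_{\bar B(p_0,R)}$ has positive straining radius. Since $V$ is open in the homology $n$-manifold $X$, it is a homology $n$-manifold, and $f(V)$ is connected and locally contractible, so Theorem~\ref{thm:ray} applies; comparing dimensions (the base $f(V)$ being a topological $k$-manifold) shows that $\Pi\cap V$ is a homology $(n-k)$-manifold without boundary. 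I would then let $U$ be the neighborhood of $x$ provided by Proposition~\ref{prop:neg}, intersected with $V$, so that $\Pi_+\cap U$ is already known to be an ANR (and of finite topological dimension, locally compact, separable, metrizable — all inherited from $X$).

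For the second step, write $\Pi_+\cap U=(\Pi\cap B(p_0,R)\cap U)\sqcup(\hat\Pi\cap U)$. If $z\in\Pi\cap B(p_0,R)\cap U$ is an interior point, every sufficiently small metric ball about $z$ lies in $B(p_0,R)$, so $\Pi_+$ agrees with the homology manifold $\Pi$ near $z$ and the local homology of $\Pi_+\cap U$ at $z$ is that of $D^{n-k}$ at an interior point. If $z\in\hat\Pi\cap U$, then $|p_0z|=R$ and the fiber of $f$ through $z$ is again $\Pi$; since $z$ lies within the straining radius, the openness of an extended strainer (Remark~\ref{rem:exrad}, Proposition~\ref{prop:exstr}) makes $f|_{\bar B(p_0,R)}$ a half $(k,\delta)$-strainer map at $z$, so Corollaries~\ref{cor:exfib} and \ref{cor:hemi} applied at $z$ give that both $\Pi_+\cap B(z,\rho)$ and the punctured half ball $\Pi_+\cap B(z,\rho)\setminus\{z\}$ are contractible for all small $\rho>0$. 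Excising $\Pi_+\cap U\setminus B(z,\rho)$ and using the long exact sequence of the pair then gives $H_*(\Pi_+\cap U,\Pi_+\cap U\setminus\{z\})\cong\widetilde H_{*-1}(\Pi_+\cap B(z,\rho)\setminus\{z\})=0$, which is the local homology of $D^{n-k}$ at a boundary point. Hence $\Pi_+\cap U$ is a homology $(n-k)$-manifold with boundary, whose boundary — the locus where the top local homology vanishes — is exactly $\hat\Pi\cap U$; Theorem~\ref{thm:mit} then confirms $\hat\Pi\cap U$ is a homology $(n-k-1)$-manifold, as it should be.

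I expect the delicate point to be the boundary case: the vanishing of local homology at $z\in\hat\Pi\cap U$ rests entirely on the two contractibility statements, which must hold at every nearby boundary point and not just at $x$ — hence the use of the straining radius to propagate the half-strainer structure. The genuine subtlety behind this, already flagged in Remark~\ref{rem:retball} and absorbed into the proofs of Propositions~\ref{prop:neg} and \ref{prop:hemi}, is that the retractions producing those contractions may leave the ball in which they originate, so all the topological control is strictly local; passing to sufficiently small $U$ and $\rho$ is precisely what makes the argument work, exactly as in the GCBA case of \cite{LNS}.
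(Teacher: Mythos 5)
Your proposal is correct and follows essentially the same route as the paper's proof: the ANR property comes from Proposition~\ref{prop:neg}, the interior is handled by the fibration property (Theorem~\ref{thm:fib}) together with Raymond's theorem (Theorem~\ref{thm:ray}), and the vanishing of local homology at points of $\hat\Pi\cap U$ comes from the contractibility statements in Corollaries~\ref{cor:exfib} and \ref{cor:hemi}, propagated to nearby boundary points via the straining radius. Your use of excision and the long exact sequence of the pair is just a cosmetic variant of the Mayer--Vietoris computation in the paper, so there is nothing genuinely different to flag.
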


Using the properties of half strainer maps, we provide a little more direct proof than the original one of \cite[Proposition 5.4]{LNS} (see Remark \ref{rem:half} below).

\begin{proof}
Let $U$ be an open ball around $x$ of the straining radius of the extended strainer defining $f|_{\bar B(p_0,R)}$.
By the first half of Proposition \ref{prop:neg}, $\Pi_+\cap U$ is an ANR.
By Theorems \ref{thm:ray} and \ref{thm:fib}, its interior $\Pi_+\cap U\cap B(p_0,R)$ is a homology manifold.
Hence it remains to show that the local homology vanishes at every point of $\hat\Pi\cap U$.
This immediately follows from Corollaries \ref{cor:exfib} and \ref{cor:hemi} and the Mayer--Vietoris exact sequence.
This completes the proof.
\end{proof}

\begin{rem}\label{rem:half}
Since $\hat\Pi\cap U$ is homotopy negligible in $\Pi_+\cap U$ by Proposition \ref{prop:neg}, the vanishing of the local homology also follows from a result of Toru\'nczyk \cite[Corollary 2.6]{To} (cf.\ \cite{To:cor}), as in the original proof of \cite[Proposition 5.4]{LNS}.
\end{rem}

Now we can prove Theorem \ref{thm:4sph} in exactly the same way as the original proof in \cite[Section 6]{LNS}.

\begin{proof}[Proof of Theorem \ref{thm:4sph}]
The proof is exactly the same as the original proof of \cite[Theorem 1.4]{LNS}.
The original proof relies only on the existence of extended strainers (\cite[Lemma 5.3]{LNS}) and the properties of the ``half spaces'' and ``hemispheres'' associated with extended strainers (\cite[Propositions 5.4, 5.5]{LNS}).
The existence of extended strainers was established in Proposition \ref{prop:exstr}, and the properties of the half spaces and hemispheres were proved in Propositions \ref{prop:half} and \ref{prop:hemi} (cf.\ Remark \ref{rem:hemi}).
Therefore the original proof, using the manifold recognition in dimension $\le 2$ (Theorem \ref{thm:moo}) and the Jordan--Schoenflies theorem, still works in our setting.
We leave it to the reader to check the details.
\end{proof}

The following corollaries are proved in exactly the same way as \cite[Corollaries 1.5, 1.6]{LNS}, only using the geodesic contraction with topological arguments.

\begin{cor}\label{cor:4fib}
Let $X$ be a BNPC topological $4$-manifold and $p\in X$.
Then the distance function $d(p,\cdot):X\setminus\{p\}\to(0,\infty)$ is a fiber bundle with fiber homeomorphic to the $3$-sphere.
\end{cor}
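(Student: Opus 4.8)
The plan is to deduce the statement from Theorem~\ref{thm:4sph} together with the fibration properties of (extended) strainer maps established above, following the topological argument of \cite[Corollary~1.6]{LNS} essentially verbatim; the only geometric ingredient beyond Theorem~\ref{thm:4sph} is the geodesic contraction centered at $p$, which is available in any BNPC space. Three things have to be established: first, that every metric sphere $\partial B(p,r)$ is homeomorphic to $S^3$; second, that $d(p,\cdot)\colon X\setminus\{p\}\to(0,\infty)$ is a Hurewicz fibration; and third, that this Hurewicz fibration is in fact a locally trivial fiber bundle (which, the base $(0,\infty)$ being contractible, is then globally the product bundle $S^3\times(0,\infty)\to(0,\infty)$).

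For the first point, a BNPC topological $4$-manifold is in particular a BNPC homology $4$-manifold, so Theorem~\ref{thm:4sph} applies and each $\partial B(p,r)$ is a compact topological $3$-manifold without boundary (it is the boundary homology manifold of the closed ball $\bar B(p,r)$, cf.\ Proposition~\ref{prop:hom}, now upgraded to a topological manifold). Its homotopy type is that of $S^3$: for small $r$ this is the combination of Theorem~\ref{thm:link} (small metric spheres are homotopy equivalent to small punctured balls) and the ``only if'' part of Theorem~\ref{thm:reg} (a small punctured ball around $p$ in the $4$-manifold $X$ is homotopy equivalent to $S^3$), and for arbitrary $r$ it then follows from the second point, since the fibers of a Hurewicz fibration over a connected base are homotopy equivalent. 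A closed $3$-manifold homotopy equivalent to $S^3$ is simply connected, hence homeomorphic to $S^3$ by the resolution of the Poincar\'e conjecture, exactly as in the proof of Corollary~\ref{cor:4link}.

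For the second point, recall that the homotopy lifting property is local on the base (see the proof of Corollary~\ref{cor:fib}), so it suffices to realize $d(p,\cdot)$ as a Hurewicz fibration over arbitrarily small open intervals. Over $(0,r_0)$ with $r_0$ small this is precisely the $k=0$ case of Corollary~\ref{cor:fib} used in the proof of Theorem~\ref{thm:link}. Over an interval about a radius $R>0$, one works near a point $q\in\partial B(p,R)$: by Proposition~\ref{prop:exstr} one extends $p_0:=p$ to an extended $(k,\delta)$-strainer $p_0,p_1,\dots,p_k$ at $q$, and the associated half strainer map on $\bar B(p_0,R)$ together with its mirror structure exhibits $d(p,\cdot)$ near $q$ as a ``collared'' product of a chart of $\partial B(p_0,R)$ with an interval; the local fibration input here is supplied by Theorem~\ref{thm:exfib} and Corollaries~\ref{cor:exfib},~\ref{cor:hemi} (and, in the homology manifold case, Proposition~\ref{prop:half}). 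Compactness of $\partial B(p,R)$ and the local nature of the homotopy lifting property then give the second point. This is the step carried out in \cite{LNS}, and it uses only the extended-strainer machinery of Section~\ref{sec:exstr} and the geodesic contraction.

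For the third point, $d(p,\cdot)$ is an open, proper Hurewicz fibration whose total space $X\setminus\{p\}$ is a topological $4$-manifold, whose base is the $1$-manifold $(0,\infty)$, and whose fibers are closed $3$-manifolds; the fiber bundle recognition results of \cite[Sections~4.3, 6.1]{LN:top} then upgrade it to a fiber bundle, exactly as in the CAT($0$) case. Since every fiber bundle over the contractible base $(0,\infty)$ is trivial, $d(p,\cdot)$ is, up to homeomorphism over $(0,\infty)$, the projection $S^3\times(0,\infty)\to(0,\infty)$. The step requiring the most care is the second one --- globalising the Hurewicz fibration property of $d(p,\cdot)$ away from $p$ via extended strainers --- but since the argument of \cite{LNS} invokes only the geodesic contraction together with the topological recognition theorems, it transfers to the BNPC setting with no genuinely new difficulty.
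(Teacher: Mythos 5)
Your overall plan (spheres are $3$-manifolds by Theorem~\ref{thm:4sph}, homotopy $3$-spheres hence $S^3$ by Poincar\'e, then upgrade $d(p,\cdot)$ to a bundle by topological recognition) starts out in the right direction, but the way you try to make $d(p,\cdot)$ a Hurewicz fibration away from small scales has a genuine gap, and it is not how the paper argues. The paper's proof of Corollary~\ref{cor:4fib} is the purely topological argument of \cite{LNS} (ultimately Thurston's radial construction \cite{Th:cat}), which uses Theorem~\ref{thm:4sph}, the geodesic contraction and recognition theorems only; no strainer machinery enters at this stage. Your step~2 instead asserts that ``by Proposition~\ref{prop:exstr} one extends $p_0:=p$ to an extended $(k,\delta)$-strainer at $q$'' for an arbitrary point $q\in\partial B(p,R)$. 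Proposition~\ref{prop:exstr} does not provide this: it produces extended strainers only at points $x$ sufficiently close to a suitably chosen auxiliary point $p_k$ and lying on the same distance spheres, and the required closeness depends on $p_k$ (cf.\ Remark~\ref{rem:dich}). There may well be points of a large sphere at which no $(1,\delta)$-strainer, let alone an extended strainer, exists -- handling exactly such ``bad'' points is the whole difficulty behind Theorem~\ref{thm:rev} and the LNS proof of Theorem~\ref{thm:4sph} -- so your local trivialization is simply unavailable at every $q$.

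Even at strained points the argument does not close. The half-strainer results (Theorem~\ref{thm:exfib}, Corollaries~\ref{cor:exfib}, \ref{cor:hemi}, Proposition~\ref{prop:half}) control the map $(d(p_1,\cdot),\dots,d(p_k,\cdot))$ restricted to $\bar B(p,R)$, not $d(p,\cdot)$ itself; because of the asymmetry of almost orthogonality emphasized throughout the paper, there is no control on the strainer coordinates under motion toward or away from $p$, so no ``collared product'' structure for $d(p,\cdot)$ near $q$ can be read off from them. Finally, your gluing step is also insufficient as stated: the homotopy lifting property is local on the \emph{base}, not on the total space, so a family of neighborhoods in $X$ on which $d(p,\cdot)$ behaves like a projection does not combine, by compactness of the sphere alone, into a Hurewicz fibration over an interval; one would need an Ungar-type criterion (Theorem~\ref{thm:ung1}), i.e., locally uniformly contractible fibers of $d(p,\cdot)$ at \emph{every} point of the large sphere, which the paper establishes only at small scales (Proposition~\ref{prop:1str}, Corollary~\ref{cor:ret}; cf.\ Lemma~\ref{lem:stabsph}). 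To repair the proof, follow the paper: establish the homeomorphism type $S^3$ of small spheres via Theorems~\ref{thm:link}, \ref{thm:reg} and Poincar\'e, and then invoke the radial, geodesic-contraction-based topological argument of \cite{LNS} and \cite{Th:cat}, which yields the bundle structure of $d(p,\cdot)$ for all radii without any strainer input.
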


\begin{cor}\label{cor:bdry}
Let $X$ be a BNPC topological $4$-manifold.
Then the ideal boundary $\partial_\infty X$ is homeomorphic to the $3$-sphere, and the canonical compactification $X\cup\partial_\infty X$ is homeomorphic to the closed $4$-cell.
\end{cor}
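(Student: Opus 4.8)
The plan is to follow the argument of Lytchak--Nagano--Stadler for \cite[Corollary 1.6]{LNS}, now that all of its geometric inputs are available in the BNPC setting. First I would record the ambient facts: by Theorem \ref{thm:four} (equivalently, Theorem \ref{thm:4sph} together with Theorem \ref{thm:thur}) the space $X$ is homeomorphic to $\mathbb R^4$, and by Corollary \ref{cor:4fib} the distance function $d(p,\cdot)\colon X\setminus\{p\}\to(0,\infty)$ is a locally trivial fiber bundle with fiber $S^3$, hence trivial over the contractible base, so every metric sphere $\partial B(p,R)\cong S^3$. Moreover, since $X$ is complete and locally compact it is proper, so the visual compactification $\hat X:=X\cup\partial_\infty X$ (with the cone topology on asymptotic geodesic rays) is a compact metrizable space in which $X$ is open and dense, exactly as in the CAT($0$) case. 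I would also note the one BNPC-specific point needed about the ideal boundary: from a fixed basepoint $p$, each asymptote class contains a \emph{unique} ray starting at $p$, since if $\rho,\rho'$ are rays from $p$ with $\sup_t d(\rho(t),\rho'(t))<\infty$, then $t\mapsto d(\rho(t),\rho'(t))$ is convex, bounded, nonnegative, and vanishes at $0$, hence is identically $0$; thus the cone topology and the geodesic contraction behave as in the CAT($0$) setting.

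Then I would carry out the LNS argument. The geodesic contraction $\Phi_p$ from Section \ref{sec:bnpc} contracts each closed ball $\bar B(p,r)$ to $p$ and, combined with the bundle triviality of Corollary \ref{cor:4fib}, exhibits $\bar B(p,r)$ as the closed cone over $\partial B(p,r)\cong S^3$, hence $\bar B(p,r)\cong D^4$; letting $r\to\infty$ re-identifies $X$ with the open cone over $S^3$. To obtain the compactified statement, $\Phi_p$ extends to a contraction of $\hat X$ onto $p$ (moving an ideal point $\xi$ along the unique ray $\rho_\xi$ toward $p$), and, together with the bundle picture and properness of $X$, this identifies $\hat X$ with the closed cone over $\partial_\infty X$ and $\partial_\infty X$ with the large metric spheres $\partial B(p,R)$, hence with $S^3$. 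Since the closed cone over $S^3$ is homeomorphic to $D^4$, this yields $\hat X\cong D^4$ and $\partial_\infty X\cong S^3$. When the product/cone structure near infinity is needed, one invokes the half-strainer fibration results of Section \ref{sec:exstr} (Propositions \ref{prop:half}, \ref{prop:hemi}) and the manifold/fiber-bundle recognition theorems of \cite[Sections 4.3, 6.1]{LN:top}, rather than a naive radial homeomorphism.

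The main obstacle is precisely that last point: realizing $\hat X$ as a cone over $\partial_\infty X$ cannot be achieved by the obvious radial map $\partial B(p,1)\times[0,\infty]\to\hat X$, because in a GNPC space a geodesic segment $[p,y]$ need not extend uniquely to a ray, so rays from $p$ may branch and the map $\partial_\infty X\to\partial B(p,1)$ need not be injective. As in \cite{LNS}, this is circumvented by combining the triviality of the bundle $d(p,\cdot)$ (Corollary \ref{cor:4fib}) with the fibration statements for half strainer maps in Section \ref{sec:exstr} to control the topology on and near $\partial B(p,R)$ as $R\to\infty$, together with the recognition theorems; every remaining step is the same topological bookkeeping as in the CAT($0$) proof, which I would simply cite. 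The corollary then follows, together with Corollary \ref{cor:4fib}, in exactly the manner \cite[Corollary 1.6]{LNS} follows from its predecessors.
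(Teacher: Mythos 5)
Your proposal is correct and takes essentially the same route as the paper: the paper proves this corollary simply by observing that the argument for \cite[Corollaries 1.5, 1.6]{LNS} carries over verbatim, using only the geodesic contraction together with topological arguments and the previously established inputs (Theorem \ref{thm:four}, Corollary \ref{cor:4fib}), which is exactly what you do. Your extra BNPC-specific checks (uniqueness of the ray from a fixed basepoint in an asymptote class via convexity, properness and the cone topology on $X\cup\partial_\infty X$) and your explicit flagging of the branching issue, with the remaining bookkeeping deferred to \cite{LNS}, are consistent with the paper's treatment.
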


Here the \textit{ideal boundary} of a BNPC space is the set of geodesic rays emanating from one base point, equipped with the compact-open topology.
This definition does not depend on the choice of a base point (see \cite{Ho}).

\begin{rem}\label{rem:bdry}
There is another notion of boundary for a BNPC space, called the \textit{horofunction boundary} (or \textit{metric boundary}).
Our ideal boundary is also called the \textit{geodesic boundary}.
See \cite{An:bdry} and \cite{An:cone} for details.
These two boundaries coincide for CAT($0$) spaces, but generally differ for BNPC spaces.
The authors do not know whether Corollary \ref{cor:bdry} holds for the horofunction boundary.
\end{rem}

\subsection{Applications}\label{sec:prfapp}

In this subsection, we give two applications on G-spaces, i.e., Theorems \ref{thm:g} and \ref{thm:stab}.

Let us first define a G-space, introduced by Busemann \cite{Bu:geo}.
The following definition may not be standard, but is compatible with our terminology and is equivalent to the original one.

\begin{dfn}\label{dfn:g}
A locally compact, complete geodesic space $X$ is called a \textit{G-space} if each shortest path is locally uniquely extendable: more precisely, for any $p\in X$, there exists $r>0$ such that if $\gamma:[a,b]\to B(p,r)$ is a unit-speed shortest path, then there exists a unique unit-speed shortest path
\[\bar\gamma:[a-r,b+r]\to X\]
such that $\bar\gamma|_{[a,b]}=\gamma$.
\end{dfn}

For the original axiomatic definition, properties, and related problems, we refer to \cite{An:prob}, \cite{BHR}, and \cite{HR}.
Note that a locally BNPC G-space is nothing but a complete GNPC space without branching geodesics.

We briefly review problems on G-spaces.
There are two long-standing problems asked by Busemann himself \cite[Appendix]{Bu:geo}:
\begin{enumerate}
\item Is any G-space finite-dimensional?
\item Is any finite-dimensional G-space a topological manifold? (see also \cite[P. 2 and 49]{Bu:geo}.)
\end{enumerate}

Regarding the finite-dimensionality conjecture (1), Berestovski\v i \cite{Be:dim} proved it under the assumption of a lower Alexandrov curvature bound or the assumption that any small metric ball is convex.
The latter condition is weaker than an upper Alexandrov curvature bound and a local Busemann curvature bound (see \cite{BHR} for further generalization).
Compare with Proposition \ref{prop:dim}.

Regarding the manifold conjecture (2), the dimension $\le 2$ case was proved by Busemann \cite{Bu:geo}, the dimension $3$ and $4$ cases were proved by Krakus \cite{Krak} and Thurston \cite{Th:g}, respectively.
The dimension $\ge 5$ case still remains open.
In general, it is only known that any finite-dimensional G-space is a homogeneous ANR homology manifold (\cite{Th:g}, \cite{BHR}).
Here \textit{(topological) homogeneity} means that any point can be moved to any other point by a self-homeomorphism of the space.

The manifold conjecture was also proved under several curvature bounds.
See Berestovski\v i's results \cite{Be:one}, \cite{Be:up} for Alexandrov curvature bounds and Andreev's results \cite{An:prf}, \cite{An:norm} for a Busemann curvature bound.
In these cases, more rigid geometric structures (Riemannian or Finsler-like structures) were obtained, which can be viewed as the special cases of more general results (see Section \ref{sec:prob} and references therein).
Pogorelov \cite{Po} also proved both conjectures by assuming some regularity of distance function and obtained a Finsler structure.

Here we provide an alternative proof of Andreev's first result \cite{An:prf} on the manifold conjecture (Theorem \ref{thm:g}).
In fact, due to the homogeneity of a G-space mentioned above, it is sufficient to find one manifold point for proving the manifold conjecture.
With additional assumptions of curvature bounds, this is relatively easy (compared to the general case), as a result of strainer arguments.

\begin{proof}[Proof of Theorem \ref{thm:g}]
To show the first half, let $X$ be a locally BNPC G-space.
In particular, $X$ is a GNPC space.
By Theorem \ref{thm:mfd}, $X$ contains a manifold point.
Since any G-space is homogeneous (\cite[Theorem 2.5]{Th:g}, \cite[Corollary 3.12]{BHR}), $X$ is a topological manifold.

Next we show the second half.
Suppose $X$ is a (globally) BNPC G-space.
From the first half, $X$ is a topological manifold.
Furthermore, for every $p\in X$, any two open metric balls around $p$ are homeomorphic to each other along the unique nonbranching geodesics emanating from $p$.
Choose one Euclidean neighborhood $V_1$ of $p$, and inductively enlarge to an exhaustive sequence of open subsets $V_1\subset V_2\subset\cdots$ with each $V_i$ homeomorphic to Euclidean space.
Then Brown's result \cite{Br} concludes that $X$ is homeomorphic to Euclidean space.
\end{proof}

In fact, we can prove the following stronger result.
Compare with Corollary \ref{cor:4link} and the subsequent paragraph.
See also Andreev's second result \cite{An:norm}.

\begin{thm}\label{thm:glink}
Let $X$ be a locally BNPC G-space and $p\in X$.
Then any sufficiently small metric sphere around $p$ is homeomorphic to a sphere.
\end{thm}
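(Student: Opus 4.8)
The plan is to combine the manifold conclusion already available for $G$-spaces with the strainer machinery and the absence of branching. Denote by $n$ the dimension of $X$ as a manifold. Since a locally BNPC $G$-space is a complete GNPC space with non-branching geodesics, Theorem~\ref{thm:g} gives that $X$ is a topological $n$-manifold, in particular a locally BNPC homology $n$-manifold. If $n\le4$ the statement is precisely Corollary~\ref{cor:4link}, so assume $n\ge5$. Fix $p\in X$ and use Proposition~\ref{prop:1str} to choose a small $r_0>0$ with $B(p,r_0)$ inside a tiny ball and a $\delta<\delta_{n-1}$ such that $d(p,\cdot)$ is a $(1,\delta)$-strainer map on the punctured ball $U=B(p,r_0)\setminus\{p\}$. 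For $0<r<r_0$ I apply Theorem~\ref{thm:rev} with $k=n-1$, $f=d(p,\cdot)$ and $\Pi=f^{-1}(r)=\partial B(p,r)$: this produces a finite set $E\subset\partial B(p,r)$, namely the set of points at which $d(p,\cdot)$ is not complementable, such that $\partial B(p,r)\setminus E$ is a topological $(n-1)$-manifold. In parallel, Theorem~\ref{thm:link} and Theorem~\ref{thm:reg} (the latter applicable since $X$ is a topological $n$-manifold) show that $\partial B(p,r)$ is homotopy equivalent to $S^{n-1}$, while Theorem~\ref{thm:ray}, applied to the Hurewicz fibration $d(p,\cdot)\colon B(p,r_0)\setminus\{p\}\to(0,r_0)$ (whose total space is an open subset of the manifold $X$ and whose base is a $1$-manifold), shows that $\partial B(p,r)$ is an ANR homology $(n-1)$-manifold. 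Thus $\partial B(p,r)$ is, up to homotopy, a sphere $S^{n-1}$, and it is a genuine topological manifold off the finite set $E$.

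It remains to prove $E=\varnothing$, i.e.\ that $d(p,\cdot)$ is complementable at every point $x\in\partial B(p,r)$ once $r$ is small; this is the step that genuinely uses non-branching. It is not automatic: by Theorem~\ref{thm:cone} a point of $E$ would have a neighborhood in $\partial B(p,r)$ homeomorphic to an open cone on a homology $(n-2)$-sphere, and although non-branching does give, via the geodesic flow from $p$, a homeomorphism $B(p,r_0)\setminus\{p\}\cong\partial B(p,r)\times(0,r_0)$, the product structure by itself cannot exclude such cone points, as the double suspension shows. Instead, let $x\in\partial B(p,r)$; since $\partial B(p,r)$ is a homology $(n-1)$-manifold and $n-1\ge1$, there are points of $\partial B(p,r)\setminus\{x\}$ arbitrarily close to $x$, so choose such a point $q$, with $|pq|=|px|=r$. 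By Proposition~\ref{prop:1str} and Remark~\ref{rem:1strrad}, $p$ is a $(1,\delta)$-strainer at $q$ with straining radius $>\delta'r$, where $\delta'$ depends only on $\delta$; in particular this lower bound is uniform over $q\in\partial B(p,r)$. Proposition~\ref{prop:kstr}, applied to the $(1,\delta)$-strainer $(p)$ at $q$ together with the point $x$, then yields that $(p,q)$ is a $(2,\delta)$-strainer at $x$ as soon as $|xq|$ lies below a threshold; the only $q$-dependent ingredient of that threshold is the radius $r_1$ of Proposition~\ref{prop:orth} (cf.\ Remark~\ref{rem:dich}), and in a $G$-space this radius can be bounded below uniformly for $q$ ranging over the compact set $\partial B(p,r)$: the almost-monotonicity defect of Lemma~\ref{lem:comp} has size $O(t/|pq|)$, already uniform since $|pq|=r$, and the residual rate at which the comparison angles $\tilde\angle pq\,\cdot$ settle is controlled by the absence of branching together with local compactness (this is the Finsler analogue of the fact, recalled in Remark~\ref{rem:dich}, that in the GCBA case the straining radius alone governs the threshold; compare Andreev~\cite{An:norm}). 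Choosing $q$ closer to $x$ than the resulting uniform threshold, we conclude that $d(p,\cdot)$ is complementable at $x$; hence $E=\varnothing$ and $\partial B(p,r)$ is a topological $(n-1)$-manifold.

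Finally, $\partial B(p,r)$ is a closed topological $(n-1)$-manifold homotopy equivalent to $S^{n-1}$ (for $n=1$ it is simply $S^0$), hence homeomorphic to $S^{n-1}$ by the resolution of the Poincar\'e conjecture. The main obstacle is the uniformity assertion in the second paragraph: upgrading the $q$-dependent closeness thresholds of Section~\ref{sec:orth} to locally uniform ones is precisely where the $G$-space hypothesis---rather than merely the manifold conclusion of Theorem~\ref{thm:g}---is indispensable, and it is the natural strainer-theoretic counterpart of Andreev's theorem that the tangent cone of a locally BNPC $G$-space is a normed space.
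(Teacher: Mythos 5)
Your first paragraph is fine and matches the paper's setup: Theorem \ref{thm:g} makes $X$ a topological $n$-manifold, Theorems \ref{thm:reg} and \ref{thm:link} give $\partial B(p,r)\simeq S^{n-1}$, and Theorem \ref{thm:rev} (with $k=n-1$, $f=d(p,\cdot)$) gives that $\partial B(p,r)$ is a topological $(n-1)$-manifold away from a finite set $E$, so everything reduces to $E=\varnothing$ plus the Poincar\'e conjecture. The gap is exactly where you admit it is: your argument that $E=\varnothing$ rests on the claim that the threshold $r_1(p,q,\delta)$ of Proposition \ref{prop:orth} (equivalently, the ``sufficiently close'' requirement in Proposition \ref{prop:kstr}) can be bounded below uniformly for $q$ ranging over $\partial B(p,r)$, and you support this only by saying that the rate at which $\tilde\angle pqx(t)$ settles to $\angle pqx$ is ``controlled by the absence of branching together with local compactness.'' That is an assertion, not a proof. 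The rate of convergence in Lemma \ref{lem:orth1} is extracted from a finite $\delta$-net in $\partial B(q,1)$ and is a priori wildly $q$-dependent; non-branching does not obviously make it uniform, the angle $\angle pqx$ has no stated (semi)continuity in $q$, and a naive compactness/contradiction argument runs into exactly this. This is precisely the difficulty the paper records in Remark \ref{rem:dich} as unresolved even with the strainer machinery in hand, so you cannot treat it as a routine consequence of the $G$-space hypothesis; moreover your choice of $q$ near $x$ creates a circularity (the required closeness $|qx|$ depends on the yet-to-be-chosen $q$) that only a genuine uniform bound would break.

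The paper avoids this entirely by a different device: since a locally BNPC $G$-space is locally $G$-homogeneous, the result of Berestovski\v i--Halverson--Repov\v s \cite[Theorem 1.1]{BHR} shows that the small metric sphere $\partial B(p,r)$ is topologically homogeneous. Combined with Theorem \ref{thm:rev}, which says the non-manifold points of $\partial B(p,r)$ form a finite set, homogeneity forces that set to be empty (a manifold point exists, hence all points are manifold points), and then the Poincar\'e conjecture finishes as in your last paragraph. If you want to keep your complementability route, you would have to actually prove the uniform lower bound on $r_1(p,q,\delta)$ for non-branching spaces (in the spirit of Andreev's normed tangent cone theorem); as written, that step is missing, so the proof is incomplete.
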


\begin{proof}
By Theorem \ref{thm:g}, $X$ is a topological $n$-manifold for some $n$.
By Theorems \ref{thm:reg} and \ref{thm:link}, for any sufficiently small $r>0$, $\partial B(p,r)$ is homotopy equivalent to $S^{n-1}$ (this is indeed true for general G-spaces, \cite[Theorem 1.1]{Gu:g}).
In view of the Poincar\'e conjecture, it suffices to show that $\partial B(p,r)$ is a topological manifold.

By a result of Berestovski\v i--Halverson--Repov\v s \cite[Theorem 1.1]{BHR}, $\partial B(p,r)$ is topologically homogeneous.
Indeed, it is easy to see that any locally BNPC G-space is locally G-homogeneous in the sense of \cite[Definition 4.3]{BHR} (cf.\ \cite[Remark 4.4]{BHR}).
By Theorem \ref{thm:rev}, $\partial B(p,r)$ is a topological manifold except at finitely many points, provided $r$ is small enough.
This completes the proof.
\end{proof}

Thanks to the unique extension property of geodesics, we immediately obtain the following corollary.
Compare with Corollary \ref{cor:bdry}.
See also \cite{DJ} for examples in dimension $\ge 5$.

\begin{cor}\label{cor:gbdry}
Let $X$ be a (globally) BNPC G-space.
Then the ideal boundary $\partial_\infty X$ is homeomorphic to a sphere, and the canonical compactification $X\cup\partial_\infty X$ is homeomorphic to a closed cell.
\end{cor}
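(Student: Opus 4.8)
The plan is to deduce everything from Theorem~\ref{thm:glink} together with the unique extendability of geodesics in a G-space. By Theorem~\ref{thm:g}, $X$ is homeomorphic to Euclidean space, say of dimension $n$; in particular $X$ is a proper (complete and locally compact) geodesic space and a topological $n$-manifold. Being complete and locally geodesically complete, $X$ is globally geodesically complete, and the local unique extendability of shortest paths in a G-space (Definition~\ref{dfn:g}) propagates along any finite geodesic segment by a routine compactness argument (the point-dependent radius is bounded below on the compact image of the segment). Hence every shortest path $px$ extends to a \emph{unique} unit-speed geodesic ray $\rho_x$ from $p$, and this ray is again a shortest path by the basic properties of BNPC spaces. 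Fix a base point $p\in X$; recall that $\partial_\infty X$ is the set of unit-speed geodesic rays from $p$ with the compact-open topology (independent of $p$, see \cite{Ho}).

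\emph{Step 1 (metric spheres and the ideal boundary).} For $0<r<R$, the map $\pi_{R,r}\colon\partial B(p,R)\to\partial B(p,r)$ sending $x$ to the unique intersection of the shortest path $px$ with $\partial B(p,r)$ (well defined by convexity of balls and uniqueness of geodesics) and the map $\iota_{r,R}\colon\partial B(p,r)\to\partial B(p,R)$ following the unique extension of $px$ to length $R$ are mutually inverse; both are continuous by the continuous dependence of shortest paths and of their extensions on the endpoints in a proper G-space (an Arzel\`a--Ascoli argument, using uniqueness of the extension to identify limits). Thus every metric sphere about $p$ is homeomorphic to any sufficiently small one, which is a topological $(n-1)$-sphere by Theorem~\ref{thm:glink}. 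Similarly, the radial projection $\pi_r\colon\partial_\infty X\to\partial B(p,r)$, $\rho\mapsto\rho(r)$, is continuous, injective (uniqueness of geodesics), and surjective (each $px$ with $|px|=r$ extends to a ray). Since $X$ is proper, the unit-speed rays from $p$ form an equi-Lipschitz family with images in compact sets, so $\partial_\infty X$ is compact by Arzel\`a--Ascoli (the uniform limit of geodesic rays is a geodesic ray); as $\partial B(p,r)$ is Hausdorff, $\pi_r$ is a homeomorphism. Hence $\partial_\infty X\cong S^{n-1}$.

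\emph{Step 2 (the compactification).} Let $C(\partial_\infty X)=\bigl(\partial_\infty X\times[0,1]\bigr)/\bigl(\partial_\infty X\times\{0\}\bigr)$ be the closed cone, with vertex $o$ and base $\partial_\infty X\times\{1\}$. Fix an increasing homeomorphism $\phi\colon[0,\infty)\to[0,1)$ and define $F\colon X\cup\partial_\infty X\to C(\partial_\infty X)$ by $F(p)=o$, $F(x)=(\rho_x,\phi(|px|))$ for $x\neq p$, and $F(\xi)=(\xi,1)$ for $\xi\in\partial_\infty X$. Unique extendability of geodesics makes $F$ a bijection, and its continuity is precisely the compatibility of the cone topology on $X\cup\partial_\infty X$ with the cone construction---the same verification as in the CAT($0$) case, which uses only properness and convexity (see \cite{BH}, \cite{BBI}). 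Since $X\cup\partial_\infty X$ is compact and $C(\partial_\infty X)$ is Hausdorff, $F$ is a homeomorphism. Therefore $X\cup\partial_\infty X\cong C(\partial_\infty X)\cong C(S^{n-1})=D^n$, a closed cell.

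\emph{Main obstacle.} The geometric content is elementary; the real work is the topological bookkeeping of Step~2, namely checking that $F$ matches the cone topology at the ideal boundary. In the CAT($0$) setting this is standard (Bridson--Haefliger), and the only new input here is that the G-space axiom supplies \emph{unique} geodesic extension, which is exactly what upgrades the radial maps of Step~1 from continuous bijections onto possibly proper subsets to genuine homeomorphisms (in a general BNPC space geodesics may branch and need not extend, so $\partial_\infty X$ need not be a sphere). A minor subtlety, already addressed above, is that the point-dependent radius in Definition~\ref{dfn:g} together with completeness still yields global unique extendability of geodesics.
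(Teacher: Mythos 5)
Your proposal is correct and follows essentially the same route the paper intends: the paper deduces the corollary ``immediately'' from Theorem~\ref{thm:glink} together with the unique extension property of geodesics, i.e.\ exactly your radial identifications of $\partial_\infty X$ and of all metric spheres with a small metric sphere (a sphere by Theorem~\ref{thm:glink}), and of $X\cup\partial_\infty X$ with the closed cone over it. Your Steps 1--2 simply spell out the standard bookkeeping (Arzel\`a--Ascoli, continuity of the radial maps, cone topology) that the paper leaves implicit.
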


Next we prove Theorem \ref{thm:stab}.
For the Gromov--Hausdorff convergence, we refer to \cite[Chapter 7]{BBI}.
We also use some notions and results listed in \cite[Section 4]{LN:top}.

We first recall the following globalization, which is actually the final step of the proof of the Cartan--Hadamard theorem (see Section \ref{sec:bnpc}).
For the convenience of the reader, we include the proof (cf.\ \cite[Theorem 9.3.4]{Pa}).

\begin{lem}\label{lem:ch}
Let $X$ be a locally BNPC, complete geodesic space.
Suppose that a shortest path between given two points in $X$ is unique and that the shortest path depends continuously on its endpoints with respect to the uniform distance.
Then $X$ is a (globally) BNPC space.
\end{lem}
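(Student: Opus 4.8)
The plan is to upgrade the purely local convexity supplied by the hypothesis to global convexity, using the uniqueness and the continuous dependence of shortest paths to propagate a local estimate along a connecting geodesic.

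First I would reduce to a local claim. It suffices to prove that for every pair of linearly reparametrized shortest paths $\gamma,\eta\colon[0,1]\to X$ the function $f(t)=d(\gamma(t),\eta(t))$ is \emph{midpoint convex near each} $t_0\in[0,1]$, i.e.\ there is $\delta>0$ with $f(\tfrac{s+s'}{2})\le\tfrac12\bigl(f(s)+f(s')\bigr)$ for all $s,s'\in(t_0-\delta,t_0+\delta)$. Indeed $f$ is Lipschitz (a difference of distances along two Lipschitz curves), hence continuous, and a continuous, locally midpoint convex function on an interval is convex; therefore $f$ would be convex on $[0,1]$, which is exactly the defining condition of a BNPC space (Definition~\ref{dfn:bnpc}). (Equivalently, one could reduce to the Busemann monotonicity \eqref{eq:mono} for shortest paths issuing from a common point, which is equivalent to the BNPC condition by \cite[Proposition~8.1.2]{Pa}; the general two-geodesic convexity then follows by inserting the shortest path from $\gamma(0)$ to $\eta(1)$.)

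To prove the local claim, fix $t_0$ and let $c_t$ denote the (unique, linearly reparametrized) shortest path from $\gamma(t)$ to $\eta(t)$; by hypothesis $(t,u)\mapsto c_t(u)$ is continuous. The base case is the local BNPC property itself: if $c_{t_0}([0,1])$ happens to lie in a single BNPC neighborhood $W$, then by continuity $\gamma,\eta$ and all $c_t$ for $t$ near $t_0$ also lie in $W$, and midpoint convexity of $f$ near $t_0$ holds inside the BNPC space $W$. In general $c_{t_0}([0,1])$ is compact, hence covered by finitely many BNPC balls, and a Lebesgue-number argument yields a subdivision $0=u_0<\dots<u_N=1$ with $c_{t_0}([u_{j-1},u_j])$ contained in a BNPC ball $B_j$; shrinking $\delta$ by uniform continuity, also $c_t([u_{j-1},u_j])\subset B_j$ for all $|t-t_0|\le\delta$. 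One then wants to combine the per-ball BNPC (convexity) inequalities for the pieces $c_s|_{[u_{j-1},u_j]}$, adding them up along $c_t$ via the additivity of length of a shortest path, $d(\gamma(t),\eta(t))=\sum_j d(c_t(u_{j-1}),c_t(u_j))$, and using that the extreme transversal curves $t\mapsto c_t(u_0)=\gamma(t)$ and $t\mapsto c_t(u_N)=\eta(t)$ are themselves shortest paths, so that their midpoints at $t_m=\tfrac{s+s'}{2}$ are precisely $\gamma(t_m)$ and $\eta(t_m)$.

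The hard part — and the main obstacle — is exactly this gluing: the \emph{interior} transversal curves $t\mapsto c_t(u_j)$, $0<j<N$, need not be shortest paths, so the naive chaining of the per-ball estimates leaves error terms concentrated at the interior subdivision nodes, which do not telescope away. Overcoming this is where the continuous dependence of geodesics enters in an essential way; the device I would follow (cf.\ \cite[Theorem~9.3.4]{Pa}) is an induction along the covering of $c_{t_0}$, in which one splits the shortest path $c_t$ at a point $c_t(u^\ast)$ lying in the overlap of two consecutive balls, writes $d(\gamma(t),\eta(t))=d(\gamma(t),c_t(u^\ast))+d(c_t(u^\ast),\eta(t))$, and applies the inductive hypothesis to the two shorter halves — strengthening the inductive statement so that it also records the relevant convexity of the transversal distance functions attached to the bicombing, so that the split through the non-geodesic curve $t\mapsto c_t(u^\ast)$ is legitimate. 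Carrying this bookkeeping through, one obtains local midpoint convexity of $f$ at every $t_0$, hence $f$ is convex and $X$ is BNPC.
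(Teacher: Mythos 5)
Your setup is sound up to the point you yourself flag: the reduction to a local (midpoint) convexity statement for $f(t)=d(\gamma(t),\eta(t))$ is fine, the single-ball base case is fine, and you correctly identify that the chaining breaks because the interior transversal curves $t\mapsto c_t(u_j)$ are not shortest paths. But the proposal never overcomes this obstruction. The sentence about ``strengthening the inductive statement so that it also records the relevant convexity of the transversal distance functions attached to the bicombing'' is a placeholder, not an argument: you do not say what the strengthened hypothesis is, why the split $d(\gamma(t),\eta(t))=d(\gamma(t),c_t(u^\ast))+d(c_t(u^\ast),\eta(t))$ through a non-geodesic curve yields inequalities that telescope, or how the induction terminates. (Incidentally, there is no bicombing in this lemma; shortest paths are assumed unique.) Since this gluing is precisely the content of the lemma, the proof is incomplete as written.

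The paper closes the gap by choosing a decomposition in which \emph{every} interpolating curve is a genuine shortest path, so no node errors ever appear. One first reduces, as you note parenthetically but do not exploit, to the Busemann monotonicity \eqref{eq:mono} for two shortest paths $\gamma,\eta$ issuing from a common point $p$. One then subdivides the \emph{opposite} segment, the shortest path from $\gamma(1)$ to $\eta(1)$, by points $\gamma(1)=m_0,\dots,m_k=\eta(1)$, and interpolates with the shortest paths $\zeta_i$ from $p$ to $m_i$. If the subdivision is fine, the continuous dependence of shortest paths on endpoints makes $\zeta_i$ and $\zeta_{i+1}$ uniformly close, so for each $t$ the relevant small subsegments of $\zeta_i,\zeta_{i+1}$ lie in a single BNPC neighborhood; hence $t\mapsto|\zeta_i(t)\zeta_{i+1}(t)|$ is locally convex, therefore convex, and since it vanishes at $t=0$ it satisfies $|\zeta_i(t)\zeta_{i+1}(t)|\le t\,|m_im_{i+1}|$. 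Summing over $i$ and using $\sum_i|m_im_{i+1}|=|\gamma(1)\eta(1)|$, the triangle inequality gives \eqref{eq:mono} for $(\gamma,\eta)$. This is the idea missing from your write-up: interpolate by geodesics from the common basepoint to a fine subdivision of the far segment, rather than subdividing the connecting geodesic at a fixed time and trying to repair the non-geodesic transversals afterwards.
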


\begin{proof}
It is sufficient to prove the Busemann monotonicity \eqref{eq:mono}.
Let $\gamma,\eta:[0,1]\to X$ be linearly reparametrized shortest paths emanating from $p\in X$.

We first consider the special case.
Let $m$ be a point on the shortest path connecting $\gamma(1)$ and $\eta(1)$ and $\zeta:[0,1]\to X$ be the linearly reparametrized shortest path from $p$ to $m$.
Suppose the Busemann monotonicity \eqref{eq:mono} holds for the pairs $(\gamma,\zeta)$ and $(\zeta,\eta)$.
Then by the triangle inequality, we have
\begin{align*}
|\gamma(t)\eta(t)|&\le|\gamma(t)\zeta(t)|+|\zeta(t)\eta(t)|\\
&\le t|\gamma(1)\zeta(1)|+t|\zeta(1)\eta(1)|\\
&=t|\gamma(1)\eta(1)|,
\end{align*}
that is, the Busemann monotonicity \eqref{eq:mono} holds for the pair $(\gamma,\eta)$.

Now we consider the general case.
In this case, subdivide the shortest path connecting $\gamma(1)$ and $\eta(1)$ into sufficiently small pieces by points $\gamma(1)=m_0,\dots,m_k=\eta(1)$.
For each $0\le i\le k$, let $\zeta_i:[0,1]\to X$ denote the linearly reparametrized shortest path from $p$ to $m_i$.
If we show the Busemann monotonicity \eqref{eq:mono} for every adjacent pair $(\zeta_i,\zeta_{i+1})$, we obtain the desired monotonicity for the pair $(\gamma,\eta)$, from the previous special case.
If the subdivision is sufficiently fine, the shortest paths $\zeta_i$ and $\zeta_{i+1}$ are sufficiently close in the uniform distance by assumption.
Then by the local BNPC condition, the function $|\zeta_i(t)\zeta_{i+1}(t)|$ is locally convex.
Since any local convex function is (globally) convex, $|\zeta_i(t)\zeta_{i+1}(t)|$ is convex, as desired.
\end{proof}

Next we prove the homotopical stability of small metric spheres in GNPC spaces with respect to the Gromov--Hausdorff convergence.
Compare with \cite[Proposition 2.3(2)]{LN:top}.
Strictly speaking, we need the notion of convex geodesic bicombing here, but we defer it to the next subsection.

\begin{lem}\label{lem:stabsph}
Suppose a sequence of closed tiny balls $\bar B_j(p_j,R)$ in GNPC spaces converges to a compact metric space $\bar B$ in the Gromov--Hausdorff topology, where $p_j$ converges to $p\in\bar B$.
Then for any sufficiently small $r>0$, $\partial B(p_j,r)$ is homotopy equivalent to $\partial B(p,r)$, provided $j$ is large enough (depending on $r$).
\end{lem}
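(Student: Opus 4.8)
The plan is to replace each metric sphere by a compact annulus that deformation-retracts onto it, to identify those annuli as mutual Gromov--Hausdorff limits, and then to invoke a homotopy-stability theorem for equi-locally-contractible compacta; this parallels \cite[Proposition 2.3(2)]{LN:top}. First, a closed tiny ball in a GNPC space carries the canonical convex geodesic bicombing given by its unique shortest paths, and a Gromov--Hausdorff limit of spaces with convex geodesic bicombings again admits one (see Section~\ref{sec:prfgen}); hence the limit $\bar B$ carries a convex geodesic bicombing, and by Theorem~\ref{thm:conv} the results of Section~\ref{sec:str} apply to $\bar B$ exactly as to each $\bar B_j$. In particular, for every sufficiently small $r>0$ the distance function $d(p,\cdot)$ is a $(1,\delta)$-strainer map on a punctured neighborhood of $\partial B(p,r)$ (Proposition~\ref{prop:1str}), so by Corollary~\ref{cor:glofib} (applied with $k=1$ and $K=[\epsilon,r]$) its restriction to the compact annulus $A(\epsilon,r):=d(p,\cdot)^{-1}([\epsilon,r])$ is a Hurewicz fibration for small $0<\epsilon<r$; since the base interval is contractible, $A(\epsilon,r)$ deformation-retracts onto the fiber $\partial B(p,r)$. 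The same holds in each $\bar B_j$, giving $\partial B(p,r)\simeq A(\epsilon,r)$ and $\partial B(p_j,r)\simeq A_j(\epsilon,r):=d(p_j,\cdot)^{-1}([\epsilon,r])$; thus it suffices to show $A_j(\epsilon,r)\simeq A(\epsilon,r)$ for $j$ large. (One may equally start from Theorem~\ref{thm:link} to reduce to punctured balls, but realizing the sphere as a fiber of a Hurewicz fibration over an interval is what makes the limiting argument clean.)

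Second, I would check that $A_j(\epsilon,r)\to A(\epsilon,r)$ in the Gromov--Hausdorff topology: pushing approximating maps between $\bar B_j$ and $\bar B$ nearly intertwines $d(p_j,\cdot)$ with $d(p,\cdot)$, points of $A_j$ land near $A$ because geodesics to or away from the basepoint can be shortened/extended inside the (proper, geodesic) limit, and points of $A$ are approximated because geodesics in the $\bar B_j$ extend past radius $\epsilon$ by local geodesic completeness, with the extensions converging by Arzel\`a--Ascoli. Next I would verify that $\{A_j(\epsilon,r)\}_j$ is equi-locally contractible: away from the two bounding spheres this is the local contractibility of small metric balls, which is uniform because small balls are convex by Busemann convexity \eqref{eq:conv}; on the bounding spheres it follows from the locally uniformly contractible fibers of the $(1,\delta)$-strainer map $d(p_j,\cdot)$ furnished by Theorem~\ref{thm:str} (via Corollary~\ref{cor:fib}), together with the fact that the base is a half-interval near such points, just as in the proof of Proposition~\ref{prop:hom}. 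Granting this, the homotopy-stability theorem for equi-locally-contractible compacta under Gromov--Hausdorff convergence (see \cite[Section~4]{LN:top} and the references therein) yields $A_j(\epsilon,r)\simeq A(\epsilon,r)$ for $j$ large, and combining with the first step finishes the proof.

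The step I expect to be the main obstacle is the uniformity in $j$: the straining radius at which $d(p_j,\cdot)$ behaves like a $1$-strainer near $\partial B(p_j,r)$ and near $\partial B(p_j,\epsilon)$, and the modulus of local contractibility of $A_j(\epsilon,r)$, must be bounded below independently of $j$. The point is that only one-sided, scale-controlled estimates are needed here, and these come from the Busemann inequalities \eqref{eq:mono} and \eqref{eq:conv}, which hold with universal constants and pass to the limit, together with the quantitative straining-radius bounds of Section~\ref{sec:str} (Remarks~\ref{rem:rad} and~\ref{rem:1strrad}), which are uniform in $j$ precisely because $\delta$, $\epsilon$, and $r$ are fixed; the same one-sidedness is what makes the convex-bicombing limit $\bar B$ amenable to the strainer machinery in the first place, which is why the limiting argument does not require any a priori bound on the doubling constants of the enlargements $10\bar B_j$.
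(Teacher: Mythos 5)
There is a genuine gap, and it sits exactly at the point you yourself flag as ``the main obstacle'': the uniformity in $j$ of the $1$-strainer property. Your argument needs, for the \emph{fixed} small $r$ (and $\epsilon$), that $d(p_j,\cdot)$ is a $(1,\delta)$-strainer map near $\partial B(p_j,r)$ with straining radius bounded below independently of $j$. You assert this follows because the Busemann inequalities \eqref{eq:mono}, \eqref{eq:conv} are universal and because of Remarks~\ref{rem:rad} and~\ref{rem:1strrad}, ``which are uniform in $j$ precisely because $\delta$, $\epsilon$, and $r$ are fixed.'' That is not justified: Remark~\ref{rem:1strrad} gives the straining radius $>\delta' r$ only \emph{once} $r<r_0(p_j,\delta)$, where $r_0(p_j,\delta)$ is the radius from Proposition~\ref{prop:bran} (via Lemma~\ref{lem:bran}). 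That radius is produced by a compactness argument (a finite $\delta$-net on a unit sphere plus the non-quantitative convergence $\tilde\angle_{p_j}(tx,ty)\to\angle_{p_j}(x,y)$) and depends on the point and on the space $X_j$; nothing in the Busemann inequalities gives a uniform rate, so a priori $r_0(p_j,\delta)$ could tend to $0$ as $j\to\infty$, in which case your fixed $r$ is not admissible for large $j$ and Propositions~\ref{prop:1str}, \ref{prop:open}, Theorem~\ref{thm:str}, and Corollary~\ref{cor:glifib}-type statements cannot be invoked in $\bar B_j$. Ruling this out is the actual content of the lemma.

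The paper closes precisely this gap by a contradiction/limiting argument: fix $r<r_0(p,\delta)$ for the limit ball; if the almost nonbranching estimate of Proposition~\ref{prop:bran} failed for $p_j$ on $\partial B(p_j,r)$ along a subsequence (i.e.\ there were $x_j\in\partial B(p_j,r)$, $y_j\in B(x_j,\delta r)$ with $|2x_j,2y_j|\ge\varkappa(\delta)r$), then Arzel\`a--Ascoli would produce limit points $x\in\partial B(p,r)$, $y\in\bar B(x,\delta r)$ violating Proposition~\ref{prop:bran} in $\bar B(p,R)$, contradicting the choice of $r$. Only after this transfer does one get the $(1,\delta)$-strainer property for $p_j$ on $\partial B(p_j,r)$ with straining radius $\ge\delta'r$ uniformly in $j$, and then local uniform contractibility of the spheres (Corollary~\ref{cor:ret}), GH-convergence of the spheres via $\varepsilon$-openness (Proposition~\ref{prop:open}), and Petersen's stability theorem finish the proof directly on the spheres --- no annuli or Hurewicz fibrations over intervals are needed (though your annulus reduction is fine on the limit space and is a legitimate, if heavier, variant). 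If you insert an analogous Arzel\`a--Ascoli transfer of Proposition~\ref{prop:bran} from $\bar B$ to the $\bar B_j$ before invoking the strainer machinery in $\bar B_j$, your outline can be repaired; without it, the key quantitative input is missing.
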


\begin{proof}
Note that $\bar B=\bar B(p,R)$.
As we will see later, $B(p,R)$ is a ``weakly'' GNPC space (Definition \ref{dfn:weak}, Proposition \ref{prop:conv}).
Roughly speaking, this means that $B(p,R)$ satisfies the definition of a GNPC space except for the local uniqueness of shortest paths.
For simplicity, here we assume that $B(p,R)$ is a GNPC space.

By Proposition \ref{prop:1str}, for any sufficiently small $r>0$, $p$ is a $(1,\delta)$-strainer on $\partial B(p,r)$.
We claim that the same is true for $p_j$, that is, $p_j$ is a $(1,\delta)$-strainer on $\partial B(p_j,r)$, provided $j$ is large enough.
Since the proof of Proposition \ref{prop:1str} is based on Proposition \ref{prop:bran}, the almost nonbranching of geodesics, we may assume that  $r$ is less than $r_0(p,\delta)$ from Proposition \ref{prop:bran}.

Suppose that Proposition \ref{prop:bran} does not hold for $p_j$ on $\partial B(p_j,r)$, i.e., there exist $x_j\in\partial B(p_j,r)$ and $y_j\in B(x_j,\delta r)$ such that $|2x_j,2y_j|\ge\varkappa(\delta)r$, where $r$ is fixed.
By the Arzela--Ascoli theorem (\cite[Theorem 2.5.14]{BBI}), we find $x\in\partial B(p,r)$ and $y\in\bar B(x,\delta r)$ such that $|2x,2y|\ge\varkappa(\delta)r$.
This means that Proposition \ref{prop:bran} does not hold on $\partial B(p,r)$, which contradicts the choice of $r$ (the possibility that $y$ lies on $\partial B(x,\delta r)$ is not important and can be ignored by adjusting the constants appropriately).

Hence Proposition \ref{prop:bran} holds for $p_j$ on $\partial B(p_j,r)$, provided $j$ is large enough.
By repeating the proof of  Proposition \ref{prop:1str}, we see that $p_j$ is a $(1,\delta)$-strainer on $\partial B(p_j,r)$ (where we replaced $\varkappa(\delta)$ with $\delta$ as in the proof of Proposition \ref{prop:1str}).
Furthermore, as mentioned in Remark \ref{rem:1strrad}, the straining radius of $p_j$ on $\partial B(p_j,r)$ is uniformly bounded below by $\delta'r$, where $\delta'\ll\delta$.

By Corollary \ref{cor:ret} and Remark \ref{rem:rad}, $\partial B(p_j,r)$ and $\partial B(p,r)$ are locally uniformly contractible in the sense of \cite[Definition 4.1]{LN:top}.
In addition, by the $\varepsilon$-openness of a strainer map, Proposition \ref{prop:open}, $\partial B(p_j,r)$ certainly converges to $\partial B(p,r)$ in the Gromov--Hausdorff topology (more precisely, any point of $\partial B(p,r)$ is a limit of points of $\partial B(p_j,r)$).
Therefore, by Petersen's stability theorem \cite[Theorem A]{Pete} (\cite[Theorem 4.2]{LN:top}), we conclude that $\partial B(p_j,r)$ is homotopy equivalent to $\partial B(p,r)$, as desired.

The general case where $B(p,R)$ is a weakly GNPC space is exactly the same.
See the next subsection for details.
\end{proof}

\begin{rem}\label{rem:stabstr}
The above result is actually the special case of the homotopical stability of the fibers of strainer maps.
See \cite[Theorem 13.1]{LN:geo} for more details.
\end{rem}

Now we prove Theorem \ref{thm:stab}.
The proof is almost the same as \cite[Theorem 7.1, Corollary 7.2]{LN:top}.
Recall that a G-space has \textit{injectivity radius $\ge r$} if every shortest path of length $<r$ extends uniquely to length $r$.
In particular, any two points at distance $<r$ are joined by a unique shortest path.

\begin{proof}[Proof of Theorem \ref{thm:stab}]
Let $X_j$ be as in the assumption of Theorem \ref{thm:stab}, i.e., a sequence of compact, locally BNPC G-spaces of dimension $n$ and injectivity radius $\ge r$ that converges to a compact metric space $X$.
Note that by Theorem \ref{thm:g}, $X_j$ is a topological manifold.

From Lemma \ref{lem:ch} (and the fact mentioned before this proof), any point of $X_j$ has a tiny ball of radius $r/100$.
By Corollary \ref{cor:weak} in the next subsection, the limit space $X$ is a weakly GNPC space in the sense of Definition \ref{dfn:weak} such that every point has a tiny ball of radius $r/100$.
For simplicity, here we assume that $X$ is a GNPC space.

Since $X_j$ and $X$ are uniformly locally contractible in the sense of \cite[Definition 4.1]{LN:top} and $X$ is finite-dimensional by Proposition \ref{prop:dim}, it immediately follows from Petersen's stability theorem \cite[Theorem A]{Pete} that $X_j$ is homotopy equivalent to $X$ for any large $j$.
More precisely, for any given $\varepsilon>0$, $X_j$ and $X$ are \textit{$\varepsilon$-equivalent}, provided $j$ is large enough (see \cite[Theorem 4.2]{LN:top}).
In view of the $\alpha$-approximation theorem of Chapman--Ferry \cite{CF} (see also \cite[Theorem 4.7]{LN:top} and references therein), it remains to show that $X$ is a topological manifold.

To this end, we apply the topological regularity theorem, Theorem \ref{thm:reg}, to $X$.
In view of Theorem \ref{thm:link}, it suffices to show that for any $p\in X$, a sufficiently small metric sphere $\partial B(p,r_p)$ is homotopy equivalent to $S^{n-1}$, where $r_p>0$ depends on $p$.
We may assume that $r_p$ is less than the injectivity radius bound $r$.

By Theorem \ref{thm:glink}, there exists $0<r_j<r_p$ depending on $p_j$ such that $\partial B(p_j,r_j)$ is homeomorphic to $S^{n-1}$ (actually a homotopy equivalence is enough).
Since the injectivity radius is bounded below by $r>r_p$, $\partial B(p_j,r_j)$ is homeomorphic to $\partial B(p_j,r_p)$ via the ``exponential map'' defined by the uniquely extendable geodesics.
Therefore $\partial B(p_j,r_p)$ is homeomorphic to $S^{n-1}$.
By Lemma \ref{lem:stabsph}, $\partial B(p,r_p)$ is homotopy equivalent to  $S^{n-1}$, as desired.

The general case where the limit space $X$ is a weakly GNPC space is exactly the same, as we will see in the next subsection.
This completes the proof.
\end{proof}

\begin{rem}\label{rem:4stab}
Except for the fact that $X_j$ is a topological manifold, the assumption that $X_j$ is a G-space is only used to show that $\partial B(p_j,r_j)$ is homeomorphic to $\partial B(p_j,r_p)$ via the exponential map.
Therefore, in dimension $\le 4$, to obtain the same conclusion as Theorem \ref{thm:stab}, it suffices to only assume that $X_j$ is a topological manifold: the G-space assumption, or more specifically, the nonbranching of geodesics, is not necessary.
This is the same as the CAT($0$) case.

More specifically, to carry out the above proof, it is enough to show that $\partial B(p_j,r_p)$ is homotopy equivalent to $S^{n-1}$ for fixed $r_p>0$.
The dimension $4$ case follows from Corollary \ref{cor:4fib}, the dimension $3$ and $2$ cases follow from Proposition \ref{prop:hom} and Theorem \ref{thm:moo}, and the dimension $1$ case is trivial.
\end{rem}

\begin{rem}\label{rem:krak}
In the above proof, we used the fact that $\partial B(p_j,r_p)$ and $\partial B(p,r_p)$ are locally uniformly contractible in the sense of \cite[Definition 4.1]{LN:top}, which was shown by a strainer argument in the proof of Lemma \ref{lem:stabsph}.
This fact also follows from the assumption that $X_j$ is a G-space with injectivity radius $\ge r>r_p$.

Indeed, by using Krakus' retraction inequality \cite{Krak} (see also \cite[Proposition 2.3]{Th:g}), one can show that any point of $\partial B(p_j,r_p)$ has an arbitrarily small contractible neighborhood (see the proof of \cite[Theorem 2.12]{Th:g}).
Hence $\partial B(p_j,r_p)$ are locally uniformly contractible.
Furthermore, by the stability of local uniform contractibility (\cite[Section 16, Main Theorem]{Bo}; see also \cite[Section 5, Theorem]{Pete}, \cite[Theorem 1.2]{GPW}), we see that $\partial B(p,r_p)$ is also locally uniformly contractible.
\end{rem}

As a corollary, we obtain the following finiteness theorem.
For $\varepsilon>0$, the \textit{$\varepsilon$-packing number} of a metric space $X$ is the maximal number of $\varepsilon$-separated points in $X$, i.e., points with pairwise distance $\ge\varepsilon$.

\begin{cor}\label{cor:fin}
Fix $r>0$ and $L>0$.
Let $\mathcal G(r,L)$ denote the family of compact, locally BNPC G-spaces with injectivity radius $\ge r$ and $r/100$-packing number $\le L$.
Then $\mathcal G(r,L)$ contains only finitely many homeomorphism types.
\end{cor}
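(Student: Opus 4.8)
The plan is to argue by contradiction, combining a uniform covering estimate for the spaces in $\mathcal G(r,L)$ with the Gromov--Hausdorff precompactness criterion and the stability theorem (Theorem~\ref{thm:stab}). Suppose $\mathcal G(r,L)$ contained infinitely many homeomorphism types. Then one can choose a sequence $X_j\in\mathcal G(r,L)$ that are pairwise non-homeomorphic, and it suffices to derive a contradiction from this.

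First I would show that the members of $\mathcal G(r,L)$ are uniformly totally bounded with uniformly bounded diameter. As observed in the proof of Theorem~\ref{thm:stab} (via Lemma~\ref{lem:ch}), every point $p$ of a space $X\in\mathcal G(r,L)$ has a tiny ball of radius $\rho_0:=r/100$, so that $\bar B(p,10\rho_0)$ is a compact BNPC space. Running the argument of Proposition~\ref{prop:dim} inside it, the ball $B(p,\rho_0)$ is $\ell$-doubling with $\ell$ bounded by the maximal number of $\rho_0$-separated points in $\bar B(p,10\rho_0)$; since $\bar B(p,10\rho_0)\subset X$, any such separated set has at most $L$ points, so $B(p,\rho_0)$ is $L$-doubling. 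Covering $X$ by the (at most $L$) balls of radius $\rho_0$ about a maximal $\rho_0$-separated set, we conclude that for every $\varepsilon>0$ the space $X$ is covered by $N(\varepsilon)$ balls of radius $\varepsilon$, with $N(\varepsilon)$ depending only on $L$, $r$, $\varepsilon$; moreover the diameter of $X$ is at most $2L\rho_0$, and since each tiny ball has Hausdorff dimension at most $\log_2 L$, the topological dimension of $X$ is at most $\log_2 L$.

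Next I would pass to subsequences. Since the dimensions of the $X_j$ take only finitely many values, after passing to a subsequence we may assume that all $X_j$ have the same dimension $n$. By the uniform total boundedness and diameter bound just established, Gromov's precompactness theorem (see \cite[Chapter 7]{BBI}) allows passing to a further subsequence converging in the Gromov--Hausdorff topology to a compact metric space $X$. Now Theorem~\ref{thm:stab} applies verbatim: $X$ is a topological manifold and $X_j$ is homeomorphic to $X$ for all sufficiently large $j$. This contradicts the choice of the $X_j$ as pairwise non-homeomorphic, so $\mathcal G(r,L)$ has only finitely many homeomorphism types.

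The main obstacle is the uniform covering estimate in the second step: the hypothesis bounds the packing number only at the single scale $r/100$, and one must propagate this to a covering bound at all smaller scales. This is exactly where the G-space structure enters — the injectivity radius bound, through Lemma~\ref{lem:ch}, produces a compact BNPC ball of definite size about every point, after which the Busemann monotonicity yields the doubling property at all scales, just as in Proposition~\ref{prop:dim}. Everything else reduces to a routine application of the stability theorem.
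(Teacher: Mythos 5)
Your proposal is correct and follows essentially the same route as the paper: use Lemma \ref{lem:ch} and the injectivity radius bound to produce tiny balls of radius $r/100$, run the doubling argument of Proposition \ref{prop:dim} with the packing bound to get a uniform covering estimate (hence uniform dimension bound and Gromov--Hausdorff precompactness), and then conclude by contradiction via Theorem \ref{thm:stab}. No gaps.
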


\begin{proof}
Let $X\in\mathcal G(r,L)$.
By Lemma \ref{lem:ch} and the injectivity radius bound, every point of $X$ has a tiny ball of radius $r/100$. 
By the proof of Proposition \ref{prop:dim} and the packing number bound, every tiny ball in $X$ of radius $r/100$ is $L$-doubling.
In particular, the dimension of $X$ is bounded above by a constant depending only on $L$.
Since $X$ is covered by at most $L$ tiny balls of radius $r/100$, this implies that for any given $\varepsilon>0$, the minimal number of $\varepsilon$-balls needed to cover $X$ is bounded above by a constant $N_{r,L}(\varepsilon)$ depending only on $r$, $L$, and $\varepsilon$.
Therefore, the family $\mathcal G(r,L)$ is precompact in the Gromov--Hausdorff topology (cf.\ \cite[Proposition 5.10]{LN:geo}).

Now we prove the claim by contradiction.
Suppose there exists a sequence $X_j\in\mathcal G(r,L)$ such that every two elements are not homeomorphic to each other.
Since the dimension of $X_j$ is uniformly bounded above, passing to a subsequence, we may assume that $X_j$ have the same dimension.
By precompactness, we may further assume $X_j$ converges to a compact space $X$.
Then by Theorem \ref{thm:stab}, $X_j$ is homeomorphic to $X$ for any large $j$, a contradiction.
\end{proof}

\begin{rem}\label{rem:fin}
In fact, such finiteness extends to a much more general setting without any curvature assumption (except for dimension $3$), since the family of G-spaces with a uniform injectivity radius bound is locally uniformly contractible.
See \cite[Theorem 1]{Fe} (cf.\ \cite[Main Theorem]{GPW}, \cite{GPW:err}).
On the other hand, topological stability like Theorem \ref{thm:stab} is not known in this curvature-free setting.
Ferry--Okun \cite{FO} (cf.\ \cite{Mo}) constructed a sequence of Riemannian metrics on the $5$-sphere, each admitting a uniform contractibility function; this sequence converges to an infinite-dimensional space. Still, it is unclear to the authors what can be said about the convergence under additional geometric assumptions, such as a uniform lower bound on the injectivity radius.
\end{rem}

\subsection{Generalization}\label{sec:prfgen}

In this subsection, we discuss the generalization to spaces with convex geodesic bicombings introduced by Descombes--Lang \cite{DL:conv}.
In particular, we provide more details on Theorem \ref{thm:conv}.

Spaces with convex geodesic bicombings are a natural generalization of BNPC spaces.
They have recently attracted the attention of geometric group theorists, because every hyperbolic group admits a proper and cocompact action by isometries on a space with a convex geodesic bicombing (\cite{La}; more precisely, on an injective metric space), while it is a long-standing problem whether the same holds for a CAT($0$) space.
From our viewpoint, these spaces naturally arise as Gromov--Hausdorff limits of BNPC spaces, as already used in the previous section.

Let $X$ be a geodesic space.
A \textit{geodesic bicombing} on $X$ is a map
\[\sigma:X\times X\times[0,1]\to X\]
that assigns linearly reparametrized shortest paths between two points of $X$, i.e., for any $x,y\in X$, $\sigma_{xy}(t):=\sigma(x,y,t)$ is a constant-speed shortest path from $x$ to $y$.

\begin{dfn}\label{dfn:conv}
A geodesic bicombing $\sigma$ on $X$ is \textit{convex} if for any $x,x',y,y'\in X$, the function
\[t\mapsto d(\sigma_{xy}(t),\sigma_{x'y'}(t))\]
is convex on $[0,1]$.
\end{dfn}

In addition, we always assume the consistency and reversibility of a bicombing.
A geodesic bicombing $\sigma$ on $X$ is \textit{consistent} if for any $x,y\in X$, setting $p:=\sigma_{xy}(s)$ and $q:=\sigma_{xy}(t)$, where $0\le s\le t\le 1$, we have
\[\sigma_{pq}(\lambda)=\sigma_{xy}((1-\lambda)s+\lambda t)\]
for all $0\le\lambda\le1$.
It is \textit{reversible} if for any $x,y\in X$, we have
\[\sigma_{xy}(t)=\sigma_{yx}(1-t)\]
for all $0\le t\le1$.

Clearly, if a geodesic bicombing is convex, then it is continuous.
Any BNPC space admits a unique consistent, reversible, convex geodesic bicombing.
Conversely, a (complete) metric space with a convex geodesic bicombing is a BNPC space if it is uniquely geodesic, i.e., a shortest path between any two points is unique.

Any proper injective metric space of finite topological dimension admits a unique consistent, reversible, convex geodesic bicombing (\cite[Section 1]{DL:conv}).
A combinatorial condition for a simplicial complex equipped with some polyhedral metric to admit a convex geodesic bicombing can be found in \cite{Ha}.
Several theorems known for BNPC spaces are extended to spaces with convex geodesic bicombings (\cite{De}, \cite{DL:flat}, \cite{Mie}).

To make the terminology compatible with ours, let us call $\sigma_{xy}$ the \textit{$\sigma$-shortest path} between $x$ and $y$ (to be more precise, its unit-speed reparametrization).
Then Definition \ref{dfn:conv} is the same as Definition \ref{dfn:bnpc} with ``shortest path'' replaced by ``$\sigma$-shortest path,'' except for the completeness assumption.

For a (consistent, reversible) geodesic bicombing $\sigma$, a \textit{$\sigma$-geodesic} is a curve whose restriction to each sufficiently small interval is a $\sigma$-shortest path.
Note that if $\sigma$ is convex, then any $\sigma$-geodesic is a $\sigma$-shortest path; compare with Section \ref{sec:bnpc}.
We say that a geodesic bicombing $\sigma$ is \textit{locally extendable} (resp.\ (globally) extendable) if any $\sigma$-geodesic is extendable to a $\sigma$-geodesic locally (resp.\ infinitely).
These conditions are equivalent if $X$ is complete; compare with Section \ref{sec:geo} (cf.\ \cite[Proposition 9.1.28]{BBI}).

With this terminology, instead of a GNPC space, we introduce

\begin{dfn}\label{dfn:weak}
A \textit{weakly GNPC space} is a separable, locally compact metric space such that any point has a neighborhood that admits a locally extendable, consistent, reversible, convex geodesic bicombing (with respect to the restriction of the original metric).
\end{dfn}

Then we have

\begin{clm}\label{clm:weak}
All the results in this paper can be generalized from GNPC spaces to weakly GNPC spaces.
More precisely, if we replace
\begin{itemize}
\item ``BNPC space'' by ``complete metric space with a consistent, reversible, convex geodesic bicombing,''
\item ``shortest path'' by ``$\sigma$-shortest path,''
\item ``geodesic completeness'' with ``extendability of a bicombing,''
\end{itemize}
then all the arguments work without any further changes.
Note that the definition of a G-space is weakened in a similar manner.
\end{clm}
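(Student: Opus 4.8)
The plan is to observe that the BNPC hypothesis enters the proofs of this paper only through a short, explicit list of consequences, and that each item on that list has a literal analogue for a consistent, reversible, convex geodesic bicombing $\sigma$. The ``proof'' of the claim is then a section-by-section verification that, after systematically reading ``shortest path'' as ``$\sigma$-shortest path'', ``geodesic'' as ``$\sigma$-geodesic'', ``geodesic contraction'' as ``$\sigma$-geodesic contraction'', and ``geodesic completeness'' as ``extendability of $\sigma$'', every argument goes through unchanged.

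First I would isolate the metric input actually used. The only properties of a tiny ball in a BNPC space invoked anywhere are: (a) the Busemann monotonicity \eqref{eq:mono} and Busemann convexity \eqref{eq:conv}; (b) convexity of metric balls together with contractibility of convex sets along a \emph{continuous} geodesic contraction that restricts correctly to punctured balls and to fibers; and (c) extendability of the relevant geodesics inside the enlargement $10\bar B$. For a convex bicombing, (a) follows immediately from Definition \ref{dfn:conv} by taking the two outer endpoints to be $p$ (a constant $\sigma$-path) and the inner ones to be the endpoints of the two $\sigma$-shortest paths in question --- exactly the derivation of Remark \ref{rem:bnpc}. Convexity of $\sigma$ forces $\sigma$ to be continuous, so the $\sigma$-geodesic contraction $y\mapsto\Phi_p(y,t)$ is continuous, giving the first half of (b); consistency of $\sigma$ is precisely what makes this contraction restrict to punctured balls and to fibers, taking over the role that uniqueness of shortest paths played in Propositions \ref{prop:geo} and \ref{prop:hom}, and reversibility is what lets one traverse $\sigma_{xy}$ in either direction, as the angle definitions and opposite strainers require. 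Finally (c) is the extendability of $\sigma$ assumed in Definition \ref{dfn:weak}, promoted to a fixed scale inside $10\bar B$ by the same completeness argument cited in Section \ref{sec:geo}.

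Next I would run through the technical core. In Section \ref{sec:key} the two angles are comparison angles along $\sigma$-shortest paths; the monotonicity of $\tilde\angle_p(tx,ty)$ is still \eqref{eq:mono} and the almost-monotonicity of Lemma \ref{lem:comp} is still \eqref{eq:conv}, so Lemmas \ref{lem:bran}--\ref{lem:orth2} and Propositions \ref{prop:bran} and \ref{prop:orth} are unaffected (one must only remember that $\lambda x$ and the opposite strainer $\bar p_i$ are $\sigma$-extensions, so that the convexity estimates such as \eqref{eq:branx}, \eqref{eq:brany}, \eqref{eq:gameta} stay available). In Sections \ref{sec:str} and \ref{sec:exstr}, the definitions of $(k,\delta)$-strainers, strainer maps, half strainer maps, and straining radius refer only to these angles; Lemma \ref{lem:open} is a statement about continuous maps between metric spaces with a locally geodesic target and uses no curvature bound; and the consecutive-approximation retractions are built by iterating $\sigma$-geodesic contractions, whose continuity and consistency are exactly what the convergence argument and the length estimates of Remark \ref{rem:retlen} need. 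The entire topological half of the paper --- Ungar's and Raymond's theorems, the recognition theorems of \cite[Sections 4.3, 6.1]{LN:top}, Petersen's stability theorem --- never sees the metric, so it is untouched. For the applications, Definition \ref{dfn:g} of a G-space is weakened in the parallel way (local unique extendability of $\sigma$-geodesics), under which a locally BNPC G-space is a weakly GNPC space whose bicombing extends uniquely; Theorems \ref{thm:g}, \ref{thm:glink} and \ref{thm:stab} then follow as written, the limit spaces in Theorem \ref{thm:stab} being weakly GNPC by the compactness of convex bicombings under Gromov--Hausdorff convergence.

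The main obstacle is purely bookkeeping: every place where the original proofs implicitly wrote ``the'' shortest path --- the geodesic contraction, its restriction to a punctured ball in Proposition \ref{prop:geo}, the point $\lambda x$ on an extension, the deformation retract of a fiber, the exponential-type maps in the G-space applications --- must be reread as a statement about the fixed $\sigma$, and one must check in each case that consistency and reversibility recover whatever uniqueness was being used. I expect no genuine difficulty, only a careful uniform pass through the text; the one spot deserving explicit comment is the choice of $\sigma$-extensions in the definitions of $\lambda x$, $\hat x$, and $\bar p_i$, so that the bicombing convexity inequalities remain literally applicable.
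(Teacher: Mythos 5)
Your proposal is correct and follows essentially the same route as the paper: the paper's own justification likewise reduces everything to the convexity of the distance along $\sigma$-geodesics, the $\sigma$-geodesic contraction, consistency standing in for uniqueness (exactly as you note for Proposition \ref{prop:geo}), reversibility (used implicitly in Lemma \ref{lem:sum}), and extendability of the bicombing, with the Gromov--Hausdorff stability of convex bicombings handled separately via an Arzel\`a--Ascoli argument (Proposition \ref{prop:conv}). Your pass through Sections \ref{sec:key}--\ref{sec:exstr} and the applications matches the paper's checklist, so no further comment is needed.
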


This is because our arguments rely only on the contraction along geodesics and the convexity of distance function along geodesics, both of which are available for $\sigma$-geodesics even in the weaker setting.
The uniqueness of shortest paths is never used essentially in our arguments.
Even if there is a place where the uniqueness of shortest paths appears to be used, it can be replaced with the consistency of $\sigma$-shortest paths: for instance, see the proof of Proposition \ref{prop:geo}.
The reversibility is also implicitly used, for example, in the proof of Lemma \ref{lem:sum}.
Lemma \ref{lem:ch} also holds for a continuous, consistent, reversible geodesic bicombing $\sigma$ defined globally on $X$ that is locally convex.

On the other hand, the non-uniqueness of shortest paths typically results in the non-existence of a unique projection to a ``$\sigma$-convex subset'' (compare \cite[Corollary 8.2.6, Proposition 8.4.8]{Pa}).
However, we have never used such a projection.

The advantage of dropping the uniqueness of shortest paths is that this weaker notion is preserved under the Gromov--Hausdorff convergence, which was essential in the previous subsection.
Compare the following with \cite[Example 4.3]{LN:geo}.

\begin{prop}\label{prop:conv}
Let $X_j$ be a sequence of compact metric spaces with consistent, reversible, convex geodesic bicombings $\sigma_j$.
Suppose $X_j$ converges to a compact metric space $X$ in the Gromov--Hausdorff topology.
Then $X$ admits a consistent, reversible, convex geodesic bicombing $\sigma$.

Furthermore, if $\sigma_j$ is locally extendable in $B(p_j,r)$, then $\sigma$ is locally extendable in $B(p,r)$, where $p_j\in X_j$ converges to $p\in X$.
\end{prop}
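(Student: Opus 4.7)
The plan is to construct $\sigma$ as a subsequential limit of the $\sigma_j$ via a diagonal Arzel\`a--Ascoli argument, and then verify the three required properties by passing the corresponding conditions on $\sigma_j$ to the limit. First I would fix isometric embeddings of all $X_j$ and $X$ into a common compact metric space (e.g., via Gromov's embedding theorem), so that convergence becomes Hausdorff convergence of compact subsets. Pick a countable dense subset $D \subset X$, and for each $x \in D$ choose $x_j \in X_j$ with $x_j \to x$. For every pair $(x,y) \in D \times D$, the $\sigma_j$-shortest paths $\sigma_{j,x_jy_j} : [0,1] \to X_j$ are uniformly Lipschitz (with constant bounded by $\operatorname{diam}(X_j)$, itself uniformly bounded), hence precompact in the uniform topology. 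A diagonal extraction then produces a subsequence along which $\sigma_{j,x_jy_j}$ converges uniformly to a constant-speed shortest path $\sigma_{xy} : [0,1] \to X$ for every $(x,y) \in D \times D$, and the convexity, consistency, and reversibility inequalities/identities pass to the limit on $D \times D$ by continuity of the distance function.

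Next I would extend $\sigma$ from $D \times D$ to all of $X \times X$ using convexity as a rigidity tool. For any $(x,y) \in X \times X$, take $(x_n, y_n) \to (x,y)$ with $x_n, y_n \in D$; the convexity on $D \times D$ yields
\[
d\bigl(\sigma_{x_n y_n}(t), \sigma_{x_m y_m}(t)\bigr) \le (1-t)\, d(x_n, x_m) + t\, d(y_n, y_m),
\]
so $\{\sigma_{x_n y_n}\}$ is uniformly Cauchy and the uniform limit $\sigma_{xy}$ is independent of the approximating sequence. Standard continuity arguments show that $\sigma_{xy}$ is again a constant-speed shortest path (using that the endpoint distances converge), and that the convexity, consistency, and reversibility identities extend from $D \times D$ to $X \times X$.

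For the second statement, suppose $\sigma_j$ is locally extendable in $B(p_j, r)$. Given a $\sigma$-geodesic $\gamma : [a,b] \to B(p,r)$, I would approximate $\gamma$ uniformly by $\sigma_j$-geodesics $\gamma_j : [a,b] \to B(p_j,r)$ obtained by joining approximations of $\gamma(a)$ and $\gamma(b)$ via $\sigma_j$ (possibly after subdividing $[a,b]$ so that consecutive subsegments are genuine $\sigma_j$-shortest paths, using the consistency of $\sigma_j$ and convergence of $\sigma_j$ to $\sigma$ on subsegments established in the first part). Each $\gamma_j$ then extends to a $\sigma_j$-geodesic $\bar\gamma_j : [a-\varepsilon_j, b+\varepsilon_j] \to X_j$ with $\varepsilon_j$ bounded below uniformly in $j$ once $j$ is large (by local extendability inside $B(p_j,r)$ with uniform room near the image of $\gamma$). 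Passing to a subsequential uniform limit via Arzel\`a--Ascoli yields a $\sigma$-geodesic extending $\gamma$.

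The main obstacle is the second paragraph: one must ensure that the constructed $\sigma$ is genuinely a bicombing in the strict sense, namely that each $\sigma_{xy}$ is a constant-speed shortest path in $X$ and that consistency holds for \emph{arbitrary} intermediate points $\sigma_{xy}(s)$, $\sigma_{xy}(t)$, not merely for intermediate points taken from $D$. This requires showing that the extension procedure is compatible with restriction to subsegments, which follows from the convexity rigidity above together with the fact that $\sigma_{xy}(s)$ can be approximated by $\sigma_{x_ny_n}(s)$ with $x_n, y_n \in D$ at a uniform rate; the subtlety is purely bookkeeping and no genuinely new estimate is needed beyond the convexity inequality.
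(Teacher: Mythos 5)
Your first two paragraphs follow essentially the same route as the paper's proof, which simply takes an Arzel\`a--Ascoli limit of the $\sigma_j$-shortest paths and declares consistency, reversibility and convexity to be clear; your diagonal extraction over a countable dense set together with the convexity-based extension is a more careful implementation, and the bookkeeping you flag does indeed reduce to the convexity inequality: comparing $\sigma_{j,x_jy_j}$ with $\sigma_{j,x_j'y_j'}$ for nearby pairs in $D$ shows that for \emph{arbitrary} $x_j\to x$, $y_j\to y$ one has $\sigma_{j,x_jy_j}\to\sigma_{xy}$ uniformly along the chosen subsequence, after which convexity, reversibility and consistency all pass to the limit. So the first statement is in order.

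The second statement is where you have a genuine gap. A $\sigma$-geodesic $\gamma:[a,b]\to B(p,r)$ is only \emph{locally} a $\sigma$-shortest path, so your primary construction (joining approximations of $\gamma(a)$ and $\gamma(b)$ by $\sigma_j$) need not approximate $\gamma$ at all unless you first prove that every $\sigma$-geodesic in $B(p,r)$ is globally a $\sigma$-shortest path, a Cartan--Hadamard-type globalization you neither state nor justify; and your fallback --- concatenating $\sigma_j$-shortest paths between approximations of subdivision points --- produces, for each finite $j$, a broken curve that is \emph{not} a $\sigma_j$-geodesic (nothing makes it a $\sigma_j$-shortest path on small intervals straddling the breakpoints), so the hypothesis of local extendability of $\sigma_j$ cannot be applied to your $\gamma_j$; moreover the uniform lower bound on $\varepsilon_j$ is asserted but not proved. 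The repair, which is how the paper argues, is to avoid approximating $\gamma$ altogether: by consistency it suffices to extend the terminal $\sigma$-shortest subpath of $\gamma$, say from $x=\gamma(c)$ to $y=\gamma(b)$. That subpath is, by the identification above, the uniform limit of the $\sigma_j$-shortest paths from $x_j$ to $y_j$; each of these \emph{is} a $\sigma_j$-geodesic (again by consistency), lies in $B(p_j,r')$ for some fixed $r'<r$ (apply convexity against the constant path at $p_j$), and therefore extends, by the local-to-global argument of \cite[Proposition 2.5.25]{BBI}, by the uniform length $r-r'$ before it can leave $B(p_j,r)$. An Arzel\`a--Ascoli limit of these extensions then extends $\sigma_{xy}$, and on small subintervals it is a limit of $\sigma_j$-shortest paths between converging endpoints, hence a $\sigma$-shortest path, i.e.\ the limit is a $\sigma$-geodesic, which gives the local extendability of $\sigma$ in $B(p,r)$.
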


\begin{proof}
Let $S\subset X$ be any countable dense subset.
By a standard diagonal argument using the Arzela--Ascoli theorem (\cite[Theorem 2.5.14]{BBI}), after passing to a subsequence, one can define the desired convex geodesic bicombing $\sigma$ on $S\times S$ as a limit of $\sigma_j$.
Furthermore, by the convexity of $\sigma$, if $s_i,s_i'\in S$ are Cauchy sequences, then so is $\sigma_{s_is_i'}(t)$ for all $0\le t\le1$.
Therefore $\sigma$ extends uniquely to $X\times X$.
Clearly $\sigma$ is consistent, reversible, and convex, since it is a limit of $\sigma_j$.

Suppose next $\sigma_j$ is locally extendable in $B(p_j,r)$.
Fix $x,y\in B(p,r)$ and let $x_j,y_j\in B(p_j,r)$ be arbitrary sequences converging to $x,y$, respectively.
Since $x_j,y_j$ are uniformly bounded away from the boundary of $B(p_j,r)$, the $\sigma_j$-shortest path between $x_j$ and $y_j$ is extendable by a fixed length independent of $j$ (where we used the local-to-global argument for extendability as in \cite[Proposition 9.1.28]{BBI} and the fact that any $\sigma_j$-geodesic is a $\sigma_j$-shortest path).
Therefore the $\sigma$-shortest path between $x$ and $y$ is also extendable as a limit of the extendable $\sigma_j$-shortest paths.
This completes the proof.
\end{proof}

Applying the above proposition to tiny balls, we obtain

\begin{cor}\label{cor:weak}
Let $X_j$ be a sequence of compact weakly GNPC spaces with a uniform positive lower bound on the radii of tiny balls.
Suppose $X_j$ converges to a compact metric space $X$ in the Gromov--Hausdorff topology.
Then $X$ is a weakly GNPC space with the same lower bound on the radii of tiny balls.
\end{cor}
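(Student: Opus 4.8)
The plan is to deduce the statement from Proposition~\ref{prop:conv}, applied not to the whole spaces $X_j$ but to their closed tiny balls. Let $r_0>0$ be the uniform lower bound on the radii of tiny balls, fix $p\in X$, and choose $p_j\in X_j$ with $p_j\to p$. By hypothesis each $X_j$ contains a tiny ball $B(p_j,r_0)$; in the weakly GNPC setting (the analogue of Definition~\ref{dfn:tiny} via Claim~\ref{clm:weak}) this means that $10\bar B_j:=\bar B(p_j,10r_0)$ is a compact metric space carrying a consistent, reversible, convex geodesic bicombing $\sigma_j$, and, as noted after Definition~\ref{dfn:tiny}, $\sigma_j$ is locally extendable inside $B(p_j,9r_0)$. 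Since closed balls are convex (the function $t\mapsto d((\sigma_j)_{xy}(t),p_j)$ is convex, being the distance between $(\sigma_j)_{xy}$ and the constant path at $p_j$), every $\sigma_j$-shortest path between points of $10\bar B_j$ stays inside $10\bar B_j$ and has length equal to the ambient distance; hence $\sigma_j$ is a genuine geodesic bicombing of the compact metric space $10\bar B_j$ with its induced metric.

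Next I would identify the Gromov--Hausdorff limit of the balls $10\bar B_j$. Realizing the convergence $X_j\to X$ in a common ambient space, after passing to a subsequence the closed sets $10\bar B_j$ Hausdorff-converge to a compact subset $Y\subseteq X$. One checks $Y\subseteq\bar B(p,10r_0)$ (a limit of points at distance $\le 10r_0$ from $p_j$ is at distance $\le 10r_0$ from $p$) and $B(p,10r_0)\subseteq Y$ (a point of the open ball is approximated by points eventually lying in the open balls $B(p_j,10r_0)$); since $Y$ is closed and open balls are dense in closed balls in the length space $X$, this forces $Y=\bar B(p,10r_0)$, which is compact because $X$ is.

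Finally, apply Proposition~\ref{prop:conv} to the sequence of compact metric spaces $10\bar B_j$ with bicombings $\sigma_j$: the limit $\bar B(p,10r_0)$ admits a consistent, reversible, convex geodesic bicombing $\sigma$, and since each $\sigma_j$ is locally extendable in $B(p_j,9r_0)$, the second part of that proposition gives that $\sigma$ is locally extendable in $B(p,9r_0)\supset B(p,r_0)$. Thus $B(p,r_0)$ is a tiny ball of $X$; as $p$ was arbitrary and $X$, being compact metric, is automatically separable and locally compact, $X$ is a weakly GNPC space every point of which has a tiny ball of radius $\ge r_0$, i.e.\ with the same lower bound on the radii of tiny balls. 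The one genuinely delicate step is the second one: verifying that the Gromov--Hausdorff limit of the closed tiny balls is exactly $\bar B(p,10r_0)$ and that the bicombing produced by Proposition~\ref{prop:conv} lives on this full ball rather than on a proper sub-ball or merely a dense subset. This is where the geodesic structure is used — the convexity of closed balls (to confine the $\sigma_j$'s) together with the density of open balls in closed balls (to lose no boundary point in the limit).
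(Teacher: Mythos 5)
Your route is exactly the paper's: the paper proves this corollary by the single sentence ``apply Proposition~\ref{prop:conv} to tiny balls,'' and your proposal fills in that application (confinement of $\sigma_j$ to $10\bar B_j$ by convexity against the constant path at $p_j$, lower and upper inclusions for the Hausdorff limit of the balls, then the ``furthermore'' part of Proposition~\ref{prop:conv} for extendability). The genuine gap is precisely in the step you yourself flagged, and your justification of it does not hold: you assert that $X$ is a length space, so that $\overline{B(p,10r_0)}=\bar B(p,10r_0)$ and hence $Y=\bar B(p,10r_0)$. Nothing in the hypotheses gives this. A weakly GNPC space is only \emph{locally} geodesic (the tiny-ball hypothesis yields shortest paths between points at distance at most $10r_0$, nothing more), and neither this property nor its Gromov--Hausdorff limit forces the metric to be globally intrinsic; in particular the closed ball of radius exactly $10r_0$ in $X$ need not be the closure of the open ball.

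The failure is not just hypothetical. Take $X_j$ to be two disjoint flat square tori of side $50r_0$ (each point then has a tiny ball of radius $r_0$, the closed $10r_0$-balls being Euclidean balls with the affine bicombing), with cross distances $d(a,c)=d(a,a_0)+D_j+d(c_0,c)$ for marked points $a_0,c_0$ and $D_j\downarrow 10r_0$; since all cross distances exceed $10r_0$, each $X_j$ is weakly GNPC with tiny-ball radius bound $r_0$. In the limit $X$, the point $c_0$ lies at distance exactly $10r_0$ from $p=a_0$ but is not in the closure of $B(p,10r_0)$; the limit $Y$ of the balls $10\bar B_j$ misses $c_0$, and $\bar B(p,10r_0)$ itself contains an isolated point, so it carries no geodesic bicombing whatsoever and $B(p,r_0)$ is \emph{not} a tiny ball of $X$. (Similar connected examples can be made by attaching a long arc elsewhere.) So $Y=\bar B(p,10r_0)$ can fail, and with it the literal conclusion at radius exactly $r_0$. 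Two repairs are available. First, you can obtain what the paper actually uses later: for every $r'<r_0$, $\bar B(p,10r')\subset B(p,10r_0)\subset Y$, and the convexity argument you already used confines the limit bicombing $\sigma$ to $\bar B(p,10r')$, while the ``furthermore'' clause of Proposition~\ref{prop:conv} gives local extendability in $B(p,9r')$; hence $B(p,r')$ is a tiny ball for every $r'<r_0$. Second, if one adds the assumption that the $X_j$ are geodesic spaces --- which holds in both applications in Section~\ref{sec:prfapp}, where the $X_j$ are compact locally BNPC G-spaces, respectively closed tiny balls --- then $X$ is geodesic, closed balls are closures of open balls, and your identification $Y=\bar B(p,10r_0)$ (and with it the statement as written) goes through.
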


\section{Problems}\label{sec:prob}

We conclude this paper by raising some remaining problems.
Most of them are related to the geometric structure of GNPC spaces, which we intentionally did not explore in depth here.
See also Remark \ref{rem:prob} for the latest information.

The first is the generalization of upper curvature bounds mentioned in Section \ref{sec:gen}.
By modifying the BNPC condition in terms of comparison angles, one can define \textit{Busemann spaces} with curvature $\le\kappa$ for arbitrary $\kappa\in\mathbb R$ (see Remark \ref{rem:curv}).

\begin{prob}\label{prob:curv}
Generalize our arguments and results to the setting of Busemann spaces with curvature $\le\kappa$, where $\kappa\in\mathbb R$.
\end{prob}

The next problem is the one mentioned in Remark \ref{rem:dim}, which is true for GCBA spaces.
We remark that the original proof in \cite{LN:geo} is based on a strainer argument.

\begin{prob}\label{prob:dim}
Show that the Hausdorff dimension is equal to the topological dimension for GNPC spaces.
\end{prob}

The next three problems are all related to the tangent cones of GNPC spaces, which also generalize previous results for GCBA spaces by Lytchak--Nagano \cite{LN:geo}.
Before discussing the problems, we briefly review the original results in \cite{LN:geo} (cf.\ \cite{OT}, \cite{Ot:diff}, \cite{Be:up}).
For analogous results for Alexandrov spaces with curvature bounded below, see \cite{OS}, \cite{Ot:sec}, \cite{Ot:diff}, \cite{Per:dc}, \cite{Be:one}.
We also refer to \cite{Ly:diff} for tangent cones in more general settings.

Let $X$ be a GCBA space and $p\in X$.
The \textit{tangent cone} $T_pX$ of $X$ at $p$ is defined as the Gromov--Hausdorff blow-up of $X$ at $p$, which in turn is isometric to the Euclidean cone over the \textit{space of directions} $\Sigma_pX$ of $X$ at $p$.
$T_pX$ is a locally compact, geodesically complete CAT($0$) space and $\Sigma_pX$ is a compact, geodesically complete CAT($1$) space.

We say that $p$ is a \textit{$k$-regular point} if $T_pX$ is isometric to $\mathbb R^k$ (with Euclidean metric).
The set $R^k$ of $k$-regular points in $X$ is dense and has full measure with respect to the $k$-dimensional Hausdorff measure in the $k$-dimensional part $X^k$ of $X$.
Here, the \textit{$k$-dimensional part $X^k$} is the set of points in $X$ with $\dim T_pX=k$ (in the sense of the Hausdorff or topological dimension).

Furthermore, there exists a Lipschitz manifold $M^k\subset X^k$, which is open in $X$ and contains $R^k$, such that $M^k$ admits a Riemannian metric that is continuous on $R^k$ and is locally compatible with the original distance.

We want to develop GNPC analogs of the above results.
For simplicity, we ignore the dimensional inhomogeneity of a GNPC space and formulate very rough problems.
Note that these are not rigorous conjectures.

The starting point would be the structure of the tangent cone (cf.\ \cite{Ly:diff}).

\begin{prob}\label{prob:tan}
Describe the structure of the tangent cone of a GNPC space.
Define regular points of a GNPC space and show its almost everywhere existence.
\end{prob}

Considering that the tangent space of a Finsler manifold is a strictly convex normed space, a regular point of a GNPC space should be defined as a point at which the tangent cone is isometric to a strictly convex normed space.
Note that, unlike a Riemannian metric modeled on the standard Euclidean metric, a Finsler norm generally varies pointwise.
However, if a Finsler manifold satisfies the BNPC condition, then the Finsler metric belongs to a special class called a \textit{Berwald metric}, as shown by Ivanov--Lytchak \cite{IL}.
One of the properties of a Berwald metric is that the tangent norms are all mutually isometric.
From our viewpoint that GNPC spaces are a generalization of Finsler manifolds with nonpositive flag curvature, one might ask the following.

\begin{prob}\label{prob:ber}
Does a GNPC space satisfy any kind of Berwaldness?
For example, is it possible to ``decompose" a GNPC space so that in each piece the tangent cone is isometric to a fixed strictly convex normed space at almost every point?
\end{prob}

Related problems and partial counterexamples can be found in \cite[Sections 1, 6]{IL}.
For example, one can glue two different strictly convex normed spaces at one point to obtain a GNPC space such that the tangent cones at ``regular points" are not isometric.
However, the authors do not know more complicated counterexamples to the above problem.

Note that, unlike Riemannian manifolds, Finsler manifolds do not generally have a ``canonical'' measure, but for (reversible) Berwald spaces the Hausdorff measure can be regarded as a canonical measure (see \cite[Section 10.2]{Oh}).
Therefore it would be natural to consider the Hausdorff measure for our GNPC spaces.

Furthermore, based on the notion of a regular point, one could ask the following.
Compare with Theorem \ref{thm:bilip} and Remark \ref{rem:isom}.

\begin{prob}\label{prob:fin}
Does a GNPC space admit any kind of Finsler structure almost everywhere?
Does a regular point have a neighborhood almost isometric to an open subset of its tangent cone?
\end{prob}

To attack the above problems, it would be reasonable to first consider the special case of locally BNPC G-spaces, i.e., (complete) GNPC spaces without branching geodesics, which could be viewed as the ``regular part'' of general GNPC spaces.
Andreev \cite{An:norm} proved that the tangent cone of such a space is indeed a strictly convex normed space.

Another problem we would like to mention here is the diffeomorphism version of Theorem \ref{thm:four} in the cocompact setting.
As mentioned in \cite[Section 6]{DJL}, this is open even in the CAT($0$) case.
See also \cite{St} and \cite{Sa} for related results.

\begin{prob}\label{prob:diff}
Is the universal cover of a closed smooth $4$-manifold with a locally CAT($0$)/BNPC metric diffeomorphic to the standard $\mathbb R^4$?
\end{prob}

Finally, we discuss Busemann spaces with lower curvature bounds.
Recall that Theorems \ref{thm:reg} and \ref{thm:sing} were originally proved by Wu \cite{Wu:alex} for Alexandrov spaces with curvature bounded below.
Reversing the inequality for comparison angles in Remark \ref{rem:curv}, one can also define \textit{Busemann spaces} with curvature $\ge\kappa$, where $\kappa\in\mathbb R$ (cf.\ \cite{Gu:ab}; however, note that there is no equivalent definition in terms of the concavity of the distance function, similar to Definition \ref{dfn:bnpc}, even for nonnegative curvature).
Clearly, every Alexandrov space is a Busemann space with the same lower curvature bound.
Therefore, it is natural to ask

\begin{prob}\label{prob:alex}
Is it possible to extend Theorems \ref{thm:reg} and \ref{thm:sing} to Busemann spaces with curvature bounded below?
\end{prob}

Note that Wu's proof \cite{Wu:alex} (cf.\ \cite{Wu:ed}) relies heavily on the structure theorem for Alexandrov spaces by Perelman \cite{Per:alex}, \cite{Per:mor}, which seems very hard to generalize to Busemann spaces.
We point out that Lytchak--Nagano's proof \cite{LN:top} can also be applied to Alexandrov spaces, by using the gradient flow of the distance function (see \cite{Petr}) instead of the geodesic contraction we frequently used.
However, to the best of our knowledge, it is not even known whether a Busemann space with a lower curvature bound is locally contractible.
We remark that there is work of Kell \cite{Ke} on Busemann spaces with nonnegative curvature.

\begin{rem}\label{rem:prob}
After the first version of this paper appeared on arXiv, there have been several developments related to the above problems.
Regarding Problem \ref{prob:alex}, Han--Yin \cite{HY} developed a structure theory of Busemann nonnegatively curved spaces with some additional conditions.
For Problems \ref{prob:tan} and \ref{prob:fin}, some progress was made in \cite{FT} and \cite{FG:fin}.
A general theory of the geometric structure of GNPC spaces will be discussed in a forthcoming paper by the first author \cite{Fu:geo}, which in particular addresses Problem \ref{prob:dim}.
\end{rem}


\begin{thebibliography}{999}

\bibitem{AB}
S. B. Alexander and R. L. Bishop, The Hadamard--Cartan theorem in locally convex metric spaces, Enseign. Math. 36 (1990), no. 3--4, 309--320.

\bibitem{AKP:cat}
S. Alexander, V. Kapovitch, and A. Petrunin, An invitation to Alexandrov geometry: CAT($0$) spaces, SpringerBriefs Math., Springer, Cham, 2019.

\bibitem{AKP:found}
S. Alexander, V. Kapovitch, and A. Petrunin, Alexandrov geometry: foundations, Grad. Stud. Math., 236, Amer. Math. Soc., Providence, RI, 2024.

\bibitem{ADG}
F. D. Ancel, M. W. Davis, and C.R. Guilbault, CAT($0$) reflection manifolds, Geometric topology (Athens, GA, 1993), 441--445, AMS/IP Stud. Adv. Math., 2.1, Amer. Math. Soc., Providence, RI, 1997.

\bibitem{An:bdry}
P. D. Andreev, Geometry of ideal boundaries of geodesic spaces with nonpositive curvature in the sense of Busemann, Mat. Tr. 10 (2007), no. 1, 16--28; translation in Siberian Adv. Math. 18 (2008), no. 2, 95--102.

\bibitem{An:const}
P. D. Andreev, Geometric constructions in the class of Busemann nonpositively curved spaces, Zh. Mat. Fiz. Anal. Geom. (J. Math. Phys. Anal. Geom.) 5 (2009), no. 1, 25--37.

\bibitem{An:prf}
P. D. Andreev, Proof of the Busemann conjecture for G-spaces of nonpositive curvature, Algebra i Analiz 26 (2014), no. 2, 1--20; translation in St. Petersburg Math. J. 26 (2015), no. 2, 193--206.

\bibitem{An:norm}
P. D. Andreev, Normed space structure on a Busemann G-space of cone type, Mat. Zametki 101 (2017), no. 2, 169--180; translation in Math. Notes 101 (2017), no. 1--2, 193--202.

\bibitem{An:prob}
P. D. Andreev, Busemann's problems on G-spaces, Herbert Busemann Selected Works I, 85--113, Springer, Cham, 2018.

\bibitem{An:cone}
P. Andreev, The cone metric of a Busemann space, J. Geom. 109 (2018), no. 1, 25, 13 pp.

\bibitem{Ba}
W. Ballmann, Singular spaces of nonpositive curvature, Sur les groupes hyperboliques d'apr\`es Mikhael Gromov (Bern, 1988), 189--201, Progr. Math., 83, Birkh\"auser Boston, Inc., Boston, MA, 1990.

\bibitem{Be:dim}
V. N. Berestovski\v i, The finite-dimensionality problem for Busemann G-spaces, Sibirsk. Mat. \v Z. 18 (1977), no. 1, 219--221, 240; translation in Sib. Math. J. 18 (1977), 159--161.

\bibitem{Be:bor}
V. N. Berestovski\v i, Borsuk's problem on the metrization of a polyhedron, Dokl. Akad. Nauk SSSR 268 (1983), no. 2, 273--277; translation in Sov. Math., Dokl. 27, 56--59 (1983).

\bibitem{Be:one}
V. N. Berestovski\v i, Manifolds with an intrinsic metric with one-sided bounded curvature in the sense of A. D. Aleksandrov, Mat. Fiz. Anal. Geom. 1 (1994), no. 1, 41--59.

\bibitem{Be:alex}
V. N. Berestovskii, A. D. Alexandrov's length manifolds with one-laterally bounded curvature, Zap. Nauchn. Semin. POMI  234 (1996), 17--19; translation in J. Math. Sci. (New York) 94 (1999), no. 2, 1145--1146.

\bibitem{Be:up}
V. N. Berestovski\v i, Busemann spaces with upper-bounded Aleksandrov curvature, Algebra i Analiz 14 (2002), no. 5, 3--18; translation in St. Petersburg Math. J. 14 (2003), no. 5, 713--723.

\bibitem{BHR}
V. N. Berestovski\v i, D. M. Halverson, and D. Repov\v s, Locally G-homogeneous Busemann G-spaces, Differential Geom. Appl. 29 (2011), no. 3, 299--318.

\bibitem{Bo}
K. Borsuk, On some metrizations of the hyperspace of compact sets, Fund. Math. 41 (1955), 168--202.

\bibitem{BH}
M. R. Bridson and A. Haefliger, Metric spaces of non-positive curvature, Grundlehren Math. Wiss., 319, Springer, Berlin, 1999.

\bibitem{Br}
M. Brown, The monotone union of open $n$-cells is an open $n$-cell, Proc. Amer. Math. Soc. 12 (1961), no. 5, 812--814. 

\bibitem{BBI}
D. Burago, Y. Burago, and S. Ivanov, A course in metric geometry, Grad. Stud. Math., 33, Amer. Math. Soc., Providence, RI, 2001.

\bibitem{BGP}
Yu. Burago, M. Gromov, and G. Perel'man, A.D. Alexandrov spaces with curvature bounded below, Uspekhi Mat. Nauk 47 (1992), no. 2(284), 3--51, 222; translation in Russian Math. Surveys 47 (1992), no. 2, 1--58.

\bibitem{Bu:npc}
H. Busemann, Spaces with non-positive curvature, Acta Math. 80 (1948), 259--310.

\bibitem{Bu:geo}
H. Busemann, The geometry of geodesics, Academic Press, New York, 1955.

\bibitem{BP}
H. Busemann and B. B. Phadke, Spaces with distinguished geodesics, Monogr. Textbooks Pure Appl. Math., 108, Marcel Dekker, Inc., New York, 1987.

\bibitem{Ca}
J. W. Cannon, Shrinking cell-like decomposition of manifolds. Codimension three, Ann. of Math. 110 (1979), no. 1, 83--112.

\bibitem{CS}
N. Cavallucci and A. Sambusetti, Packing and doubling in metric spaces with curvature bounded above, Math. Z. 300 (2022), 3269--3314.

\bibitem{CF}
T. A. Chapman and S. Ferry, Approximating homotopy equivalences by homeomorphisms, Amer. J. Math. 101 (1979), 583--607.

\bibitem{Da}
R.J. Daverman, Decompositions of Manifolds, Academic Press, New York, 1986.

\bibitem{DJ}
M. Davis and T. Januszkiewicz, Hyperbolization of polyhedra, J. Differential Geom. 34 (1991), no. 2, 347--388.

\bibitem{DJL}
M. Davis, T. Januszkiewicz, and J.-F. Lafont, $4$-dimensional locally CAT($0$)-manifolds with no Riemannian smoothings, Duke Math. J. 161 (2012), no. 1, 1--28.

\bibitem{De}
D. Descombes, Asymptotic rank of spaces with bicombings, Math. Z. 284 (2016), no. 3--4, 947--960.

\bibitem{DL:conv}
D. Descombes and U. Lang, Convex geodesic bicombings and hyperbolicity, Geom. Dedicata 177 (2015), 367--384.

\bibitem{DL:flat}
D. Descombes and U. Lang, Flats in spaces with convex geodesic bicombings, Anal. Geom. Metr. Spaces 4 (2016), no. 1, 68--84.

\bibitem{Du}
J. Dugundji, Topology, Allyn and Bacon, Boston, MA, 1966.

\bibitem{Ed}
R. D. Edwards, Suspension of homology spheres, preprint, arXiv:06010573v1, 2006.

\bibitem{EK}
J. Eells Jr. and N. H. Kuiper, Homotopy negligible subsets, Compositio Math. 21 (1969), 155--161.

\bibitem{Fe}
S. C. Ferry, Topological finiteness theorems for manifolds in Gromov-Hausdorff space, Duke Math. J. 74 (1994), no. 1, 95--106.

\bibitem{FO}
S. C. Ferry and B. L. Okun, Approximating topological metrics by Riemannian metrics, Proc. Amer. Math. Soc. 123 (1995), no. 6, 1865--1872.

\bibitem{FLS}
T. Foertsch, A. Lytchak, and V. Schroeder, Nonpositive curvature and the Ptolemy inequality, Int. Math. Res. Not. (2007), rnm100, 15pp.

\bibitem{Fu:nc}
T. Fujioka, Noncritical maps on geodesically complete spaces with curvature bounded above, Ann. Global. Anal. Geom. 62 (2022), no. 3, 661--677.

\bibitem{Fu:ex}
T. Fujioka, Extremal subsets in geodesically complete spaces with curvature bounded above, Anal. Geom. Metr. Spaces 11 (2023), no. 1, 20230104.

\bibitem{Fu:geo}
T. Fujioka, Geodesically complete Busemann spaces of nonpositive curvature, in preparation.

\bibitem{FG:fin}
T. Fujioka and S. Gu, Finsler structure of Busemann G-spaces, to appear on arXiv.

\bibitem{FT}
T. Fujioka and K. Tashiro, Busemann and MCP, preprint, arXiv:2602.05740v2, 2026.

\bibitem{Fuk}
T. Fukaya, A topological product decomposition of Busemann space, J. Math. Soc. Japan 76 (2024), no. 1, 269--281.

\bibitem{Gr}
M. Gromov, Hyperbolic manifolds, groups and actions, Riemann surfaces and related topics: Proc. 1978 Stony Brook Conf., 183--213, Ann. of Math. Stud., 97, Princeton Univ. Press, Princeton, NJ, 1981.

\bibitem{GPW}
K. Grove, P. Petersen V, and J.-Y. Wu, Geometric finiteness theorems via controlled topology, Invent. Math. 99 (1990), no. 1, 205--213.

\bibitem{GPW:err}
K. Grove, P. Petersen V, and J.-Y. Wu, Erratum to Geometric finiteness theorems via controlled topology, Invent. Math. 104 (1991), no. 1, 221--222.

\bibitem{Gu:ab}
S. Gu, On the equivalence of Alexandrov curvature and Busemann curvature, Turkish J. Math. 41 (2017), no. 1, 211--215.

\bibitem{Gu:g}
S. Gu, On small metric spheres and local cone structures of Busemann G-spaces, Houston J. Math. 45 (2019), no. 1, 285--291.

\bibitem{Gui}
C. R. Guilbault, Ends, shapes, and boundaries in manifold topology and geometric group theory, Topology and geometric group theory, 45--125, Springer Proc. Math. Stat., 184, Springer, Cham, 2016.

\bibitem{Ha}
T. Haettel, A link condition for simplicial complexes, and CUB spaces, Math. Ann. 393 (2025), no. 2, 1939--1987.

\bibitem{HHP}
T. Haettel, N. Hoda, and H. Petyt, $\ell^p$ metrics on cell complexes, J. Lond. Math. Soc. 111 (2025), no. 1, e70062.

\bibitem{HR}
D. M. Halverson and D. Repov\v s, The Bing--Borsuk and the Busemann conjectures, Math. Commun. 13 (2008), no. 2, 163--184.

\bibitem{HY}
B.-X. Han and L. Yin, On the structure of Busemann spaces with non-negative curvature I, preprint, arXiv:2508.12348v3, 2026.

\bibitem{Ho}
P. K. Hotchkiss, The boundary of a Busemann space, Proc. Amer. Math. Soc. 125 (1997), no. 7, 1903--1912.

\bibitem{IL}
S. Ivanov and A. Lytchak, Rigidity of Busemann convex Finsler metrics, Comment. Math. Helv. 94 (2019), no. 4, 855--868.

\bibitem{Jo}
J. Jost, Nonpositive curvature: geometric and analytic aspects, Lectures Math. ETH Z\"urich, Birkh\"auser Verlag, Basel, 1997.

\bibitem{Ke}
M. Kell, Sectional curvature-type conditions on metric spaces, J. Geom. Anal. 29 (2019), 616--655.

\bibitem{Kl}
B. Kleiner, The local structure of spaces with curvature bounded above, Math. Z. 231 (1999), 409--456.

\bibitem{Krak}
B. Krakus, Any $3$-dimensional G-space is a manifold, Bull. Acad. Polon. Sci. S\'er. Sci. Math. Astronom. Phys. 16 (1968), 737--740.

\bibitem{Kram}
L. Kramer, On the local structure and the homology of CAT($\kappa$) spaces and euclidean buildings, Adv. Geom. 11 (2011), 347--369.

\bibitem{KVK}
A. Krist\'aly, C. Varga, and L. Kozma, The dispersing of geodesics in Berwald spaces of non-positive flag curvature, Houston J. Math. 30 (2004), no. 2, 413--420.

\bibitem{KK}
A. Krist\'aly and L. Kozma, Metric characterization of Berwald spaces of non-positive flag curvature, J. Geom. Phys. 56 (2006), no. 8, 1257--1270.

\bibitem{La}
U. Lang, Injective hulls of certain discrete metric spaces and groups, J. Topol. Anal. 5 (2013), no. 3, 297--331.

\bibitem{LP}
U. Lang and C. Plaut, Bilipschitz embeddings of metric spaces into space forms, Geom. Dedicata 87 (2001), 285--307.

\bibitem{Ly:diff}
A. Lytchak, Differentiation in metric spaces, Algebra i Analiz 16 (2004), no. 6, 128--161; St. Petersburg Math. J. 16 (2005), no. 6, 1017--1041.

\bibitem{Ly:open}
A. Lytchak, Open map theorem for metric spaces, Algebra i Analiz 17 (2005), no. 3, 139--159; St. Petersburg Math. J. 17 (2006), no. 3, 477--491.

\bibitem{LN:geo}
A. Lytchak and K. Nagano, Geodesically complete spaces with an upper curvature bound, Geom. Funct. Anal. 29 (2019), 295--342.

\bibitem{LN:top}
A. Lytchak and K. Nagano, Topological regularity of spaces with an upper curvature bound, J. Eur. Math. Soc. 24 (2022), no. 1, 137--165.

\bibitem{LNS}
A. Lytchak, K. Nagano, and S. Stadler, CAT($0$) $4$-manifolds are Euclidean, Geom. Topol. 28 (2024), no. 7, 3285--3308.

\bibitem{LS}
A. Lytchak and V. Schroeder, Affine functions on CAT($\kappa$)-spaces, Math. Z. 255 (2007), no. 2, 231--244.

\bibitem{Mie}
B. Miesch, The Cartan--Hadamard theorem for metric spaces with local geodesic bicombings, Enseign. Math. 63 (2017), no. 1--2, 233--247.

\bibitem{Mit}
W. J. R. Mitchell, Defining the boundary of a homology manifold, Proc. Amer. Math. Soc. 110 (1990), no. 2, 509--513.

\bibitem{Mo}
T. E. Moore, Gromov--Hausdorff convergence to nonmanifolds, J. Geom. Anal. 5 (1995), no. 3, 411--418.

\bibitem{Na:vol}
K. Nagano, Volume pinching theorems for CAT($1$) spaces, Amer. J. Math. 144 (2022), no. 1, 267--285.

\bibitem{Na:asy}
K. Nagano, Asymptotic topological regularity of CAT($0$) spaces, Ann. Global. Anal. Geom. 61 (2022), 427--457.

\bibitem{Na:wall}
K. Nagano, Wall singularity of spaces with an upper curvature bound, in preparation.

\bibitem{NSY:1}
K. Nagano, T. Shioya, and T. Yamaguchi, Two-dimensional metric spaces with curvature bounded above, I, Geom. Topol. 28 (2024), no. 7, 3023--3093.

\bibitem{NSY:2}
K. Nagano, T. Shioya, and T. Yamaguchi, Two-dimensional metric spaces with curvature bounded above II, preprint, arXiv:2308.16398v1, 2023.

\bibitem{Oh}
S. Ohta, Comparison Finsler geometry, Springer Monogr. Math., Springer, Cham, 2021.

\bibitem{Ot:sec}
Y. Otsu, Almost everywhere existence of second differentiable structure of Alexandrov spaces, preprint, 1995.

\bibitem{Ot:diff}
Y. Otsu, Differential geometric aspects of Alexandrov spaces, Comparison geometry, 135--148, Math. Sci. Res. Inst. Publ., 30, Cambridge University Press, Cambridge, 1997.

\bibitem{OS}
Y. Otsu and T. Shioya, The Riemannian structure of Alexandrov spaces, J. Differential Geom. 39 (1994), no. 3, 629--658.

\bibitem{OT}
Y. Otsu and H. Tanoue, The Riemannian structure of Alexandrov spaces with curvature bounded above, preprint, 1999.

\bibitem{Pa}
A. Papadopoulos, Metric spaces, convexity and nonpositive curvature: second edition, IRMA Lect. Math. Theor. Phys., 6, Eur. Math. Soc., Z\"urich, 2014.

\bibitem{Per:alex}
G. Perelman, Alexandrov's spaces with curvatures bounded from below II, preprint, 1991, available at \url{https://anton-petrunin.github.io/papers/}

\bibitem{Per:mor}
G. Ya. Perel'man, Elements of Morse theory on Aleksandrov spaces, Algebra i Analiz 5 (1993), no. 1, 232--241; translation in St. Petersburg Math. J. 5 (1994), no. 1, 205--213.

\bibitem{Per:dc}
G. Perelman, DC structure on Alexandrov space, preprint, 1994, available at \url{https://anton-petrunin.github.io/papers/}

\bibitem{Pete}
P. Petersen V, A finiteness theorem for metric spaces, J. Differential Geom. 31 (1990), no. 2, 387--395.

\bibitem{Petr}
A. Petrunin, Semiconcave functions in Alexandrov's geometry, Metric and comparison geometry, 137--201, Surv. Differ. Geom., 11, Int. Press, Somerville, MA, 2007.

\bibitem{Po}
A. V. Pogorelov, Busemann regular G-spaces, Rev. Math. Math. Phys. 10 (1998), no. 4, 1--99.

\bibitem{Qu}
F. Quinn, Problems on homology manifolds, Exotic homology manifolds (Oberwolfach, 2003), 87--103, Geom. Topol. Monogr., 9, Geom. Topol. Publ., Coventry, 2006.

\bibitem{Ra}
F. Raymond, Local triviality for Hurewicz fiberings of manifolds, Topology 3 (1965), no. 1, 43--57.

\bibitem{Ro}
D. Rolfsen, Strongly convex metrics in cells, Bull. Amer. Math. Soc. 74 (1968), 171--175.

\bibitem{Sa}
B. Sathaye, Link obstruction to Riemannian smoothings of locally CAT($0$) $4$-manifolds, preprint, arXiv:1707.03433v1, 2017.

\bibitem{St}
S. Stadler, An obstruction to the smoothability of singular nonpositively curved metrics on 4-manifolds by patterns of incompressible tori, Geom. Funct. Anal. 25 (2015), no. 5, 1575--1587.

\bibitem{Th:cat}
P. Thurston, CAT($0$) $4$-manifolds possessing a single tame point are Euclidean, J. Geom. Anal. 6 (1996), 475--494.

\bibitem{Th:g}
P. Thurston, $4$-dimensional Busemann G-space are $4$-manifolds, Differential Geom. Appl. 6 (1996), no. 3, 245--270.

\bibitem{To}
H. Toru\'nczyk, Concerning locally homotopy negligible sets and characterization of $l_2$-manifolds, Fund. Math. 101 (1978), no. 2, 93--110.

\bibitem{To:cor}
H. Toru\'nczyk, A correction of two papers concerning Hilbert manifolds, Fund. Math. 125 (1985), no. 1, 89--93.

\bibitem{Un}
G. S. Ungar, Conditions for a mapping to have the slicing structure property, Pacific J. Math. 30 (1969), no. 2, 549--553.

\bibitem{Wi}
R. L. Wilder, Topology of Manifolds, Amer. Math. Soc. Colloq. Publ., 32, Amer. Math. Soc., New York, 1949.

\bibitem{Wu:alex}
J.-Y. Wu, Topological regularity theorems for Alexandrov spaces, J. Math. Soc. Japan 49 (1997), no. 4, 741--757.

\bibitem{Wu:ed}
J.-Y. Wu, A generalization of a theorem of Edwards, Proc. Amer. Math. Soc. 127 (1999), no. 10, 3119--3123.

\bibitem{Zo}
V. Zolotov, Bi-Lipschitz embeddings of SRA-free spaces into Euclidean spaces, preprint, arXiv:1906.02477v1, 2019.

\end{thebibliography}
\end{document}